\newcommand{\stkout}[1]{\ifmmode\text{\sout{\ensuremath{#1}}}\else\sout{#1}\fi}
\newtheorem{thm}{Theorem}[section]
\newtheorem{lem}[thm]{Lemma}
\newtheorem{cor}[thm]{Corollary}
\newtheorem{pro}[thm]{Proposition}
\newtheorem{introthm}{Theorem}
\newtheorem{introcor}[introthm]{Corollary}
\newtheorem{conj}[introthm]{Conjecture}
\theoremstyle{definition}
\newtheorem{defi}[thm]{Definition}
\newtheorem{rmk}[thm]{Remark}
\DeclareMathOperator{\spinc}{spin^{c}}
\DeclareMathOperator{\scspinc}{sc-spin^{c}}
\DeclareMathOperator{\torspinc}{tor-spin^{c}}
\DeclareMathOperator{\spin}{spin}
\DeclareMathOperator{\sgn}{sign}
\newcommand{\SQ}{Q}
\def\HF {\operatorname{\mathit{HF}}}
\def\HM {\operatorname{\mathit{HM}}}
\def\KH {\operatorname{\mathit{Kh\,}}}
\def\HF {{HF}}
\def\HM {{HM}}
\def\KH {{Kh}\,}
\newcommand{\cptwo}{{\mathbb C}\textup{P}^2}
\newcommand{\cptwobar}{\overline{{\mathbb C}\textup{P}\!}\,^2}
\newcommand{\inte}{\operatorname{int}}
\newcommand{\ab}{\operatorname{ab}}
\newcommand{\ad}{\operatorname{ad}}
\newcommand{\Ad}{\operatorname{Ad}}
\renewcommand{\sf}{\operatorname{sf}}
\newcommand{\sign}{\operatorname{sign}}
\newcommand{\Fix}{\operatorname{Fix}}
\newcommand{\su}{\mathfrak{su}}
\newcommand{\id}{\operatorname{id}}
\newcommand{\ind}{\operatorname{ind}}
\newcommand{\Lef}{\operatorname{Lef}}
\newcommand{\red}{\operatorname{red}}
\newcommand{\hmred}{\HM^{\red}}
\newcommand{\coker}{\operatorname{coker}}
\newcommand{\PD}{\operatorname{PD}\,}
\newcommand{\A}{\mathcal A}
\newcommand{\B}{\mathcal B}
\newcommand{\M}{\mathcal M}
\newcommand{\T}{\mathcal T}
\newcommand{\G}{\mathcal G}
\newcommand{\RR}{\mathcal R}
\newcommand{\F}{\mathbb F}
\newcommand{\Q}{\mathbb Q}
\newcommand{\R}{\mathbb R}
\newcommand{\Z}{\mathbb Z}
\newcommand{\s}{\mathfrak s}
\renewcommand{\H}{\mathcal H}
\newcommand{\lsw}{\lambda_{\rm{SW}}}
\newcommand{\lfo}{\lambda_{\rm{FO}}}
\newcommand{\tr}{\operatorname{tr}}
\renewcommand{\phi}{\varphi}
\newcommand{\irr}{\operatorname{irr}}
\renewcommand{\div}{\operatorname{div}}
\renewcommand{\Fix}{\operatorname{Fix}}
\newcommand{\nLef}{\operatorname{Lef}^{\circ}}
\renewcommand{\SS}{\mathcal S}
\newcommand{\Char}{\operatorname{Char}}
\title{On the Fr{\o}yshov invariant and monopole Lefschetz number}
\thanks{The first author was partially supported by the NSF Grant DMS-1707857, the second author was partially supported by NSF grant DMS-1506328 and a Simons Fellowship, and the third author was partially supported by a Collaboration Grant from the Simons Foundation}
\author[Jianfeng Lin]{Jianfeng Lin}
\address{Department of Mathematics\newline\indent Massachusetts Institute of
Technology \newline\indent Cambridge MA 02139}
\email{\rm{linjian5477@gmail.com}}
\author[Daniel Ruberman]{Daniel Ruberman}
\address{Department of Mathematics, MS 050\newline\indent Brandeis
University \newline\indent Waltham, MA 02454}
\email{\rm{ruberman@brandeis.edu}}
\author[Nikolai Saveliev]{Nikolai Saveliev}
\address{Department of Mathematics\newline\indent
University of Miami \newline\indent PO Box 249085
\newline\indent Coral Gables, FL 33124}
\email{\rm{saveliev@math.miami.edu}}
\subjclass[2010]{57R57, 57R58, 57M25, 57M27}
\begin{document}

\begin{abstract}
Given an involution on a rational homology 3-sphere $Y$ with quotient the $3$-sphere, we prove a formula for the Lefschetz number of the map induced by this involution in the reduced monopole Floer homology. This formula is motivated by a variant of Witten's conjecture relating the Donaldson and Seiberg--Witten invariants of 4-manifolds. A key ingredient is a skein-theoretic argument, making use of an exact triangle in monopole Floer homology, that computes the Lefschetz number in terms of the Murasugi signature of the branch set and the sum of Fr\o yshov invariants associated to spin structures on $Y$. We discuss various applications of our formula in gauge theory, knot theory, contact geometry, and 4-dimensional topology.   
\end{abstract}


\maketitle

\section{Introduction}
The monopole Floer homology defined by Kronheimer and Mrowka \cite{Kronheimer-Mrowka} using Seiberg--Witten gauge theory is a powerful invariant of 3--manifolds which has had many important applications in low-dimensional topology. Because of its functoriality \cite[Theorem 3.4.3]{Kronheimer-Mrowka}, the monopole Floer homology of a 3--manifold is acted upon by its mapping class group. However, the information contained in this action is not easy to extract due to the gauge theoretic nature of the theory. In this paper, we make some first steps towards understanding this action by calculating the Lefschetz numbers of certain involutions making the 3--manifold into a double branched cover over a link in the 3--sphere. Our study is motivated by the calculation of Lefschetz numbers in the instanton Floer homology \cite{RS2} and by a variant \cite[Conjecture B]{ruberman-saveliev:sw-casson} of Witten's conjecture~\cite{witten:monopole} relating the Donaldson and Seiberg--Witten invariants. The following theorem is the main result of the paper, and it comes with many interesting applications. 

\begin{introthm}\label{T:main}
Let $Y$ be an oriented rational homology 3--sphere with an involution $\tau: Y \to Y$ making it into a double branched cover of $S^3$ with branch set a non-empty link $L$. Denote by $\tau_*: \hmred(Y)\to \hmred(Y)$ the induced map in the reduced monopole Floer homology, and by $\Lef(\tau_*)$ its Lefschetz number. Then
\begin{equation}\label{E:main}
\Lef\, (\tau_*)\; =\; \frac{\, 2^{|L|}}{16}\;\xi(L)\, -\;\sum\limits_{\s}\; h(Y,\s),
\end{equation}
where $|L|$ is the number of components of the link $L$ and $\xi(L)$ is its Murasugi signature~\cite{murasugi}, and the last term is the sum over all the spin structures on $Y$ of the Fr{\o}yshov invariants $h(Y,\s)$ of the spin manifold $(Y,\s)$. In particular, if the link $L $ is a knot $K$ in $S^3$,
\begin{equation}\label{E:main-knot}
\Lef(\tau_{*})\; = \; \frac 1 8\, \sgn\,(K) -  h(Y),
\end{equation}
where $\sgn(K)$ is the classical knot signature and $h(Y)$ is the Fr{\o}yshov invariant for the unique spin structure on $Y$.
\end{introthm}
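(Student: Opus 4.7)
The plan is to prove \eqref{E:main} by induction on a skein complexity of the branch link $L$, using the surgery exact triangle of Kronheimer--Mrowka in monopole Floer homology. Given a crossing of $L$, the three links $L_+, L_-, L_0$ of the corresponding skein triple have double branched covers $Y_+, Y_-, Y_0$ related by $(\pm 1/2)$-surgery on a $\tau$-invariant unknot sitting above the crossing; the three covering involutions all descend from the single involution on the ambient $S^3$, so the relevant surgery cobordisms can be arranged to be $\tau$-equivariant, and the induced triangle
\[ \hmred(Y_+) \longrightarrow \hmred(Y_0) \longrightarrow \hmred(Y_-) \longrightarrow \hmred(Y_+) \]
commutes with the involutions. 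The base case of the induction is the unknot, for which $Y\cong S^3$, $\hmred(S^3)=0$, $\sgn(\text{unknot})=0$, and $h(S^3)=0$, so \eqref{E:main} holds trivially.

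For the inductive step, equivariance of the exact triangle yields an alternating Lefschetz identity of the form
\[ \Lef(\tau_{+,*}) - \Lef(\tau_{0,*}) + \Lef(\tau_{-,*}) = 0, \]
and the goal is to show that the right-hand side of \eqref{E:main} obeys the same relation. This combines two ingredients: the classical skein formulas of Murasugi for the signature together with the behavior of the prefactor $2^{|L|}/16$ under the smoothing that changes the number of components of $L$ by one, and a surgery formula for the total Fr{\o}yshov invariant $\sum_\s h(Y,\s)$ along the $(\pm 1/2)$-surgery corresponding to the skein move, which should mirror the exact triangle while tracking the transformation of the set of spin structures. The knot case \eqref{E:main-knot} then follows by specialising $|L|=1$ and noting that there is a unique spin structure on $Y$ since the determinant of a knot is odd.

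The principal obstacle will be establishing the equivariance of the exact triangle and the resulting Lefschetz identity at the level needed to take traces. The triangle maps are defined by Seiberg--Witten moduli spaces on surgery cobordisms; after choosing $\tau$-equivariant representatives one must verify that $\tau_*$ intertwines the maps so that the alternating sum of Lefschetz numbers vanishes, which requires careful bookkeeping of signs, degree shifts, and perturbations on the reducible locus. A closely related difficulty is deriving the surgery formula for $\sum_\s h(Y,\s)$ that precisely balances the change in Murasugi signature; this will involve controlling the reducible solutions that compute the Fr{\o}yshov invariants and matching spin structures across the three manifolds in the triangle, with the prefactor $2^{|L|}/16$ absorbing the variation in $|H^1(Y;\mathbb{Z}/2)|$. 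Reconciling all of this bookkeeping so that the skein induction closes is where I expect the heart of the argument to lie.
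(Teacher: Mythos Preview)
Your overall strategy---skein induction via the surgery exact triangle, with the base case being the unknot---is indeed the paper's approach. However, there are two substantial gaps in your plan that the paper spends most of its length resolving, and which you have not anticipated.

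First, the exact triangle lives in $\widecheck{\HM}$, not in $\HM^{\red}$, and $\widecheck{\HM}(Y_j)$ has infinite rank. Your ``alternating Lefschetz identity'' $\Lef(\tau_{+,*}) - \Lef(\tau_{0,*}) + \Lef(\tau_{-,*}) = 0$ is therefore not well defined as stated, and does not follow from equivariance of the triangle alone. The paper handles this by truncating $\widecheck{\HM}$ above a large degree $N$, which destroys exactness; as a result, the skein relation for $\chi(L)$ only holds \emph{up to a universal constant} depending on certain combinatorial data (the ``resolution data'': the divisibility of the longitude, a parity bit, and the surgery slopes). Showing that this constant vanishes is not formal---it requires producing, for every possible value of the resolution data, an explicit admissible skein triangle of links known to have $\chi=0$. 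The paper does this by an intricate bootstrap starting from two-bridge links and climbing through Montesinos links, occupying all of Section~6. Your proposal to derive a separate ``surgery formula for $\sum_\s h(Y,\s)$'' mirroring the Lefschetz identity would run into exactly the same truncation issue.

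Second, the naive skein induction fails because one of the resolved links may have determinant zero, so its branched cover is not a rational homology sphere and the inductive hypothesis is unavailable. The paper circumvents this (following Mullins) by supplementing the skein relation with a \emph{crossing-change} relation $\chi(L)=\chi(\bar L)$ valid precisely when a resolution is not ramifiable; this comes from comparing two exact triangles with a common pair of resolutions. Without this extra relation your induction will not close.

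A smaller point: the exact triangle with $\mathbb{Q}$-coefficients is not available off the shelf (the Kronheimer--Mrowka--Ozsv\'ath--Szab\'o triangle is over $\mathbb{F}_2$), and the sign assignments needed to make the $\mathbb{Q}$-triangle compatible with the covering involution are delicate---the map $\mu$ of Section~4 and Proposition~4.2 are what make your hoped-for equivariance work.
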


We are using here the rational numbers as the coefficient ring of the monopole Floer homology, and we will continue doing so throughout the paper unless otherwise noted. 
We expect that a formula similar to (\ref{E:main}) will hold for any rational homology sphere $Y$ and a diffeomorphism $\tau: Y \to Y$ of order $n \ge 2$. In the special case when the quotient $Y/\tau$ is an integral homology sphere $Y'$ and the branch set is a knot $K\subset Y'$, we make this expectation precise and conjecture that
\begin{equation}\label{Lefs for higher order}
\Lef\, (\tau_*)\; =\; n\cdot \lambda(Y')\;+\;\frac1{8}\;\sum\limits_{m=0}^{n-1}\;\sgn^{m/n}(K)\, -\, \sum_{\s}\; h(Y,\s),
\end{equation}

\smallskip\noindent
where $\lambda(Y')$ is the Casson invariant of $Y'$, $\sgn^{m/n}(K)$ is the Tristram--Levine signature of $K$ (see \cite[Section 6]{RS2}), and the last summation extends to the $\spin^c$ structures $\s$ on $Y$ such that $\tau_*(\s) = \s$.  The origins of this conjecture will be discussed in Section \ref{S:lfo}.

\begin{rmk}\label{R:HM=HF}
We will be working throughout with the Fr\o yshov invariants $h(Y,\s)$, which are defined via monopole homology. However, it is important to note that these are now known to be equivalent to the Heegaard Floer theory correction terms $d(Y,\s)$, for which many more calculations are available. In particular, the work of Kutluhan, Lee, and Taubes \cite{kutluhan-lee-taubes:HFSW-I,kutluhan-lee-taubes:HFSW-II,kutluhan-lee-taubes:HFSW-III,kutluhan-lee-taubes:HFSW-IV,kutluhan-lee-taubes:HFSW-V}, or alternatively, the work of Colin, Ghiggini, and Honda \cite{colin-ghiggini-honda:HFECH-I,colin-ghiggini-honda:HFECH-II,colin-ghiggini-honda:HFECH-III} and Taubes \cite{taubes:HMECH}, identifies the  monopole homology and the Heegaard Floer homology. Furthermore, by combining the main results of \cite{Gripp,Gripp-Huang,Gardiner}, the absolute $\Q$-gradings in the two theories coincide. Therefore, the relation $d(Y,\s) = -2h(Y,\s)$ between the Fr\o yshov invariant and the Heegaard Floer correction term holds for any rational homology spheres. This relation plays a role in our proof of Theorem \ref{T:main} (see Proposition \ref{t=1 case}) as well in several of the corollaries.\newline
\indent
We conjecture that a version of Theorem \ref{T:main} holds for Heegaard Floer homology.  This would be established by showing that  the isomorphisms cited above between the reduced Floer theories are natural with respect to cobordisms, so that the Lefschetz numbers computed in the two theories are the same.\end{rmk}


\subsection{An outline of the proof} 
Since $Y$ is the double branched cover of $S^{3}$ with branch set $L$, we will also use the notation $Y = \Sigma(L)$ and assume that the orientation on $Y$ is pulled back from the standard orientation of $S^3$. We need to show the vanishing of the link invariant 
\begin{equation}\label{E:chi}
\chi(L)\;=\;\frac1{2^{|L|-1}}\,\left(\Lef\, (\tau_*)+\,\sum\limits_{\s}\; h(\Sigma(L),\s)\right)\,-\,\frac 1 8\;\xi(L)
\end{equation}
for all links $L$ with non-zero determinant. This is done by an inductive argument involving a skein relation between $\chi(L)$, $\chi(L_0)$, and $\chi(L_1)$, where $L_0$, $L_1$ are resolutions of $L$ at a certain crossing. The skein relation for $\xi(L)$ can be proved directly, and the skein relations for the other two terms are a consequence of an exact triangle relating the monopole Floer homology of $\Sigma(L)$, $\Sigma(L_0)$, and $\Sigma(L_1)$.
 
While the idea is straightforward, there are several technical obstacles one needs to overcome. First of all, to understand the skein-theoretic behavior the monopole Lefschetz number (as a rational number), one needs an exact triangle with $\Q$--coefficients; however, the original exact triangle \cite{KMOS} has coefficients in $\Z/2$. While one may be able to adapt the proof there by putting suitable plus and minus signs before various terms appearing in the proof, keeping the signs straight is complicated and would require a significant amount of work. Here, we follow an alternative route: we show that, with some extra input from homological algebra, one can deduce a $\mathbb{Q}$--coefficient exact triangle from the corresponding $\Z/2$ exact triangle using the universal coefficient theorem. It is a delicate matter to define the signs involved in the maps of this new exact triangle so that they are compatible with the induced action of $\tau$; we need this compatibility to deduce a vanishing result for the total Lefschetz number.

The second difficulty comes from the fact that the version of monopole Floer homology that appears in the exact triangle is $\widecheck{\HM}(Y)$, and it is always infinite dimensional. To discuss the Lefschetz number, one needs to modify $\widecheck{\HM}(Y)$ to a finite dimensional vector space by ignoring all generators of sufficiently high degree. However, we lose the exactness of the triangle by such a truncating operation. As a consequence, the skein relation for $\chi(L)$ only holds up to a universal constant $C$ depending on certain combinatorial data including the surgery coefficients. To prove that $C$ always equals zero, we start from the example of two-bridge links. Since such links are known to have vanishing $\chi(L)$, we can use them to show that in some cases, the constant $C$ vanishes and the actual skein relation holds. With the help of this special skein relation, we can produce more examples of links $L$ with $\chi(L) = 0$ and prove the vanishing result for $C$ in a more general situation. Repeating this procedure several times, we eventually produce enough examples to prove that $C=0$ in all possible cases. 

After establishing the skein relation, one might hope to prove $\chi(L)=0$ by an inductive argument. However, such an argument would need to avoid links with zero determinant because double branched covers of such links, not being rational homology 3-spheres, may have more complicated monopole Floer homology. Unfortunately, it is not clear how to reduce a general non-zero determinant link to the unknot solely by resolving crossing without involving any zero determinant links. To overcome this obstacle, we make use of Mullins's \emph{skein theory for non-zero determinant links} \cite{mullins}. Following his idea, we extend the inductive statement by adding another skein relation relating $\chi(L)$ with $\chi(\bar{L})$, where $\bar{L}$ is obtained from $L$ by a crossing change. The relation is then established by comparing the two exact triangles arising from the triples $(L,L_0,L_1)$ and $(\bar{L},L_1,L_0)$. 


\subsection{Calculations and applications}
Theorem \ref{T:main} can be used in several different ways. In some cases (for instance, when $Y$ is an $L$-space), the monopole Lefschetz number automatically vanishes and we obtain a direct relation between the Fr{\o}yshov invariant and the Murasugi signature. In other cases, one can use formula \eqref{E:main} to compute the monopole Lefschetz number. This Lefschetz number contains important information about the action and can be used to  explicitly describe the action in some cases, leading to non-trivial conclusions. The applications we present in this paper fall into four different categories: gauge theory, knot theory, contact geometry, and 4-dimensional topology. 


\subsubsection{\rm{\textbf{An application to gauge theory}}}\label{S:lfo}
Let $X$ be a closed smooth oriented 4-manifold such that
\begin{equation}\label{E:homology condition}
H_{*}(X;\Z) = H_{*}(S^1\times S^{3};\Z)\quad\text{and}\quad H_{*}(\tilde{X};\mathbb{Q}) = H_{*}(S^{3};\mathbb{Q}),
\end{equation}
where $\tilde X$ is the universal abelian cover of $X$ associated with a choice of generator for $H^1 (X;\Z) = \Z$, called a homology orientation on $X$. Associated with $X$ are two gauge-theoretic invariants whose definition depends on a choice of Riemannian metric on $X$ but which end up being metric independent. The invariant $\lfo(X)$ is roughly one quarter times a signed count of anti-self-dual connections on the trivial $SU(2)$ bundle over $X$; and the invariant $\lsw(X)$ is roughly a signed count of the Seiberg--Witten monopoles over $X$ plus an index theoretic correction term. The invariant $\lfo(X)$ was originally defined by Furuta and Ohta \cite{FO} under the more restrictive hypothesis that $H_*(\tilde X;\Z) = H_* (S^3; \Z$), and the invariant $\lsw(X)$ was defined by Mrowka, Ruberman, and Saveliev \cite{MRS1} without any assumption on $\tilde X$.

\begin{conj}\label{C:lfo-lsw}
For any closed oriented homology oriented smooth 4-manifold $X$ satisfying condition (\ref{E:homology condition}), one has
\[
\lfo(X)=-\lsw(X).
\]
\end{conj}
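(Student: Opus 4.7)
The natural plan is to verify the conjecture on the class of 4-manifolds where Theorem \ref{T:main} applies directly, namely the mapping tori of involutions on rational homology spheres. Given an involution $\tau$ on a rational homology sphere $Y$ with $Y/\tau = S^3$, the mapping torus
\[
X_\tau \;=\; Y \times [0,1] \;/\; (y,0) \sim (\tau(y),1)
\]
satisfies condition (\ref{E:homology condition}): its universal abelian cover is $Y \times \R$, a rational homology $S^3$. In view of the tentative formula (\ref{Lefs for higher order}), such mapping tori already suggest a sharp target for both gauge-theoretic invariants, and Theorem \ref{T:main} supplies the arithmetic content that would make the identity $\lfo = -\lsw$ hold in this case.

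The strategy is to establish two complementary Lefschetz-type formulas. On the Seiberg--Witten side, one would show that $\lsw(X_\tau)$ equals $-\Lef(\tau_*)$ on $\hmred(Y)$ plus precisely the index-theoretic correction appearing in the definition of $\lsw$. This should follow by setting up a $\Z$-equivariant finite-dimensional approximation on the end-periodic cover $Y \times \R$ and reading off the signed count of Seiberg--Witten solutions as the Lefschetz trace of the deck transformation acting on the approximating spectrum. On the Donaldson side, one would relate $\lfo(X_\tau)$ to an equivariant Casson-type invariant of $(Y,\tau)$; for knot branch sets this ought to yield an expression involving the classical knot signature and the Fr\o yshov invariant of $Y$. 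Theorem \ref{T:main} then matches the two sides, since the terms $\tfrac{1}{8}\sgn(K)$ and $h(Y)$ appear on both, leaving only the index-theoretic corrections to reconcile.

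The hardest step will be the Lefschetz formula for $\lsw$. The invariant is built from a subtle index-theoretic correction on end-periodic 4-manifolds, and matching the equivariant spectral flow and eta-invariant contributions with the reducible locus that governs the bottom of $\widecheck{\HM}(Y)$ is far from automatic; in particular, the $\tau$-action on reducibles couples to the spin structure sum in (\ref{E:main}) in a way that must be tracked carefully. A secondary obstacle is extending the result from mapping tori to an arbitrary $X$ satisfying (\ref{E:homology condition}); no surgery or cobordism reduction of the general case to mapping tori is currently available, so the extension will likely require either a cobordism invariance statement for the sum $\lfo + \lsw$, or a more intrinsic comparison of ASD and Seiberg--Witten moduli on the cyclic cover of $X$ that bypasses the mapping torus setup entirely.
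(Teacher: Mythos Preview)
The statement you are addressing is a \emph{conjecture}; the paper does not prove it in full generality, and your proposal correctly recognizes that the general case would require ideas well beyond what is currently available. What the paper does establish is the special case of mapping tori of involutions with $S^3$ quotient and knot branch set (Theorem~\ref{T:lfo-lsw}), which is precisely the first stage of your plan.

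For that special case, your outline and the paper's argument diverge in the details. On the Seiberg--Witten side, you propose an equivariant finite-dimensional approximation to derive a Lefschetz formula for $\lsw(X_\tau)$; the paper instead invokes the splitting formula $\lsw(X) = -\Lef(\tau_*) - h(Y,\s)$ already proved in \cite[Theorem~A]{LRS}, so no new analysis is needed here. On the Donaldson side, your expectation that $\lfo(X_\tau)$ will involve both $\tfrac{1}{8}\sgn(K)$ and $h(Y)$ is off: the paper's computation (Section~\ref{lfo}) proceeds entirely through $SU(2)$ representation varieties---identifying $\M^*(X)$ with an equivariant character variety of $Y$, then with an orbifold character variety of the knot complement, and finally applying Herald's signed count---and yields $\lfo(X) = \tfrac{1}{8}\sgn(K)$ with no Fr\o yshov term whatsoever. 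The Fr\o yshov invariant enters only on the $\lsw$ side, and Theorem~\ref{T:main} is used not to match $h(Y)$ terms on both sides but to cancel the $h(Y)$ in $\lsw(X) = -\Lef(\tau_*) - h(Y)$ against the one hidden in $\Lef(\tau_*) = \tfrac{1}{8}\sgn(K) - h(Y)$.

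Your closing remarks on the difficulty of extending beyond mapping tori are apt; the paper makes no claim in that direction.
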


\noindent
This conjecture is a slight generalization of \cite[Conjecture B]{MRS1}. It relates the Donaldson and Seiberg--Witten invariants of certain smooth 4-manifolds and therefore can be thought of as a variant of the Witten conjecture \cite{witten:monopole} for manifolds with vanishing second Betti number. The conjecture has been verified in a number of examples \cite{ruberman-saveliev:sw-casson}. The following theorem, which we prove in this paper, provides further evidence towards it. 

\begin{introthm}\label{T:lfo-lsw}
Let $\tau: Y \to Y$ be an involution on a rational homology sphere $Y$ making $Y$ into a double branched cover of $S^3$ with branch set a knot $K$, and let $X$ be the mapping torus of $\tau$.  Then 
\[
\lfo(X) = -\lsw(X) = \frac 1 8\, \sgn\,(K).
\]
\end{introthm}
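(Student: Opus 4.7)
The plan is to express each of $\lfo(X)$ and $-\lsw(X)$, for $X$ the mapping torus of $\tau$, as the quantity $\Lef(\tau_*) + h(Y)$ in the appropriate Floer theory, and then invoke Theorem~\ref{T:main} to evaluate it. Indeed, the knot specialization \eqref{E:main-knot} of Theorem~\ref{T:main} can be rewritten as
\[
\Lef(\tau_*) + h(Y)\;=\;\tfrac{1}{8}\,\sgn(K),
\]
so once the gauge-theoretic quantities are translated into Lefschetz data, the arithmetic becomes immediate.

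On the Seiberg--Witten side, the strategy is to establish a Lefschetz-type formula expressing $\lsw(X)$ of the mapping torus in terms of the induced action of $\tau$ on the monopole Floer complex of $Y$. The end-periodic moduli space of Seiberg--Witten monopoles on the infinite cyclic cover $\tilde X$, cut along copies of $Y$, should produce a $\tau$-equivariant count on $\hmred(Y)$ whose signed total is the Lefschetz number $\Lef(\tau_*)$. The index-theoretic correction term built into the definition of $\lsw(X)$ from \cite{MRS1} must then be identified, via an eta-invariant / spectral-flow computation combined with the defining characterization of the Fr\o yshov invariant, with $h(Y)$. The expected outcome is a formula of the shape $\lsw(X) = -\bigl(\Lef(\tau_*) + h(Y)\bigr)$, after which Theorem~\ref{T:main} gives $-\lsw(X) = \tfrac{1}{8}\sgn(K)$.

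For the Furuta--Ohta invariant $\lfo(X)$ the strategy is parallel, but carried out in instanton Floer homology. The relevant Lefschetz formula for $\lfo$ of a mapping torus, expressing it as an instanton Lefschetz number plus an instanton-theoretic correction, is essentially the content of \cite{RS2}, where it was shown that $\lfo(X) = \tfrac{1}{8}\sgn(K)$ for precisely this class of mapping tori via the Tristram--Levine signature (specialized to the order $n=2$, $Y'/\tau = S^3$ case of the general conjectural formula \eqref{Lefs for higher order}). Combining this with the monopole side yields $\lfo(X) = -\lsw(X) = \tfrac{1}{8}\sgn(K)$, which in turn verifies Conjecture~\ref{C:lfo-lsw} for this family of four-manifolds.

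The main obstacle is the Seiberg--Witten Lefschetz formula itself: one must show that the index-theoretic correction in the definition of $\lsw$ reassembles, under the $\tau$-equivariant decomposition of the mapping torus, into the Fr\o yshov invariant $h(Y)$, and that contributions from reducible monopoles do not spoil this identification. Keeping the sign conventions consistent with those of Theorem~\ref{T:main} and with the induced action $\tau_*$ on $\hmred(Y)$ -- the very compatibility that was delicate to arrange in the skein-theoretic proof of Theorem~\ref{T:main} -- is the subtle bookkeeping on which the argument ultimately rests.
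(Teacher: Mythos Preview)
Your overall architecture is correct: the paper does exactly compute $-\lsw(X) = \Lef(\tau_*) + h(Y)$ via a splitting formula and then invoke Theorem~\ref{T:main}, and it does compute $\lfo(X)$ independently by instanton methods descended from \cite{RS2}. However, you have the logical dependencies inverted.

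The Seiberg--Witten side, which you flag as ``the main obstacle'', is in fact already done: the splitting formula $\lsw(X) = -\Lef(\tau_*) - h(Y,\s)$ for mapping tori is precisely Theorem~A of the authors' earlier paper \cite{LRS}, and the paper simply cites it. No new analysis of end-periodic moduli spaces or reducibles is carried out here.

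Conversely, the instanton side, which you treat as essentially a citation of \cite{RS2}, is where the new work actually lies. The results of \cite{CS} and \cite{RS2} compute $\lfo$ only when $Y$ is an \emph{integral} homology sphere; but the double branched cover of a knot is in general only a \emph{rational} homology sphere (of odd order $|\Delta_K(-1)|$). The paper must therefore (i) extend the very definition of $\lfo(X)$ to manifolds satisfying only $H_*(\tilde X;\Q) = H_*(S^3;\Q)$, (ii) verify compactness of the ASD moduli space in this wider setting, and (iii) redo the equivariant/orbifold correspondence $\M^*(X) \leftrightarrow \RR^\tau(Y) \leftrightarrow \RR^V(Y',K)$ in the presence of \emph{abelian} (non-trivial reducible) flat connections on $Y$, which do not occur in the integral case. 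Handling these abelian representations---showing they admit constant lifts, that they contribute correctly to the count, and that perturbations go through---is the genuine content of Section~\ref{lfo} and is what you are missing. Your appeal to \cite{RS2} ``for precisely this class of mapping tori'' does not cover the case at hand.
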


Note that our conjectural formula (\ref{Lefs for higher order}) can be interpreted as a special case of Conjecture \ref{C:lfo-lsw} for the mapping torus of a diffeomorphism of order $n$, by using the splitting formula for $\lsw(X)$ proved in our earlier paper \cite[Theorem A]{LRS} and the calculation of $\lfo(X)$ for the finite order mapping tori \cite[Theorem 1.1]{RS2}.


\subsubsection{\rm{\textbf{Strongly non-extendable involutions and Akbulut corks}}}
In \cite{Akbulut:cork}, Akbulut constructed a smooth compact contractible 4-manifold $W_1$ with boundary an integral homology sphere $\partial W_1$ and an involution $\tau: \partial W_1 \to \partial W_1$ which can be extended to $W_1$ as a homeomorphism but not as a diffeomorphism; it was the first example of what is now known as the Akbulut cork. We improve upon this result by constructing the first known example  of what we call a `strongly non-extendable involution'. The precise statement  is as follows.

\begin{introthm}\label{T: absolutely nonextendable involution}
There exists a smooth involution $\tau: Y \to Y$ on an integral homology 3-sphere $Y$ which has the following two properties:
\begin{itemize}
\item[(1)] $Y$ bounds a smooth contractible 4-manifold, and
\item[(2)] $\tau$ can not be extended as a diffeomorphism to any $\Z/2$ homology 4-ball that $Y$ bounds.
\end{itemize}
\end{introthm}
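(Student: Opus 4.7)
My plan is to use Theorem~\ref{T:main} to exhibit a concrete pair $(Y, \tau)$ with $\Lef(\tau_*) \neq 0$, and to argue that this non-vanishing obstructs smooth equivariant extension to any $\Z/2$-homology 4-ball.

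I would first produce the candidate.  Let $K \subset S^3$ be a knot satisfying (i) $\det(K) = \pm 1$, so that $Y = \Sigma(K)$ is an integral homology 3-sphere and $\tau$ is the covering involution; (ii) $\sgn(K) \neq 0$; and (iii) $Y$ bounds a smooth contractible 4-manifold.  Candidates can be found among knots whose double branched covers are integral homology spheres appearing as boundaries of the Mazur-type contractible 4-manifolds of Akbulut--Kirby; properties (i)--(iii) are checked by a Goeritz-matrix computation for the signature and determinant, and by explicit Kirby calculus for the contractible filling.  Since $Y$ bounds a contractible, and hence $\Q$-homology, 4-ball, Fr\o yshov's negative-definite inequality applied to both sides of that cobordism forces $h(Y) = 0$, so substituting into \eqref{E:main-knot} gives
\[
\Lef(\tau_*) \;=\; \tfrac{1}{8}\,\sgn(K) \;\neq\; 0.
\]

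Condition (1) of the theorem now holds by construction.  To establish (2), assume for contradiction that $\tau$ extends smoothly to an involution $\tilde\tau \colon W \to W$ of some compact $\Z/2$-homology 4-ball $W$ with $\partial W = Y$.  The theorem then reduces to the following key lemma: \emph{any such smooth equivariant extension forces $\Lef(\tau_*) = 0$.}  The plan for the lemma is to view $W$ as a $\tilde\tau$-equivariant cobordism from $S^3$ to $Y$.  The induced cobordism map on $\widecheck{\HM}$, together with the fact that $W$ is a $\Q$-homology ball, picks out a half-dimensional, $\tau_*$-invariant subspace $A \subset \hmred(Y)$; this is the monopole counterpart, via Remark~\ref{R:HM=HF}, of the well-known Ozsv\'ath--Szab\'o structure on the reduced Heegaard Floer homology of an integer homology sphere bounding a rational homology 4-ball.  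Orientation-reversal identifies $\hmred(Y)$ with the dual of $\hmred(-Y)$ via a $\tau_*$-equivariant perfect pairing; pairing a complement $B$ of $A$ against $A$ through this duality identifies $B$ with $A^{*}$ as a $\tau_*$-module, and a careful bookkeeping of the relative $\Q$-grading shift shows that the two traces are opposite, yielding $\Lef(\tau_*) = \tr(\tau_*|_A) + \tr(\tau_*|_B) = 0$.

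The main obstacle I anticipate is promoting the Ozsv\'ath--Szab\'o-type splitting to a genuinely $\tau_*$-equivariant decomposition at the Floer level: $\widecheck{\HM}(Y)$ is infinite-dimensional, and the truncation producing $\hmred(Y)$ must be performed in a $\tau_*$-compatible fashion, requiring the same kind of sign and $\Q$-grading analysis already developed in the proof of Theorem~\ref{T:main}.  A secondary technical point is to pin down a knot $K$ satisfying (i)--(iii) explicitly; if no such $K$ is readily available from standard tables, one can start from an Akbulut--Kirby Mazur manifold with integer homology sphere boundary and rewrite this boundary as the double branched cover of an explicit knot via Montesinos-type identifications.
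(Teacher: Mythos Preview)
Your proof has a genuine gap: the ``key lemma'' that a smooth extension of $\tau$ over a $\Z/2$-homology ball forces $\Lef(\tau_*)=0$ is not a known result, and the argument you sketch for it does not work.  There is no ``well-known Ozsv\'ath--Szab\'o structure'' producing a half-dimensional $\tau_*$-invariant subspace $A\subset\hmred(Y)$ from a rational homology ball filling; what the filling gives you is $h(Y)=0$, nothing more about the module structure of $\hmred(Y)$.  In fact your lemma is false as stated: take $\tau=\id$, which certainly extends over any filling, and note that $\Lef(\id_*)=\chi(\hmred(Y))=\pm\lambda(Y)$, which is nonzero for the Akbulut cork boundary ($\lambda(Y_1)=-2$).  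Since your argument for the lemma uses nothing special about $\tau$ being a covering involution, it would prove this false statement.  A second, smaller issue: you only assume $\tau$ extends as an \emph{involution}, whereas the theorem rules out extension as any diffeomorphism.

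The paper's proof is considerably more delicate and does not pass through such a lemma.  It works with the specific Akbulut cork boundary $Y_1$, where $\hmred(Y_1)\cong\Z^2$ in odd grading; Theorem~\ref{T:main} plus the Cayley--Hamilton argument gives $\tau_*=-I$ on $\hmred(Y_1)$.  The obstruction, however, lives not in $\hmred$ but in $\widehat{\HM}_{-1}(Y_1)\cong\Z^3$: any filling $W'$ produces via the cobordism map an element $e_{W'}$ with $f(e_{W'})=1$ (where $f$ is the map to $\overline{\HM}$), and if $\tau$ extended over $W'$ then $e_{W'}$ would have to be $\tau_*$-fixed.  To show this is impossible one must pin down the off-diagonal part of $\tau_*$ on $\widehat{\HM}_{-1}$, and this requires the extra input of an \emph{effective embedding} of $W_1$ in $K3\,\#\,\cptwobar$ whose Seiberg--Witten invariant changes under the cork twist; the gluing formula then forces the off-diagonal entries to be coprime, and a parity argument on $f(e_{W'})$ yields the contradiction.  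The nonvanishing of $\Lef(\tau_*)$ alone is not enough.
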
 

\noindent
One example of a strongly non-extendable involution claimed by Theorem \ref{T: absolutely nonextendable involution} is the aforementioned involution $\tau: \partial W_1 \to \partial W_1$ of the original Akbulut cork $(W_1,\tau)$: we show that $\tau$ does not extend to a self-diffeomorphism not just of $W_1$ but of \emph{any} homology ball that $\partial W_1$ may bound. We accomplish this by computing the induced action of $\tau$ on the monopole Floer homology $\widehat{\HM}(\partial W_1; \Z)$ with the help of Theorem \ref{T:main} and the calculation of Akbulut and Durusoy~\cite{akbulut-durusoy:involution}.

When the homology 4-ball bounded by $Y$ is contractible, the involution $\tau$ always extends to it as a homeomorphism. Using this idea, we give a general construction in Section \ref{S:cork} that results in a large family of new corks. It is worth mentioning that previous examples of corks were usually detected by embedding them in a closed manifold whose smooth structure is changed by the cork twist. (In the terminology of~\cite{auckly-kim-melvin-ruberman:isotopy}, they have an {\em effective} embedding.)  On the other hand, the corks we construct do not have an obvious effective embedding and they are detected by monopole Floer homology.


\subsubsection{\rm{\textbf{Knot concordance and Khovanov homology thin knots}}}\label{S:concordance}
Recall from \cite{khovanov I, khovanov II} that a link $L$ in the 3--sphere is called Khovanov homology thin (over $\F_2$) if its reduced Khovanov homology $\widetilde{\KH}(L;\F_2)$ is supported in a single $\delta$-grading. Such links are rather common: for instance, according to \cite{manolescu-ozsvath:quasi}, all quasi-alternating links, as well as 238 of the 250 prime knots with up to 10 crossings, are Khovanov homology thin. In follows from the spectral sequence of Bloom \cite{bloom: spectral sequence} that $\hmred(\Sigma(L)) = 0$ if $L$ is a Khovanov homology thin link. Combined with Theorem \ref{T:main}, this leads to the following series of corollaries, the first of which confirms the conjecture of Manolescu and Owens \cite[Conjecture 1.4]{manolescu-owens}.

\begin{introcor}\label{C: froyshov for thin knots}
For any Khovanov homology thin link $L$ with nonzero determinant, one has the relation
\[
\xi(L)\; =\; 8\sum\limits_{\s \in \operatorname{spin}(\Sigma(L))}
h(\Sigma(L),\s)
\]
between the Murasugi signature of $L$ and the Fr\o yshov invariants of the double branched cover $\Sigma(L)$.
\end{introcor}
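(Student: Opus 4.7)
The plan is to reduce the corollary to Theorem~\ref{T:main} by verifying that the Lefschetz number $\Lef(\tau_*)$ vanishes identically in this setting. The key ingredient will be Bloom's spectral sequence~\cite{bloom: spectral sequence} from the reduced Khovanov homology $\widetilde{\KH}(L;\F_2)$ to (a version of) the monopole Floer homology of $\Sigma(L)$ with $\F_2$-coefficients. Since $L$ is Khovanov homology thin, the $E_2$-page is supported in a single $\delta$-grading; the known grading shifts of Bloom's differentials together with a dimension count against $|\det(L)|$ then force $\hmred(\Sigma(L);\F_2)=0$. This is precisely the vanishing statement flagged in the paragraph preceding the corollary.

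Next I would upgrade the vanishing from $\F_2$- to $\Q$-coefficients. Because $L$ has nonzero determinant, $\Sigma(L)$ is a rational homology sphere, and so $\hmred(\Sigma(L);\Z)$ is a finitely generated abelian group; the triviality of its mod~$2$ reduction forces it to have no free part and no $2$-torsion, and any remaining odd torsion disappears upon tensoring with $\Q$. Thus $\hmred(\Sigma(L);\Q)=0$, and the induced map $\tau_*$ on this zero vector space automatically has $\Lef(\tau_*)=0$.

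With the Lefschetz number vanishing in hand, I would substitute $\Lef(\tau_*)=0$ into formula~\eqref{E:main} of Theorem~\ref{T:main} and rearrange to extract the desired identity between $\xi(L)$ and the sum of Fr\o yshov invariants over $\operatorname{spin}(\Sigma(L))$. The main obstacle is really the Bloom spectral sequence input, which I would cite as a black box; everything after that is bookkeeping. For the special case of a knot $K$ this specializes via \eqref{E:main-knot} to $\sgn(K)=8\,h(\Sigma(K))$, confirming the Manolescu--Owens prediction~\cite[Conjecture 1.4]{manolescu-owens} in the Khovanov-thin case.
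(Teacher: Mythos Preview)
Your proposal is correct and follows essentially the same route as the paper: the paper packages the vanishing of $\hmred(\Sigma(L);\Q)$ as Lemma~\ref{L: double branched cover of thin knot}, proved via Bloom's spectral sequence and the dimension count $\dim_{\F_2}\widetilde{\KH}(L;\F_2)=|\det(L)|$, then invokes Theorem~\ref{T:main}. The only cosmetic difference is that the paper passes from $\F_2$ to $\Q$ in a single chain of inequalities $|\det L|\ge\dim_{\F_2}\widetilde{\HM}\ge\dim_{\Q}\widetilde{\HM}\ge|\det L|$, whereas you first conclude $\hmred(\Sigma(L);\F_2)=0$ and then separately upgrade to $\Q$ via a torsion argument; both arrangements are equivalent.
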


\begin{introcor}\label{C: Lefschetz concordance invariant}
For a knot $K$ in $S^3$, denote by $L(K)$ the Lefschetz number of the map on $\HM^{\red} (\Sigma(K))$ induced by the covering translation. Then $L(K)$ is a non-trivial additive concordance invariant which vanishes on Khovanov homology thin knots.
\end{introcor}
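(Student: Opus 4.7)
The plan is to derive each of the four asserted properties — vanishing on Khovanov homology thin knots, concordance invariance, additivity, and non-triviality — as a consequence of the Lefschetz formula of Theorem \ref{T:main} applied to knots,
\[
L(K)\,=\,\tfrac{1}{8}\,\sgn(K)\,-\,h(\Sigma(K)),
\]
together with standard properties of its two summands.

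Vanishing on Khovanov thin knots is immediate: by Bloom's spectral sequence (as recalled just above Corollary \ref{C: froyshov for thin knots}), $\HM^{\red}(\Sigma(K))=0$ whenever $K$ is Khovanov homology thin, so any endomorphism of this zero vector space has trivial Lefschetz number. Alternatively, Corollary \ref{C: froyshov for thin knots} applied to $L=K$ (using $\det(K)\ne 0$, the unique spin structure on $\Sigma(K)$, and $\xi(K)=\sgn(K)$) forces the right-hand side of the displayed formula to vanish.

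For concordance invariance, $\sgn(K)$ is a classical concordance invariant, so it suffices to handle the Fr\o yshov term. A smooth concordance $C\subset S^3\times[0,1]$ from $K_0$ to $K_1$ is an annulus whose double branched cover $W=\Sigma(S^3\times I,C)$ is, by a Mayer--Vietoris calculation, a smooth $\Q$-homology cobordism between the rational homology spheres $\Sigma(K_0)$ and $\Sigma(K_1)$. Since both ends have odd-order first homology and hence unique spin structures, these structures extend over $W$ as a spin$^c$ structure, and the standard invariance of the Fr\o yshov invariant under spin$^c$ rational homology cobordism yields $h(\Sigma(K_0))=h(\Sigma(K_1))$. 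Additivity is similar: the branched-cover identity $\Sigma(K_1\#K_2)\cong\Sigma(K_1)\#\Sigma(K_2)$, combined with the additivity of $\sgn$ under connected sum of knots and of $h$ under connected sum of spin rational homology spheres, gives $L(K_1\#K_2)=L(K_1)+L(K_2)$.

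Non-triviality is proved by exhibiting a single knot with nonzero Lefschetz number. A natural candidate is the torus knot $T(3,5)$, whose branched double cover is the Poincar\'e homology sphere $\Sigma(2,3,5)$; standard values of the knot signature and the Fr\o yshov invariant (the latter accessible via the Heegaard Floer correction term by Remark \ref{R:HM=HF}) make the right-hand side of the formula manifestly nonzero. The main technical point requiring care is the spin$^c$-rational-homology-cobordism step underlying concordance invariance — namely, that the double branched cover of an annulus concordance provides a spin$^c$ cobordism restricting to the unique spin structures on the ends — which is well known but must be checked; the remaining steps are direct bookkeeping from the formula.
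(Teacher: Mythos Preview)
Your overall strategy matches the paper's: both proofs invoke formula \eqref{E:main-knot} to express $L(K)$ as $\tfrac{1}{8}\sgn(K)-h(\Sigma(K))$ and then appeal to the known concordance invariance and additivity of the two summands. Your treatment of the vanishing on thin knots and of concordance invariance and additivity is correct and essentially the same as the paper's (you spell out the rational homology cobordism argument in more detail, which is fine).

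However, your non-triviality example is wrong. The double branched cover $\Sigma(T(3,5))=\Sigma(2,3,5)$ is the Poincar\'e homology sphere, which is an $L$-space: $\HM^{\red}(\Sigma(2,3,5))=0$, so $L(T(3,5))=0$ trivially. Equivalently, on the right-hand side of the formula one has $\sgn(T(3,5))=-8$ and $h(\Sigma(2,3,5))=-1$, giving $-1-(-1)=0$. The paper instead uses the $(3,7)$-torus knot, whose branched double cover $\Sigma(2,3,7)$ has $\HM^{\red}\cong\Q$ and $h(\Sigma(2,3,7))=0$; with $\sgn(T(3,7))=-8$ this yields $L(T(3,7))=-1\ne 0$. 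Replacing your example by $T(3,7)$ (or any knot whose branched double cover is not an $L$-space and for which the two terms do not cancel) repairs the argument.
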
 

\begin{introcor}\label{C: direct summand}
Let $\mathcal{C}_{\operatorname{s}}$ be the smooth knot concordance group  and $\mathcal{C}_{\operatorname{thin}}$ its subgroup generated by the Khovanov homology thin knots. Then the quotient group $\mathcal{C}_{\operatorname{s}}/\mathcal{C}_{\operatorname{thin}}$ contains a $\Z$--summand.
\end{introcor}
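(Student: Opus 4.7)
The plan is a short argument combining Corollary \ref{C: Lefschetz concordance invariant} with an integrality observation and a standard splitting fact. The first step I would establish is that the invariant $L(K)$ actually takes values in $\Z$, not merely in $\Q$. Since $\tau$ is an involution, the induced action $\tau_*$ on the finite-dimensional rational vector space $\hmred(\Sigma(K))$ satisfies $\tau_*^2 = \id$, so its eigenvalues lie in $\{\pm 1\}$; within each integer-spaced grading piece of $\hmred$ the trace of $\tau_*$ is therefore an integer, and hence so is the alternating sum defining $\Lef(\tau_*)$. As a sanity check, formula \eqref{E:main-knot} then predicts the non-obvious congruence $\sgn(K) \equiv 8\,h(\Sigma(K)) \pmod{8}$ whenever $\det(K) \neq 0$.

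Granted integrality, Corollary \ref{C: Lefschetz concordance invariant} upgrades $L$ to a non-trivial additive homomorphism $L : \mathcal{C}_{\operatorname{s}} \to \Z$ that vanishes on the subgroup $\mathcal{C}_{\operatorname{thin}}$, and hence descends to a non-trivial homomorphism $\bar L : \mathcal{C}_{\operatorname{s}}/\mathcal{C}_{\operatorname{thin}} \to \Z$. Its image is a non-zero subgroup of $\Z$, so it equals $n\Z$ for some $n \geq 1$; rescaling by $1/n$ yields a surjective homomorphism $\phi : \mathcal{C}_{\operatorname{s}}/\mathcal{C}_{\operatorname{thin}} \twoheadrightarrow \Z$. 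Because $\Z$ is free (hence projective as a $\Z$-module), $\phi$ admits a section $s$: one picks any knot $K_0$ with $\bar L(K_0) = n$ and sets $s(1) = [K_0]$. The resulting splitting $\mathcal{C}_{\operatorname{s}}/\mathcal{C}_{\operatorname{thin}} \cong \ker\phi \,\oplus\, \Z\cdot [K_0]$ exhibits the required $\Z$-summand.

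The main obstacle is really just the integrality of $L$; once that is in place, the rest is elementary homological algebra. The subtle point is the $\Q$-grading on $\hmred$: the involution argument applies to each spin$^c$ summand and each integer-spaced graded piece individually, so the Lefschetz number must be set up via a fixed mod-$2$ reduction of the absolute $\Q$-grading that is compatible with $\tau_*$. No further input beyond Corollary \ref{C: Lefschetz concordance invariant} should be necessary.
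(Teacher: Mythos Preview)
Your proof is correct and follows the same approach as the paper, which simply states that the result is ``immediate from Corollary~\ref{C: Lefschetz concordance invariant}.'' You have filled in the details the paper leaves implicit---in particular the integrality of $L(K)$ via the eigenvalue argument for the involution $\tau_*$ (which is indeed needed, since a nontrivial $\Q$-valued homomorphism alone does not force a $\Z$-summand), and the standard splitting step; your worry about the $\Q$-grading is harmless, since $\tau_*$ preserves the canonical absolute $\Z/2$ grading and that is all the Lefschetz number uses.
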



\subsubsection{\rm{\textbf{The choice of sign in the monopole contact invariant}}}
For any compact contact 3-manifold $(Y,\xi)$, Kronheimer and Mrowka \cite{km:contact} (see also \cite{KMOS}) defined a contact invariant 
\[
\psi(Y,\xi)\in \widecheck{\HM}(-Y; \Z/2),
\]
as well as a pair of monopole homology classes
\[
\pm\, \tilde{\psi}(Y,\xi)\in \widecheck{\HM}(-Y;\Z).
\]
Note that this construction results in a pair of homology classes rather than a single class when working with integer coefficients. Technically, this happens because the Seiberg--Witten moduli space involved in the construction does not carry a natural orientation. One might hope that a canonical element can be picked in the pair $\pm\,\tilde{\psi}(Y,\xi)$ by further analysis. However, we show that that is not possible: with the help of Theorem \ref{T:main}, we construct an involution on the Brieskorn homology sphere $-\Sigma(2,3,7)$ which preserves a certain contact structure but changes the sign of the (non-torsion) contact invariant. 

\begin{introthm}\label{T: contact sign}
There exists no canonical choice of sign in the definition of $\pm\, \tilde{\psi}(Y,\xi)$.
\end{introthm}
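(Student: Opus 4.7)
The plan is to refute any canonical choice of sign by exhibiting a triple $(Y, \xi, \tau)$ in which $\tau$ is a smooth involution of $Y$ preserving $\xi$ yet inducing $\tau_*\tilde\psi(Y,\xi) = -\tilde\psi(Y,\xi)$. Because any putative canonical assignment $(Y, \xi) \mapsto \tilde\psi$ would be equivariant under diffeomorphisms of $(Y, \xi)$, the existence of a single such example suffices.

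The intended example takes $Y = -\Sigma(2,3,7)$, viewed as the double branched cover of $S^3$ over the $(3,7)$-torus knot $K$ (up to the orientation convention choosing between $T(3,7)$ and its mirror), together with the covering involution $\tau$. For $\xi$ I would choose a tight contact structure on $Y$ with non-torsion contact invariant, and verify that it is $\tau$-invariant---either by constructing an equivariant Stein filling compatible with the branched-cover presentation, or by invoking Ghiggini's classification of tight contact structures on small Seifert-fibered spaces to isolate a $\tau$-symmetric representative. Producing such a $\tau$-invariant $\xi$ is the main obstacle of the argument, since everything that follows is a short application of the theory built in the rest of the paper.

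With the contact-geometric setup in place, the sign is extracted from Theorem~\ref{T:main}: substituting the known values of $\sgn(K)$ and $h(Y)$ (the latter read off from Ozsv\'ath--Szab\'o's Heegaard Floer computation via Remark~\ref{R:HM=HF}) into formula~(\ref{E:main-knot}) yields a specific nonzero value for $\Lef(\tau_*)$. Since $\hmred(Y;\Q)$ is one-dimensional, this pins down the action of $\tau_*$ on the reduced piece as multiplication by $-1$. A secondary technical point is to identify $\tilde\psi$ with a nonzero multiple of a generator of this reduced line, using the placement of a non-torsion Stein-fillable contact class relative to the $U$-tower; once this is done, one concludes $\tau_*\tilde\psi = -\tilde\psi$, as required.
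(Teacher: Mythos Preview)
Your approach is essentially the same strategy as the paper's, but your chosen involution does not work. When you realize $\Sigma(2,3,7)$ as the double branched cover of $S^3$ over the $(3,7)$-torus knot, the covering involution is $\tau_0(x,y,z)=(-x,y,z)$ in the link-of-singularity coordinates. This $\tau_0$ lies in the Seifert $S^1$-action and is isotopic to the identity (see Remark~\ref{R:237}), so $\tau_{0*}$ acts trivially on $\hmred(\Sigma(2,3,7))$. You can see this directly from formula~\eqref{E:main-knot}: $\sgn(T(3,7))=-8$ and $h(\Sigma(2,3,7))=0$ give $\Lef(\tau_{0*})=-1$, and since $\hmred(\Sigma(2,3,7))\cong\Q$ sits in \emph{odd} $\Z/2$ grading (degree $-1$), the Lefschetz number is $-\tr(\tau_{0*})$, forcing $\tau_{0*}=+1$. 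Thus your computation, done correctly, pins down the action as $+1$, not $-1$, and the argument collapses.

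The paper uses instead the complex-conjugation involution $\tau_1(x,y,z)=(\bar x,\bar y,\bar z)$, whose quotient is $S^3$ with branch set the $(2,-3,-7)$ pretzel knot $K$. This knot has $\sgn(K)=+8$, so $\Lef(\tau_{1*})=+1$ and hence $\tau_{1*}=-1$ on $\hmred$. Your worry about producing a $\tau$-invariant contact structure is also handled more simply than you suggest: by Tosun~\cite{Tosun} (see also~\cite{Mark-Tosun}), $-\Sigma(2,3,7)$ has a \emph{unique} tight contact structure up to isotopy, so $\tau_1^*\xi$ is automatically isotopic to $\xi$; no equivariant Stein filling or case-by-case classification is needed. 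Finally, the placement of $\tilde\psi$ in the reduced line is read off from the Gompf invariant $\theta=2$, which forces the contact class into degree $-1$.
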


It is worth mentioning that similar contact invariants in Heegaard Floer homology were defined by Ozsv\'ath and Szab\'o \cite{Ozsvath-Szabo:contact}. A version of Theorem \ref{T: contact sign} in context of Heegaard Floer homology has been proved by Honda, Kazez, and Mati\'c \cite{Honda-Kazez-Matic:contact} using an approach different from ours. As discussed in Remark~\ref{R:HM=HF}, there exists an isomorphism between Heegaard Floer homology and monopole Floer homology which preserves the contact invariant \cite{colin-ghiggini-honda:HFECH-I,colin-ghiggini-honda:HFECH-II,colin-ghiggini-honda:HFECH-III,taubes:HMECH}. However, since the naturality of this isomorphism has not been established, our result and that of Honda, Kazez, and Mati\'c do not imply each other.


\subsection{Organization of the paper}
Section \ref{S:proof} sets up the skein theory argument, reducing the proof of Theorem \ref{T:main} to the key Proposition \ref{skein relation for chi}. The proof of that proposition occupies Sections 3--6, which form the bulk of the paper. Section \ref{S:skein-signature} establishes a skein relation for the Murasugi signature $\xi(L)$, and Section \ref{S:triangle} sets up a surgery exact triangle with rational coefficients that will be crucial for the remainder of the argument.  In Section \ref{S:skein}, we show that Proposition \ref{skein relation for chi} holds up to certain universal constants $C$, and organize the rather complicated data necessary to track spin and $\spinc$ structures through the skein theory argument. Section \ref{S:universal} studies the skein exact sequence for a large number of examples, sufficient to show that the constants $C$ vanish, thereby establishing Proposition \ref{skein relation for chi} and Theorem \ref{T:main}.

The remainder of the paper is devoted to applications. In Section \ref{lfo} we extend the definition of the Furuta--Ohta invariant $\lfo$ and evaluate it for the mapping torus of an involution on a rational homology sphere with homology sphere quotient. This establishes Theorem~\ref{T:lfo-lsw}.  We calculate the effect of a particular involution on a cork boundary in Section \ref{S:cork} proving the non-extension result Theorem \ref{T: absolutely nonextendable involution}. Corollaries \ref{C: froyshov for thin knots}, \ref{C: Lefschetz concordance invariant}, and \ref{C: direct summand} of Theorem \ref{T:main} concerning the knot concordance group are established in Section \ref{S:thin}. Finally, Section \ref{S:contact} proves the non-canonical nature of the sign in the contact invariant (Theorem \ref{T: contact sign}).

\medskip\noindent
\textbf{Acknowledgments:}\; We thank Ken Baker, John Baldwin, Olga Plamenevskaya, Marco Golla and Youlin Li for generously sharing their expertise, Tom Mark for pointing out Tosun's paper~\cite{Tosun}, and Christine Lescop for an interesting discussion on Mullins's approach to skein theory for the double branched cover.


\section{Skein relations and the proof of Theorem \ref{T:main}}\label{S:proof}
Let $L$ be an unoriented link in $S^3$ and $\Sigma(L)$ its double branched cover. A quasi-orientation of $L$ is an orientation on each component of $L$ modulo an overall orientation reversal. The set of quasi-orientations of $L$ will be denoted by $Q(L)$. Turaev \cite{Turaev notes} established a natural bijective correspondence between $Q(L)$ and $\text{spin}(\Sigma(L))$, the set of spin structures on $\Sigma(L)$.

The link $L$ will be called \emph{ramifiable} if $\det(L)\neq 0$ or, equivalently, if $\Sigma(L)$ is a rational homology sphere. Note that all knots are ramifiable. Given a ramifiable link $L$ in $S^3$, consider the quantity 
\[
\chi(L) = \frac1{2^{|L|-1}}\left(\sum\limits_{\s\in \text{spin}(\Sigma(L))}h(\Sigma(L),\s)\; -\; \frac 1 8 \sum\limits_{\ell\in \SQ(L)} \sigma(\ell)\; +\; \Lef(\tau_{*}) \right),
\]
where $|L|$ is the number of components of $L$, $\sigma(\ell)$ is the signature of the link $L$ quasi-oriented by $\ell$, $h(\Sigma(L),\s)$ is the Fr\o yshov invariant of the spin manifold $(\Sigma(L),\s)$, and $\Lef(\tau_{*})$ is the Lefschetz number of the map 
\begin{equation}\label{E:lef}
\tau_{*}: \HM^{\red}(\Sigma(L))\rightarrow \HM^{\red}(\Sigma(L))
\end{equation}
on the reduced monopole Floer homology of $\Sigma(L)$ induced by the covering translation $\tau: \Sigma(L) \to \Sigma(L)$. That the above formula for $\chi(L)$ matches formula \eqref{E:chi} can be seen as follows. 

Write $L = K_1\,\cup \ldots \cup\, K_m$ as a link of $m = |L|$ components, and choose a quasi-orientation $\ell \in Q(L)$. Recall that the \emph{Murasugi signature} of $L$ is defined as
\[
\xi(L) = \sigma(\ell)+ \sum\limits_{1\leq i<j\leq m} \operatorname{lk}(K_{i},K_j).
\]
Murasugi \cite{murasugi} proved that $\xi(L)$ does not depend on the choice of quasi-orientation $\ell$, hence $\xi(L)$ can be defined alternatively as
\begin{equation}\label{Murasugi invariant}
\xi(L) = \frac1{2^{m-1}}\sum\limits_{\ell\in Q(L)}\sigma(\ell).
\end{equation}
The following theorem is then equivalent to Theorem \ref{T:main}. 

\begin{thm}\label{main theorem}
For any ramifiable link $L\subset S^{3}$, one has $\chi(L)=0$.
\end{thm}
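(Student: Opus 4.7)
The plan is to prove $\chi(L)=0$ for all ramifiable links by induction on a notion of crossing complexity, using a skein relation between $\chi(L)$, $\chi(L_0)$ and $\chi(L_1)$, where $L_0,L_1$ are the two resolutions of $L$ at a chosen crossing. The quantity $\chi(L)$ assembles three ingredients, and I would treat each separately. The Murasugi signature $\xi(L)$ obeys a classical skein relation that I would verify directly from the combinatorial definition (\ref{Murasugi invariant}); the Fr\o yshov-invariant sum $\sum_{\s} h(\Sigma(L),\s)$ and the Lefschetz number $\Lef(\tau_*)$ would both be controlled by a surgery exact triangle
\begin{equation*}
\cdots \to \widecheck{\HM}(\Sigma(L)) \to \widecheck{\HM}(\Sigma(L_0)) \to \widecheck{\HM}(\Sigma(L_1)) \to \cdots
\end{equation*}
with the three covering involutions intertwining the triangle maps. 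The Fr\o yshov sum would appear as a suitable correction-term count, while $\tau$-equivariance would promote the triangle into an additive identity for $\Lef(\tau_*)$.

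The first technical step is to upgrade the $\Z/2$-coefficient surgery exact triangle of Kronheimer--Mrowka--Ozsv\'ath--Szab\'o to a $\Q$-coefficient triangle. Rather than reproving it by tracking orientations in the relevant moduli spaces, I would invoke a universal-coefficient argument combined with some homological algebra, and then pin down the signs on the connecting maps carefully enough that the covering involutions $\tau$ on the three double branched covers commute with those maps. This $\tau$-compatibility is the delicate point: without it one cannot split $\Lef(\tau_*)$ additively through the triangle.

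The main obstacle is that $\widecheck{\HM}$ is infinite-dimensional, so $\Lef(\tau_*)$ is only well-defined after truncating high-degree generators, and truncation breaks exactness. Consequently, the skein relation for $\chi$ will initially hold only up to a universal combinatorial constant $C$ that depends on surgery framings, crossing signs, and $\spinc$-bookkeeping. Showing $C=0$ is where most of the work lies. My plan is to bootstrap: the vanishing $\chi(L)=0$ is known a priori for two-bridge links, and feeding such links into the skein-up-to-$C$ relation forces $C=0$ in a first family of configurations; this enlarges the pool of ramifiable links known to satisfy $\chi=0$, which can then be fed back in to kill $C$ in further configurations. Iterating this procedure should eventually force $C=0$ universally and promote the approximate identity to an honest skein relation.

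Finally, there is the separate subtlety that naive induction by crossing resolution can fall out of the ramifiable class, since one of $\det(L_0)$ or $\det(L_1)$ may vanish. To stay inside the ramifiable world, I would adopt Mullins's skein framework, supplementing the resolution skein relation with a crossing-change skein relation between $\chi(L)$ and $\chi(\bar L)$. That second relation should come from comparing the two exact triangles attached to the triples $(L,L_0,L_1)$ and $(\bar L,L_1,L_0)$ and cancelling the common term. The enlarged move set would then allow any ramifiable link to be reduced to the unknot through a sequence of ramifiable intermediates, completing the induction and proving Theorem \ref{main theorem}.
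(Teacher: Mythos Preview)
Your proposal is correct and follows essentially the same route as the paper: the paper reduces Theorem~\ref{main theorem} to Proposition~\ref{skein relation for chi} via Mullins-style induction with both resolution and crossing-change moves, and proves that proposition by establishing a $\Q$-coefficient exact triangle (via universal coefficients) compatible with the covering involutions, truncating to get a skein relation up to a universal constant $C$, and then bootstrapping from two-bridge (more generally Montesinos) links to force $C=0$. The only organizational difference is that the paper packages the Fr\o yshov sum and Lefschetz number together into a single ``normalized Lefschetz number'' rather than handling them separately.
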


Our proof of Theorem \ref{main theorem} will rely on skein theory. Given a link $L$ in the 3--sphere, fix its planar projection and consider two resolutions of $L$ at a crossing $c$ as shown in Figure \ref{F:resolutions}; we follow here the convention of \cite{oz: spectral sequence}.

\bigskip

\begin{figure}[!ht]
\centering
\psfrag{L}{$L$}
\psfrag{L0}{$L_0$}
\psfrag{L1}{$L_1$}
\includegraphics[width=4.2in]{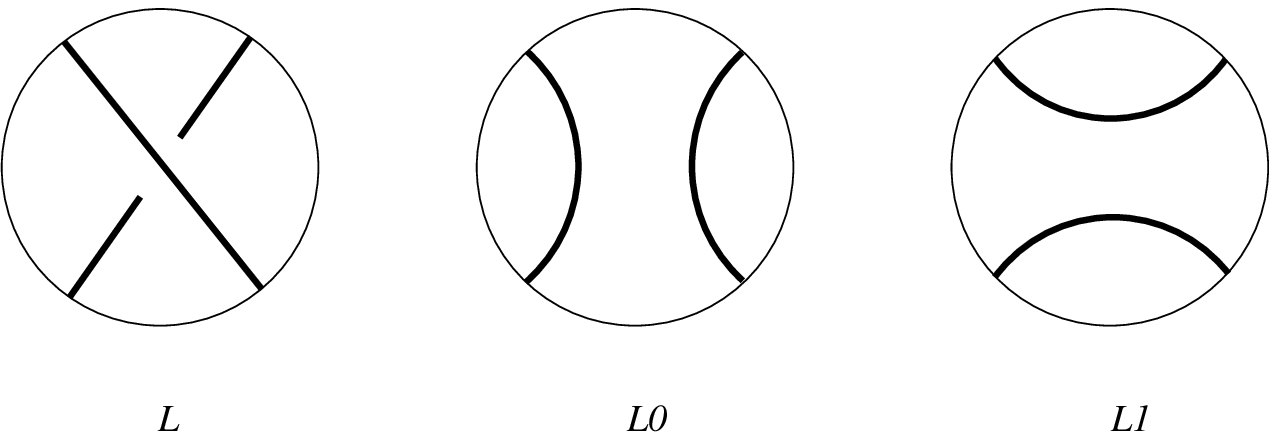}
\caption{}\label{F:resolutions}
\end{figure}

\noindent
The links $L_0$ and $L_1$ are called the $0$-resolution and the $1$-resolution of $L$, respectively, and the triple $(L, L_0, L_1)$ is called a \emph{skein triangle}. Note that a skein triangle possesses a cyclic symmetry: for any link in $(L,L_0, L_1)$, the other two taken in the prescribed cyclic ordering are its $0$- and $1$-resolutions. This symmetry is best seen when the links are drawn as in Figure \ref{F:tetrahedra}; see also \cite[Figure 6]{KM:khovanov}. Denote by $\bar L$ the link obtained by changing the crossing $c$ in the link $L$.

\begin{figure}[!ht]
\centering
\psfrag{L}{$L$}
\psfrag{L0}{$L_0$}
\psfrag{L1}{$L_1$}
\includegraphics{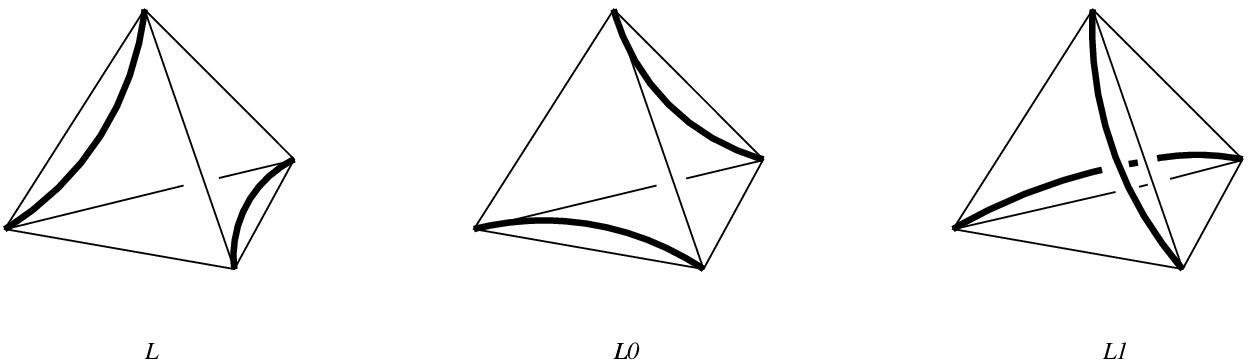}
\caption{}\label{F:tetrahedra}
\end{figure}

\begin{pro}\label{skein relation for chi}
Let $(L,L_0,L_1)$ be a skein triangle and assume that $L$ is ramifiable. Then at least two of the three links $\bar L$, $L_0$, $L_1$ are ramifiable and, in addition,
\begin{enumerate}[(i)]
\item if both $L_0$ and $L_1$ are ramifiable and $\chi(L_0) = \chi(L_1) = 0$ then $\chi(L) = 0$.
\item if one of the links $L_0$, $L_1$ is not ramifiable then $\chi(L) = \chi(\bar{L})$.
\end{enumerate}
\end{pro}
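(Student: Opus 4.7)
My plan is to assemble a skein relation for $\chi(L)$ by treating its three constituents—$\sum_\s h(\Sigma(L),\s)$, the Murasugi signature $\xi(L)$, and $\Lef(\tau_*)$—in parallel.

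First, for the ramifiability statement, the three double branched covers $\Sigma(L),\Sigma(L_0),\Sigma(L_1)$ fit into a triad obtained by $(-1)$-, $0$-, and $\infty$-surgeries on a common knot in one of them, and this produces the classical unoriented determinant relation $\det(L)=|\det(L_0)\pm\det(L_1)|$. Hence if $L$ is ramifiable, at most one of $L_0,L_1$ can have vanishing determinant. Since $(\bar L,L_1,L_0)$ is itself a skein triangle by the cyclic symmetry of Figure \ref{F:tetrahedra}, an analogous relation expresses $\det(\bar L)$ in terms of $\det(L_0)$ and $\det(L_1)$, from which a short case analysis gives that at least two of $\bar L, L_0, L_1$ are ramifiable.

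For part (i), the Floer-theoretic ingredients come from the KMOS surgery exact triangle, rationalized and made $\tau$-equivariant as in Section \ref{S:triangle}, connecting $\widecheck{\HM}(\Sigma(L))$, $\widecheck{\HM}(\Sigma(L_0))$, $\widecheck{\HM}(\Sigma(L_1))$ compatibly with the covering involutions. Additivity of Lefschetz numbers along such a self-map of a triangle produces an identity of the form $\Lef(\tau_*^L)=\pm\Lef(\tau_*^{L_0})\pm\Lef(\tau_*^{L_1})$, while tracking how $\tau$-fixed spin structures and absolute $\Q$-gradings propagate along the connecting maps (using the identification $d=-2h$ of Remark \ref{R:HM=HF}) yields an analogous skein relation for $\sum_\s h(\Sigma(\cdot),\s)$. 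Combining these with the direct skein relation for $\xi$ from Section \ref{S:skein-signature} and matching the normalization factors $1/2^{|L|-1},\,1/2^{|L_0|-1},\,1/2^{|L_1|-1}$—which differ predictably since resolving a crossing changes the component count by one—delivers the desired skein identity for $\chi$ and proves part (i). Part (ii) is then handled by a comparison in the spirit of Mullins: when $L_0$, say, is not ramifiable, $\Sigma(L_0)$ is not a rational homology sphere and $\chi(L_0)$ is not defined; but applying the exact triangle to each of $(L,L_0,L_1)$ and $(\bar L,L_1,L_0)$, treating the $\Sigma(L_0)$ contributions as unknowns, and subtracting the two resulting identities, the $L_0$ terms cancel and what remains is exactly $\chi(L)=\chi(\bar L)$.

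The main obstacle, flagged in the paper's outline, is that the triangle identity for Lefschetz numbers is corrupted by the truncation required to make the Lefschetz number of the infinite-dimensional $\widecheck{\HM}$ finite: the truncated triangle is no longer exact, so what one actually extracts is the skein relation up to a universal constant $C$ depending on the surgery-theoretic data. Closing this gap—proving $C=0$ in all cases—is the content of Section \ref{S:universal}, which bootstraps from the base case of two-bridge links (for which $\chi$ is known to vanish) by feeding new examples into the constant-corrected skein recipe until enough equations accumulate to force all the relevant constants to zero.
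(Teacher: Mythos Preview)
Your proposal follows essentially the same route as the paper: the ramifiability claim via determinant relations, the $\tau$-equivariant rational exact triangle of Section~\ref{S:triangle}, the signature skein relation of Section~\ref{S:skein-signature}, the truncation obstacle producing universal constants (Theorem~\ref{skein with error term}), and the bootstrap from two-bridge and Montesinos links to kill those constants (Section~\ref{S:universal}); your handling of part~(ii) by subtracting the two triangles $(L,L_0,L_1)$ and $(\bar L,L_1,L_0)$ is also exactly what the paper does.

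One point of imprecision is worth flagging: you describe obtaining a skein identity for $\Lef(\tau_*)$ and, in parallel, ``an analogous skein relation for $\sum_\s h(\Sigma(\cdot),\s)$.'' In the paper neither quantity satisfies a clean skein relation on its own; it is only their sum---packaged as the normalized Lefschetz number $\nLef(Y_j,\s)=\Lef(\tau^{\red}_{Y_j,\s})+h(Y_j,\s)$ of Definition~\ref{normalized lefs}---that the truncated exact triangle controls (Lemma~\ref{normalized lefs and truncated lefs} and equation~\eqref{skein for normalized lefs}). The Fr{\o}yshov term enters not through a separate skein argument but as the correction relating the truncated Lefschetz number on $\widecheck{\HM}_{\le q}$ to the reduced one. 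This does not change the overall architecture you describe, but it explains why the paper never proves a standalone $h$-skein relation.
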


\noindent
We will now prove Theorem \ref{main theorem} assuming Proposition \ref{skein relation for chi}; the proof of the proposition will then occupy Section \ref{S:skein-signature} through Section \ref{S:universal}.

\begin{proof}[Proof of Theorem \ref{main theorem}]
The proof is a modification of the proof of \cite[Theorem 3.3]{mullins}. We will proceed by induction on the pair $(c(L),|L|)$, where $c(L)$ is the number of crossings in a diagram of $L$.

The case $(0,1)$ is trivial. The cases $(0,n)$ with $n \geq 2$ are vacuous because unlinks with more than one component are not ramifiable. Next, suppose that the statement has been proved for all links admitting a diagram with $k$ or fewer crossings. We want to prove it for the case $(k+1,n)$ with $n \ge 1$.

First let $n = 1$ then $L$ is a knot admitting a diagram with $k+1$ crossings. By changing $m < k$ crossings we can unknot $L$, thereby obtaining a sequence of knots
\[
L = L^1\rightarrow L^2 \rightarrow \cdots \rightarrow L^{m+1} = \text{unknot},
\]
where $L^{a+1}$ is obtained from $L^a$ by a single crossing change. Denote by $L^a_0$ and $L^a_1$ the two resolutions of $L^a$. We have $\chi(L^{m+1}) = 0$. To deduce that $\chi(L^a) = 0$ from $\chi (L^{a+1}) = 0$, we will consider the following two cases:

\begin{itemize}
\item[(i)] Both $L^a_0$ and $L^a_1$ are ramifiable. Since $c(L^a_0) \le k$ and $c(L^a_1) \leq k$, it follows from our induction hypothesis that $\chi(L^a_0) = \chi(L^a_1) =0$. Proposition \ref{skein relation for chi} (i) then implies that $\chi(L^a) = 0$.
\item[(ii)] One of the resolutions $L^a_0$, $L^a_1$ is not ramifiable. Then Proposition \ref{skein relation for chi} (ii) implies that $\chi(L^a) = \chi(L^{a+1}) = 0$, and we are finished.
\end{itemize}

Now let $n\geq 2$ so that $L$ is a multi-component link admitting a diagram with $k + 1$ crossings. Again, change $m < k$ crossings one by one to obtain a sequence of links
\[
L=L^1 \rightarrow L^2 \rightarrow \cdots \rightarrow L^{m+1} = \text{a split link},
\]
where $L^{a+1}$ is obtained from $L^a$ by a single crossing change. Since multi-component split links are not ramifiable, we can find $b \le m$ such that $L^1, \cdots, L^b$ are ramifiable and $L^{b+1}$ is not. Proposition \ref{skein relation for chi} then implies that both $L^b_0$ and $L^b_1$ are ramifiable. Since $c(L^b_0) \le k$ and $c(L^b_1) \le k$, we conclude that $\chi(L^b_0) = \chi(L^b_1) =0$ from our induction hypothesis. Proposition \ref{skein relation for chi} (i) then implies that $\chi(L^b) = 0$. The deduction that $\chi(L^{a+1}) = 0$ implies $\chi(L^a) = 0$ for all $a < b$ is exactly the same as in the $n=1$ case.
\end{proof}


\section{Skein relation for the Murasugi signature}\label{S:skein-signature}
Let $(L_0,L_1,L_2)$ be a skein triangle obtained by resolving a crossing $c$ inside a ball $B$. For any subscript $j$ viewed as an element of $\mathbb Z/3 = \{0,1,2\}$, denote by $S_j$ the standard cobordism surface from $L_j$ to $L_{j+1}$ inside the 4-manifold $[0,1] \times S^3$ obtained by adding a single $1$-handle to the product surface outside of $[0,1] \times B$. Denote by $W_j$ the double branched cover of $[0,1] \times S^3$ with branch set $S_j$. The manifold $W_j$ is an oriented cobordism from $\Sigma(L_j)$ to $\Sigma(L_{j+1})$; it will be described as a surgery cobordism in Section \ref{S:triangle}. The signature of $W_j$ can be either $0$, $1$, or $-1$.

\begin{lem}\label{skein for signature}
Let $(L_0,L_1,L_2)$ be a skein triangle such that $|L_2| = |L_0| + 1 = |L_1| + 1$, which is to say that the resolved crossing $c$ is between two different components of $L_2$. Then
\begin{equation}\label{skein relation}
2\xi(L_2)=\xi(L_0)+\xi(L_1) + \sgn(W_1) - \sgn(W_2).
\end{equation}
\end{lem}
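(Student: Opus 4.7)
The plan is to reduce \eqref{skein relation} to a statement about ordinary signatures by invoking Murasugi's averaging formula \eqref{Murasugi invariant}, and to identify the resulting signature defect with that of a branched-cover cobordism via a Rokhlin--Viro type formula.

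First, I would organize the quasi-orientation bookkeeping. Because $c$ connects two distinct components $K_a, K_b$ of $L_2$, any $\ell \in Q(L_2)$ orients both strands at $c$ and therefore canonically designates one of $L_0, L_1$ as the \emph{Seifert-oriented} smoothing $L_{i(\ell)}$ (the one on which $K_a, K_b$ glue coherently across $c$), which then inherits a quasi-orientation $\ell' \in Q(L_{i(\ell)})$. This yields a $2$-to-$1$ map $\ell \mapsto (i(\ell), \ell')$ splitting $Q(L_2)$ into two halves of size $2^{|L_2|-2}$, one mapping onto $Q(L_0)$ and the other onto $Q(L_1)$.

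For each such pair I would establish a local identity of the form
\[
\sigma(L_2,\ell) - \sigma(L_{i(\ell)},\ell') \;=\; \sgn(W_{j(\ell)}) - \operatorname{lk}(K_a, K_b),
\]
where $W_{j(\ell)}$ is the branched-cover cobordism of the skein triangle joining $\Sigma(L_2)$ to $\Sigma(L_{i(\ell)})$, and the linking number is computed with respect to $\ell$. The signature term arises from the Rokhlin--Viro formula $\sigma(\tilde X) = 2\sigma(X) - \tfrac12 [S]\cdot[S]$ applied to the double cover of $X = [0,1]\times S^3$ branched over $S_{j(\ell)}$, with the self-intersection read off against framings induced by Seifert surfaces for $L_2$ and $L_{i(\ell)}$ compatible with $\ell$ and $\ell'$; the linking correction accounts for the framing discrepancy introduced by the $1$-handle $S_{j(\ell)}$. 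Summing over $\ell$ and invoking \eqref{Murasugi invariant}, the ordinary signatures assemble, via the $2$-to-$1$ correspondence, into $2\xi(L_2)$ on the left and $\xi(L_0)+\xi(L_1)$ on the right, the $\operatorname{lk}(K_p,K_q)$ contributions built into the definition of $\xi$ combining with the $\operatorname{lk}(K_a,K_b)$ corrections via the classical $1$-handle identity for linking numbers (reversing the orientation of $K_a$ alone flips both $\operatorname{lk}(K_a,K_b)$ and the designation $i(\ell)$, producing the required pairwise cancellations). The residual $\sgn(W_{j(\ell)})$ contributions collapse, after the cyclic labeling dictated by Figure \ref{F:tetrahedra}, into $\sgn(W_1)-\sgn(W_2)$, yielding \eqref{skein relation}.

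The hard part will be the sign bookkeeping: verifying that the two halves of the sum over $Q(L_2)$ produce $\sgn(W_1)-\sgn(W_2)$ with the stated relative sign (and not its negative, or $\sgn(W_1)+\sgn(W_2)$) requires carefully matching the cyclic orientation convention on the cobordisms $W_j$ with the Seifert orientation convention used to select $L_{i(\ell)}$, and checking that the linking-number framing corrections from the Rokhlin--Viro formula exactly cancel against the linking contributions to $\xi$.
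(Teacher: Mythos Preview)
Your overall framework matches the paper's exactly: split $Q(L_2)$ into two halves according to which resolution is the oriented one, establish a signature identity for each half, and sum using the averaging formula \eqref{Murasugi invariant}. The gap is in your local identity.

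The correct identities, which the paper proves, are
\[
\sigma(\ell_0) = \sigma(\ell) + \sgn(W_2)\quad (\ell \in Q_0(L_2)),
\qquad
\sigma(\ell_1) = \sigma(\ell) - \sgn(W_1)\quad (\ell \in Q_1(L_2)),
\]
with \emph{no} linking-number correction. The argument is direct: push a Seifert surface $F$ for $(L_2,\ell)$ into $D^4$, so that $\sgn(\Sigma(D^4,F)) = \sigma(\ell)$ by Kauffman--Taylor; since $\ell_0$ is the oriented resolution, the cobordism surface $S_2$ is oriented and $F\cup S_2\subset D^4\cup([0,1]\times S^3)\cong D^4$ is an oriented surface bounding $\ell_0$, so the same result gives $\sgn(\Sigma(D^4,F\cup S_2))=\sigma(\ell_0)$; Novikov additivity then yields $\sigma(\ell_0)=\sigma(\ell)+\sgn(W_2)$. (The second identity is obtained by running the cobordism $S_1$ from $L_1$ to $L_2$.) Summing and dividing by $2^{|L_2|-2}$ gives \eqref{skein relation} at once, and the relative sign $\sgn(W_1)-\sgn(W_2)$ falls out with no further bookkeeping.

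Your formula $\sigma(L_2,\ell)-\sigma(L_{i(\ell)},\ell')=\sgn(W_{j(\ell)})-\operatorname{lk}(K_a,K_b)$ disagrees with this; comparing with the first identity above would force $\operatorname{lk}(K_a,K_b)=2\sgn(W_2)$, which is false in general. The source of the error is the Rokhlin--Viro formula $\sigma(\tilde X)=2\sigma(X)-\tfrac12[S]^2$, which applies to a \emph{closed} branch locus in a closed $X$; for the surface $S_{j}$ with boundary in $[0,1]\times S^3$ there is no intrinsic self-intersection number, and the framing/linking corrections you introduce to manufacture one are precisely where the discrepancy enters. The Kauffman--Taylor input sidesteps all of this: any two oriented surfaces in $D^4$ bounding the same oriented link give branched covers of equal signature (glue them across $S^3$ to get a branched cover of $S^4$, whose signature vanishes since $H_2(S^4)=0$), so no framing comparison is ever needed. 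Replace your local step with this additivity argument and the ``hard part'' you anticipate disappears.
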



\begin{proof} 
This can be derived from the Gordon--Litherland~\cite{gordon-litherland:signature} formula for the Murasugi signature but we will follow a more self-contained approach. It will rely on the disjoint decomposition 
\[
\SQ(L_2)=\SQ_0(L_2)\cup \SQ_1(L_2),
\]
where $\SQ_0(L_2)$ (resp. $\SQ_1(L_2)$) consists of the quasi-orientations of $L_2$ which make $L_0$ (resp. $L_1$) into an oriented resolution. For any $\ell \in \SQ_0 (L_2)$, the induced quasi-orientation on $L_0$ will be denoted by $\ell_0$; this establishes a bijective correspondence $Q_0 (L_2) = Q(L_0)$. Similarly, for any $\ell \in \SQ_1 (L_2)$, the induced quasi-orientation on $L_1$ will be denoted by $\ell_1\in \SQ(L_1)$; this establishes a bijective correspondence $Q_1(L_2) = Q(L_1)$. We claim that
\begin{align*}
& \sigma(\ell_0) = \sigma(\ell) + \sgn(W_2)\quad\text{for any}\quad\ell\in \SQ_0(L_2),\quad\text{and} \\
& \sigma(\ell_1)=\sigma(\ell)-\sgn(W_1)\quad\text{for any}\quad\ell\in \SQ_1(L_2).
\end{align*}
These identities, which are essentially due to Murasugi, can be verified as follows. Let $\ell \in \SQ_0(L_2)$. Since $\ell_0$ is an oriented resolution of $\ell$, the cobordism surface $S_2 \subset [0,1] \times S^3$ is naturally oriented. Choose an (oriented) Seifert surface for $\ell\subset \partial D^4$ and slightly push its interior into the interior of $D^4$ to obtain a properly embedded surface $F\subset D^4$.  Passing to double branched covers, we obtain
\[
\Sigma(D^4\cup ([0,1]\times S^3), F\cup S_2)\, =\, \Sigma(D^4, F)\,\cup\, \Sigma([0,1]\times S^3,S_2),
\]
where $\Sigma(A,B)$ stands for the double branched cover of $A$ with branch set $B$. Using additivity of the signature and the fact that $\Sigma([0,1]\times S^3,S_2) = W_2$, we obtain
\[
\sgn(\Sigma(D^4\cup ([0,1]\times S^3), F\cup S_2))\, =\, \sgn(\Sigma(D^4, F))+ \sgn(W_2).
\]
Observe that the surface $F\cup S_{2}$ in $D^{4}\cup([0,1]\times S_2)$ is an embedded surface with boundary $\ell_0$ in $\{1\}\times S_{2}$. It is a classical result (see for instance \cite{kauffman-taylor:links}) that
\[
\sgn(\Sigma(D^{4}, F))\, =\, \sigma(\ell)\quad\text{and}\quad \sgn(\Sigma(D^{4}\cup ([0,1]\times S^3), F\cup S_2))\,=\,\sigma(\ell_0).
\]
Therefore $\sigma (\ell_0) = \sigma(\ell) + \sgn(W_2)$ for any $\ell\in \SQ_0(L_2)$. The proof of the other identity is similar. With these identities in place, the proof of the lemma is completed as follows:
\begin{multline}\notag
2^{|L_2|-1}\cdot \xi(L_2)=\sum\limits_{\ell\in \SQ_0(L_2)}\sigma(\ell)+\sum\limits_{\ell\in \SQ_1(L_2)}\sigma(\ell)\\= \sum\limits_{\ell_0\in \SQ(L_0)}(\sigma(\ell_0)-\sgn(W_2))+\sum\limits_{\ell_1\in \SQ(L_1)}(\sigma(\ell_1)+\sgn(W_1))\\
=2^{|L_2|-2}\cdot (\xi(L_0)+\xi(L_1)+\sgn(W_1) - \sgn (W_2)).
\end{multline}
\end{proof}


\section{An exact triangle in monopole Floer homology}\label{S:triangle}
In this section, we will establish an exact triangle in the monopole Floer homology with rational coefficients. We will also show that this triangle possesses a certain conjugation symmetry, which will be instrumental in the proof of Proposition \ref{skein relation for chi} later in the paper.


\subsection{Statement of the exact triangle}
Let $Y$ be a compact connected oriented 3-manifold with boundary $\partial Y = T^2$, and let $\gamma_0$, $\gamma_1$, and $\gamma_2$ be oriented simple closed curves on $\partial Y$ such that
\[
\# (\gamma_0\cap \gamma_1) = \# (\gamma_1\cap \gamma_2) = \# (\gamma_2\cap \gamma_0) = -1,
\]
where the algebraic intersection numbers $\#$ are calculated with respect to the boundary orientation on $\partial Y$. Let $\mathbb F_2$ be the field of two elements. It follows from Poincar\'e duality that the kernel of the map $H_1 (\partial Y;\mathbb F_2) \to H_1 (Y; \mathbb F_2)$ is one-dimensional, therefore, we may assume without loss of generality that $\gamma_2$ is an $\mathbb F_2$ longitude (meaning that $[\gamma_2] = 0 \in H_1 (Y;\mathbb F_2)$), while $\gamma_0$ and $\gamma_1$ are not. 

For any $j$ viewed as an element of $\mathbb Z/3 = \{0, 1, 2\}$, denote by $Y_j$ the closed manifold obtained from $Y$ by attaching a solid torus to its boundary with meridian $\gamma_j$, and by $W_j$ the respective surgery cobordism from $Y_j$ to $Y_{j+1}$. The cobordism $W_j$ can be obtained by attaching $D^4$ to the component $S^3$ in the boundary of the 4-manifold
\begin{equation}\label{E:W0}
W^0_j = ([-1,0] \times Y_j)\, \cup_{\{0\}\times Y} ([0,1]\times Y)\, \cup_{\{1\}\times Y} ([1,2]\times Y_{j+1}).
\end{equation}

\begin{lem}\label{spin cobordism}
The manifolds $W_1$ and $W_2$ are spin, and the manifold $W_0$ is not.
\end{lem}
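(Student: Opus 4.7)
The plan is to reduce the spin-ness of $W_j$ to a short $\F_2$-linear-algebra check on the four-element set $\spin(T^2)$. First I would observe that since $W_j = W_j^0 \cup_{S^3} D^4$ with the gluing along a simply connected spin $3$-manifold, $W_j$ is spin iff $W_j^0$ is. The decomposition \eqref{E:W0} exhibits $W_j^0$ as a union of three product $4$-manifolds, each of which is automatically spin, with spin structure determined by one on its $3$-manifold factor. Gluing is compatible precisely when the spin structures on the two $Y$-copies being glued coincide, so $W_j^0$ is spin iff one can choose spin structures on $Y_j$ and $Y_{j+1}$ whose restrictions to $Y$---equivalently, to $T^2 = \partial Y$---agree.

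Writing $I_M \subseteq \spin(T^2)$ for the image of $\spin(M) \to \spin(T^2)$, the criterion becomes $I_Y \cap I_{V_j} \cap I_{V_{j+1}} \neq \emptyset$. The standard fact that a spin structure on $T^2$ extends over a solid torus iff its meridian carries the bounding spin structure on $S^1$ gives $|I_{V_j}| = 2$. Fixing a basis $(m, \ell)$ of $H_1(T^2; \F_2)$ with $\gamma_0 = m$, $\gamma_1 = \ell$, $\gamma_2 = m + \ell$ (possible by the intersection hypotheses) and tabulating the quadratic form of each spin structure on $T^2$, one verifies at once that the three $I_{V_j}$ are pairwise distinct $2$-subsets meeting pairwise in three distinct singletons, with empty triple intersection. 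On the other hand $I_Y$ is a coset of the image of $H^1(Y; \F_2) \to H^1(T^2; \F_2)$; by the hypothesis that $\gamma_2$ is the unique $\F_2$-longitude, this image is the annihilator of $\langle [\gamma_2] \rangle$, so $1$-dimensional, and $|I_Y| = 2$.

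The crucial step will be to pin down which of the two candidate cosets $I_Y$ is. For this I would use that the closed $3$-manifolds $Y_0, Y_1, Y_2$ are all spin, which forces $I_Y \cap I_{V_j} \neq \emptyset$ for every $j$; only one of the two cosets satisfies this simultaneous constraint, so $I_Y$ is determined. With $I_Y$ thus identified, $I_{V_0} \cap I_{V_1}$ turns out to be precisely the singleton \emph{not} in $I_Y$, so $W_0$ is not spin, whereas $I_{V_1} \cap I_{V_2}$ and $I_{V_2} \cap I_{V_0}$ are singletons that do lie in $I_Y$, so $W_1$ and $W_2$ are spin. The hard part will be setting up this identification of $I_Y$ correctly; once it is in place, the conclusion is a quick inspection of a $4$-element set.
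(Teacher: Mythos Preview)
Your argument is correct and takes a genuinely different route from the paper's. Both proofs handle $W_1$ and $W_2$ in essentially the same way---finding a spin structure on $Y$ that extends to both $Y_j$ and $Y_{j+1}$, using that $\gamma_2$ is the $\F_2$-longitude---but they diverge for $W_0$. The paper argues by contradiction using the global identification $W_0 \cup_{Y_1} W_1 \cong (-W_2)\,\#\,\cptwobar$: a spin structure on $W_0$ would glue with one on $W_1$ to give a spin structure on a manifold containing a $(-1)$-sphere. You instead stay entirely local to the torus, reducing the question to the combinatorics of the four-element set $\spin(T^2)$ and the quadratic refinement of the intersection form. Your approach is more elementary and self-contained, needing no input beyond half-lives--half-dies and basic spin-structure bookkeeping; the paper's approach is quicker once the blow-up identification is in hand, and since that identification is needed elsewhere in the paper anyway, it costs nothing there.

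One streamlining of your ``crucial step'': the image of $H^1(V_2;\F_2)\to H^1(T^2;\F_2)$ is also the annihilator of $[\gamma_2]$, so $I_Y$ and $I_{V_2}$ are cosets of the \emph{same} one-dimensional subgroup. Since $Y_2$ is spin they intersect, hence $I_Y = I_{V_2}$. With this in hand, the three conditions $I_Y \cap I_{V_j}\cap I_{V_{j+1}}\neq\emptyset$ become simply $I_{V_2}\cap I_{V_j}\cap I_{V_{j+1}}\neq\emptyset$, and the conclusion follows immediately from your observation that the triple intersection $I_{V_0}\cap I_{V_1}\cap I_{V_2}$ is empty while the pairwise intersections are not.
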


\begin{proof}
Notice that the inclusion $Y \to Y_2$ induces an isomorphism $H^1 (Y_2;\mathbb F_2) \to H^1 (Y;\mathbb F_2)$ hence any spin structure on $Y$ can be extended to a spin structure on $Y_2$. To show that $W_1$ is spin, start with any spin structure $\s$ on $Y_1$ and extend $\s|_{Y}$ to a spin structure on $Y_2$. This gives a spin structure on $W^0_1$, which extends over $D^4$ to a spin structure on $W_1$. A similar argument shows that $W_2$ is also spin.

To show that $W_0$ is not spin, we argue as follows. Suppose $W_0$ has a spin structure $\s$. By the argument above, $\s|_{Y_1}$ can be extended to a spin structure on $W_1$. Glue these two spin structures together to obtain a spin structure on the manifold $W_0\cup_{Y_1}W_1$. This leads to a contradiction because the manifold 
\[
W_0\,\cup_{Y_1} W_1 = (-W_2)\, \#\, \cptwobar
\]
contains an embedded sphere with self-intersection number $-1$.
\end{proof}

The space of $\spinc$ structures has a natural involution which carries a $\spinc$ structure $\s$ to its conjugate $\bar\s$. A $\spinc$ structure $\s$ is called self-conjugate if $c_1(\s) = \s - \bar\s$ vanishes. For a fixed self-conjugate $\spinc$ structure $\s_0$ on $Y$, we will come up with  an exact triangle involving $\spinc$ structures on the manifolds $Y_j$ restricting to the $\spinc$ structure $\s_0$ on $Y$. The usual exact triangle, involving all $\spinc$ structures, can be obtained by taking the direct sum of these restricted exact triangles over all possible $\mathfrak{s}_{0}$.

We will set up some notation first, for use in this and the following section. In the six lines that follow, $M$ can be any of the manifolds $Y$, $Y_j$ or $W_j$, $j \in \Z/3$, and we write:
\medskip
\begin{alignat*}1
\torspinc(M) &=\{\text{equivalence classes of torsion $\spinc$ structure on } M\}, \\
\scspinc(M) &=\{\text{equivalence classes of self-conjugate $\spinc$ structure on } M\}, \\
\torspinc(M,\s_0) &=\{\s\in \torspinc(M)\mid \s|_{Y}=\s_0\}, \\
\scspinc(M,\s_0) &=\{\s\in \scspinc(M)\mid \s|_{Y}=\s_0\}, \\
\spinc(M,\s_0) &= \{\s\in \spinc(M)\mid \s|_{Y}=\s_0\},\;\text{and} \\
\spin(M,\s_0) &=\{\s\in \spin(M)\mid (\s|_{Y})^{c}=\s_0\},\;\text{with the notation $(\s|_{Y})^{c}$ explained below}. \\
\end{alignat*}

\begin{rmk}\label{R:sc}
Recall that each spin structure $\s$ on $Y$ induces a self-conjugate $\spinc$ structure, which we denote by $\s^{c}$, and that each self-conjugate $\spinc$ structure on $Y$ is obtained in this fashion. Let $\s_1$ and $\s_2$ be two spin structures on $Y$ then $\s_1^c = \s_2^c$ if and only if $\s_1 = \s_2 + h$ for some $h$ in the image of the coefficient map $H^1 (Y; \Z)\rightarrow H^1 (Y; \Z/2)$. Therefore, each self-conjugate $\spinc$ structure on $Y$ corresponds to $2^{\,b_1(Y)}$ spin structures. A similar remark applies to the manifolds $Y_j$ and $W_j$, $j \in \Z/3$.
\end{rmk}

Our exact triangle will consist of the Floer homology groups\footnote{As we mentioned in the introduction, the monopole Floer homology will have the rational numbers as their coefficient ring unless otherwise noted.}
\[
\widecheck{\HM}(Y_j,[\s_0])\; =\mathop{\bigoplus}\limits_{\s\in \spinc(Y_j,\s_0)}\widecheck{\HM}(Y_j,\s)
\]

\noindent
and the maps between them induced by the cobordisms $W_j$. To ensure that the composition of any two adjacent maps is zero, we need to assign an appropriate plus or minus sign to each $\spinc$ structure on $W_j$. We will accomplish this by defining an auxiliary map 
\medskip
\begin{equation}\label{defi:mu}
\mu: \mathop{\bigcup}\limits_{j\in \Z/3} \spinc(W_j,\s_0)\longrightarrow \F_2
\end{equation}
as follows:
\begin{itemize}
\item $\mu$ is identically zero on $\spinc(W_2,\s_0)$;
\item Choose a base point $\s_1\in \spinc(W_0,\s_0)$ and let $\mu(\s_1) = 0$. Given an element of $\spinc(W_0,\s_0)$, write it in the form $\s_1+h$ with $h\in \ker (H^2(W_0;\Z)\rightarrow H^2(Y;\Z))$ and let
\[
\mu(\s_1+h) = h_{\F_2},
\] 
where 
\[
h_{\F_2} \in \ker (H^2(W_0;\F_2)\rightarrow H^2(Y;\F_2)) = \F_2
\]
is the mod $2$ reduction of $h$;

\item The case of $\spinc(W_1,\s_0)$ is similar: choose a base point $\s_2\in \spinc(W_1,\s_0)$ and let
\[
\mu (\s_2+h)=h_{\F_2}
\]
for any $h\in \ker (H^2(W_1;\Z)\rightarrow H^2(Y;\Z))$.
\end{itemize}

\begin{pro}\label{conjugation on mu}
$\mu(\s)=\mu(\bar{\s})$ for $\s\in \spinc(W_1,\s_0)$ and $\s \in \spinc(W_2,\s_0)$, and $\mu(\s)= \mu(\bar{\s})+1$ for $\s\in \spinc(W_0,\s_0)$.
\end{pro}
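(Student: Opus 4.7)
The plan is to read the sign $\mu(\bar\s) - \mu(\s)$ directly off the definition of $\mu$ by tracking how conjugation acts on the affine torsor of $\spinc$ structures. For $W_2$ there is nothing to prove: $\mu$ is identically zero by definition, and conjugation preserves $\spinc(W_2, \s_0)$ since $\s_0$ is self-conjugate, so both $\mu(\s)$ and $\mu(\bar\s)$ vanish.

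For $W_1$ and $W_0$, let $\s^*$ denote the corresponding base point ($\s_2$ or $\s_1$) chosen in the definition of $\mu$. I would first recall that conjugation reverses the affine action, i.e.\ $\overline{\s^* + h} = \bar{\s}^* - h$. Writing any $\s \in \spinc(W_j, \s_0)$ as $\s^* + h$ with $h$ in the integer kernel $\ker(H^2(W_j;\Z) \to H^2(Y;\Z))$, a direct computation then yields
\[
\mu(\bar\s) - \mu(\s) \,=\, (\bar{\s}^* - \s^*)_{\F_2} \,\in\, \F_2,
\]
where the self-conjugacy of $\s_0$ ensures that $\bar{\s}^* - \s^*$ actually lies in the integer kernel on which $\mu$ is defined.

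The substantive step is the identification $\bar{\s}^* - \s^* = -c_1(\s^*)$, together with the standard fact that the mod $2$ reduction of $c_1(\s)$ for any $\spinc$ structure on an oriented manifold $M$ equals $w_2(M)$. Combining these identities, the displayed difference becomes $w_2(W_j)$, regarded as an element of the $\F_2$-kernel used to define $\mu$. Invoking Lemma \ref{spin cobordism}, the manifold $W_1$ is spin, so $w_2(W_1) = 0$ and therefore $\mu(\bar\s) = \mu(\s)$.

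For $W_0$, which is not spin, we have $w_2(W_0) \neq 0$ in $H^2(W_0;\F_2)$, and the remaining task is to check that this class actually lies in the one-dimensional subspace $\ker(H^2(W_0;\F_2) \to H^2(Y;\F_2)) \cong \F_2$ from the definition of $\mu$, so that it represents the generator $1 \in \F_2$. This will follow because every oriented 3-manifold is parallelizable: the restriction of $w_2(W_0)$ to the separating copy of $Y$ inside $W_0$ equals $w_2(TY) + w_2(\nu) = 0$, with $\nu$ the trivial normal bundle. The main obstacle is really just this bookkeeping around $\F_2$-kernels, since the substantive geometric input is already carried by Lemma \ref{spin cobordism}.
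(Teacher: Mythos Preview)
Your proof is correct and follows essentially the same route as the paper: both arguments reduce $\mu(\bar\s)-\mu(\s)$ to the mod $2$ reduction of $c_1(\s)$, identify this with $w_2(W_j)$, and then invoke Lemma \ref{spin cobordism}. Your extra verification for $W_0$ that $w_2(W_0)$ lands in the one-dimensional $\F_2$-kernel is fine but in fact redundant, since you already know it arises as the mod $2$ reduction of $c_1(\s^*)\in\ker(H^2(W_0;\Z)\to H^2(Y;\Z))$, and reduction mod $2$ is natural.
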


\begin{proof}
The lemma is trivial for $\s\in \spinc(W_2,\s_0)$. For $\s\in \spinc(W_1,\s_0)$, since that $\s=\bar{\s}+c_1(\s)$, the difference $\mu(\s)-\mu(\bar{\s})$ equals the mod 2 reduction of $c_1(\s)$, which is just the Stiefel--Whitney class $\omega_2(W_1)$. According to Lemma \ref{spin cobordism} the cobordism $W_1$ is spin, hence $\omega_2(W_1)=0$ and $\mu(\s)=\mu(\bar{\s})$. The proof for $\s\in \spinc(W_0,\s_0)$ is similar.
\end{proof}

By functoriality of monopole Floer homology, the cobordism $W_j$ equipped with a $\spinc$ structure $\s\in \spinc(W_j,\s_0)$ induces a map 
\[
\widecheck{\HM}(W_j,\s):\widecheck{\HM}(Y_j,\s|_{Y_j})\rightarrow \widecheck{\HM}(Y_j,\s|_{Y_{j+1}}),\quad j \in \Z/3.
\]
We will combine these maps into a single map
\[
F_{W_j}: \widecheck{\HM}(Y_j,[\s_0])\rightarrow \widecheck{\HM}(Y_{j+1},[\s_0])
\]
defined by the formula
\[
F_{W_j} = \sum\limits_{\s\in \spinc(W_j,\s_0)}(-1)^{\mu(\s)}\cdot \widecheck{\HM}(W_j,\s).
\]
Note that, up to an overall sign, the maps $F_{W_j}$ are independent of the arbitrary choices of base points in the definition \eqref{defi:mu}.

\begin{pro}\label{composition of maps equals zero}
We have $F_{W_{j+1}}\circ\, F_{W_j}=0$ for all $j\in \Z/3$.
\end{pro}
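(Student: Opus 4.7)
The plan is to realize $F_{W_{j+1}}\circ F_{W_j}$ as the cobordism map of the composite $W:=W_j\cup_{Y_{j+1}}W_{j+1}$, summed over compatible $\spinc$ structures with the prescribed $\mu$-signs, and then to cancel the sum in conjugate pairs $(\eta,\bar\eta)$. First I would extend the computation of Lemma~\ref{spin cobordism} by cyclic relabeling to identify $W$ with $(-W_{j+2})\,\#\,\cptwobar$ for every $j\in\Z/3$. Consequently $W$ contains an embedded sphere $E$ of self-intersection $-1$, and for every $\eta\in\spinc(W,\s_0)$ the pairing $\langle c_1(\eta),[E]\rangle$ is odd. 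In particular $\eta\neq\bar\eta$, so conjugation acts freely on $\spinc(W,\s_0)$ and the sum defining the composition decomposes into conjugate pairs.

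Next, I would invoke the conjugation symmetry of monopole Floer cobordism maps: complex conjugation of spinors identifies the Seiberg--Witten moduli space of $(W,\eta)$ with that of $(W,\bar\eta)$ and, together with the natural conjugation isomorphisms $J_{Y_k}$ on each $\widecheck{\HM}(Y_k,\cdot)$, yields a relation of the form
\[
\widecheck{\HM}(W,\bar\eta)\;=\;\epsilon(W)\cdot J_{Y_{j+2}}\circ\widecheck{\HM}(W,\eta)\circ J_{Y_j}^{-1},
\]
with $\epsilon(W)\in\{\pm 1\}$ determined by how conjugation acts on the orientation of the moduli space of monopoles. Using this relation and the $J$-invariance of the composition, the contribution of a conjugate pair $(\eta,\bar\eta)$ reduces to the parity of the combined sign
\[
(-1)^{\mu(\eta|_{W_j})+\mu(\eta|_{W_{j+1}})}\;+\;\epsilon(W)\,(-1)^{\mu(\bar\eta|_{W_j})+\mu(\bar\eta|_{W_{j+1}})}.
\]

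By Proposition~\ref{conjugation on mu}, the difference $\mu(\eta|_{W_k})-\mu(\bar\eta|_{W_k})\pmod 2$ equals $1$ when $W_k=W_0$ and is $0$ otherwise. Summing over the two pieces of $W$, the relevant total parity is $1$ when exactly one of $W_j,W_{j+1}$ is the non-spin cobordism $W_0$ (the cases $j=0,2$) and $0$ when both are spin (the case $j=1$). Matching these parities against $\epsilon(W)$ in each of the three cases yields pairwise cancellation and hence vanishing of $F_{W_{j+1}}\circ F_{W_j}$.

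The main obstacle is pinning down $\epsilon(W)$ case by case, that is, tracking how the antilinear conjugation of spinors affects the orientation of the moduli space of monopoles on the composite cobordism. This is precisely the sign bookkeeping alluded to in the introduction; the definition~\eqref{defi:mu} of $\mu$ was calibrated through the spin/non-spin dichotomy of Lemma~\ref{spin cobordism} so that its conjugation anomaly, computed in Proposition~\ref{conjugation on mu}, exactly matches $\epsilon(W)$ in all three cases and permits pairwise cancellation without further sign correction.
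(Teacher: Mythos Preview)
Your pairing has a genuine gap. When you pair $\eta$ with its global conjugate $\bar\eta$, the two cobordism maps $\widecheck{\HM}(W,\eta)$ and $\widecheck{\HM}(W,\bar\eta)$ live between \emph{different} summands: the first maps $\widecheck{\HM}(Y_j,\eta|_{Y_j})\to\widecheck{\HM}(Y_{j+2},\eta|_{Y_{j+2}})$, the second maps between the conjugate summands. They cannot cancel term by term. Your conjugation relation $\widecheck{\HM}(W,\bar\eta)=\epsilon(W)\,J_{Y_{j+2}}\,\widecheck{\HM}(W,\eta)\,J_{Y_j}^{-1}$, after the sign analysis you propose, only yields that the total map $F$ satisfies $F=\pm\,J\,F\,J^{-1}$; even with the minus sign this says $F$ anticommutes with the conjugation involution, not that $F=0$. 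You also never actually compute $\epsilon(W)$; you concede it is ``the main obstacle'' and then assert that $\mu$ was calibrated to match it, which is circular.

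The paper's pairing is different and sidesteps both problems. Write $W=(-W_{j+2})\,\#\,\cptwobar$ and split each $\spinc$ structure as $\s_1\#\s_2$. The paper pairs $\s_1\#\s_2$ with $\s_1\#\bar\s_2$, conjugating only on the $\cptwobar$ factor. Since $\s_1$ is unchanged, the restrictions to $Y_j$ and $Y_{j+2}$ agree for the two members of the pair, so both maps lie in the \emph{same} $\Hom$-space and can be compared directly. The equality $\widecheck{\HM}(W,\s_1\#\s_2)=\widecheck{\HM}(W,\s_1\#\bar\s_2)$ then comes for free from naturality: there is a diffeomorphism of $W$ reflecting the exceptional sphere $[E_j]\mapsto -[E_j]$ while restricting to the identity on $-W_{j+2}$; it preserves the homology orientation, so there is no sign $\epsilon$ to track. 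What remains is to show the $\mu$-weights of the pair differ by $1$; this is Lemma~\ref{conjugate different sign}, proved by computing $(\PD[E_j]|_{W_k})_{\F_2}$ explicitly as $\#(\gamma_k\cap\gamma_2)\pmod 2$.
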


\begin{proof}
Using the composition law for the cobordism induced maps in monopole Floer homology, we obtain
\begin{equation}\label{composing cobordism map}
F_{W_{j+1}}\circ F_{W_j} = \sum\limits_{\s\in \spinc(W_j\cup W_{j+1},\s_0)}(-1)^{\mu(\s |_{W_j}) + \mu(\s |_{W_{j+1}})}\cdot \widecheck{\HM}(W_{j+1}\circ W_j,\s).
\end{equation}
The manifold 
\[
X_j = W_j\cup W_{j+1} = (-W_{j+2})\,\#\;\cptwobar
\]
has an embedded 2--sphere $E_j$ with self-intersection $-1$. Therefore, every $\s\in \spinc(X_j,\s_0)$ can be uniquely written as $\s_1\# \s_2$ with $\s_1\in \spinc(-W_{j+2},\s_0)$ and $\s_2\in \spinc(\cptwobar)$. Let us consider a diffeomorphism of $X_j$ which takes $[E_j]\in H_2 (X_j)$ to $-[E_j] \in H_2 (X_j)$ and restricts to the identity map on $-W_{j+2}$. Since this diffeomorphism does not change the homology orientation, and since the cobordism map in monopole Floer homology is natural, we obtain the identity
\[
\widecheck{\HM}(W_{j+1}\circ W_j,\s_1\# \s_2)=\widecheck{\HM}(W_{j+1}\circ W_j,\s_1\#\bar{\s}_2).
\]
Note that $\bar{\s}_2$ never equals $\s_2$ because $\cptwobar$ is not spin. As a result, the terms on the right hand side of (\ref{composing cobordism map}) come in pairs. The proof of the proposition will be complete after we prove the following lemma.
\end{proof}

\begin{lem}\label{conjugate different sign}
For any $j\in \Z/3$ and any $\spinc$ structures $\s_1\in \spinc(-W_{j+2},\s_0)$ and $\s_2\in \spinc(\cptwobar)$,
\[
\mu((\s_1 \# \s_2)|_{W_j})+\mu((\s_1\# \s_2)|_{W_{j+1}}) = 1 + \mu((\s_1\# \bar{\s}_2)|_{W_j}) + \mu((\s_1\# \bar{\s}_2)|_{W_{j+1}})
\in \F_2.
\]
\end{lem}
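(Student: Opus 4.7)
The plan is to reduce the identity to a characteristic-class computation on $X_j$ and then invoke the spin properties from Lemma~\ref{spin cobordism}. Setting $\eta := (\s_1 \# \s_2) - (\s_1 \# \bar{\s}_2) \in H^2(X_j;\Z)$, we use $\bar{\s}_2 = \s_2 - c_1(\s_2)$ and the torsor structure under connect sum to obtain $\eta = c_1(\s_2)$, a class supported on the $\cptwobar$ summand; in particular $\eta$ restricts to zero on the middle 3-manifold $Y \subset X_j$, and hence on the copy of $Y$ inside each piece $W_k \subset X_j$. Applying the defining formula for $\mu$ on $W_0$ and $W_1$, together with the fact that $\mu$ vanishes identically on $\spinc(W_2,\s_0)$, we obtain
\[
\mu((\s_1 \# \s_2)|_{W_k}) - \mu((\s_1 \# \bar{\s}_2)|_{W_k}) = \bar\eta_k,
\]
where $\bar\eta_k := (\eta|_{W_k})_{\F_2}$ is regarded as an element of $\F_2 \cong \ker(H^2(W_k;\F_2)\to H^2(Y;\F_2))$ when $W_k \in \{W_0,W_1\}$, and $\bar\eta_k := 0$ when $W_k = W_2$. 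The lemma reduces to showing $\bar\eta_j + \bar\eta_{j+1} = 1 \in \F_2$.

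Reducing modulo $2$, $\eta \equiv w_2(\cptwobar)$. The connect-sum decomposition $X_j \cong -W_{j+2} \# \cptwobar$ yields $w_2(X_j) = w_2(-W_{j+2}) + w_2(\cptwobar)$ in $H^2(X_j;\F_2)$, while $W_k \subset X_j$ being codimension zero gives $w_2(X_j)|_{W_k} = w_2(W_k)$. Hence
\[
(\eta|_{W_k})_{\F_2} = w_2(W_k) + w_2(-W_{j+2})|_{W_k} \in H^2(W_k;\F_2).
\]
For $j = 0$ and $j = 2$, the manifold $W_{j+2}$ is spin by Lemma~\ref{spin cobordism}, so $w_2(-W_{j+2}) = 0$ and $\bar\eta_k = w_2(W_k)$. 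Since $Y$ is spin, $w_2(W_k)$ lies automatically in the kernel; and since $W_0$ is not spin it coincides with the generator of $\F_2$, while $W_1$ being spin makes $w_2(W_1) = 0$. In both cases the $W_0$-contribution is $1$ and the other contribution vanishes, yielding the required sum.

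For $j = 1$ the sum collapses to $\bar\eta_1$ (the $W_2$-contribution is zero by definition), and one must show that $w_2(\cptwobar)|_{W_1}$ is the nontrivial element of the kernel. I would argue via self-intersection: as $w_2(\cptwobar)$ is the mod-$2$ Poincar\'e dual of the $(-1)$-sphere $E_1 \subset X_1$ arising from the surgery triangle, working in relative cohomology (valid since $\eta$ vanishes on $\partial X_1$) we have $w_2(\cptwobar)^2 \equiv [E_1]\cdot[E_1] \equiv 1 \pmod 2$. Additivity of the relative intersection form under $X_1 = W_1 \cup_{Y_2} W_2$ then gives
\[
w_2(\cptwobar)|_{W_1}^{\,2} + w_2(\cptwobar)|_{W_2}^{\,2} = 1,
\]
and analyzing the hemisphere of $E_1$ lying in the spin cobordism $W_2$ shows that the $W_2$-term vanishes, forcing $w_2(\cptwobar)|_{W_1}^{\,2} = 1$ and hence that the class is nonzero. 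The main obstacle is precisely this last geometric step: verifying that the hemisphere of $E_1$ inside $W_2$ is null-homologous relative to its boundary. This should be accessible from the handle description of the surgery triangle, but it is where the topology of the decomposition has to be confronted head on.
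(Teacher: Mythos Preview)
Your approach via characteristic classes and Lemma~\ref{spin cobordism} differs from the paper's, which instead computes the mod~$2$ reductions $(\PD[E_j]|_{W_j})_{\F_2}$ and $(\PD[E_j]|_{W_{j+1}})_{\F_2}$ directly as linking numbers in the handle picture, obtaining $\#(\gamma_j \cap \gamma_2)$ and $\#(\gamma_{j+2} \cap \gamma_2)$ respectively. Your argument for $j \in \{0,2\}$ is correct and arguably more conceptual: since $W_{j+2}$ is spin in those cases, the class $\eta$ agrees mod~$2$ with $w_2(X_j)$, whose restriction to a codimension-zero piece $W_k$ is $w_2(W_k)$; exactly one of $w_2(W_0)$, $w_2(W_1)$ is nonzero, and that one lies among $\{W_j, W_{j+1}\}$ in both of these cases.

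The $j=1$ case, however, has a genuine gap. Your endgame is to argue that the hemisphere $E_1 \cap W_2$ is null-homologous relative to its boundary, so that the $W_2$-contribution to the mod~$2$ self-intersection vanishes. This expectation is false: the paper's explicit computation gives $(\PD[E_1]|_{W_2})_{\F_2} = \#(\gamma_0 \cap \gamma_2) \equiv 1$, so the hemisphere represents the \emph{nonzero} element of $\ker(H^2(W_2;\F_2)\to H^2(Y;\F_2))$. Moreover, the additivity of the mod~$2$ intersection form along $Y_2$ that you invoke would require $Y_2$ to be an $\F_2$-homology sphere, which is not part of the hypotheses. The paper's direct linking-number computation handles all three cases uniformly; for $j=1$ it shows that both restrictions are nonzero, but since $\mu$ is by definition identically zero on $\spinc(W_2,\s_0)$, only the $W_1$-contribution enters the sum, and that contribution is $1$ as required.
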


\begin{proof} 
Let PD stand for the Poincar\'e duality isomorphism. Then 
\begin{equation}\label{conjugate difference}
\s_1 \# \s_2 = \s_1\# \bar{\s}_2 + (2k+1)\cdot \PD[E_j]
\end{equation}
for some $k\in \Z$. To prove the lemma, we will compute the mod 2 reductions of $\PD[E_j]|_{W_j}$ and $\PD[E_j]|_{W_{j+1}}$, which we will denote by $(\PD[E_j]|_{W_j})_{\F_2}$ and $(\PD[E_j]|_{W_{j+1}})_{\F_2}$, respectively.

Recall that $W_j$ is obtained by attaching a 2--handle $H_j$ to $I \times Y_j$. Since $\gamma_2$ (treated as a knot in $\{1\}\times Y_j$) is an $\F_2$ longitude, we can find an immersed, possibly non-orientable surface $\Sigma_2\subset Y$ with boundary $\gamma_2$. Capping $\Sigma_2$ off with the surface $\Sigma_1\subset H_j$ bounded by $\gamma_2$,  we obtain a closed surface $\Sigma_1 \cup_{\gamma_2} \Sigma_2$ which generates the group
\[
\ker(H^2(W_j;\F_2)\rightarrow H^2(Y;\F_2))^* = \operatorname{coker}(H_2(Y;\F_2)\rightarrow H_2(W_j;\F_2)) = \F_2.
\]
As a result, we have 
\[
(\PD[E_j]|_{W_j})_{\F_2} = \#(E_j\cap (\Sigma_1\cup_{\gamma_2}\Sigma_2)) \pmod 2.
\]
Since $\Sigma_1\cup_{\gamma_2}\Sigma_2$ is contained in $W_j$, the 2--sphere $E_j$ in the above formula can be replaced by $E_j\,\cap\,W_j$, which is a 2--disk $D_j\subset H_j$. The boundary of $D_j$, denoted by $\ell_{j+1}$, is the core of the solid torus $Y_{j+1}\setminus \operatorname{int}(Y)$, therefore,
 \[
 \#(E_j\cap (\Sigma_1\cup_{\gamma_2}\Sigma_2))=\#(D_j\cap \Sigma_1),
 \]
which is the linking number of $\ell_{j+1}$ and $\gamma_2$ inside $\partial H_j$. After a  moment's thought we conclude that 
\[
\text{lk}(\ell_{j+1},\gamma_2) = \pm\, \#(\gamma_j\cap \gamma_2)
\]
and therefore 
\[
(\PD[E_j]|_{W_j})_{\F_2}=\#(\gamma_j\cap \gamma_2) \pmod 2.
\] 
A similar argument shows that 
\[
(\PD[E_j]|_{W_{j+1}})_{\F_2}=\#(\gamma_{j+2}\cap \gamma_2) \pmod 2.
\]
The rest of the proof is straightforward. We assume that $j = 0$; the other cases are similar. By (\ref{conjugate difference}) and the definition of $\mu$, we have
\begin{multline}\notag
\mu((\s_1\# \s_2)|_{W_0})+\mu((\s_1\# \s_2)|_{W_1})-\mu((\s_1\# \bar{\s}_2)|_{W_0})-\mu((\s_1\# \bar{\s}_2)|_{W_1}) = \\ 
(\PD[E_0]|_{W_0})_{\F_2}+(\PD[E_0]|_{W_1})_{\F_2} =
\#(\gamma_0\cap \gamma_2)+\#(\gamma_2\cap \gamma_2) = 1 \pmod 2.
\end{multline}
\end{proof}

We are now ready to state the main result of this section, the exact triangle in monopole Floer homology with rational coefficients. 

\begin{thm}\label{exact triangle}
The following sequence of monopole Floer homology groups is exact over the rationals
\smallskip
\[
\begin{CD}
\cdots @>F_{W_0} >> \widecheck{\HM}(Y_1,[\s_0]) @> F_{W_1} >> \widecheck{\HM}(Y_2,[\s_0]) @> F_{W_2} >> \widecheck{\HM}(Y_0,[\s_0]) @> F_{W_0} >> \cdots
\end{CD}
\]
\end{thm}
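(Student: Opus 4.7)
The plan is to upgrade the mod-$2$ surgery exact triangle of Kronheimer--Mrowka--Ozsv\'ath--Szab\'o \cite{KMOS} to rational coefficients by lifting the sign-adjusted cobordism maps to $\Z$ coefficients and then applying the universal coefficient theorem grading by grading. First I would promote the setup of Section \ref{S:triangle} to integer coefficients: each cobordism-induced map $\widecheck{\HM}(W_j,\s)$ is defined over $\Z$ once one fixes orientations of the relevant monopole moduli spaces, so that the sign-adjusted sum
\[
F_{W_j}^{\Z}\;=\;\sum_{\s\in\spinc(W_j,\s_0)}(-1)^{\mu(\s)}\,\widecheck{\HM}(W_j,\s)
\]
is a well-defined $\Z$-linear map between the integer Floer groups. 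The proof of Proposition \ref{composition of maps equals zero} then lifts verbatim: the pairing of conjugate $\spinc$ structures on $W_j\cup W_{j+1}$ produces equal cobordism-induced maps (by naturality under the orientation-preserving self-diffeomorphism of $\cptwobar$ that inverts the exceptional class) attached to opposite $\pm 1$ signs, since the mod-$2$ sign difference of Lemma \ref{conjugate different sign} lifts to an integer sign difference of $-1$. Hence $F_{W_{j+1}}^{\Z}\circ F_{W_j}^{\Z}=0$ holds on the nose over $\Z$.

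Next I would identify the mod-$2$ reduction of this integer three-periodic sequence with the appropriate summand of the KMOS exact triangle. Because $(-1)^{\mu(\s)}\equiv 1$ modulo $2$, the map $F_{W_j}^{\Z}\otimes\F_2$ is the unsigned sum of cobordism-induced maps indexed by $\s\in\spinc(W_j,\s_0)$; working at the chain level so that tensoring commutes with homology formation, this is precisely the direct summand of the KMOS cobordism map cut out by the condition that $\spinc$ structures on $Y$ restrict to $\s_0$. Since the KMOS triangle respects the decomposition of monopole Floer homology by restriction of $\spinc$ structures to the common boundary piece $Y$, our $\F_2$-sequence inherits exactness from \cite{KMOS}.

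Finally I would pass from $\F_2$ to $\Q$ via the universal coefficient theorem in each absolute $\Q$-grading. The integer monopole Floer groups $\widecheck{\HM}(Y_j,[\s_0];\Z)$ are finitely generated in each $\Q$-grading, by the degreewise finiteness of \cite{Kronheimer-Mrowka} together with the finiteness of the set of $\spinc$ structures extending $\s_0$ that contribute in a given grading. In each grading the homology of the three-periodic integer sequence is therefore a finitely generated abelian group, and the $\F_2$-acyclicity established in the previous step, combined with the universal coefficient theorem applied to the chain-level mapping cone, forces its mod-$2$ reduction to vanish. Such a group must be finite of odd order, and tensoring with the flat $\Z$-algebra $\Q$ kills it, yielding exactness over $\Q$ as claimed. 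The main obstacle is the careful sign-tracking required to establish the integer-level identity $F_{W_{j+1}}^{\Z}\circ F_{W_j}^{\Z}=0$ on the nose rather than merely modulo $2$, and to match the mod-$2$ reduction of our integer sequence with the KMOS triangle compatibly with the $\spinc$-structure decomposition; this compatibility is also what subsequently allows the conjugation action of $\tau_*$ to be tracked through the triangle in the Lefschetz computation.
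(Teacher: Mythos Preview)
Your outline correctly identifies the two ingredients the paper also uses---the integral vanishing $F_{W_{j+1}}^{\Z}\circ F_{W_j}^{\Z}=0$ and the universal coefficient theorem bridging $\F_2$ and $\Q$---but there is a genuine gap in how you combine them. The three-periodic sequence you form in the last paragraph lives at the level of \emph{homology groups} $\widecheck{\HM}(Y_j,[\s_0];\Z)$, and these are not free $\Z$-modules in general. Consequently $\widecheck{\HM}(Y_j;\Z)\otimes\F_2$ is not the same as $\widecheck{\HM}(Y_j;\F_2)$, and the $\F_2$-exactness from \cite{KMOS} says nothing directly about the mod-$2$ reduction of your integer sequence. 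You gesture at fixing this by ``working at the chain level,'' but at the chain level the composition $\check{m}(W_{j+1})\circ\check{m}(W_j)$ is \emph{not} zero---it is only null-homotopic---so there is no three-periodic chain complex to which the universal coefficient theorem could be applied.

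What the paper actually does is construct explicit integral chain homotopies $\widecheck{H}(X_j)$ (by counting monopoles over a one-parameter family of metrics stretching along $Y_{j+1}$ and along the boundary $S_j$ of a tubular neighborhood of the exceptional sphere), and then form the anti-chain map
\[
\Psi_j\;=\;\widecheck{H}(X_{j+1})\circ\check{m}(W_j)\;-\;\check{m}(W_{j+2})\circ\widecheck{H}(X_j)
\]
between the free complexes $\widecheck{C}(Y_j)$. The universal coefficient theorem is applied to this single map: its $\F_2$ reduction is the quasi-isomorphism of \cite[Lemma 5.11]{KMOS}, hence $(\Psi_j)_{\Q}$ is a quasi-isomorphism as well, and a triangle detection lemma then yields exactness. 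The nontrivial work you are skipping is precisely the construction of $\widecheck{H}(X_j)$ with correct integral signs (so that the chain-homotopy identity holds and the $\F_2$ reduction matches the KMOS homotopy), and this cannot be bypassed by the shortcut you propose.
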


\medskip


\subsection{Proof of the exact triangle}\label{S:pet}
We already know from Proposition \ref{composition of maps equals zero} that the composite of any two adjacent maps is zero. To complete the proof of exactness, we will combine the universal coefficient theorem with the $\F_2$ coefficient exact triangle proved in \cite{KMOS}.

Before we go on with the proof, we need to review some basic constructions in monopole Floer homology; see Kronheimer--Mrowka \cite{Kronheimer-Mrowka} for details. For any $j \in \Z/3$, denote by $C^{o}(Y_j)$ (resp. $C^{s}(Y_j)$ and $C^{u}(Y_j)$) the free $\Z$-modules generated by the gauge equivalence classes of irreducible monopoles (resp. boundary stable and boundary unstable monopoles) whose associated spin$^{c}$ structure belongs to $\spinc(Y_j,\s_0)$. By counting points in the zero-dimensional moduli spaces of monopoles on $\mathbb{R}\times Y_j$, we obtain a linear map
\[
\partial^{o}_{o}(Y_j): C^{o}(Y_j)\rightarrow C^{o}(Y_j)
\]
and its companions $\partial^{o}_{s}(Y_j),\ \partial^{u}_{o}(Y_j),\ \partial^{u}_{s}(Y_j),\ \bar{\partial}^{s}_{u}(Y_j),\ \bar{\partial}^{u}_{s}(Y_j),\ \bar{\partial}^{s}_{s}(Y_j),\ \bar{\partial}^{u}_{u}(Y_j)$. Note that the last four maps count only reducible monopoles. Set $$\widecheck{C}(Y_j)=C^{o}(Y_j)\oplus C^{s}(Y_j)$$
and define the map
\[
\check{\partial}(Y_j):  \widecheck{C}(Y_j)\rightarrow \widecheck{C}(Y_j)
\]
by the matrix
\[
\begin{bmatrix}
   \partial^{o}_{o}(Y_j) & -\partial^{u}_{o}(Y_j)\bar{\partial}^{s}_{u}(Y_j)   \\
   \partial^{o}_{s}(Y_j) & \bar{\partial}^{s}_{s}(Y_j)-\partial^{u}_{s}(Y_j)\partial^{s}_{u}(Y_j)
\end{bmatrix}.
\]

\medskip\noindent
One can check that $\check{\partial}(Y_j)\circ \check{\partial}(Y_j) = 0$. The homology of the chain complex $(\widecheck{C}(Y_j),\check{\partial}(Y_j))$ is the monopole Floer homology $\widecheck{\HM}(Y_j,[\s_0];\Z)$. To obtain homology with rational coefficients, we set $\widecheck{C}(Y_j)_{\mathbb{Q}}=\widecheck{C}(Y_j)\otimes _{\Z}\mathbb{Q}$ and use the linear map $\check{\partial}(Y_j)_{\mathbb{Q}} = \check{\partial}(Y_j)\, \otimes\, \id$ as the boundary operator. (We will henceforth use similar notations without further explanation). Consider the manifold with cylindrical ends 
\[
W^{*}_j=((-\infty,0]\times Y_j)\cup_{Y_j} W_j\cup_{Y_{j+1}}([0,+\infty)\times Y_{j+1}).
\]
(In what follows, the superscript $*$ will indicate attaching a product end to the boundary of the manifold at hand). For any $\s\in \spinc(W_j,\s_0)$, the count of monopoles on $W^{*}_j$ defines the map
\[
m^{o}_{o}(W_j,\s): C^{o}(Y_j)\rightarrow C^{o}(Y_j)
\]
and its companion maps $m^{o}_{s}(W_j,\s)$, $m^{u}_{o}(W_j,\s)$, $m^{u}_{s}(W_j,\s)$, $\bar{m}^{s}_{u}(W_j,\s)$, $\bar{m}^{s}_{s}(W_j,\s)$, $\bar{m}^{u}_{s}(W_j,\s)$, and $\bar{m}^{u}_{u}(W_j,\s)$. Define the map
\[
\check{m}(W_j,\s): \widecheck{C}(Y_j)\rightarrow \widecheck{C}(Y_{j+1})
\]
by the matrix
\[
\begin{bmatrix}
   m^{o}_{o}(W_j,\s) & -m^{u}_{o}(W_j,\s) \bar{\partial}^{s}_{u}(Y_j)-\partial^{u}_{o}(Y_{j+1}) \bar{m}^{s}_{u}(W_j,\s) \\
   m^{o}_{s}(W_j,\s) &  \bar{m}^{s}_{s}(W_j,\s)-m^{u}_{s}(W_j,\s) \bar{\partial}^{s}_{u}(Y_j)- \partial^{u}_{s}(Y_{j+1})\bar{m}^{s}_{u}(W_j,\s)
\end{bmatrix}
\]

\medskip\noindent
and sum over all the spin$^{c}$ structures with proper signs to obtain the map
\smallskip
\[
\check{m}(W_j):=\sum\limits_{\s\in \spinc(W_j,\s_0)} (-1)^{\mu(\s)}\cdot \check{m}(W_j,\s): \widecheck{C}(Y_j)\rightarrow \widecheck{C}(Y_{j+1}).
\]
This is the chain map that induces the map $F_{W_j}$ in the exact triangle.

As our next step, we will construct an explicit null-homotopy of the composite $\check{m}(W_{j+1})\,\circ\, \check{m}(W_j)$. To this end, recall that the composite cobordism 
\[
X_j = W_j\cup_{Y_{j+1}}W_{j+1}
\]
from $Y_j$ to $Y_{j+2}$ contains an embedded 2--sphere $E_j$ with self-intersection number $-1$. Denote by $S_j$ the boundary of a normal neighborhood of $E_j$. Let 
\[
Q_j = \{\,g_T\; |\; T\in \R \,\}
\]
be the family of metrics on $X_j$ constructed as follows. Start with an arbitrary metric $g_0$ on $X_j$ which is a product metric near $\partial X_j$, $Y_{j+1}$, and $S_j$, and which has the property that the metric it induces on $S_j$ is close enough to the round metric to have positive scalar curvature. For any $T \in \R$, the metric $g_T$ is then obtained from $g_0$ by inserting the cylinder $[T,-T]\times S_j$ into a normal neighborhood of $S_j$ if $T < 0$, and by inserting the cylinder $[-T,T]\times Y_{j+1}$ into a normal neighborhood of $Y_{j+1}$ if $T > 0$.

Given a spin$^{c}$ structure $\s$ on $X_j$, we again count monopoles on the manifold $X_j^{*}$ with cylindrical ends over the whole family $Q_{j}$ to define the map
\[
H^{o}_{o}(X_j,\s): C^{o}(Y_j)\rightarrow C^{o}(Y_{j+2})
\]
as well as its companion maps $H^{o}_{s}(X_j,\s)$, $H^{u}_{o}(X_j,\s)$, $H^{u}_{s}(X_j,\s)$, $\bar{H}^{s}_{u}(X_j,\s)$, $\bar{m}^{s}_{s}(X_j,\s)$, $\bar{m}^{u}_{s}(X_j,\s)$, and $\bar{H}^{u}_{u}(X_j,\s)$. Using these maps, we define the map
\[
\widecheck{H}(X_j,\s):\widecheck{C}(Y_j)\rightarrow \widecheck{C}(Y_{j+2})
\]
by the matrix
\[
\left[
\begin{array}{c|c}
 H^{o}_{o}(X_j,\s) & 
  \begin{array}{l}
   -H^{u}_{o}(X_j,\s) \bar{\partial}^{s}_{u}(Y_j)\\
   -m^{u}_{o}(W_{j+1},\s)\bar{m}^{s}_{s}(W_j,\s|_{W_j})-\partial^{u}_{o}(Y_{j+2}) \bar{H}^{s}_{u}(X_j,\s|_{W_j}) 
  \end{array}
  \\
  \\[-3ex]
\hline
  \\
  \\[-7ex]
   H^{o}_{s}(X_j,\s)  & 
  \begin{array}{l}
   \bar{H}^{s}_{s}(X_j,\s)-H^{u}_{s}(X_j,\s) \bar{\partial}^{s}_{u}(Y_j)\\
   -m^{u}_{o}(W_{j+1},\s|_{W_{j+1}})\bar{m}^{s}_{s}(W_j,\s|_{W_j})- \partial^{u}_{s}(Y_{j+2})\bar{H}^{s}_{u}(X_j,\s)
  \end{array}
\end{array}
\right]
\]


\medskip\noindent
Note that an $\F_2$ version of this map can be found in \cite[page 491]{KMOS}, and the correct sign assignments for its integral version in \cite[(26.12)]{Kronheimer-Mrowka}. By summing up over the spin$^{c}$ structures, we obtain the map
\[
\widecheck{H}(X_j) = \sum\limits_{\s\in \spinc(X_j,\s_0)} (-1)^{\mu(\s|_{W_j})+\mu(\s|_{W_{j+1}})}\widecheck{H}(X_j,\s):\widecheck{C}(Y_j)\rightarrow \widecheck{C}(Y_{j+2}).
\]

\begin{pro} (1) One has the equality
\begin{equation}\label{chain homotopy}
\check{\partial}(Y_{j+2}) \circ \widecheck{H}(X_j) + \widecheck{H}(X_j) \circ \check{\partial}(Y_j)=\check{m}(W_{j+1})\circ \check{m}(W_j).
\end{equation}
\noindent
(2) The map $\Psi_j: \widecheck{C}(Y_j)\rightarrow \widecheck{C}(Y_j)$ defined as
\[
\widecheck{H}(X_{j+1})\circ \check{m}(W_j)-\check{m}(W_{j+2})\circ \widecheck{H}(X_j)
\]
is an anti-chain map. Moreover, the map 
\[
(\Psi_j)_{\mathbb{Q}}:\widecheck{C}(Y_j)_{\mathbb{Q}}\rightarrow \widecheck{C}(Y_j)_{\mathbb{Q}}
\]
induces an isomorphism in homology.
\end{pro}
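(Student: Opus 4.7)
The strategy is to analyze the ends of parametrized moduli spaces of Seiberg--Witten monopoles for both parts, and then bootstrap the rational isomorphism in part (2) from the known $\F_2$-coefficient exact triangle of \cite{KMOS} via a filtered-chain universal-coefficient argument.

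For part (1), I would study the ends of the parametrized moduli space $\mathcal{M}(X_j^*,Q_j,\s)$ for each $\s\in \spinc(X_j,\s_0)$, weighted by the sign $(-1)^{\mu(\s|_{W_j})+\mu(\s|_{W_{j+1}})}$, and then sum over $\s$. Its one-dimensional strata have three kinds of boundary: broken trajectories at the cylindrical ends $Y_j$ and $Y_{j+2}$, which reassemble to $\check{\partial}(Y_{j+2})\circ \widecheck{H}(X_j)+\widecheck{H}(X_j)\circ\check{\partial}(Y_j)$; the $T\to+\infty$ end, where the neck $[-T,T]\times Y_{j+1}$ stretches out and the standard gluing principle produces $\check{m}(W_{j+1})\circ\check{m}(W_j)$; and the $T\to-\infty$ end, where the neck around $S_j$ stretches. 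The third contribution comes from limit configurations that factor through a $\cptwobar$-cap around $E_j$; positive scalar curvature on $S_j$ controls these, and Lemma \ref{conjugate different sign} shows that the signs built into $\mu$ are chosen precisely so that conjugate pairs of $\spinc$ structures on $X_j$ contribute oppositely, so the weighted sum cancels exactly. The remaining identity is \eqref{chain homotopy}.

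The anti-chain-map property in part (2) arises from a parallel argument with one more parameter: one considers a $2$-parameter family of metrics on $X_j\cup_{Y_{j+2}}X_{j+1}$ obtained from $Q_j\times Q_{j+1}$, and reads off the codimension-one boundary of the associated moduli space. The various end contributions assemble into the identity $\check{\partial}(Y_j)\circ\Psi_j+\Psi_j\circ\check{\partial}(Y_j)=0$, up to the usual sign from the graded commutator; the ``corner'' contributions of this $2$-parameter boundary cancel in pairs by virtue of the already-established equality \eqref{chain homotopy}.

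The isomorphism claim is what I expect to be the main obstacle. The observation making it plausible is that the signs $(-1)^{\mu(\s)}$ are trivial over $\F_2$, so the mod-$2$ reduction $(\Psi_j)_{\F_2}$ coincides with the chain map of \cite{KMOS} and is therefore a quasi-isomorphism on $\widecheck{\HM}(Y_j;\F_2)$. The delicate step is upgrading this from $\F_2$ to $\Q$: a naive application of universal coefficients to the mapping cone of $\Psi_j$ only shows that its integer homology has no $2$-primary part, which is not by itself sufficient to conclude rational acyclicity. To get around this I would appeal to the internal structure of the KMOS argument, in which $\Psi_j$ is identified, up to chain homotopy and filtration-lowering error, with the identity map; because that identification is governed by the combinatorics of monopole trajectories rather than coefficient choices, the same filtered argument applied to the signed $\Z$-lift should yield that $\Psi_j$ is chain-homotopic to $\pm \id + N_{\Z}$ with $N_{\Z}$ strictly lowering the energy filtration. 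Tensoring with $\Q$ and using that the filtration is locally finite in each grading then gives the desired isomorphism on $\widecheck{\HM}(Y_j,[\s_0])_{\Q}$.
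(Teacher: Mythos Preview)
Your treatment of part (1) is essentially the paper's: analyze the ends of the one-parameter family $\bar Q_j$, use gluing at $T\to+\infty$ to produce $\check m(W_{j+1})\circ\check m(W_j)$, and cancel the $T\to-\infty$ contributions in conjugate pairs via Lemma~\ref{conjugate different sign}.

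For the anti-chain-map assertion in (2) you propose a two-parameter moduli-space argument, but this is unnecessary. The paper observes that the claim is a \emph{formal} consequence of (1): since each $\check m(W_k)$ is a chain map and $\check\partial\,\widecheck H(X_k)+\widecheck H(X_k)\,\check\partial=\check m(W_{k+1})\circ\check m(W_k)$, one computes directly
\[
\check\partial\,\Psi_j+\Psi_j\,\check\partial
=(\check\partial\,\widecheck H(X_{j+1})+\widecheck H(X_{j+1})\,\check\partial)\,\check m(W_j)
-\check m(W_{j+2})\,(\check\partial\,\widecheck H(X_j)+\widecheck H(X_j)\,\check\partial)=0,
\]
the two triple composites $\check m(W_{j+2})\check m(W_{j+1})\check m(W_j)$ cancelling. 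No further moduli-space analysis is needed.

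For the quasi-isomorphism claim your diagnosis of the difficulty is off. The paper's argument is short: by \cite[Lemma~5.11]{KMOS} the map $\Psi_j\otimes\F_2$ is a quasi-isomorphism, and then the universal coefficient theorem gives the same over $\Q$. Your objection---that $\F_2$-acyclicity of the mapping cone only rules out $2$-primary homology and does not force $\Q$-acyclicity---is valid for arbitrary complexes of free $\Z$-modules, but not here: the chain groups $\widecheck C(Y_j)$ are free and \emph{finitely generated in each grading}, so the homology of the cone is degreewise a finitely generated abelian group $G$ with $G\otimes\F_2=0$, hence $G=0$. Your proposed workaround (lift the KMOS filtration argument to $\Z$ and show $\Psi_j\simeq\pm\id+N$ with $N$ filtration-lowering) would certainly succeed and is indeed how \cite[Lemma~5.11]{KMOS} is proved over $\F_2$, but it duplicates that effort rather than leveraging it; the paper's one-line reduction is both correct and more economical.
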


\begin{proof}
(1) We can upgrade the proof of the mod 2 version \cite[Proposition 5.2]{KMOS} of this result to the integers as follows: Let $B_j$  be the (closed) normal neighborhood of $E_j$ and $Z_j$ the closure of $X_j\setminus B_j$. The family $Q_j$ of metrics on $X_j$ can be completed by adding the disjoint union $Z_j^{*}\cup B^{*}_j$ at $T=-\infty$ and the disjoint union $W^{*}_j\cup W^{*}_{j+1}$ at $T=+\infty$. Denote this new family by $\bar{Q}_j$. Given monopoles $\mathfrak{a}$ on $Y_j$ and $\mathfrak{b}$ on $Y_{j+2}$ and a $\spinc$ structure $\s$ on $X_j$, consider the parametrized moduli space
\[
\mathcal{M}(\mathfrak{a},(X^{*}_j,\s),\mathfrak{b})_{\bar{Q}}
\]
on the manifold $(X^{*}_j,\s)$ and construct its compactification $
\mathcal{M}^{+}(\mathfrak{a},(X^{*}_j,\s),\mathfrak{b})_{\bar{Q}}
$ by adding in broken trajectories. When this moduli space is one-dimensional, the number of its boundary points, counted with sign, must be zero. This gives us a boundary identity. By adding these boundary identities over all possible  $\s$ with sign $(-1)^{\mu(\s|_{W_j})+\mu(\s|_{W_{j+1}})}$, we obtain various summed up boundary identities for different $(\mathfrak{a},\mathfrak{b})$.

We claim that the points in $\mathcal{M}^{+}(\mathfrak{a},(X^{*}_j,\s),\mathfrak{b})_{g_{-\infty}}
$ do not contribute to these identities: As explained in the proof of \cite[Proposition 5.2]{KMOS}, these points always come in pairs of the form $(\gamma,\gamma')$ and $(\gamma,\gamma'')$, where $\gamma$ is a (possibly broken) solution over $Z^{*}_j$ and $\gamma'$ and $\gamma''$ are reducible solutions over $B^{*}_j$. Moreover, $\gamma'$ and $\gamma''$ correspond to conjugate spin$^{c}$ structures over $B^{*}_j$. Since $b^{+}_2(B_j)=b_1(B_j)=0$, all reducible monopoles over $B^*_j$ are positive. Therefore, $(\gamma,\gamma')$ and  $(\gamma,\gamma'')$ contribute to their respective boundary identities with the same sign. By Lemma \ref{conjugate different sign}, when we take the sum with the weights $(-1)^{\mu(\s|_{W_j})+\mu(\s|_{W_{j+1}})}$ these contributions cancel.

The rest of the proof proceed exactly as in \cite[Proposition 5.2]{KMOS}. In \cite[Lemma 26.2.3]{Kronheimer-Mrowka}, several similar boundary identities are obtained by considering one-dimensional moduli spaces of monopoles for a family of metrics parametrized by $[0,1]$. As a consequence of our claim, the summed up boundary identities we have here can be obtained from the identities there by removing terms corresponding to $T=0$. For example, we have
\begin{equation*}
\begin{split}
0= & \sum\limits_{\s\in \spinc(X_j,\s_0)} (-1)^{\mu(\s|_{W_{j+1}})+\mu(\s|_{W_j})} (-H^{o}_{o}(X_j,\s) \partial^{o}_{o}(Y_j)-\partial^{o}_{o}(Y_{j+2})H^{o}_{o}(X_j,\s)\\&+H^{u}_{o}(X_j,\s)\bar{\partial}^{u}_{s}(Y_j)\partial^{o}_{s}(Y_j)+\partial^{u}_{o}(Y_{j+2})\bar{H}^{s}_{u}(X_j,\s)\partial^{o}_{s}(Y_j)+\partial^{u}_{o}(Y_{j+2})\bar{\partial}^{s}_{u}(Y_{j+2})H^{o}_{s}(X_j,\s)\\&+m^{o}_{o}(W_{j+1},\s|_{W_{j+1}})m^{o}_{o}(W_j,\s|_{W_j})+m^{o}_{u}(W_{j+1},\s|_{W_{j+1}})\overline{m}^{s}_{u}(W_j,\s|_{W_j})\partial^{o}_{s}(Y_j)\\&-m^{u}_{o}(W_{j+1},\s|_{W_{j+1}})\overline{\partial}^{s}_{u}(Y_{j+1})m^{o}_{s}(W_j,\s|_{W_j})-\partial^{u}_{o}(Y_{j+2})\overline{m}^{s}_{u}(W_{j+1},\s|_{W_{j+1}})m^{o}_{s}(W_j,\s|_{W_j}))
\end{split}
\end{equation*}
Using these identities, formula (\ref{chain homotopy}) can be proved by an elementary (but cumbersome) calculation. 

(2) The fact that $\Psi_j$ is a chain map follows easily from (1). According to \cite[Lemma 5.11]{KMOS}, the map
\[
\Psi_j\,\otimes\,\id: \widecheck{C}(Y_j)\,\otimes\,\F_2 \rightarrow \widecheck{C}(Y_j)\,\otimes\,\F_2
\]
induces an isomorphism in homology. By the universal coefficient theorem, the map $\Psi_{\mathbb{Q}}$ also induces an isomorphism in homology. 
\end{proof}

The proof of Theorem \ref{exact triangle} is now completed by the following `triangle detection lemma'. The mod 2 version of this lemma appears as Lemma 4.2 in \cite{oz: spectral sequence}. The proof of the version at hand is essentially the same.\footnote{A version of this lemma over the integers can be found as Lemma 7.1 in \cite{KM:khovanov}. Our sign conventions here are slightly different.}

\begin{lem} 
For any $j\in \Z/3$, let $(C_j,\partial_j)$ be a chain complex over the rationals. Suppose that there are chain maps $f_j:C_j\rightarrow C_{j+1}$ satisfying the following two conditions:
\begin{itemize}
\item the composite $f_{j+1}\circ f_j$ is null-homotopic by a chain homotopy $H_j:C_j\rightarrow C_{j+2}$ with
\[
\partial H_j+H_j\partial=f_{j+1}\circ f_j, \quad\text{and} 
\]
\item the map 
\[
\psi_j=H_{j+1}\circ f_j-f_{j+2}\circ H_j: C_j\rightarrow C_j,
\]
which is an anti-chain map by the first condition, induces an isomorphism in homology.
\end{itemize}
Then the sequence
\[
\begin{CD}
\cdots @>>> H_{*}(C_j) @> {(f_j)_{*}} >> H_{*}(C_{j+1}) @> {(f_{j+1})_{*}} >> H_{*}(C_{j+2}) @>>> \cdots
\end{CD}
\]
is exact.
\end{lem}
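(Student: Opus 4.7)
Plan: The proof is a standard triangle-detection argument modeled on Lemma~4.2 of \cite{oz: spectral sequence}. The strategy is to reduce the claim to the quasi-isomorphism of a canonical comparison chain map, verify this via an auxiliary triple total complex, and extract the final isomorphism from the hypothesis on $\psi_j$.

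First I would define the chain map $\Phi_j \colon \operatorname{Cone}(f_j) \to C_{j+2}$ by the formula $\Phi_j(x,y) = f_{j+1}(x) + H_j(y)$; the identity $\partial H_j + H_j \partial = f_{j+1} f_j$ is precisely what makes $\Phi_j$ a chain map, and $\Phi_j$ restricted to the canonical summand $C_{j+1}$ of the cone equals $f_{j+1}$. Consequently, once each $(\Phi_j)_*$ is shown to be an isomorphism, substituting $H_*(C_{j+2})$ for $H_*(\operatorname{Cone}(f_j))$ in the mapping-cone long exact sequence of $f_j$ produces the desired cyclic exact sequence.

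The main step is showing each $\Phi_j$ is a quasi-isomorphism. I would form the triple complex
$$T_j := C_j[-2] \oplus C_{j+1}[-1] \oplus C_{j+2}$$
with differential $D(a,b,c) = (\partial a,\; f_j(a) - \partial b,\; H_j(a) - f_{j+1}(b) + \partial c)$, for which the null-homotopy relation yields $D^2 = 0$. This complex fits into two short exact sequences of chain complexes,
$$0 \to C_{j+2} \to T_j \to \operatorname{Cone}(f_j)[-1] \to 0 \quad\text{and}\quad 0 \to \operatorname{Cone}(f_{j+1}) \to T_j \to C_j[-2] \to 0,$$
whose connecting homomorphisms in homology are, up to sign, $\Phi_j$ itself and the map $\delta'_j \colon a \mapsto (H_j(a), -f_j(a))$ into $\operatorname{Cone}(f_{j+1}) = C_{j+2} \oplus C_{j+1}[-1]$. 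Hence $\Phi_j$ is a quasi-isomorphism if and only if $T_j$ is acyclic if and only if $\delta'_j$ is.

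The final step uses $\psi_j$. A direct computation from the formulas gives $\Phi_{j+1} \circ \delta'_j = -\psi_j$ as maps $C_j \to C_j$, so the hypothesis that $\psi_j$ is a quasi-isomorphism makes $\delta'_j$ split injective and $\Phi_{j+1}$ split surjective on homology. The resulting direct-sum decomposition $H_*(\operatorname{Cone}(f_{j+1})) \cong \operatorname{im}\delta'_j \oplus \ker \Phi_{j+1}$, combined with the two long exact sequences of $T_j$, identifies $\ker \Phi_j$ with $\ker \Phi_{j+1}$ up to a shift of one in grading; applying this cyclically through $j \in \mathbb{Z}/3$ makes $\ker \Phi_j$ invariant under a shift by three in grading, and the upper-boundedness of the grading on the monopole Floer chain complexes at hand forces $\ker \Phi_j = 0$. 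The main obstacle, as in the similar arguments of \cite{KMOS,KM:khovanov}, will be the careful bookkeeping of signs, gradings, and the non-zero degrees of $H_j$ and $\psi_j$ so that the crucial identity $\Phi_{j+1} \circ \delta'_j = -\psi_j$ emerges with exactly the right sign.
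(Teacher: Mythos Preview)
Your route---constructing $\Phi_j$, the iterated cone $T_j$, the second filtration with connecting map $\delta'_j$, and the identity $\Phi_{j+1}\circ\delta'_j=-\psi_j$---is precisely the argument of \cite[Lemma~4.2]{oz: spectral sequence} that the paper cites, and up through the conclusion that each $(\Phi_j)_*$ is surjective and each $(\delta'_j)_*$ is injective it is correct.

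The gap is the closing step. You cycle to get $\ker(\Phi_j)_*$ invariant under a grading shift by three, and then kill it by appealing to ``upper-boundedness of the grading on the monopole Floer chain complexes at hand.'' There are two problems. First, the lemma is stated for \emph{arbitrary} rational chain complexes with no finiteness hypothesis, so you are importing an assumption that is not present. Second, even in the intended application the complexes $\widecheck{C}(Y_j,[\s_0])$ are \emph{not} bounded above---the boundary-stable reducibles contribute an infinite tower going up in grading---so the claimed boundedness is false there too. (They are bounded below in each torsion $\spinc$ structure, so a periodicity argument could perhaps be salvaged for the application after tracking the shift direction carefully, but that still would not prove the lemma as stated.) To close the argument without any finiteness one should verify that the \emph{other} composite $\delta'_j\circ\Phi_{j+1}$ is also a quasi-isomorphism of $\operatorname{Cone}(f_{j+1})$: it is chain-homotopic, via a homotopy built from $H_{j+2}$, to a triangular endomorphism with $\pm\psi_{j+1}$ and $\pm\psi_{j+2}$ on the diagonal, hence a quasi-isomorphism by the five lemma. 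Then $(\delta'_j)_*$ and $(\Phi_{j+1})_*$ are mutually inverse, $T_j$ is acyclic, and the cycling is unnecessary.
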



\section{Skein relations up to constants}\label{S:skein}
Let $(L_0,L_1,L_2)$ be a skein triangle obtained by resolving a crossing $c$ of the link $L = L_2$ as shown in Figure \ref{F:resolutions}. 

 \begin{defi}\label{D:skein} The skein triangle $(L_0,L_1,L_2)$ will be called \emph{admissible} if 
\begin{enumerate}
\item $|L_2| = |L_0| + 1 = |L_1| + 1$, which is equivalent to saying that the resolved crossing $c$ is between two different components of $L_2$, and 
\item at least one of the links $L_0$, $L_1$, and $L_2$ is ramifiable.
\end{enumerate}
\end{defi}

\noindent
In Section \ref{S:proof} we defined a link $\bar L_2$ by changing the crossing $c$. Define $\bar L_0$ and $\bar L_1$ similarly using the cyclic symmetry as in Figure \ref{F:tetrahedra}.

\begin{lem}\label{at most one not ramifiable}
If $(L_0,L_1,L_2)$ is an admissible skein triangle then at most one of the six links $L_0$, $L_1$, $L_2$, $\bar L_0$, $\bar L_1$, $\bar L_2$ is not ramifiable. In particular, we have three more admissible skein triangles, $(\bar{L}_1,L_0,L_2)$, $(L_1,\bar{L}_0,L_2)$, and $(L_1,L_0,\bar{L}_2)$.
\end{lem}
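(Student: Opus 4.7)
The plan is to work in the double branched cover $M$ of the complement $S^3 \setminus \operatorname{int}(B)$ of a small 3-ball $B$ containing the crossing $c$, branched over the two strands of $L_2 \cap (S^3 \setminus \operatorname{int}(B))$. Then $M$ is a compact connected 3-manifold with $\partial M \cong T^2$, and every way of gluing a rational tangle back into $B$ corresponds to a Dehn filling of $M$ along a specific primitive slope on $\partial M$. All six of the links $L_0, L_1, L_2, \bar L_0, \bar L_1, \bar L_2$ are of this form, giving rise to six slopes $\gamma_0, \gamma_1, \gamma_2, \bar\gamma_0, \bar\gamma_1, \bar\gamma_2$ on $\partial M$ such that $\Sigma(L_j)$ (respectively $\Sigma(\bar L_j)$) is the Dehn filling of $M$ along $\gamma_j$ (respectively $\bar\gamma_j$).

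The next step is to identify these six slopes in $H_1(\partial M;\Q)$. By Conway's correspondence between rational tangles and elements of $\Q \cup \{\infty\}$, which is equivariant under the $SL(2,\Z)$-action on $H_1(T^2)$, the three tangles associated to $L_0, L_1, L_2$ correspond to three slopes pairwise connected by edges of the Farey tessellation; this is consistent with the pairwise intersection conditions already used in Section~\ref{S:triangle}. In suitable coordinates on $H_1(\partial M)$ these slopes can be taken to be $\{0, \infty, 1\}$. Reading Figure~\ref{F:tetrahedra}, the crossing-change operation sends each of $T_0, T_1, T_2$ to the tangle whose slope is obtained by reflecting the Farey triangle $\{0, \infty, 1\}$ across the edge opposite to the given vertex, yielding the slopes $\{2, 1/2, -1\}$ for $\bar\gamma_0, \bar\gamma_1, \bar\gamma_2$ in some order. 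In particular, all six slopes are pairwise distinct elements of $\Q \cup \{\infty\}$.

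The final ingredient is the standard ramifiability criterion for Dehn fillings. Since $\partial M$ is a torus, the ``half-lives, half-dies'' principle forces the kernel of $H_1(\partial M;\Q) \to H_1(M;\Q)$ to have rank exactly one. If $\dim H_1(M;\Q) \ge 2$, then no filling of $M$ yields a rational homology sphere, contradicting the admissibility hypothesis; hence $\dim H_1(M;\Q) = 1$, and the kernel is spanned by a single primitive class $\lambda$, the rational longitude of $M$. A Dehn filling along a slope $\mu$ produces a rational homology sphere if and only if $\mu$ is not proportional to $\lambda$, equivalently, if and only if the slope of $\mu$ in $\Q \cup \{\infty\}$ differs from that of $\lambda$.

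Since $\lambda$ corresponds to a single slope while the six slopes $\{\gamma_j, \bar\gamma_j\}$ are pairwise distinct, at most one of the six can equal $\lambda$, which proves the first assertion. For the ``in particular'' statement, the component-count condition of Definition~\ref{D:skein} is inherited by each of the three new triples because crossing changes preserve the number of components, so $|\bar L_j| = |L_j|$ for each $j$; combined with the already established bound on non-ramifiable links, each of $(\bar L_1, L_0, L_2)$, $(L_1, \bar L_0, L_2)$, $(L_1, L_0, \bar L_2)$ contains at least two ramifiable links and is therefore admissible. The main obstacle will be pinning down the precise identification of $\bar\gamma_0, \bar\gamma_1, \bar\gamma_2$ with the reflected Farey slopes: this requires carefully unwinding the cyclic-symmetry definition of $\bar L_0$ and $\bar L_1$ in Figure~\ref{F:tetrahedra} and tracking how it transports to an explicit tangle replacement in $B$; once this bookkeeping is done, the rest of the argument is essentially a one-line observation about slopes on a torus.
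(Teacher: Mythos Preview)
Your argument is correct and takes a genuinely different route from the paper's. The paper's proof is purely combinatorial: it cites \cite[Claim 3.2]{mullins} to rule out most configurations of two non-ramifiable links among the six, then applies the signed skein relation $\pm\det(L_0)\pm\det(L_1)\pm\det(L_2)=0$ to several of the available triangles to force $\det(L_0)=\det(L_1)=\det(L_2)=0$, contradicting admissibility. You instead pass to the double branched cover $M$ of the tangle complement and recognize all six links as Dehn fillings of $M$ along six \emph{pairwise distinct} slopes on $\partial M$ (the vertices of a Farey triangle together with its three reflected neighbors); since admissibility forces $b_1(M)=1$ and hence a unique rational longitude $\lambda$, and since a filling fails to be a rational homology sphere precisely when its slope equals $\lambda$, at most one of the six can fail. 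Your approach is more geometric and self-contained---no appeal to Mullins, no determinant skein relation---and it makes transparent why the bound is ``at most one''. The paper's approach is shorter on the page but relies on external input.

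One small point: for the ``in particular'' clause you check conditions (1) and (2) of Definition~\ref{D:skein}, but you should also say explicitly that each of the three new triples is a skein triangle to begin with. In your Farey picture this is immediate---reflecting a Farey triangle across one of its edges yields the adjacent Farey triangle, so $\{\bar\gamma_j,\gamma_{j+1},\gamma_{j+2}\}$ is again a triple of slopes at pairwise distance one---but it deserves a sentence.
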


\begin{proof}
By our definition of $\bar{L}$, all of the triples $(\bar{L}_1,L_0,L_2)$, $(L_1,\bar{L}_0,L_2)$, and $(L_1,L_0,\bar{L}_2)$ are skein triangles. Now suppose that two of the links $L_0$, $L_1$, $L_2$, $\bar L_0$, $\bar L_1$, and $\bar L_2$ are not ramifiable. By \cite[Claim 3.2]{mullins}, these two links have to be $\bar{L}_{j}$ and $\bar{L}_{j+1}$ for some $j\in \mathbb{Z}/3$. Recall that after putting suitable signs, the determinants of the three links in a skein triangle add up to zero. Therefore, from the skein triangles $(L_{j-1},L_{j+1},\bar{L}_{j})$ and $ (L_{j}, L_{j-1}, \bar{L}_{j+1})$ we deduce that $\det(L_{0})=\det(L_{1})=\det(L_{2})$. Since $(L_0,L_1,L_2)$ is a skein triangle, this implies that $\det(L_{0})=\det(L_{1})=\det(L_{2})=0$. This contradicts Condition (2) of Definition \ref{D:skein}.
\end{proof}

Let $(L_0,L_1,L_2)$ be an admissible skein triangle and $B\subset S^{3}$ a small ball containing the resolved crossing $c$. Denote by $Y$ the double branched cover of $S^{3}\setminus B$ with branch set $(S^{3}\setminus B)\cap L_2$ then $Y$ is a manifold with torus boundary $\partial Y$. 

\begin{defi}
A \emph{boundary framing} is a pair of oriented simple closed curves $(m,l)$ on $\partial Y$ such that
\begin{itemize}
\item[(1)] $\# (m\cap l)=-1$,
\item[(2)] $[l]=0\in H_1(Y;\mathbb{Q})$, and
\item[(3)] either $m$ or $l$ represents the zero element in $H_1(Y;\F_2)$.
\end{itemize}
\end{defi}

\noindent
One can easily check that a boundary framing always exists. Once a boundary framing $(m,l)$ is fixed, we will define the following numbers:
\begin{itemize}
\item The divisibility of the longitude
\[
t(Y) = \min\, \{\,a\in \Z\mid a > 0\;\text{ and }\; a\cdot [l]=0\in H_1 (Y;\Z)\,\};
\]
\item Set $s(Y)=0$ if $l$ represents the zero element in $H_1(Y;\F_2)$ and set $s(Y)=1$ otherwise;
\item The double branched cover $Y_j = \Sigma(L_j)$, $j \in \Z/3$, is obtained from $Y$ by attaching a solid torus along $\partial Y$, matching the meridian with a simple closed curve $\gamma_j$ on $\partial Y$. We will orient the curves $\gamma_j$ by the following two conditions:
\begin{itemize}
\item[(a)] $\# (\gamma_0 \cap \gamma_1) = \# (\gamma_1 \cap \gamma_2) = \# (\gamma_2 \cap \gamma_0) = -1$, where the algebraic intersection numbers $\#$ are calculated with respect to the boundary orientation on $\partial Y$ (see Ozsv{\'a}th--Szab{\'o} \cite[Section 2]{oz: spectral sequence}), and
\item[(b)] $\# (\gamma_2 \cap m)>0$ when $s(Y)=0$ and $\# (\gamma_2 \cap l)>0$ when $s(Y)=1$ (this makes sense because $\gamma_2$ represents zero in $H_1 (Y;\F_2)$ by Definition \ref{D:skein} (1)).
\end{itemize}
Having oriented the curves $\gamma_j$ this way, we define the integers $(p_j,q_j)$ by the equality $[\gamma_j] = p_j\cdot [m] + q_j\cdot [l]$, which holds in $H_1(\partial Y;\Z)$.
\end{itemize}

\begin{defi}\label{D:res}
Given a boundary framing $(m,l)$, we define the \emph{resolution data} for the admissible skein triangle $(L_0,L_1,L_2)$ as the six-tuple
\[
(t(Y),s(Y),(p_0,q_0),(p_1,q_1))
\]
(we should note that $(p_2,q_2)=(-p_0-p_1,-q_0-q_1)$ since $[\gamma_0]+[\gamma_1]+[\gamma_2]=0\in H_1(Y;\Z)$).
\end{defi}

The main goal of this section is to establish the following `skein relations up to universal constants'. We will show later in Section \ref{S:universal} that these universal constants actually vanish.

\begin{thm} \label{skein with error term}
Let $(L_0,L_1,L_2)$ be an admissible skein triangle, and fix a boundary framing $(m,l)$ on the boundary $\partial Y$ of the manifold $Y$ as above. Then
\begin{enumerate}
\item if all of the links $L_0,L_1,L_2$ are ramifiable,
\begin{equation}\label{skein with error term 1}
2\chi(L_2)=\chi(L_0)+\chi(L_1)+C(t(Y),s(Y),(p_0,q_0),(p_1,q_1));
\end{equation}
\item if $L_j$ is not ramifiable for some $j\in \Z/3$ then $L_{j\pm 1}$ and $\bar{L}_{j\pm 1}$ are all ramifiable and, in addition,
\begin{align}
& \chi(\bar{L}_{j-1})=\chi(L_{j-1})+C^{-}_j(t(Y),s(Y),(p_0,q_0),(p_1,q_1))\;\;\text{and}\label{skein with error term 2} \\
& \chi(\bar{L}_{j+1})=\chi(L_{j+1})+C^{+}_j(t(Y),s(Y),(p_0,q_0),(p_1,q_1)),\label{skein with error term 3}
\end{align}
\end{enumerate}
where $C(t(Y),s(Y),(p_0,q_0),(p_1,q_1))$ and $C^{\pm}_j(t(Y),s(Y),(p_0,q_0),(p_1,q_1))$ are certain universal constants depending only on the resolution data.
\end{thm}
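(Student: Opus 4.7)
The strategy is to combine the signature skein relation of Lemma \ref{skein for signature} with a Lefschetz-type identity extracted from the $\tau$-equivariant exact triangle of Theorem \ref{exact triangle}, while carefully tracking the contributions of the reducible towers (which are controlled by the Fr\o yshov invariants).

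For part (1), I would first observe that the covering involutions on $Y_j=\Sigma(L_j)$ extend to involutions on the surgery cobordisms $W_j$, since each $W_j$ is itself the double branched cover of $[0,1]\times S^3$ along the cobordism surface $S_j$. Thus $\tau$ acts on the entire exact triangle of Theorem \ref{exact triangle} and commutes with the maps $F_{W_j}$, for every self-conjugate $\spinc$ structure $\s_0$ on the cut-open piece $Y$. Since each $\widecheck{\HM}(Y_j,[\s_0])$ is infinite dimensional, I would next fix a grading cutoff $N$ large enough that all of $\hmred(Y_j,[\s_0])$ sits in degrees $\leq N$ and that $\widecheck{\HM}(Y_j,[\s_0])$ in degrees $>N$ consists only of the reducible tower. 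The truncated sequence fails to be exact only near the cutoff, and this failure is controlled explicitly by the degree shifts of the $F_{W_j}$ and by the shape of the towers. An alternating-sum computation of Lefschetz numbers of $\tau$ on the truncated groups therefore yields an identity of the form
\begin{equation*}
\Lef(\tau_*|_{\widecheck{\HM}^{\leq N}(Y_0,[\s_0])}) - \Lef(\tau_*|_{\widecheck{\HM}^{\leq N}(Y_1,[\s_0])}) + \Lef(\tau_*|_{\widecheck{\HM}^{\leq N}(Y_2,[\s_0])}) = E(\s_0),
\end{equation*}
with $E(\s_0)$ a boundary correction depending only on the cobordisms and the tower behavior near degree $N$.

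Next I would split each truncated Lefschetz number as $\Lef(\tau_*|_{\hmred(Y_j,[\s_0])})$ plus a contribution from the finite piece of the tower below degree $N$; the latter is governed by the Fr\o yshov invariant $h(Y_j,\s_0^c)$ together with the sign of the $\tau$-action on the tower, which I can pin down from the self-conjugacy of $\s_0$. Summing the resulting identities over all self-conjugate $\s_0$ (equivalently, via Remark \ref{R:sc} and the Turaev correspondence, over all spin structures or quasi-orientations on $\Sigma(L_j)$), combining with Lemma \ref{skein for signature} to handle the Murasugi signature terms, and regrouping according to the definition of $\chi$, should produce formula (\ref{skein with error term 1}): the link-specific data assembles into $\chi(L_0),\chi(L_1),\chi(L_2)$, and what is left over is a constant that depends only on the combinatorial data $(t(Y),s(Y),(p_0,q_0),(p_1,q_1))$ entering the truncation and the tower sign computation.

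For part (2), when some $L_j$ is not ramifiable one cannot directly form $\chi(L_j)$, but Lemma \ref{at most one not ramifiable} guarantees that the other five links in the associated skein tetrahedron are ramifiable. Following Mullins's strategy, I would set up two exact triangles sharing the slot $\widecheck{\HM}(\Sigma(L_j),[\s_0])$ — the original triangle for $(L_0,L_1,L_2)$ and a crossing-changed companion such as $(L_1,\bar L_0,L_2)$ — run the part (1) argument on each, and subtract. The uncomputable contribution of $\widecheck{\HM}(\Sigma(L_j))$ cancels, leaving an identity relating $\chi(L_{j\pm 1})$ and $\chi(\bar L_{j\pm 1})$ up to universal constants $C_j^\pm$ that again depend only on the resolution data. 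The main obstacle in both parts is the sign bookkeeping: the function $\mu$ of Section \ref{S:triangle} must be tracked through the truncation and through the two-triangle cancellation, and one must verify that all link-dependent quantities really do reassemble into $\chi$ so that the residual constants depend only on $(t(Y),s(Y),(p_0,q_0),(p_1,q_1))$ and not on any choice of base $\spinc$ structure or other link-specific input.
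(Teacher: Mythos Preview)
Your overall architecture matches the paper's: set up the $\tau$-equivariant exact triangle, truncate, extract an alternating-sum Lefschetz identity, split off the tower contributions as Fr\o yshov terms, sum over self-conjugate $\s_0$, and combine with Lemma~\ref{skein for signature}. Part (2) via the two-triangle subtraction is also exactly the paper's move.

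There are, however, two points where the proposal is too optimistic. First, $\tau_*$ does \emph{not} literally commute with $F_{W_j}$: by Proposition~\ref{conjugation on mu} one has $\mu(\s)\neq\mu(\bar\s)$ on $W_0$, so conjugation flips the sign of the $W_0$ summands. The paper handles this by twisting $\tau_*$ to $f_j=(-1)^{c_j}\tau_*$ with $c_j$ built from $b_2^+(W_j)$ and $b_1(Y_j)$ (Lemma~\ref{exact triangle compatible with covering transformation}); these signs then reappear in the alternating sum (Lemma~\ref{lefs on chain map equals lefs on homology}) and are essential to matching the signature side. Your ``$+,-,+$'' pattern is not the correct one in general.

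Second, and more seriously, the step ``what is left over is a constant that depends only on $(t(Y),s(Y),(p_0,q_0),(p_1,q_1))$'' is the entire content of the theorem, and your proposal does not supply a mechanism for it. The residual term $E(\s_0)$ and the tower corrections are computed from the $\spinc$ structures on $Y_j$ and $W_j$, their restriction maps, their Chern numbers, and the grading shifts of $F_{W_j}$; a priori these depend on the specific link. The paper's substantive work (all of Section~5.2) is to package this data into a ``$\spinc$ system'' $\SS((L_0,L_1,L_2),\s_0)$ and to prove (Theorem~\ref{same resolution data give equivalent spin-c system}, via the explicit cohomological computations of Lemmas~\ref{relative homology}--\ref{lemma self-conjugate spinc extension} and Proposition~\ref{same resolution data gives same homology}) that two admissible skein triangles with the same resolution data have equivalent $\spinc$ systems. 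Only then can one conclude (Lemma~\ref{quotient chain complex isomorphic}, Corollary~\ref{equivalent spin-c systems have same lefs number}) that the quotient complexes $\mathfrak{C}^{>}(N,\tilde\s)$---and hence the residual constants---agree. Without this comparison theorem you have a skein relation with a link-dependent error, not a universal one, and the induction in Section~\ref{S:universal} cannot get started.
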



\subsection{The action of covering translations}
We will use $\tau_M$ to denote covering translations on various double branched covers $M$ such as $M = Y_j$ or $M = W_j$.

\begin{lem}\label{homology of cobordism}
Let $(L_0,L_1,L_2)$ be an admissible skein triangle. Then exactly one of the following two options is realized:
\begin{enumerate}
\item if all $L_j$ are ramifiable, there exists a unique $n\in \Z/3$ such that $|p_{n}|>|p_{n\pm 1}|$ and, in addition,
\begin{itemize}
\item $b_1(Y_j)=b_1(W_j) = 0$ for all $j\in \Z/3$,
\item $b^+_2 (W_{n+1}) = 1$, $b^+_2 (W_{n-1}) = b^+_2 (W_n) = 0$, and
\item $b^-_2 (W_{n+1}) =0$, $b^-_2(W_{n-1}) = b^-_2(W_n)=1$.
\end{itemize}
\item if $L_{n}$ is not ramifiable for some $n\in \Z/3$ then
\begin{itemize}
\item $b_1(Y_{n})=1$ and $b_1(Y_{n-1}) = b_1(Y_{n+1}) = 0$,
\item $b_1(W_j)=b^{+}_2(W_j)=0$ for all $j\in \Z/3$, and
\item $b^{-}_2 (W_{n+1}) = 1$ and $b^{-}_2 (W_{n-1}) = b^{-}_2 (W_n) = 0$.
\end{itemize}
\end{enumerate}
\end{lem}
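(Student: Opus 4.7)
The lemma is a Mayer--Vietoris and long-exact-sequence calculation, organised around the dictionary ``$L_j$ ramifiable $\iff p_j \neq 0$''. I would establish this by applying Mayer--Vietoris to $Y_j = Y \cup_{T^2} V_j$ and using that $[l]=0$ in $H_1(Y;\Q)$ while $[m]$ generates $H_1(Y;\Q)\cong\Q$, so rationally $[\gamma_j]\mapsto p_j[m]$; hence $b_1(Y_j)=0$ iff $p_j\neq 0$. The relation $[\gamma_0]+[\gamma_1]+[\gamma_2]=0$ in $H_1(\partial Y;\Z)$ then forces $p_0+p_1+p_2=0$, and admissibility guarantees at least one $p_j\neq 0$, leaving exactly two possibilities: all $p_j\neq 0$, with exactly one $p_n$ of opposite sign so that $|p_n|=|p_{n-1}|+|p_{n+1}|$ is the unique maximum; or exactly one $p_n=0$, in which case $p_{n-1}=-p_{n+1}$ are nonzero and $L_{n\pm 1}$ are both ramifiable.

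For the Betti numbers of $W_j$ I would view $W_j=(Y_j\times I)\cup h$, where $h$ is a single $2$-handle attached along $\gamma_{j+1}\subset Y_j$ with framing $\gamma_j$. The long exact sequence of the pair reduces rationally to
\begin{equation*}
0 \to H_2(Y_j;\Q) \to H_2(W_j;\Q) \to \Q \xrightarrow{\partial} H_1(Y_j;\Q) \to H_1(W_j;\Q) \to 0,
\end{equation*}
with $\partial$ sending the $2$-handle generator to $[\gamma_{j+1}]$. In case~(1) all $Y_j$ are rational homology spheres, $\partial=0$ trivially, and one reads off $b_1(W_j)=0$, $b_2(W_j)=1$. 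In case~(2) one inspects $W_{n-1}$, $W_n$, $W_{n+1}$ separately: for $W_{n\pm 1}$ the bottom boundary is a rational homology sphere and $\partial=0$, while for $W_n$ the class $[\gamma_{n+1}]=p_{n+1}[m]$ generates $H_1(Y_n;\Q)\cong\Q$, forcing $\partial$ to be an isomorphism; in every subcase one again gets $b_1(W_j)=0$ and $b_2(W_j)=1$.

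The signature computation rests on whether the generator of $H_2(W_j;\Q)$ is carried by $\partial W_j$. Dualising the long exact sequence above, or equivalently running the analogous sequence for the pair $(W_j,Y_{j+1})$, shows that when exactly one of $Y_j,Y_{j+1}$ fails to be a rational homology sphere, the generator of $H_2(W_j;\Q)$ is the image of $H_2$ of that non-$\Q$HS boundary end; such a class has self-intersection zero, giving $b_2^{\pm}(W_{n-1})=b_2^{\pm}(W_n)=0$ in case~(2). The decomposition $W_j\cup_{Y_{j+1}}W_{j+1}=(-W_{j+2})\,\#\,\cptwobar$ from Lemma~\ref{spin cobordism}, combined with Novikov additivity of signatures across closed $3$-manifolds, yields
\begin{equation*}
\sigma(W_0)+\sigma(W_1)+\sigma(W_2)=-1,
\end{equation*}
which in case~(2) immediately forces $\sigma(W_{n+1})=-1$ and hence $b_2^{-}(W_{n+1})=1$, completing that case.

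In case~(1) this signature identity only narrows the triple $(\sigma(W_0),\sigma(W_1),\sigma(W_2))$ to a permutation of $(-1,-1,+1)$ or $(-1,0,0)$, and the main difficulty is ruling out the degenerate pattern and placing the $+1$ at $W_{n+1}$. I would do this by an explicit framing computation: represent the generator of $H_2(W_j;\Q)$ by capping off a rational Seifert surface for $\gamma_{j+1}$ in $Y_j$ with an appropriate multiple of the core disk of the $2$-handle, and evaluate its self-intersection as the defect between the $2$-handle framing $\gamma_j$ and the rational Seifert framing of $\gamma_{j+1}$ on $\partial\nu(\gamma_{j+1})$. Expanding the rational Seifert framing in the $(m,l)$ basis via the divisibility $t(Y)$ and the coordinates $(p_k,q_k)$ expresses this defect as a rational number whose sign is governed by the sign of the distinguished $p_n$, pinning down $W_{n+1}$ as the unique positive-signature cobordism. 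The hardest step is the sign bookkeeping in this framing calculation, which must be carried out carefully in light of the orientation conventions on $\gamma_j$ and $\partial Y$.
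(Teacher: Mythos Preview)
Your approach is correct and lands on the same key computation as the paper, but the organisation differs. The paper's proof is terse: the $b_1$ claims come from Mayer--Vietoris, and \emph{all} of the $b_2^{\pm}$ claims are read off directly from the explicit cup-product formula proved later as Lemma~\ref{l: cup product on W}, namely $(\partial_{W_j}(a\hat m + b\hat l))^2 = a^2/(p_j p_{j+1})$. That formula immediately gives $\operatorname{sign}\sigma(W_j)=\operatorname{sign}(p_j p_{j+1})$ when both $p_j,p_{j+1}\neq 0$, and forces the form to vanish when one of them is zero, so both cases fall out at once without appealing to Novikov additivity. Your route---handle long exact sequences for $b_2$, boundary-image arguments for the degenerate forms, and Novikov additivity across $W_j\cup W_{j+1}=(-W_{j+2})\,\#\,\cptwobar$ to constrain the signature triple---is valid and conceptually pleasant, but your closing ``framing computation'' is exactly the content of the paper's cup-product lemma, so you end up doing that calculation anyway. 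One simplification you missed: in case~(1) all $Y_j$ are rational homology spheres, hence each intersection form on $W_j$ is nondegenerate of rank one, so $\sigma(W_j)=\pm 1$ and the pattern $(-1,0,0)$ is excluded a priori; only the placement of the $+1$ remains, and for that the framing (equivalently cup-product) calculation is indeed required.
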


\begin{proof}
The claims about $b_1$ follow easily from the Mayer--Vietoris sequence. As for the $b^{\pm}_2$ claims, note that the inequality $|p_n| > |p_{n \pm 1}|$ is equivalent to $p_n$ having an opposite sign to both $p_{n-1}$ and $p_{n+1}$. The result then follows from the explicit calculation of the cup-product structure on $H^2(W_j)$ in Lemma \ref{l: cup product on W}. 
\end{proof}

\begin{lem}\label{conjugation action}
The covering translations act as follows:
\begin{enumerate}
\item $\tau^{*}_{Y}(a)=-a$ for any $a\in H^2(Y;\Z)$;
\item $\tau^{*}_{Y_j}(b)=-b$ for any $b\in H^2(Y_j;\Z)$;
\item $\tau^{*}_{W_j}(c)=-c$ for any $c\in H^2(W_j;\Z)$;
\item $\tau^{*}_{Y_j}(d)=-d$ for any $d\in H_1(Y_j;\Z)$;
\item $\tau^{*}_{W_j}(e)=-e$ for any $e\in H_1(W_j;\Z)$;
\item $\tau^{*}_{Y}(\s)=\bar{\s}$ for any $\s\in \spinc(Y)$;
\item $\tau^{*}_{Y_j}(\s)=\bar{\s}$ for any $\s\in \spinc
(Y_j)$;
\item $\tau^{*}_{X_j}(\s)=\bar{\s}$ for any $\s\in \spinc(X_j)$.
\end{enumerate}
\end{lem}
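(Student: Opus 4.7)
The plan is to treat all eight statements with a single unifying device, the standard transfer formula for double covers, and then to handle two ``residual'' issues: the $2$-torsion part of the integral (co)homology statements, and the passage from $H^2$ to spin$^c$ structures. Recall the transfer formula: if $p:\widetilde M\to M$ is a double (branched) cover with deck involution $\tau$ and $R$ is a coefficient ring in which $2$ is invertible, then $H^k(\widetilde M;R)$ decomposes into $(\pm 1)$-eigenspaces of $\tau^*$, with the $+1$-eigenspace equal to $p^*H^k(M;R)$. In particular $\tau^*=-\id$ on $H^k(\widetilde M;R)$ whenever $H^k(M;R)=0$. Each manifold in the lemma is a double branched cover of a space with vanishing $H^1$ and $H^2$: $Y$ covers the 3-ball $S^3\setminus B$, each $Y_j$ covers $S^3$, each $W_j$ covers $I\times S^3$, and $X_j=W_j\cup W_{j+1}$ likewise covers a cylinder. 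This immediately gives (1)--(5) with $\Q$ coefficients, and hence also on the free and odd-torsion summands of the integral (co)homology.

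To handle the $2$-torsion in (1)--(5), I will use explicit geometric presentations. For $Y_j=\Sigma(L_j)$, the classical Seifert-matrix presentation $H_1(\Sigma(L_j);\Z)=\coker(V+V^T)$ realizes the deck transformation as $-\id$ on the Seifert basis, which therefore induces $-\id$ on the entire cokernel, including its $2$-torsion. An analogous presentation using a tangle Seifert matrix treats $Y$. For $W_j$, I will use its handle description as a surgery cobordism lifted from $[0,1]\times S^3$: each downstairs $2$-handle lifts either to a pair of $2$-handles swapped by $\tau$ or to a single $\tau$-invariant $2$-handle whose cocore class is sent to its negative, from which $\tau^*=-\id$ on $H_2(W_j;\Z)$ can be read off directly. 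The cohomological statements (1)--(3) and homological statements (4)--(5) are then interchanged by Poincar\'e--Lefschetz duality and the universal coefficient theorem.

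For the spin$^c$ statements (6)--(8), first note that (1)--(3) give
\[
c_1(\tau^*\s)\;=\;\tau^*c_1(\s)\;=\;-c_1(\s)\;=\;c_1(\bar\s),
\]
so $\delta:=\tau^*\s-\bar\s\in H^2$ is automatically a $2$-torsion class. To show $\delta=0$ on $Y$ and each $Y_j$, fix a self-conjugate spin$^c$ structure $\s_0$ coming from a spin structure $\sigma$ via Turaev's bijection $\spin(\Sigma(L))\cong Q(L)$; since the quasi-orientation corresponding to $\sigma$ lives on $L$ downstairs, $\tau^*\sigma=\sigma$ and hence $\tau^*\s_0=\s_0$. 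Writing $\s=\s_0+\alpha$ with $\alpha\in H^2$, the identities $\bar\s=\s-c_1(\s)=\s_0-\alpha$ and $\tau^*\alpha=-\alpha$ combine to give $\tau^*\s=\s_0-\alpha=\bar\s$. For $X_j=(-W_{j+2})\#\cptwobar$, which carries no self-conjugate spin$^c$ structure, I will argue instead that $\delta$ restricts to zero on both $W_j$ and $W_{j+1}$ by (3), and use the Mayer--Vietoris sequence for $X_j=W_j\cup_{Y_{j+1}}W_{j+1}$ (together with the vanishing of $H^1(Y_{j+1})$ when $Y_{j+1}$ is a rational homology sphere) to deduce the injectivity of $H^2(X_j;\Z)\to H^2(W_j;\Z)\oplus H^2(W_{j+1};\Z)$, forcing $\delta=0$.

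The main obstacle will be the $2$-torsion portion of the integral statements for the $4$-manifold $W_j$, where the handle-theoretic picture is less classical than the Seifert-matrix computation for $3$-manifolds, and the $X_j$ case in (8) when the middle boundary is not ramifiable, which requires a short case analysis using the cyclic symmetry of the skein triangle and the explicit calculation of $H^1(Y_{j+1})$ in that degenerate situation.
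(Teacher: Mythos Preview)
Your treatment of (6) and (7) via Turaev's correspondence matches the paper, and the transfer formula is a reasonable device for the rational and odd-torsion parts of (1)--(5). Two steps, however, contain real gaps.

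For (3), the base $[0,1]\times S^3$ carries no $2$-handles to lift; the single $2$-handle in $W_j$ arises from the branched-cover construction over the band in $S_j$. Even granting $\tau_*=-\id$ on $H_2(W_j;\Z)\cong\Z$, the torsion in $H^2(W_j;\Z)$ comes from $\operatorname{Ext}(H_1(W_j),\Z)$, and your plan does not say how to obtain (5). The paper deduces (5) from (4) via the surjection $H_1(\partial W_j)\twoheadrightarrow H_1(W_j)$, and then proves (3) by Mayer--Vietoris for $W_j=(I\times Y_j)\cup\text{($2$-handle)}$: given (2), one has $\tau^*\alpha+\alpha=\partial\beta$ for some $\beta\in H^1(S^1\times D^2)$; since $\tau$ reflects the attaching circle, $\tau^*\beta=-\beta$, while naturality gives $\partial(\tau^*\beta)=\partial\beta$; injectivity of $\partial$ then forces $\beta=0$. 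This works over $\Z$ with no separate $2$-torsion bookkeeping. Incidentally, for (1) and (4) the paper also avoids the Seifert-matrix detour: any loop $\tilde\alpha$ in $Y_j$ projects to a nullhomotopic loop in $S^3$, and lifting the bounding disk shows directly that $[\tilde\alpha]+[\tau\tilde\alpha]=0$ in $H_1(Y_j;\Z)$.

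For (8), citing (3) is not enough: (3) gives $\tau^*=-\id$ on $H^2(W_j)$, which only yields $2(\delta|_{W_j})=0$, not $\delta|_{W_j}=0$. What you actually need is the spin$^c$ statement on $W_j$ itself. The paper proves exactly that: by (7), $\tau^*\s-\bar\s$ lies in $\ker\bigl(H^2(W_j)\to H^2(Y_j)\bigr)$, and the Mayer--Vietoris sequence shows this kernel is torsion-free, forcing the $2$-torsion class to vanish. This bypasses your injectivity step $H^2(X_j)\hookrightarrow H^2(W_j)\oplus H^2(W_{j+1})$ altogether, and with it the case analysis you flag when $b_1(Y_{j+1})=1$.
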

\begin{proof}
Since $H_2(Y;\Z)=0$, the universal coefficient theorem provides a natural identification of $H^2 (Y;\Z)$ with the torsion part of $H_1(Y;\Z)$. Therefore, to prove (1), we only need to show that $\tau$ acts on $H_1(Y;\Z)$ as $-1$. To see this, let $\tilde{\alpha}$ be a loop in $Y$ which does not intersect the branch locus. The image of $\tilde{\alpha}$ under the covering map $Y\rightarrow S^{3}\setminus B$ will be called $\alpha$. Since $S^{3}\setminus B$ is contractible, there is a continuous map $f: D^2\rightarrow S^{3}\setminus B$ with $f(\partial D^2)=\alpha$. Lift $f$ to a map $\tilde{f}: F\rightarrow Y$, where $F$ is a double branched cover of $D^2$. If $\tilde{f}(\partial F)$ equals $\tilde{\alpha}$ or $\tau(\tilde{\alpha})$, then $\tilde{\alpha}$ is null-homologous. Otherwise, we have $\tilde{f}(\partial F)=\tilde{\alpha}+\tau(\tilde{\alpha})$, which implies $[\tilde{\alpha}]=-[\tau(\tilde{\alpha})]$ and proves (1). Claim (4) can be proved similarly, while (2) is just the Poincar\'{e} dual of (4), and (5) follows from (4) and the fact that $H_1(\partial W_j;\Z)\rightarrow H_1(W_j;\Z)$ is onto.

We will next prove (3) under the assumption that $b_1(Y_j) = 0$ (otherwise, $b_1 (Y_{j+1}) = 0$, and the argument is similar). Using the Mayer--Vietoris sequence for the decomposition of $W_j$ into $I \times Y_j$ and the 2-handle, we obtain an exact sequence 
\[
\begin{CD}
0 @>>> H^1(S^1\times D^2;\Z) @> {\partial} >> H^2(W_j;\Z) @> {i^{*}} >> H^2(I \times Y_j;\Z) @>>> \cdots
\end{CD}
\]
The maps induced by $\tau$ on the cohomology groups in this sequence are compatible with $\partial$ and $i^{*}$. For any element $\alpha\in H^2(W_j;\Z)$, it follows from Claim (2) that $i^{*}(\tau^{*}_{W_j}(\alpha)+\alpha)=\tau^{*}_{Y_j}(i^{*}(\alpha))+i^{*}(\alpha)=0$. Therefore, there exists $\beta\in H^1(S^1 \times D^2;\Z)$ such that $\partial \beta= \tau^{*}_{W_j}(\alpha)+\alpha$. Notice that 
\[
\partial(-\beta)=\partial (\tau^{*}_{S^1\times D^2}(\beta))=\tau^{*}_{W_j}(\tau^{*}_{W_j}(\alpha)+\alpha)=\tau^{*}_{W_j}(\alpha)+\alpha=\partial \beta.
\]
Since $\partial$ is injective, $\beta$ must vanish and therefore $\tau^{*}_{W_j}(\alpha)+\alpha = 0$ as claimed in (3).

Let us now turn our attention to the action of $\tau^*$ on $\spinc$ structures, starting with (7). Turaev \cite[Section 2.2]{Turaev notes} established a natural one-to-one correspondence between spin structures on $Y_j$ and quasi-orientations on $L_j$. Using this correspondence, one can easily see that all the spin structures on $Y_j$ are invariant under $\tau_{Y_j}$. This proves (7) for self-conjugate $\spinc$ structures. Since any $\spinc$ structure can be written as $\s+h$, with $\s$  self-conjugate and $h\in H^2(Y;\Z)$, claim (7) follows from (2). To prove (6), consider $\s_0=\s_1|_{Y}$ with $\s_1$ a $\spinc$ structure on $Y_1$. By (7) we have $\tau^{*}_{Y}(\s_0)=\bar{\s}_0$. Then we express a general $\spinc$ structure as $\s_0+h$ with $h\in H^2(Y;\Z)$ and use (1).
We are left with (8). Take any $\s\in \spinc(W_j)$. It follows from (7) that
\[
\tau^{*}_{W_j}(\s)|_{Y_j}=\tau^{*}_{Y_j}(\s|_{Y_j})=\bar{\s}|_{Y_j}.
\]
Therefore, $\tau^{*}_{W_j}(\s)=\bar{\s}+h$ for some $h\in \ker(H^2(W_j;\Z)\rightarrow H^2(Y_j;\Z)).$ Using (3), we conclude that $c_1(\tau^{*}_{W_j}\s)=-c_1(\s)=c_1(\bar{\s})$ and therefore $2h=0$. However, it follows from the Mayer--Vietoris exact sequence that $\ker(H^2(W_j;\Z)\rightarrow H^2(Y_j;\Z))$ is torsion free. Therefore, $h=0$, and claim (8) is proved. 
\end{proof}

\subsection{$\text{Spin}^c$ systems and their equivalence}
In this section we introduce the concept of a $\spinc$ system on a skein triangle and relate the $\spinc$ systems corresponding to admissible skein triangles with the same resolution data.

\begin{defi}\label{spin-c system}
Let $(L_0,L_1,L_2)$ be a skein triangle and $\s_0$ a fixed self-conjugate $\spinc$ structure on $Y$. A $\spinc$ \emph{system} $\SS((L_0,L_1,L_2),\s_0)$ is the set
\[
\mathop{\bigcup}\limits_{j\in \Z/3}\; (\spinc(W_j,\s_0)\, \cup\, \spinc(Y_j,\s_0))
\]
endowed with the following additional structure:
\begin{enumerate}
\item the restriction map $r_{j,j+1}: \spinc(W_j,\s_0)\longrightarrow \spinc(Y_j,\s_0)\times \spinc(Y_{j+1},\s_0)$ for every $j \in \Z/3$,
\item for every $\s\in \spinc(W_j,\s_0)$ with $r_{j,j+1}(\s)$ torsion, the Chern number $c_1 (\s)^2 \in \Q$ (see \eqref{E:cup}), and 
\item the involution $\tau^{*}_{W_j}$ on $\spinc(W_j,\s_0)$ and the involution $\tau^{*}_{Y_j}$ on $\spinc(Y_j,\s_0)$. By Lemma \ref{conjugation action}, these act by conjugation on the set of $\spinc$ structures.
\end{enumerate}
\end{defi}

\begin{defi}\label{equivalent spin-c system}
Two $\spinc$ systems $\SS((L_0,L_1,L_2),\s_0)$ and $\SS((L'_0,L'_1,L'_2),\s'_0)$ are called \emph{equivalent} if $\sign (W_j) = \sign (W'_j)$ for all $j \in \Z/3$ and there exist bijections
\[
\hat{\theta}_j: \spinc(W_j,\s_0)\rightarrow \spinc(W'_j,\s'_0)\quad\text{and}\quad
\theta_j: \spinc(Y_j,\s_0)\rightarrow \spinc(Y'_j,\s'_0)
\]
which are compatible with the additional structures (1), (2), and (3) in the obvious way. We use $\sim$ to denote this equivalence relation.
\end{defi}

\begin{thm}\label{same resolution data give equivalent spin-c system}
Let $(L_0,L_1,L_2)$ and $(L'_0,L'_1,L'_2)$ be admissible skein triangles. Suppose that, for a suitable choice of  boundary framings, the resolution data of $(L_0,L_1,L_2)$ matches that of $(L'_0,L'_1,L'_2)$. Then there exist disjoint decompositions
\[
\scspinc(Y)=A_0\cup A_1\quad\text{and}\quad \scspinc(Y')=A'_0\cup A'_1
\]
with the following properties:
\begin{itemize}
\item $|A_0| = |A_1|$ and $|A'_0| = |A'_1|$ (the vertical bars stand for the cardinality of a set), and
\item for $i=0,1$ and any $\s_0\in A_i$ and $\s'_0\in A'_i$, we have
\[
\SS((L_0,L_1,L_2),\s_0)\sim \SS((L'_0,L'_1,L'_2),\s'_0).
\]
\end{itemize}
\end{thm}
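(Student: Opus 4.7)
The plan is to reduce the theorem to a purely cohomological statement about Dehn-filling configurations, matched via the resolution data, and then to isolate the residual $\F_2$-ambiguity in the decomposition $A_0 \cup A_1$.

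First, I would show that the resolution data $(t(Y), s(Y), (p_0,q_0), (p_1,q_1))$ determines, up to canonical isomorphism, all the cohomology groups $H^*(Y;\Z)$, $H^*(Y_j;\Z)$, $H^*(W_j;\Z)$, the restriction homomorphisms among them, and the intersection form on $H^2(W_j)$. The pair $(t(Y), s(Y))$ controls the $\Z$- and $\F_2$-image of the framing classes $[m], [l]$ in $H_1(Y)$; Mayer--Vietoris applied to the Dehn fillings $Y_j$ and to the $2$-handle cobordisms $W_j$ then expresses all remaining cohomology groups and restrictions in terms of $(p_j, q_j)$. This uses the homological bookkeeping of Lemma \ref{homology of cobordism} together with the explicit intersection-form formula of Lemma \ref{l: cup product on W}. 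The upshot is a collection of tautological isomorphisms between the cohomology diagrams for $Y, Y_j, W_j$ and those for $Y', Y_j', W_j'$.

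Next, I would fix base self-conjugate $\spinc$ structures $\s_0 \in \scspinc(Y)$ and $\s_0' \in \scspinc(Y')$, and pick compatible lifts $\s_j^* \in \spinc(Y_j, \s_0)$ and $\hat{\s}_j^* \in \spinc(W_j, \s_0)$, transporting them to lifts $(\s_j')^*, (\hat{\s}_j')^*$ via the Mayer--Vietoris identification. Define $\theta_j(\s_j^* + h) = (\s_j')^* + h'$ and $\hat{\theta}_j(\hat{\s}_j^* + k) = (\hat{\s}_j')^* + k'$, where $h \leftrightarrow h'$ and $k \leftrightarrow k'$ under the canonical isomorphisms between the kernels of the appropriate restriction maps. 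Compatibility with $r_{j,j+1}$ is automatic because restriction is $\Z$-linear on torsors and the matching isomorphisms commute with the Mayer--Vietoris diagrams by construction. Since $\tau^*$ acts as $-1$ on $H^2$ by Lemma \ref{conjugation action}, conjugation-equivariance reduces to checking that base lifts are sent to base lifts of conjugates, which can be arranged by an initial adjustment of the basepoints using that they restrict to the \emph{self-conjugate} $\s_0$. Finally, $c_1(\s)^2 = c_1(\hat{\theta}_j\s)^2$ holds because $c_1$ is read off a cohomology basis whose intersection pairings on the two sides are identical by Lemma \ref{l: cup product on W}.

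The subtler point is the decomposition $\scspinc(Y) = A_0 \cup A_1$. Although the construction above yields an equivalence of $\spinc$ systems for a \emph{specific} pair $(\s_0, \s_0')$, changing $\s_0$ may force an incompatible change in $\s_0'$, and not every element of $\scspinc(Y')$ can be matched with a given element of $\scspinc(Y)$. I expect this to be governed by a single non-trivial $\F_2$-valued invariant on $\scspinc(Y)$, measuring whether the spin structure on $Y$ underlying $\s_0$ extends to the spin cobordism $W_2$ of Lemma \ref{spin cobordism} compatibly with a fixed reference extension on the $Y'$ side (equivalently, a secondary obstruction controlling the basepoint dependence of $\mu$ from Section~\ref{S:triangle}). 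The sets $A_i$ and $A_i'$ are the level sets of this invariant, and because it is a non-constant affine-linear function on the $\F_2$-torsor $\scspinc(Y)$, its fibers have equal cardinality. The main obstacle I expect is precisely the identification of this discrete invariant and the verification that the sign function $\mu$ transports correctly under $\hat{\theta}_j$ between the two $\spinc$ systems; once this is carried out, equivalence between $\SS((L_0,L_1,L_2), \s_0)$ and $\SS((L_0',L_1',L_2'), \s_0')$ for $\s_0, \s_0'$ in matching sets $A_i, A_i'$ follows from the cohomological matching in the previous two steps.
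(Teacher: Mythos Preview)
Your approach is essentially the paper's: both reduce to a Mayer--Vietoris computation showing that the resolution data determines the cohomological configuration of $(Y, Y_j, W_j)$, then build $\theta_j$, $\hat\theta_j$ as affine maps over the resulting isomorphisms $\xi_j$, $\hat\xi_j$ of the relative $H^2$ groups. Two points need sharpening.

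First, your description of $A_0 \cup A_1$ is correct only for even $t(Y)$: the paper (Lemma~\ref{lemma self-conjugate spinc extension}) defines $A_i$ by whether $\s_0$ extends to a self-conjugate $\spinc$ structure on $W_2$ or only on $W_1$. But for odd $t(Y)$ every $\s_0$ extends over both $W_1$ and $W_2$, so your ``non-constant affine-linear'' invariant is in fact constant; the paper simply takes an \emph{arbitrary} equipartition in this case (Remark~\ref{R:odd t}), which suffices because the cosets $\Char(W_j,\s_0)=\{c_1(\s):\s\in\spinc(W_j,\s_0)\}$ are then independent of $\s_0$.

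Second, the role of the $A_i$ is not to transport $\mu$---which is not part of Definition~\ref{spin-c system}---but to secure conjugation-equivariance of $\hat\theta_j$. Your claim that this ``can be arranged by an initial adjustment of the basepoints'' is too quick: $W_0$ never carries a self-conjugate $\spinc$ structure (Lemma~\ref{lemma self-conjugate spinc extension}(1)), so no basepoint in $\spinc(W_0,\s_0)$ is conjugation-fixed, and for $\hat\theta_j$ to commute with conjugation one needs precisely $\hat\xi_j(c_1(\hat\s_j^*)) = c_1((\hat\s_j')^*)$. This is a coset condition---$\hat\xi_j$ must carry $\Char(W_j,\s_0)$ onto $\Char(W'_j,\s'_0)$---which no basepoint adjustment can repair if the cosets themselves do not match. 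The paper computes these cosets explicitly (the lemma immediately preceding Proposition~\ref{same resolution data gives same homology}) and shows they depend only on the resolution data together with the index $i$ for which $\s_0\in A_i$; this is Proposition~\ref{same resolution data gives same homology}(3), and it is exactly why the decomposition is needed.
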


The proof of Theorem \ref{same resolution data give equivalent spin-c system} will take up the rest of this subsection. The idea of the proof is straightforward: we give an explicit description of the topology and the $\spinc$ structures on $Y_j$ and $W_j$ in terms of the resolution data. 

We begin by studying the algebraic topology of cobordisms $W_j$. Let us consider the decompositions
\[
W_j = (I \times Y)\cup_{I \times \partial Y} D^{4}\quad \text{and}\quad Y_j=Y\cup_{\partial Y} (S^1\times D^2).
\]
The gluing map in the first decomposition (which matches the decomposition \eqref{E:W0}) identifies $\partial Y$ with the standard torus in $S^3 = \partial D^{4}$ by sending $\gamma_j$ and $\gamma_{j+1}$ to the standard meridian $m$ and longitude $l$, respectively. The corresponding Mayer--Vietoris exact sequences are of the form 
\begin{equation}\label{mv sequence1}
\begin{CD}
\ldots @>>> H^1(Y) @>>> H^1(\partial Y) @> {\partial_{W_j}} >> H^2(W_j) @> {i_{W_j}} >> H^2(Y) @>>> \ldots
\end{CD}
\end{equation}
and
\begin{equation}\label{mv sequence2}
\begin{CD}
\ldots @>>> H^1(Y)\oplus H^1(S^1\times D^2) @>>> H^1(\partial Y) @> {\partial_{Y_j}} >> H^2(Y_j) @> {i_{Y_j}} >> H^2(Y) @>>> \ldots
\end{CD}
\end{equation}

\smallskip\noindent
Let $\hat{{\gamma}}_j$, $\hat{m}$, and $\hat{l}\in H^1(\partial Y)$ be the Poincar\'e duals of $[\gamma_j]$, $[m]$, and $[l]\in H_1(\partial Y)$, respectively. The following lemma is a direct consequence of (\ref{mv sequence1}) and (\ref{mv sequence2}).
\begin{lem}\label{relative homology}
There are isomorphisms
\begin{gather*}
\ker i_{W_j}\;=\; \Z\oplus\Z/\langle(0,t(Y))\rangle\quad\text{with generators}\quad \partial_{W_j}(\hat{m}),\; \partial_{W_j}(\hat{l}),\quad\text{and} \\
\ker\, i_{Y_j}\;\,=\; \Z\oplus\Z/\langle(0,t(Y)),(p_j,q_j)\rangle\quad\text{with generators}\quad \partial_{Y_j}(\hat{m}),\; \partial_{Y_j}(\hat{l}),
\end{gather*}
under which the map $\ker i_{W_j}\to \ker i_{Y_j}\,\oplus\,\ker i_{Y_{j+1}}$ induced by the boundary inclusions is the natural projection map.
\end{lem}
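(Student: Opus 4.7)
The plan is to read both isomorphisms directly from the Mayer--Vietoris sequences (\ref{mv sequence1}) and (\ref{mv sequence2}) already displayed, with Poincar\'e--Lefschetz duality doing the algebraic heavy lifting. The main bookkeeping challenge will be to keep the various duality and orientation conventions consistent so that the relations come out exactly as $t(Y)\hat l$ and $(p_j,q_j)$, rather than as other generators of the same subgroups.

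First, from (\ref{mv sequence1}) together with $H^*(D^4)=0$ in positive degrees, we identify $\ker i_{W_j}$ with the cokernel of the restriction $H^1(Y)\to H^1(\partial Y)$. The image of this restriction equals $\ker(H^1(\partial Y)\to H^2(Y,\partial Y))$ by the cohomology long exact sequence of the pair $(Y,\partial Y)$. Lefschetz duality converts this kernel, via $H^2(Y,\partial Y)\cong H_1(Y)$ and Poincar\'e duality on the torus $H^1(\partial Y)\cong H_1(\partial Y)$, into $K:=\ker(H_1(\partial Y)\to H_1(Y))$. By the classical Lagrangian property for $3$-manifolds with torus boundary, $K$ has rank one in $H_1(\partial Y)=\Z[m]\oplus\Z[l]$; since $K$ contains $t(Y)[l]$ by the definition of $t(Y)$, any rank-one subgroup of $\Z^2$ containing $(0,t(Y))$ must have the form $\Z\cdot(0,c)$ for some $c\mid t(Y)$, and minimality of $t(Y)$ forces $c=t(Y)$. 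Translating this conclusion back to $H^1(\partial Y)$ via Poincar\'e duality on $T^2$ yields the presentation $\ker i_{W_j}\cong(\Z\hat m\oplus\Z\hat l)/\langle t(Y)\hat l\rangle$ claimed in the lemma.

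Second, for (\ref{mv sequence2}) we run the same argument, except that we must now additionally quotient by the image of $H^1(S^1\times D^2)\to H^1(\partial Y)$. This image is infinite cyclic; a direct computation will show that the generator of $H^1(S^1\times D^2)=\Z$ restricts on $\partial Y$ to the Poincar\'e dual of $[\gamma_j]=p_j[m]+q_j[l]$, introducing exactly the relation $(p_j,q_j)$ in the presentation. Finally, the last assertion---that the boundary inclusion map $\ker i_{W_j}\to\ker i_{Y_j}\oplus\ker i_{Y_{j+1}}$ agrees with the natural projection of quotients---is a diagram chase: it follows from the naturality of the Mayer--Vietoris connecting homomorphisms $\partial_{W_j}$, $\partial_{Y_j}$, $\partial_{Y_{j+1}}$ with respect to the inclusions $Y\hookrightarrow W_j$ and $Y\hookrightarrow Y_j$.
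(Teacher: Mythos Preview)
Your proof is correct and is precisely a detailed unpacking of what the paper leaves implicit: the paper's entire proof is the single sentence ``The following lemma is a direct consequence of (\ref{mv sequence1}) and (\ref{mv sequence2}).'' Your identification of $\ker i_{W_j}$ with the cokernel of $H^1(Y)\to H^1(\partial Y)$, the Lefschetz duality reduction to $\ker(H_1(\partial Y)\to H_1(Y))=\Z\cdot t(Y)[l]$, and the additional relation coming from $H^1(S^1\times D^2)$ restricting to $\hat\gamma_j$ are exactly the steps one would take to flesh out that sentence, so there is no genuine methodological difference to compare.
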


Recall that, for a cohomology class $\alpha\in H^2(W_j)$ with $\alpha|_{\partial W_j}$ torsion, the number $\alpha^2 \in \mathbb{Q}$ is defined as follows. Let $u_j: H^2(W_j,\partial W_j)\rightarrow H^2(W_j)$ be the map induced by the inclusion. Then there exists a non-zero integer $k$ and a cohomology class $\beta\in H^2(W_j,\partial W_j)$ such that $k\alpha = u_j(\beta)$. We define
\begin{equation}\label{E:cup}
\alpha^2\; =\; \frac1{k^2}\; \langle\, \beta\smile \beta,\, [W_j,\partial W_j]\,\rangle.
\end{equation}

\begin{lem}\label{l: cup product on W}
(1) Suppose that $p_j \neq 0$ and $p_{j+1}\neq 0$. Then
\begin{equation}\label{cup product formula}
(\partial_{W_j}(a\hat{m}+b\hat{l}))^2\, =\, \frac{a^2}{p_j\cdot p_{j+1}}.
\end{equation}
(2) Suppose that either $p_j=0$ or $p_{j+1}=0$. Then $\alpha^2 = 0$ for any $\alpha\in H^2(W_j)$ with $\alpha|_{\partial W_j}$ torsion.
\end{lem}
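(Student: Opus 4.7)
My plan is to translate cup products in $H^2(W_j;\Q)$ into intersection numbers in $H_2(W_j,\partial W_j;\Q)$ via Poincar\'e--Lefschetz duality, and to carry them out geometrically inside the decomposition $W_j = (I \times Y) \cup_{I \times \partial Y} D^4$. Inside the $D^4$ piece I will use the two spanning disks $D_m, D_l \subset D^4$ with $\partial D_m = \gamma_j$, $\partial D_l = \gamma_{j+1}$, and transverse intersection $D_m \cdot D_l = -1$ (matching $\#(m\cap l) = -1$).

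Part (2) should follow almost for free from Lemma~\ref{homology of cobordism}(2): in every situation covered by the hypothesis $p_j = 0$ or $p_{j+1} = 0$, that lemma gives $b_2(W_j;\Q) = 0$. Then $H^2(W_j,\partial W_j;\Q) \cong H_2(W_j;\Q) = 0$ by Poincar\'e--Lefschetz duality, so the relative class $\beta \in H^2(W_j,\partial W_j;\Z)$ appearing in the definition of $\alpha^2$ is torsion; since $H^4(W_j,\partial W_j;\Z) \cong \Z$ is torsion-free, $\beta\smile\beta = 0$, hence $\alpha^2 = 0$.

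For Part (1), Lemma~\ref{homology of cobordism}(1) gives $b_2(W_j;\Q) = 1$, so the intersection form has rank one and a single self-intersection suffices. I will first show that $\partial_{W_j;\Q}(\hat l) = 0$, reducing the entire problem to the computation of $(\partial_{W_j}(\hat m))^2$. From the Mayer--Vietoris sequence, $\ker \partial_{W_j;\Q}$ equals the image of $H^1(Y;\Q) \to H^1(\partial Y;\Q)$, which is the annihilator under the evaluation pairing of $\ker(H_1(\partial Y;\Q) \to H_1(Y;\Q))$. By the half-lives-half-dies principle this kernel is one-dimensional, and since $[l] = 0$ in $H_1(Y;\Q)$ it is exactly $\Q[l]$; its annihilator is $\Q \hat l$ (because $\langle\hat m, [l]\rangle = \#(m \cap l) = -1$ and $\langle \hat l,[l]\rangle = 0$). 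Consequently $(\partial_{W_j}(a\hat m+b\hat l))^2 = a^2\,(\partial_{W_j}(\hat m))^2$.

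To compute $(\partial_{W_j}(\hat m))^2$ I will build the Poincar\'e--Lefschetz dual of $\partial_{W_j}(\hat m)$ in $H_2(W_j;\Q)$ explicitly. The identity $p_j q_{j+1} - p_{j+1} q_j = 1$ (which follows from $\#(\gamma_j \cap \gamma_{j+1}) = -1$) allows me to express $[m]$ and $[l]$ in $H_1(\partial Y;\Q)$ as explicit rational combinations of $[\gamma_j]$ and $[\gamma_{j+1}]$, so that $D_m$ and $D_l$ assemble into rational 2-chains in $W_j$ bounded by $m$ and $l$. Combined with the fact that $\gamma_j$ and $\gamma_{j+1}$ both bound rational 2-chains in the rational homology spheres $Y_j$ and $Y_{j+1}$, this produces a closed rational 2-cycle $\Sigma \in H_2(W_j;\Q)$ generating the $\Q$-line. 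Its self-intersection reduces to a multiple of $D_m \cdot D_l = -1$, and a bookkeeping calculation should yield $(\partial_{W_j}(\hat m))^2 = 1/(p_j p_{j+1})$. The hard part will be the last normalization step: showing that $\Sigma$ represents precisely the Poincar\'e--Lefschetz dual of $\partial_{W_j}(\hat m)$ (rather than a rational multiple of it) requires carefully tracking signs and orientations between the Mayer--Vietoris coboundary and its dual statement on relative 2-cycles. An alternative route that sidesteps some of this bookkeeping is to recognize $W_j$ as a 2-handle cobordism attached along the core of the solid torus $Y_j \setminus \inte(Y)$, framed according to the pair $(\gamma_j,\gamma_{j+1})$, and to invoke the standard intersection-form formula for rational surgery cobordisms.
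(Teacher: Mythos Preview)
Your argument for Part~(2) contains two genuine problems. First, it is circular: Lemma~\ref{homology of cobordism} is proved in the paper \emph{using} Lemma~\ref{l: cup product on W} (its $b_2^{\pm}$ claims are deduced from the cup-product formula you are trying to establish), so you cannot invoke it here. Second, and more seriously, the conclusion you draw from Lemma~\ref{homology of cobordism}(2) is false. That lemma gives $b_2^+(W_j)=b_2^-(W_j)=0$, but for a manifold with boundary this is \emph{not} the same as $b_2(W_j;\Q)=0$: the intersection form on $H_2(W_j;\Q)$ can be (and here is) degenerate. In fact $H_2(W_j;\Q)\cong\Q$ always, by the very Mayer--Vietoris computation the paper uses in Part~(1). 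So $H^2(W_j,\partial W_j;\Q)\cong\Q$, the class $\beta$ is not torsion, and your argument collapses. The paper instead observes that when $p_j=0$ (resp.\ $p_{j+1}=0$) the restriction $H^2(W_j;\Q)\to H^2(Y_j;\Q)$ (resp.\ $\to H^2(Y_{j+1};\Q)$) is injective---the generator $[\Sigma]$ of $H_2(W_j;\Q)$ comes from the boundary component with $b_1=1$---so any $\alpha$ with torsion boundary restriction is itself torsion, hence $\alpha^2=0$. (The paper's text has the indices $Y_j$ and $Y_{j+1}$ swapped, but the argument is sound.)

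For Part~(1), your plan is workable but takes a different route than the paper. The paper builds a single closed integral surface $\Sigma = F_1\cup F_2$ generating $H_2(W_j;\Z)$, where $F_1\subset I\times Y$ is bounded by $t(Y)$ copies of $l$ and $F_2\subset D^4$ is a Seifert surface for the $(t(Y)p_j,\,t(Y)p_{j+1})$ torus link in $S^3=\partial D^4$. The self-intersection $[\Sigma]^2 = t(Y)^2 p_j p_{j+1}$ is then read off as the self-linking of that torus link, and the normalization comes from the single evaluation $\langle\partial_{W_j}\hat m,[\Sigma]\rangle = t(Y)$. This sidesteps the rational-cycle bookkeeping you anticipate; your approach via rational combinations of the disks $D_m$, $D_l$ can be made to work, but the torus-link description packages the same information more cleanly and avoids having to identify the Poincar\'e--Lefschetz dual of $\partial_{W_j}(\hat m)$ by hand. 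Note also that your appeal to Lemma~\ref{homology of cobordism}(1) for ``$b_2(W_j;\Q)=1$'' is again circular; you should instead extract this directly from the Mayer--Vietoris sequence, as the paper does.
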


\begin{proof}
Associated with the decomposition $W_j=(I \times Y)\cup_{I \times \partial Y} D^4$ is the Mayer--Vietoris exact sequence in homology, 
\[
\begin{CD}
\ldots @>>> H_2(Y) @> 0 >> H_2(W_j) @> {\partial} >> H_1(\partial Y) @>>> H_1(Y) @>>> \ldots
\end{CD}
\]
from which we conclude that $H_2(W_j)$ is a copy of $\Z$ generated by the homology class $[\Sigma]$ of a surface $\Sigma$ with $\partial[\Sigma]= t(Y)[l]$. The surface $\Sigma$ splits as $F_1\cup F_2$, where $F_1$ is an embedded surface in $I \times Y$ bounded by $t(Y)$ copies of $l$, and $F_2$ is the Seifert Surface for the right handed $(t(Y)\cdot p_j, t(Y)\cdot p_{j+1})$ torus link in $\partial D^{4}$. From this description, we see that the homological self-intersection number of $\Sigma$ equals the linking number between two parallel copies of the $(t(Y)\cdot p_j, t(Y)\cdot p_{j+1})$ torus link, which equals $t(Y)^2\cdot p_j\cdot p_{j+1}$. Therefore, 
\[
\langle\,\PD[\Sigma]\smile \PD[\Sigma],\,[W_j,\partial W_j]\,\rangle = \langle\,u_j (\PD[\Sigma]),\,[\Sigma]\,\rangle = t(Y)^2\cdot p_j\cdot p_{j+1}.
\]
Comparing this to 
\[
\langle\,\partial_{W_j}\hat{m},\, [\Sigma]\,\rangle = \langle \hat{m},\,\partial[\Sigma]\,\rangle =\langle \hat{m},\,t(Y)[l]\,\rangle = t(Y)
\]
we obtain 
\[
u(\PD[\Sigma])=\partial_{W_j}(\,t_{Y}\cdot p_j\cdot p_{j+1}\,\hat{m} + k \hat{l}\,)
\]
for some integer $k$, whose value is of no importance to us because $\partial_{W_j}(\hat{l})$ is torsion. From this we deduce that
\[
(\partial_{W_j}(a\hat{m}+b\hat{l}))^2 = \frac{a^2}{(t(Y)\cdot p_j\cdot p_{j+1})^2}\cdot \langle\, \PD[\Sigma]\smile \PD[\Sigma],\, [W_j,\partial W_j]\,\rangle=\frac{a^2}{p_j\cdot p_{j+1}}.
\]

\smallskip\noindent
This completes the proof of (1). To prove (2), observe that the map $H^2(W,\mathbb{Q}) \to H^2 (Y_{j+1},\mathbb{Q})$  is injective when $p_j=0$, and that the map $H^2(W,\mathbb{Q})\to H^2 (Y_j,\mathbb{Q})$ is injective when $p_{j+1}=0$. Therefore, any element $\alpha$ with $\alpha|_{\partial W}$ torsion is a torsion itself.
\end{proof}

Let $X_j = W_j\,\cup\, W_{j+1}$ be the composite cobordism from $Y_j$ to $Y_{j+2}$. As we mentioned in the proof of the exact triangle in Section \ref{S:pet}, there exists an embedded 2--sphere $E_j \subset X_j$ with homological self-intersection $-1$. It is obtained by gluing a disk $D_1\subset W_j$ to a disk $D_2 \subset W_{j+1}$ along $-\partial D_1=\partial D_2=l_{j+1}$, where $l_{j+1}$ is the core of the solid torus $Y_{j+1}\setminus \operatorname{int} Y$. Orient $E_j$ so that $l_{j+1}$ is homotopic to $\gamma _{j+2}$ in $Y_{j+1}\setminus \operatorname{int} Y$. Also recall a decomposition $X_j = (- W_{j+2})\,\#\,\cptwobar$, which induces an isomorphism
\begin{equation}\label{homology composition}
\rho_{j,j+1}: H^2(W_{j+2})\oplus H^2(\cptwobar)\longrightarrow H^2(X_j).
\end{equation}

\begin{lem}
For any integers $a$, $b$, $c$, denote by $\xi$ the image of $(\partial_{W_{j+2}}(a\hat{m}+b\hat{l}),c\cdot \PD[E_j])$ under the map $\rho_{j,j+1}$. Then
\[
\xi|_{W_j}=\partial_{W_j}(a\hat{m}+b\hat{l}+c\hat{\gamma_j})\quad\text{and}\quad
\xi|_{W_{j+1}}=\partial_{W_{j+1}}(a\hat{m}+b\hat{l}+c\hat\gamma_{j+2}).
\]
\end{lem}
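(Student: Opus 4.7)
My plan is to verify both identities by linearity, reducing to the three basis cases $(a,b,c) \in \{(1,0,0),\,(0,1,0),\,(0,0,1)\}$, and then to handle the first summand (cases $c = 0$) by Mayer--Vietoris naturality and the second summand (case $c = 1$) by a direct geometric computation of Poincar\'e--Lefschetz duals.

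First, I would set up a Mayer--Vietoris decomposition $X_j = (I \times Y) \cup H$ of the composite cobordism, where $I \times Y$ is the natural thickening common to $W_j$ and $W_{j+1}$ (sitting inside $X_j$ as $[0,2] \times Y$ after combining the two $I \times Y$'s in $W^0_j$ and $W^0_{j+1}$). This yields a connecting homomorphism $\partial_{X_j}\colon H^1(\partial Y) \to H^2(X_j)$. Naturality of Mayer--Vietoris with respect to the inclusions $W_j \hookrightarrow X_j$ and $W_{j+1} \hookrightarrow X_j$ (each preserving the $I \times Y$ piece) gives $\partial_{X_j}(\hat c)|_{W_j} = \partial_{W_j}(\hat c)$ and $\partial_{X_j}(\hat c)|_{W_{j+1}} = \partial_{W_{j+1}}(\hat c)$ for any $\hat c \in H^1(\partial Y)$. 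On the other hand, the piece $-W_{j+2}$ in the decomposition $X_j = (-W_{j+2})\,\#\,\cptwobar$ also contains $I \times Y$ as a subpiece (the connect-sum ball may be chosen disjoint from $I \times Y$), and since $H^1(\cptwobar) = 0$ the Mayer--Vietoris connecting map for $-W_{j+2}$ factors through $\rho_{j,j+1}$ to recover $\partial_{X_j}$. Consequently $\rho_{j,j+1}(\partial_{W_{j+2}}(\hat c),0) = \partial_{X_j}(\hat c)$, and restricting to $W_j$ and $W_{j+1}$ yields the lemma in the cases $(1,0,0)$ and $(0,1,0)$.

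Next, for the case $(0,0,1)$, I would compute $\PD[E_j]|_{W_j}$ directly. Since $E_j \cap W_j = D_1$ is a properly embedded disk in $W_j$ with $\partial D_1 = -l_{j+1} \subset Y_{j+1}$, the restriction $\PD[E_j]|_{W_j}$ equals the image under $H^2(W_j, \partial W_j) \to H^2(W_j)$ of the Lefschetz dual of $[D_1]$. By Lemma \ref{relative homology}, this class lies in the subgroup $\ker i_{W_j} = \Z \oplus \Z/\langle(0,t(Y))\rangle$ generated by $\partial_{W_j}(\hat m)$ and $\partial_{W_j}(\hat l)$; thus it equals $\partial_{W_j}(\hat d)$ for some $\hat d \in H^1(\partial Y)$. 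To identify $\hat d$ with $\hat \gamma_j$, I would pair both sides with the generator $[\Sigma]$ of $H_2(W_j)$ from the proof of Lemma \ref{l: cup product on W}, whose boundary is $t(Y)[l]$. After pushing $\partial D_1 = -l_{j+1}$ radially onto $\partial Y$, the resulting curve is homologous in $\partial Y$ to $\gamma_j$ modulo the meridian $\gamma_{j+1}$ (which lies in the kernel of $\partial_{W_j}$ because it bounds in the solid torus attached to form $Y_j$ and hence is killed in $H^1(Y)$), and the geometric intersection $\#(D_1 \cap \Sigma)$ matches $t(Y)\cdot \#(\gamma_j \cap l) = \langle \partial_{W_j}(\hat\gamma_j), [\Sigma]\rangle$ up to sign. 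A symmetric argument using the disk $D_2 \subset W_{j+1}$, whose boundary $l_{j+1}$ is by definition homotopic in $Y_{j+1}\setminus \operatorname{int} Y$ to $\gamma_{j+2}$, will give $\PD[E_j]|_{W_{j+1}} = \partial_{W_{j+1}}(\hat \gamma_{j+2})$.

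The main obstacle will be the sign bookkeeping in Step~3: correctly tracking orientation conventions to conclude that the pushed-in boundary of $D_1$ represents exactly $\gamma_j$ (rather than $-\gamma_j$, or some combination involving $\gamma_{j+2}$) in the quotient of $H_1(\partial Y)$ that survives to $H^2(W_j)$ via $\partial_{W_j}$. This hinges on the chosen orientation of $E_j$ (which was fixed so that $l_{j+1}$ is homotopic to $\gamma_{j+2}$ in $Y_{j+1}\setminus \operatorname{int} Y$), together with the relation $\gamma_j + \gamma_{j+1} + \gamma_{j+2} = 0$ in $H_1(\partial Y)$ and the fact that each $\gamma_j$ bounds in precisely one of the attached solid tori. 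Once these sign conventions are fixed consistently with the rest of Section \ref{S:skein}, the computation above goes through verbatim.
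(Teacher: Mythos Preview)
Your proposal is correct and follows essentially the same approach as the paper: both arguments split linearly into the $c=0$ part (handled by naturality of the Mayer--Vietoris boundary map under the inclusions of $W_j$, $W_{j+1}$, and $-W_{j+2}$ into $X_j$) and the $c=1$ part (handled by observing that $\PD[E_j]|_{W_j}$ is the Lefschetz dual of the disk $E_j\cap W_j$ in the $2$--handle and identifying this dual with $\partial_{W_j}(\hat\gamma_j)$). The paper is quite terse on the second step, simply asserting that ``one can easily verify'' the identification, whereas you spell out a concrete route via pairing with $[\Sigma]$ and pushing $\partial D_1$ onto $\partial Y$; your honest flagging of the sign bookkeeping is appropriate, though note a small slip in your reasoning about the kernel of $\partial_{W_j}$ (it is the \emph{image} of $H^1(Y)$, and the curve $\gamma_{j+1}$ bounds in the solid torus attached to form $Y_{j+1}$, not $Y_j$), which you would need to correct when filling in the details.
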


\begin{proof}
It is a direct consequence of the naturality of the boundary map in the Mayer--Vietoris exact sequence that 
\[
\rho_{j,j+1}(\partial_{W_{j+2}}(a\hat{m}+b\hat{l}),0)|_{W_{n}}=\partial_{W_{n}}(a\hat{m}+b\hat{l})\quad\text{for}\quad n=j, j+1.
\]
We still need to show that 
\[
\rho_{j,j+1}(0,\PD[E_j])|_{W_j}=\partial_{W_j}(\hat\gamma_j)\quad\text{and}\quad
\rho_{j,j+1}(0,\PD[E_j])|_{W_{j+1}}=\partial_{W_{j+1}}(\hat\gamma_{j+2}).
\]
The Poincare dual of $\rho_{j,j+1}(0,\PD[E_j])$ is realized by the sphere $E_j$. Therefore, the restriction of $\rho_{j,j+1}(0,\partial_{W_{j+2}}(\PD[E_j])$ to $W_j$ equals the Poincare dual of $[E_j\cap W_j]\in H_2(W_j,\partial W_j)$. Using the fact that $E_j\cap W_j$ is a disk contained in the two-handle $D^4\subset W_j$ with the boundary  $l_{j+1}$, one can easily verify that $[E_j\cap W_j]$ equals the Poincare dual of $\partial_{W_j}(\hat{\gamma_j})$. This finishes the proof of the first formula. The proof of the second formula is similar.
\end{proof}

We will next study the spin and $\spinc$ structures on the manifolds $Y_j$ and $W_j$. First, define a map
\[
\lambda: \spin(Y)\rightarrow H^1(\partial Y; \F_2)
\]
as follows (compare with Turaev \cite{Turaev survey}). Fix a diffeomorphism $\phi: \partial Y \to \R^2/\Z^2$ and let $x_1$, $x_2$ be the standard coordinates on $\R^2$. Pull back the vector fields $\partial/\partial x_1$, $\partial/\partial x_2$ via $\phi$ to obtain vector fields $\vec{v}_1$, $\vec{v}_2$ on $\partial Y$. Any loop $\gamma$ in $\partial Y$ gives rise to the loop 
\[
\tilde{\gamma}(x) = (\vec{n}(x),\vec{v}_1(x),\vec{v}_2(x))
\]
in the frame bundle of $Y$, where $\vec{n}(x)$ is the outward normal vector at $x\in \partial Y$. We define $\lambda(\s)$ to be the unique cohomology class in $H^1(\partial Y,\F_2)$ with the property that $\langle \lambda(\s),[\gamma]\rangle=0$ if and only if $\tilde{\gamma}$ can be lifted to a loop in the spin bundle for $\s$. 

\begin{lem}\label{condition for spin structure extension}
The map $\lambda: \spin(Y)\rightarrow H^1(\partial Y; \F_2)$ has the following properties:
\begin{itemize}
\item[(1)] $\lambda$ does not dependent on the choice of diffeomorphism $\phi$,
\item[(2)] $\lambda(\s+\omega) = \s+\omega|_{\partial Y}$ for any $\s\in \spin(Y)$ and $\omega\in H^1(Y;\F_2)$, and 
\item[(3)] for any $j \in \Z/3$, a spin structure $\s$ can be extended to $Y_j$ if and only if $\langle \lambda(\s),\gamma_j\rangle=1\in \F_2$. The extension is unique if it exists.
\end{itemize}
\end{lem}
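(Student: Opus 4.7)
The plan is to prove (2) and (3) directly from the definitions, and then deduce (1) as a consistency check. I interpret the statement of (2) as $\lambda(\s+\omega)=\lambda(\s)+\omega|_{\partial Y}$, since the right-hand side must live in $H^1(\partial Y;\F_2)$.

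For (2), the action $\s\mapsto\s+\omega$ twists the spin double cover $P_\s$ over the oriented frame bundle $F(Y)$ by the flat $\F_2$-line bundle $L_\omega\to Y$ classified by $\omega$. For a loop $\gamma\subset\partial Y$ with framed lift $\tilde\gamma$ in $F(Y)$, the obstruction to lifting $\tilde\gamma$ to $P_\s\otimes L_\omega$ is the sum of the obstruction for $P_\s$ and the monodromy of $L_\omega$ around $\gamma$, which equals $\langle\omega|_{\partial Y},[\gamma]\rangle\in\F_2$. Running this identity over all $\gamma$ yields the claim.

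For (3), write $Y_j=Y\cup_{\partial Y}T$ with $T=S^1\times D^2$, where $\partial D^2$ is identified with $\gamma_j$. A spin structure on $Y$ extends over $T$ iff its restriction to $\partial Y$ lies in the image of the injective map $\spin(T)\hookrightarrow\spin(\partial Y)$, and this image is precisely the pair of spin structures for which the meridian $\gamma_j$ inherits the spin structure that extends over $D^2$. To translate this condition into the defining formalism for $\lambda$, one observes that the outward normal $\vec n$ to $\partial Y\subset Y$ plays a role opposite to the inward normal to $\partial D^2\subset D^2$ in the 2-handle framing, and the comparison of these two framings yields the criterion $\langle\lambda(\s),[\gamma_j]\rangle=1$ (rather than $0$). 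Uniqueness of the extension follows from the injectivity of $\spin(T)\hookrightarrow\spin(\partial T)$ together with the Mayer--Vietoris sequence for $Y_j=Y\cup T$, which forces two spin structures on $Y_j$ agreeing on $Y$ to agree on $T$ as well.

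For (1), replacing $\phi$ by $\phi'$ changes the framing $(\vec v_1,\vec v_2)$ by an $O(2)$-valued map on $\partial Y$; its mod-$2$ effect on the framed lifts is an $\s$-independent translation $\lambda(\s)\mapsto\lambda(\s)+c_{\phi,\phi'}$ with $c_{\phi,\phi'}\in H^1(\partial Y;\F_2)$. Since the existence of a spin extension to $Y_j$ is a purely topological condition independent of $\phi$, the criterion in (3) gives $\langle c_{\phi,\phi'},[\gamma_j]\rangle=0$ for each $j\in\Z/3$. As $\gamma_0$ and $\gamma_1$ generate $H_1(\partial Y;\F_2)$ (with $\gamma_0+\gamma_1+\gamma_2=0$), this forces $c_{\phi,\phi'}=0$.

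The main obstacle lies in (3): pinning down the sign conventions between the framing $(\vec n,\vec v_1,\vec v_2)$ used to define $\lambda$ and the framing inherited from the 2-handle $D^2\times S^1$, so as to obtain $\langle\lambda(\s),[\gamma_j]\rangle=1$ rather than $=0$ as the extension criterion. Once (3) is established for one (hence every) $\phi$, the invariance statement (1) follows formally from the bootstrap above.
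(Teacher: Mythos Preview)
Your reading of the typo in (2) is correct, and your arguments for (2) and (3) are fine; they are exactly the sort of unpacking the paper has in mind when it writes ``immediate from the definition of the map~$\lambda$.''  You also correctly isolate the only genuine content, namely the sign check in (3) that picks out $\langle\lambda(\s),[\gamma_j]\rangle=1$ rather than $0$.

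Your route to (1) via (3) is a little different from the direct approach and deserves a word of caution.  The bootstrap needs (3) to hold for \emph{every} choice of $\phi$, so that the extension criterion reads $\langle\lambda_\phi(\s),[\gamma_j]\rangle=1$ independently of $\phi$; only then can you compare two $\phi$'s and conclude $\langle c_{\phi,\phi'},[\gamma_j]\rangle=0$.  But showing that the sign computation in (3) is $\phi$--independent already uses the fact that two translation--invariant framings of $\partial Y$ differ by a \emph{constant} element of $SO(3)$, and right multiplication by a constant lifts to the spin cover and hence does not affect liftability of $\tilde\gamma$.  That observation is precisely the direct proof of (1).  So your argument is correct but not really more economical: you may as well prove (1) directly by noting that a change of $\phi$ multiplies the section $x\mapsto(\vec n(x),\vec v_1(x),\vec v_2(x))$ of the frame bundle by a constant in $SO(3)$, which lifts to $Spin(3)$ and therefore leaves $\langle\lambda(\s),[\gamma]\rangle$ unchanged for every loop $\gamma$.
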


\begin{proof}
This is immediate from the definition of the map $\lambda$.
\end{proof}

\begin{lem}\label{spin structure extension}
Any $\s\in \spin(Y)$ extends to a spin structure on $Y_2$, and to a spin structure on one of the manifolds $Y_0$ and $Y_1$ but not the other. For any $\s_0\in \scspinc(Y)$, we have the following identity for the counts of self-conjugate $\spinc$ structures:
\begin{equation}\label{spin structure count}
2^{b_1(Y_0)}\cdot |\scspinc(Y_0,\s_0)| + 2^{b_1(Y_1)}\cdot |\scspinc(Y_1,\s_0)| = 2^{b_1(Y_2)}\cdot |\scspinc(Y_2,\s_0)|.
\end{equation}
\end{lem}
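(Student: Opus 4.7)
The plan is to establish Part (a) using the obstruction map $\lambda$ from Lemma \ref{condition for spin structure extension}, and then derive Part (b) by bookkeeping with Remark \ref{R:sc}.

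For Part (a), the fact that every $\s \in \spin(Y)$ extends to $Y_2$ is already noted in the proof of Lemma \ref{spin cobordism}: the Mayer--Vietoris sequence (\ref{mv sequence2}) for $j=2$ combined with $[\gamma_2] = 0 \in H_1(Y;\F_2)$ (which holds by Definition \ref{D:skein}(1), as observed just before Definition \ref{D:res}) shows that $H^1(Y_2;\F_2) \to H^1(Y;\F_2)$ is an isomorphism, and the surjectivity of $\spin(Y_2)\to\spin(Y)$ then follows from the $H^1(\cdot;\F_2)$-torsor structure together with the non-emptiness of $\spin(Y_2)$. For the exclusive-extension claim, I would first observe that three oriented simple closed curves on $T^2$ with pairwise intersection $-1$ satisfy $[\gamma_0]+[\gamma_1]+[\gamma_2]=0$ in $H_1(\partial Y;\Z)$, a one-line calculation in any symplectic basis. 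Reducing mod $2$ and pairing with $\lambda(\s)$ gives
\[
\langle \lambda(\s),\gamma_0\rangle + \langle \lambda(\s),\gamma_1\rangle + \langle \lambda(\s),\gamma_2\rangle = 0 \in \F_2.
\]
Since $\s$ extends to $Y_2$, Lemma \ref{condition for spin structure extension}(3) forces $\langle \lambda(\s),\gamma_2\rangle = 1$, so exactly one of $\langle \lambda(\s),\gamma_0\rangle$ and $\langle \lambda(\s),\gamma_1\rangle$ equals $1$, proving the claim.

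For Part (b), uniqueness of extension in Lemma \ref{condition for spin structure extension}(3) identifies $\spin(Y_j,\s_0)$ with the subset of $\{\,\s \in \spin(Y) : \s^c=\s_0\,\}$ consisting of spin structures that extend to $Y_j$. Part (a) then yields
\[
|\spin(Y_0,\s_0)| + |\spin(Y_1,\s_0)| = |\spin(Y_2,\s_0)|.
\]
Applying Remark \ref{R:sc} to each $Y_j$, the map $\s\mapsto \s^c$ is a surjection $\spin(Y_j,\s_0)\to \scspinc(Y_j,\s_0)$ with fibers of cardinality $2^{b_1(Y_j)}$, so $|\spin(Y_j,\s_0)| = 2^{b_1(Y_j)}\,|\scspinc(Y_j,\s_0)|$, and substituting delivers the stated identity.

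The only subtle point, and hence the main obstacle, is confirming that the common value of $\langle \lambda(\s),\gamma_2\rangle$ is $1$ rather than $0$: this cannot be read off from Lemma \ref{condition for spin structure extension}(3) alone, but is supplied by the independent Mayer--Vietoris argument that produces an actual extension for at least one (hence every) spin structure on $Y$.
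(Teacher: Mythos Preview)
Your proof is correct and follows essentially the same approach as the paper's own proof: both use the Mayer--Vietoris isomorphism $H^1(Y_2;\F_2)\cong H^1(Y;\F_2)$ to get the extension to $Y_2$, then combine $[\gamma_0]+[\gamma_1]+[\gamma_2]=0$ with Lemma~\ref{condition for spin structure extension}(3) to obtain the exclusive extension to $Y_0$ or $Y_1$, and finally invoke Remark~\ref{R:sc} to convert the resulting spin-structure count into the self-conjugate $\spinc$ count. Your discussion of the ``subtle point'' (that $\langle\lambda(\s),\gamma_2\rangle=1$ must be established independently via the actual extension) is exactly how the paper handles it as well.
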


\begin{proof}
Since the map $H^1(Y_2;\F_2)\to H^1(Y;\F_2)$ is an isomorphism, any spin structure $\s$ on $Y$ can be extended to a spin structure on $Y_2$. This implies that 
\begin{equation}\label{exactly one can be extended}
\langle \lambda(\s),[\gamma_0]\rangle+\langle \lambda(\s),[\gamma_1]\rangle=\langle \lambda(\s),[\gamma_2]\rangle=1.
\end{equation}
It now follows from Lemma \ref{condition for spin structure extension}\,(3) that $\s$ can be extended a spin structure on exactly one of the manifolds $Y_0$ and $Y_1$. This finishes the proof of the first statement. 

According to Remark \ref{R:sc}, a self-conjugate $\spinc$ structure on $Y_j$ corresponds to $2^{b_1(Y_j)}$ spin structures. Therefore, 
\[
2^{b_1(Y_j)}\cdot |\scspinc(Y_j,\s_0)| = |\spin(Y_j,\s_0)|,
\]
and (\ref{spin structure count}) is equivalent to 
\[
|\spin(Y_2,\s_0)| = |\spin(Y_0,\s_0)| + |\spin(Y_1,\s_0)|,
\] 
which follows easily from the first statement.
\end{proof}

\begin{lem}\label{lemma self-conjugate spinc extension}
A self-conjugate $\spinc$ structure $\s_0\in\scspinc(Y)$ has the following extension properties to the cobordisms $W_j$:
\begin{enumerate}
\item $\scspinc(W_0,\s_0) = \emptyset$;
\item If $t(Y)$ is odd, then $\scspinc(W_1,\s_0) \neq \emptyset$ and\, $\scspinc(W_2,\s_0)\neq \emptyset$;
\item If $t(Y)$ is even, there is a disjoint decomposition
\begin{equation}\label{decomposition of self-conjugate spinc}
\scspinc(Y)=A_0\cup A_1
\end{equation}
such that $|A_0| = |A_1|$ and, in addition,
\begin{equation}\label{half of self-conjugate spinc can be extended}
\begin{split}
\s_0\in A_0\quad\text{if and only if}\quad \scspinc(W_1,\s_0) = \emptyset\;\;\text{and}\;\; \scspinc(W_2,\s_0)\neq \emptyset, \\
\s_0\in A_1\quad\text{if and only if}\quad \scspinc(W_1,\s_0)\neq \emptyset\;\;\text{and}\;\; \scspinc(W_2,\s_0)=\emptyset.
\end{split}
\end{equation}
\end{enumerate}
\end{lem}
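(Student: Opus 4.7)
The plan is to reduce extension of self-conjugate $\spinc$ structures to extension of spin structures on $Y$, which is controlled by the map $\lambda$ from Lemma~\ref{condition for spin structure extension}. Part (1) is immediate from Lemma~\ref{spin cobordism}: since $W_0$ is not spin, $w_2(W_0) \neq 0$, and no $\spinc$ structure on $W_0$ can have vanishing first Chern class (because $c_1 \equiv w_2 \pmod 2$), giving $\scspinc(W_0,\s_0) = \emptyset$.

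For (2) and (3), decompose $W_j = W^0_j \cup D^4$ as in \eqref{E:W0}. Since the unique spin structure on the $S^3$-boundary of $D^4$ bounds, a spin structure on $W_j$ is the same datum as a spin structure $\alpha \in \spin(Y)$ that extends simultaneously to $Y_j$ and $Y_{j+1}$. Using $\lambda(\alpha)([\gamma_2]) = 1$ for every $\alpha$ (cf.\ the proof of Lemma~\ref{spin structure extension}), Lemma~\ref{condition for spin structure extension}(3) simplifies this condition to $\lambda(\alpha)([\gamma_1]) = 1$ for $W_1$ and $\lambda(\alpha)([\gamma_0]) = 1$ for $W_2$. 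Via Remark~\ref{R:sc}, non-emptiness of $\scspinc(W_j,\s_0)$ is therefore equivalent to the existence of some $\alpha \in \spin(Y,\s_0)$ meeting the appropriate condition. The key preliminary observation is that $s(Y) = 1$ forces $t(Y)$ to be even: reducing $t(Y)[l] = 0$ in $H_1(Y;\Z)$ modulo $2$ with $[l] \neq 0 \in H_1(Y;\F_2)$ gives $t(Y) \equiv 0 \pmod 2$. Consequently, $t(Y)$ odd forces $s(Y) = 0$, and then $[\gamma_2] = 0 \in H_1(Y;\F_2)$ together with $[l] = 0$ yields $p_2$ even, so by $p_0 + p_1 + p_2 = 0$ and $p_0 q_1 - p_1 q_0 = 1$ both $p_0, p_1$ are odd.

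For (2), when $t(Y)$ is odd, Lemma~\ref{relative homology} identifies the integral restriction image as $t(Y)\Z\hat{l}$, so $V_2 := \operatorname{im}(H^1(Y;\Z) \to H^1(Y;\F_2))$ restricts to $\F_2\hat{l} \subset H^1(\partial Y;\F_2)$. Since $\hat{l}([\gamma_j]) \equiv p_j \pmod 2$ is odd for $j = 0, 1$ by the above, and $\spin(Y,\s_0)$ is a torsor over $V_2$ with $\lambda$ affine over restriction (Lemma~\ref{condition for spin structure extension}(2)), the values $\alpha \mapsto \lambda(\alpha)([\gamma_j])$ take both $0$ and $1$ on the fiber. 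In particular some $\alpha$ realizes the value $1$, yielding extensions to each of $W_1$ and $W_2$.

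For (3), when $t(Y)$ is even, $V_2$ restricts trivially to $H^1(\partial Y;\F_2)$, so $\lambda(\alpha)([\gamma_j])$ is constant on each fiber $\spin(Y,\s_0)$; call this value $c_j(\s_0)$. From $c_0(\s_0) + c_1(\s_0) = c_2(\s_0) = 1$, exactly one of $c_0, c_1$ equals $1$, so $\s_0$ extends to exactly one of $W_1, W_2$, producing the disjoint partition $\scspinc(Y) = A_0 \sqcup A_1$ required by the lemma. For $|A_0| = |A_1|$, the full image of $H^1(Y;\F_2) \to H^1(\partial Y;\F_2)$ is the Lagrangian $\F_2\hat{l}$ if $s(Y) = 0$ and $\F_2\hat{m}$ if $s(Y) = 1$, and its pairing with $[\gamma_1]$ is non-trivial in either case (via $p_1$ or $q_1$ odd). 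Thus half of $\spin(Y)$ satisfies $\lambda(\alpha)([\gamma_1]) = 1$, and dividing by the constant fiber size $|V_2|$ gives $|A_1| = |\scspinc(Y)|/2$, with the other half in $A_0$ by symmetry. The main subtlety is the preliminary observation $s(Y) = 1 \Rightarrow t(Y)$ even; without it the analysis in (2) would appear to fail when $p_1$ is even, a configuration this observation rules out.
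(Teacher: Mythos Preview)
Your proof is correct and follows essentially the same route as the paper: both reduce the question to whether some spin lift $\alpha\in\spin(Y,\s_0)$ extends to $W_j$, use the criterion $\lambda(\alpha)([\gamma_j])=\lambda(\alpha)([\gamma_{j+1}])=1$ from Lemma~\ref{condition for spin structure extension}, and then split according to whether the two spin lifts of $\s_0$ have the same or different $\lambda$-values, governed by the parity of $t(Y)$. The only notable differences are cosmetic: for part~(1) you invoke Lemma~\ref{spin cobordism} and the relation $c_1\equiv w_2\pmod 2$ directly, whereas the paper deduces it from the dichotomy $\lambda(\alpha)([\gamma_0])+\lambda(\alpha)([\gamma_1])=1$; and for $|A_0|=|A_1|$ you count via surjectivity of the pairing with $[\gamma_1]$ on $\spin(Y)$, while the paper counts through $|\spin(Y_j)|$.
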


\begin{proof}
Denote by $\s_0^1$ and $\s_0^2$ the two spin structures on $Y$ corresponding to the self-conjugate $\spinc$ structure $\s_0$ on $Y$. Then 
\begin{equation}\label{self-conjugate spinc extension}
\scspinc(W_j,\s_0)\neq \emptyset\quad \text{if and only if}\quad \spin(W_j,\s^1_0)\neq \emptyset\;\;\text{or}\;\,\spin(W_j,\s^2_0)\neq \emptyset.
\end{equation}
Since the cobordism $W_j$ is obtained by attaching $D^4$ to the manifold $W_j^0$ along $S^3$, see \eqref{E:W0}, we conclude that, for both $k = 1$ and $k = 2$,
\[
\begin{split}
\spin(W_j,\s^{k}_0)\neq \emptyset&\quad\text{if and only if}\quad \spin(Y_j,\s^{k}_0)\neq \emptyset\;\; \text{and}\;\, \spin(Y_{j+1},\s^{k}_0)\neq \emptyset\\
&\quad\text{if and only if}\quad \langle \lambda(\s^{k}_0),[\lambda_j]\rangle=\langle \lambda(\s^{k}_0),[\lambda_{j+1}]\rangle=1.
\end{split}
\]
With this understood, (1) follows from Lemma \ref{spin structure extension}. Since $\s_0^1$ and $\s_0^2$ correspond to the same $\spinc$ structure, we can write $\s^1_0 = \s^2_0 +(\omega_{\F_2})|_{\partial Y}$, where $\omega_{\F_2}$ is the mod 2 reduction of the generator $\omega \in H^1(Y;\Z)$. This implies that
\[
\lambda(\s^1_0)=\lambda(\s^2_0)+(\omega_{\F_2})|_{\partial Y}.
\]
It is not difficult to see that $(\omega_{\F_2})|_{\partial Y}\neq 0$ if and only if $t(Y)$ is odd. Therefore, if $t(Y)$ is odd, $\lambda(\s^1_0)\neq \lambda(\s^2_0)$. By Lemma \ref{spin structure extension}, one of $\s^{k}_0$ $(k=1,2)$ can be extended over $Y_0$ (and hence $W_2$), while the other one can be extended over $Y_1$ (and hence $W_1$). Claim (2) now follows from (\ref{self-conjugate spinc extension}). If $t(Y)$ is even, $\lambda(\s^1_0) = \lambda(\s^2_0)$. Define the sets
\[
A_j = \{ \s_0\mid \langle\lambda(\s^{k}_0),[\gamma_j]\rangle =0\;\;\text{for}\;\;k=1,2\},\quad j=0,1,
\]
then \eqref{decomposition of self-conjugate spinc} and (\ref{half of self-conjugate spinc can be extended}) follow directly from \eqref{exactly one can be extended}, and the equality $|A_0| = |A_1|$ can be verified as follows:
\[
|A_0| = \frac12\; |\spin(Y_0)| = \frac14\; |\spin(Y)| = \frac12\; |\spin(Y_1)| = |A_1|.
\]
\end{proof}

\begin{rmk}\label{R:odd t}
The disjoint decomposition $\scspinc(Y) = A_0\cup A_1$ of (\ref{decomposition of self-conjugate spinc}) with the additional properties \eqref{half of self-conjugate spinc can be extended} holds only for even $t(Y)$. We will extend it to the case of odd $t(Y)$ by choosing an arbitrary disjoint decomposition such that $|A_0| = |A_1|$.
\end{rmk}

In our next step toward the proof of Theorem \ref{same resolution data give equivalent spin-c system}, we will study the set of `relative characteristic vectors' defined as
\[
\Char(W_j,\s_0)=\{c_1(\s)\mid \s\in \spinc(W_j,\s_0)\}.
\]

\smallskip\noindent
To state the following set of results about this set, we need to recall the maps $i_{W_j}: H^2(W_j) \to H^2(Y)$ and $\partial_{W_j}: H^1(\partial Y)\to H^2(W_j)$ from the Mayer--Vietoris exact sequence (\ref{mv sequence1}).

\begin{lem} 
The set $\Char(W_j,\s_0)$ is a coset of\, $2\ker i_{W_j}$ inside of\, $\ker i_{W_j}$ which can be described precisely as follows:
\begin{enumerate}
\item Suppose $t(Y)$ is odd. Then, for any $\s_0\in \scspinc(Y)$, 
\begin{enumerate}
\item $\Char(W_j,\s_0)=2\ker i_{W_j} \text{ for }j=1,2$,
\item $\Char(W_0,\s_0)=\partial_{W_0}(\hat{m})+2\ker i_{W_0}$.
\end{enumerate}
\item Suppose $t(Y)$ is even and $\s_0\in A_0$. Then
\begin{enumerate}
\item $\Char(W_2,\s_0)=2\ker i_{W_2}$,
\item $\Char(W_1,\s_0)=\partial_{W_1}(\hat{\gamma}_2)+2\ker i_{W_1}$,
\item $\Char(W_0,\s_0)=\partial_{W_0}(\hat{\gamma}_0)+2\ker i_{W_0}$.
\end{enumerate}
\item Suppose $t(Y)$ is even and $\s_0\in A_1$. Then
\begin{enumerate}
\item $\Char(W_1,\s_0)=2\ker i_{W_2}$,
\item $\Char(W_2,\s_0)=\partial_{W_2}(\hat{\gamma}_2)+2\ker i_{W_2}$,
\item $\Char(W_0,\s_0)=\partial_{W_0}(\hat{\gamma}_1)+2\ker i_{W_0}$.
\end{enumerate}
\end{enumerate}
\end{lem}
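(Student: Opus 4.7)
The plan has two stages. Stage one is to show that $\Char(W_j,\s_0)$ is a full coset of $2\ker i_{W_j}$ inside $\ker i_{W_j}$; stage two is to pin down \emph{which} coset appears in each case.

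For stage one, I would first argue that $\spinc(W_j,\s_0)$ is non-empty: extending $\s_0$ from $I\times Y\subset W_j$ is automatic, and extending across the attached 2--handle is unobstructed since the obstruction lies in the (trivial) relative $H^3$. Since $\s_0$ is self-conjugate, $c_1(\s_0)=0$, and by naturality $c_1(\s)|_Y=0$ for every $\s\in \spinc(W_j,\s_0)$, so $c_1(\s)\in \ker i_{W_j}$. Two elements of $\spinc(W_j,\s_0)$ differ by an element $h\in \ker i_{W_j}$ via the usual affine action, and their Chern classes differ by $2h$. This gives the coset structure at once.

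For stage two, whenever $\scspinc(W_j,\s_0)\ne \emptyset$ I can take a self-conjugate extension $\s$, for which $c_1(\s)=0$; this yields $\Char(W_j,\s_0)=2\ker i_{W_j}$ and, together with Lemma \ref{lemma self-conjugate spinc extension}, handles cases (1)(a), (2)(a) and (3)(a). The remaining cases are exactly those in which no self-conjugate extension exists, and there the coset is detected by the identity $c_1(\s)\equiv \omega_2(W_j)\pmod 2$. Since $Y$ is spin, $\omega_2(W_j)$ restricts to $0$ on $Y$ and therefore lies in the mod $2$ reduction of $\ker i_{W_j}$; the coset $\Char(W_j,\s_0)$ is its integral lift. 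So it suffices to identify $\omega_2(W_j)$ in terms of the framing data on $\partial Y$.

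To do this, I would decompose $W_j=(I\times Y)\cup_{I\times \partial Y} D^4$ and compute $\omega_2(W_j)$ from this handle picture: on $I\times Y$ the class is controlled by whether the two spin structures $\s_0^1,\s_0^2$ on $Y$ attached to $\s_0$ extend over $Y_j$ (governed by $\langle \lambda(\s_0^k),[\gamma_j]\rangle$ as in Lemma \ref{condition for spin structure extension}), and on the $2$--handle it is controlled by the parity of the surgery framing (governed by $p_j,p_{j+1},t(Y)$). Running the resulting cocycle through the Mayer--Vietoris map $\partial_{W_j}:H^1(\partial Y;\F_2)\to H^2(W_j;\F_2)$ of (\ref{mv sequence1}) and then lifting back to $\ker i_{W_j}$ will give a distinguished representative. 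In case (1)(b), $t(Y)$ odd forces a framing-obstruction supported on $\hat m$, producing $\partial_{W_0}(\hat m)$; in cases (2)(b)--(c) and (3)(b)--(c), $t(Y)$ even makes the obstruction come entirely from which spin structure on $Y$ fails to extend across the solid torus defining $Y_j$ or $Y_{j+1}$, producing $\partial_{W_j}(\hat\gamma_k)$ with $k$ dictated by the set $A_0$ versus $A_1$ containing $\s_0$.

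The main obstacle will be the bookkeeping for the non-spin cases: one must verify both that $\omega_2(W_j)$ actually equals the claimed $\partial_{W_j}(\hat\gamma_k)$ (not merely lies in the same coset up to an element of $2\ker i_{W_j}$) and that the case split $A_0$ versus $A_1$ from Lemma \ref{lemma self-conjugate spinc extension} correctly matches the $k$ appearing in the statement. I expect this to reduce to an explicit pairing computation showing that for $\s_0\in A_0$ the spin structure extending over $Y_0$ is the one that fails to extend across the $Y_2$--solid-torus (giving the $\hat\gamma_2$ on $W_1$ and the $\hat\gamma_0$ on $W_0$), with a symmetric statement for $A_1$; this will follow from the $\F_2$--linear relation $\hat\gamma_0+\hat\gamma_1+\hat\gamma_2=0$ on $\partial Y$ together with the definition of $A_0,A_1$ via the pairings $\langle\lambda(\s_0^k),[\gamma_j]\rangle$.
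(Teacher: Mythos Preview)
Your stage one is fine and matches the paper's implicit reasoning. The gap is in stage two, specifically the claim that in the non-self-conjugate cases ``the coset is detected by the identity $c_1(\s)\equiv \omega_2(W_j)\pmod 2$.'' This cannot work: $\omega_2(W_j)$ is an invariant of $W_j$ alone and does not depend on $\s_0$, whereas the coset $\Char(W_j,\s_0)$ genuinely depends on whether $\s_0\in A_0$ or $\s_0\in A_1$. For instance, $W_1$ is spin (Lemma~\ref{spin cobordism}), so $\omega_2(W_1)=0$; yet (2)(b) and (3)(a) assert different cosets for $\Char(W_1,\s_0)$ according as $\s_0\in A_0$ or $A_1$, and one checks from the admissibility conditions that $\partial_{W_1}(\hat\gamma_2)\notin 2\ker i_{W_1}$ when $t(Y)$ is even, so these cosets really are distinct. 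The underlying issue is that the mod~$2$ reduction map $\ker i_{W_j}/2\ker i_{W_j}\to H^2(W_j;\F_2)$ is not injective: since $\gamma_2$ is an $\F_2$--longitude, $\hat\gamma_2$ lies in the image of $H^1(Y;\F_2)\to H^1(\partial Y;\F_2)$, and hence $\partial_{W_j}(\hat\gamma_2)$ reduces to zero mod~$2$ even though it is nonzero in $\ker i_{W_j}/2\ker i_{W_j}$.

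The paper's argument avoids this by testing the coset not against $H^2(W_j;\F_2)$ but against the boundary restriction maps $\ker i_{W_j}\to \ker i_{Y_j}$ and $\ker i_{W_j}\to \ker i_{Y_{j+1}}$ of Lemma~\ref{relative homology}. Whether the image of $\Char(W_j,\s_0)$ under each of these contains $0$ is exactly the question of whether $\scspinc(Y_j,\s_0)$ (resp.\ $\scspinc(Y_{j+1},\s_0)$) is nonempty, and this is precisely the information recorded by the decomposition $A_0\cup A_1$ of Lemma~\ref{lemma self-conjugate spinc extension}. These two yes/no answers single out one of the four cosets of $2\ker i_{W_j}$ in $\ker i_{W_j}$. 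Your final paragraph is in fact groping toward this restriction argument, but it is miscast as a computation of $\omega_2$; if you drop the $\omega_2$ framing and argue directly with the restrictions to $Y_j$ and $Y_{j+1}$, you recover the paper's proof.
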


\begin{proof}
We will only prove case (2) because cases (1) and (3) are similar. In case (2), we have even $t(Y)$ and $\s_0\in A_0$. Since $\scspinc(W_2, \s_0) \neq \emptyset$, the coset $\Char(W_2,\s_0)$ must contain zero. This proves (a). To prove (b), note that the image of $\Char(W_1,\s_0)$ under the restriction map $\ker i_{W_1} \to \ker i_{Y_1}$ does not contain zero because $\scspinc(Y_1,\s_0)=\emptyset$, while the image of  $\Char(W_1,\s_0)$ under the map $\ker i_{W_1}\rightarrow \ker i_{Y_2}$ contains zero because  $\scspinc(Y_2,\s_0)\neq \emptyset$. It is now not difficult to check that $\partial_{W_2}(\hat{\gamma}_2) + 2\ker i_{W_2}$ is the only one coset (of the four) satisfying these requirements. This proves (b). Case (c) is similar.
\end{proof}

Let $(L_0,L_1,L_2)$ and $(L'_0,L'_1,L'_2)$ be two admissible skein triangles with the same resolution data, and let us fix decompositions 
\[
\scspinc(Y) = A_0\cup A_1\quad\text{and}\quad \scspinc(Y')=A'_0\cup A'_1
\] 
as in Lemma \ref{lemma self-conjugate spinc extension} and Remark \ref{R:odd t}. Combining all of the above lemmas, we obtain the following result.

\begin{pro} \label{same resolution data gives same homology}
Let $\s_0\in A_n$ and $\s'_0\in A'_n$ with $n = 0$ or $n =1$. Then there exist isomorphisms
$\hat\xi_{j}:\ker i_{W_j}\rightarrow  \ker i_{W'_j}$ and $\xi_{j}:\ker i_{Y_j}\rightarrow  \ker i_{Y'_j}$ 
with the following properties:
\begin{enumerate}
\item $\hat\xi_j(\alpha)|_{Y'_j}=\xi_j(\alpha|_{Y_j})$ and $\hat\xi_{j+1}(\alpha)|_{Y'_{j+1}}=\xi_{j+1}(\alpha|_{Y_{j+1}})$;
\item For any $\alpha\in \ker i_{W_j}$ with $\alpha|_{\partial W_j}$ torsion, we have $\alpha^2 = (\hat\xi_j(\alpha))^2$;
\item $\hat\xi_j(\Char(W_j,\s_0))=\Char(W'_j,\s'_0)$,\;
\item Let $\rho_{j,j+1}$ be the isomorphism (\ref{homology composition}) and $\rho'_{j,j+1}$ the corresponding isomorphism for the skein triangle $(L'_0,L'_1,L'_2)$. Then, for any $\beta\in \ker i_{W_{j+2}}$ and any integer $k$ we have
\begin{gather*}
\hat\xi_j(\rho_{j,j+1}(\beta,k\cdot \PD[E_j])|_{W_j})=\rho'_{j,j+1}(\hat\xi_{j+2}(\beta),k\cdot \PD[E'_j])|_{W'_j}\quad\text{and} \\
\hat\xi_{j+1}(\rho_{j,j+1}(\beta,k\cdot\PD[E_j])|_{W_{j+1}})=\rho'_{j,j+1}(\hat\xi_{j+2}(\beta),k\cdot \PD[E'_j])|_{W'_{j+1}}.
\end{gather*}
\end{enumerate}
 \end{pro}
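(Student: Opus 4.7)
The strategy is to construct $\hat\xi_j$ and $\xi_j$ explicitly on the generators supplied by Lemma \ref{relative homology}, and then to check the four listed properties one at a time by observing that each side of the desired identity depends only on the resolution data (together with the auxiliary choice of $A_0$ or $A_1$).

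First I would define the map $\hat\xi_j:\ker i_{W_j}\to \ker i_{W'_j}$ by sending the generators $\partial_{W_j}(\hat m),\partial_{W_j}(\hat l)$ to $\partial_{W'_j}(\hat m'),\partial_{W'_j}(\hat l')$. Because Lemma \ref{relative homology} identifies both sides with $\Z\oplus\Z/\langle (0,t(Y))\rangle$, and because $t(Y)=t(Y')$ by assumption, this defines a bona fide isomorphism. I would define $\xi_j:\ker i_{Y_j}\to\ker i_{Y'_j}$ by the same rule; here well-definedness follows from the equality $(p_j,q_j)=(p'_j,q'_j)$, which is part of the resolution data (using $(p_2,q_2)=-(p_0,q_0)-(p_1,q_1)$ for $j=2$).

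Property (1) is immediate from the fact, recorded in Lemma \ref{relative homology}, that the restriction $\ker i_{W_j}\to \ker i_{Y_j}\oplus \ker i_{Y_{j+1}}$ is the natural projection in the presentation by $\partial(\hat m),\partial(\hat l)$. Property (2) follows from Lemma \ref{l: cup product on W}, which gives $(\partial_{W_j}(a\hat m+b\hat l))^{2}=a^{2}/(p_j p_{j+1})$ when both $p_j,p_{j+1}$ are nonzero, and zero otherwise; since this expression depends only on $(p_j,p_{j+1})$ it is preserved by $\hat\xi_j$. For property (3), the classification of $\Char(W_j,\s_0)$ in the lemma immediately preceding the proposition writes this coset as a specific representative (among $0$, $\partial_{W_j}(\hat m)$, $\partial_{W_j}(\hat\gamma_j)$, $\partial_{W_j}(\hat\gamma_{j+2})$, $\partial_{W_j}(\hat\gamma_0)$, $\partial_{W_j}(\hat\gamma_1)$) plus $2\ker i_{W_j}$; the choice of representative depends only on the parity of $t(Y)$ and the label $n\in\{0,1\}$ of the block $A_n$ containing $\s_0$, so the identification transports $\Char(W_j,\s_0)$ to $\Char(W'_j,\s'_0)$. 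Finally, for property (4), the lemma just above Proposition \ref{same resolution data gives same homology} computes the restrictions $\rho_{j,j+1}(\beta,c\,\PD[E_j])|_{W_j}$ and $|_{W_{j+1}}$ as $\partial_{W_j}(a\hat m+b\hat l+c\hat\gamma_j)$ and $\partial_{W_{j+1}}(a\hat m+b\hat l+c\hat\gamma_{j+2})$, and these formulas again depend only on $(p_j,q_j)$ and $(p_{j+1},q_{j+2})$, hence are respected by $\hat\xi_j$.

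The main obstacle I anticipate is bookkeeping in property (3): one must check, case by case according to the parity of $t(Y)$ and the block $A_n$, that the coset representative assigned on the primed side is indeed the image under $\hat\xi_j$ of the representative on the unprimed side. This reduces to matching the six listed representatives using the fact that, by construction, $\hat\xi_j$ sends $\partial_{W_j}(\hat m)\to \partial_{W'_j}(\hat{m'})$, $\partial_{W_j}(\hat l)\to\partial_{W'_j}(\hat{l'})$, and hence $\partial_{W_j}(\hat\gamma_k)\to\partial_{W'_j}(\hat\gamma'_k)$ for $k\in\Z/3$ because $(p_k,q_k)=(p'_k,q'_k)$. Verifying compatibility in each of the three cases (odd $t(Y)$; even $t(Y)$ with $\s_0\in A_0$; even $t(Y)$ with $\s_0\in A_1$) is mechanical but requires care to align the $A_n$-labels on the two sides, which is the reason the proposition requires $\s_0\in A_n$ and $\s'_0\in A'_n$ with the \emph{same} $n$.
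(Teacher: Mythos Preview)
Your proposal is correct and is precisely the argument the paper has in mind: the paper presents Proposition \ref{same resolution data gives same homology} as the direct consequence of ``combining all of the above lemmas'', and your explicit definition of $\hat\xi_j,\xi_j$ on the generators $\partial(\hat m),\partial(\hat l)$ together with the case-by-case verification via Lemmas \ref{relative homology}, \ref{l: cup product on W}, the unnumbered $\Char$ lemma, and the $\rho_{j,j+1}$ lemma carries this out. The one cosmetic slip is the index ``$(p_{j+1},q_{j+2})$'' near the end of your discussion of property (4); the relevant data are $(p_j,q_j)$ and $(p_{j+2},q_{j+2})$, but this does not affect the argument.
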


With all the necessary preparations now in place, we are finally ready to prove the main result of this subsection, Theorem \ref{same resolution data give equivalent spin-c system}.

\begin{proof}[Proof of Theorem \ref{same resolution data give equivalent spin-c system}] 
Let $A_0$, $A_1$, $A'_0$, and $A'_1$ be as above, and $\s_0\in A_{n}$ and $\s'_0\in A'_{n}$ for $n = 0$ or $n = 1$. We first pick any $\s_{W_0}\in\spinc(W_0,\s_0)$. It follows from Proposition \ref{same resolution data gives same homology}\,(3) that $\eta_0(c_1(\s_{W_0}))\in \Char(W'_0,\s'_0)$ and there exists $\s_{W'_0}$ such that $c_1(\s_{W'_0})=\eta_0 (c_1(\s_{W_0}))$. Denote by $\tilde\s$ the $\spinc$ structure on $\cptwobar$ with $c_1(\tilde{\s}) = \PD[E_j] = \PD[E'_j]$ and let
\[
\s_{W_j}=(\s_{W_0}\#\,\tilde{\s})|_{W_j}\quad\text{and}\quad \s_{W'_j}=(\s_{W'_0}\#\,\tilde{\s})|_{W'_j}\quad\text{for $j = 0, 1$},\quad\text{and}
\]
\[
\s_{Y_j}=\s_{W_j}|_{Y_j}\quad\text{and}\quad \s_{Y'_j}=\s_{W'_j}|_{Y'_j}\quad\text{for $j =0, 1, 2$}.
\]
Using Proposition \ref{same resolution data gives same homology}\,(4) one can show that
\[
\hat\xi_j(c_1(\s_{W_j}))=c_1(\s_{W'_j})\quad\text{and}\quad
\xi_j(c_1(\s_{Y_j}))=c_1(\s_{Y'_j}).
\]
We now define the map $\hat{\theta}_j: \spinc(W_j,\s_0)\rightarrow \spinc(W'_j,\s'_0)$ by the formula
\[
\hat{\theta}_j(\s_{W_j}+h)=\s_{W'_j}+\hat\xi_j(h)\quad \text{for}\;\; h\in \ker i_{W_j}
\]
and the map $\theta_j:\spinc(Y_j,\s_0)\rightarrow \spinc(Y'_j,\s'_0)$ by the formula
\[
\theta_j(\s_{Y_j}+h)=\s_{Y'_j}+\xi_j(h)\quad\text{for}\;\; h\in \ker i_{Y_j}.
\]
It is not difficult to verify that $\hat{\theta}_j$ and $\theta_j$ are compatible with the additional structures in Definition \ref{spin-c system} and that they provide the desired equivalence $\SS((L_0,L_1,L_2),\s_0)\sim \SS((L'_0,L'_1,L'_2),\s'_0)$.
\end{proof}


\subsection{Truncated Floer homology}
Recall that, according to Theorem \ref{exact triangle}, we have the following Floer exact triangle over the rationals
\begin{equation}\label{equation exact triangle}
\begin{CD}
\ldots @> {F_{W_0}} >> \widecheck{\HM}(Y_1,[\s_0]) @> {F_{W_1}} >>\widecheck{\HM}(Y_2,[\s_0]) @> {F_{W_2}} >> \widecheck{\HM}(Y_0,[\s_0]) @> {F_{W_0}} >> \ldots
\end{CD}
\end{equation}
Let us introduce the constants 
\[
c_0= b^{+}_2(W_2)+b_1(Y_0),\quad c_1=b^{+}_2(W_1)+b_1(Y_2),\quad\text{and}\quad c_2=0,  
\]
and use them to define `twisted' versions of the maps $\check{\tau}^j_{*}$ induced by the covering translations on $\widecheck{\HM}(Y_j,[\s_0])$ by the formula
\[
f_j=(-1)^{c_j}\cdot \check{\tau}^j_{*}: \widecheck{\HM}(Y_j,[\s_0])\rightarrow \widecheck{\HM}(Y_j,[\s_0]).
\]

\begin{lem}\label{exact triangle compatible with covering transformation}
The maps $f_j$ are compatible with the maps $F_{W_j}$ in the sense that
\begin{equation}\label{involution compatible with cobordism map}
f_{j+1}\circ F_{W_j}=F_{W_j}\circ f_j\quad\text{for}\quad j\in \Z/3.
\end{equation}
\end{lem}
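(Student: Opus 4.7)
The identity to prove is equivalent to showing
\[
\check{\tau}^{j+1}_{*} \circ F_{W_j} \;=\; (-1)^{c_j + c_{j+1}}\, F_{W_j} \circ \check{\tau}^{j}_{*}.
\]
The plan is to obtain this from the naturality of the cobordism-induced maps in monopole Floer homology, applied to the diffeomorphism $\tau_{W_j}\colon W_j \to W_j$, which restricts to $\tau_{Y_j}$ and $\tau_{Y_{j+1}}$ on the boundary components, combined with the action of $\tau^{*}$ on $\spinc$ structures recorded in Lemma \ref{conjugation action} and its effect on the weights $(-1)^{\mu(\s)}$ recorded in Proposition \ref{conjugation on mu}.

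\emph{Naturality and reindexing.} By functoriality of monopole Floer homology under diffeomorphisms of cobordisms, for every $\s \in \spinc(W_j,\s_0)$ one has
\[
\check{\tau}^{j+1}_{*} \circ \widecheck{\HM}(W_j,\s) \;=\; \epsilon_j \cdot \widecheck{\HM}(W_j,\tau^{*}_{W_j}\s) \circ \check{\tau}^{j}_{*},
\]
where $\epsilon_j \in \{\pm 1\}$ is the sign by which $\tau^{*}_{W_j}$ acts on the homology-orientation data used to define the cobordism map. By Lemma \ref{conjugation action}, $\tau^{*}_{W_j}\s = \bar{\s}$. Substituting into the definition $F_{W_j} = \sum_{\s}(-1)^{\mu(\s)}\widecheck{\HM}(W_j,\s)$, reindexing by $\s \leftrightarrow \bar{\s}$, and applying Proposition \ref{conjugation on mu}, one arrives at
\[
\check{\tau}^{j+1}_{*} \circ F_{W_j} \;=\; \eta_j\, \epsilon_j \cdot F_{W_j} \circ \check{\tau}^{j}_{*},
\]
where $\eta_j = -1$ if $j=0$ and $\eta_j = +1$ otherwise, reflecting the fact that the reindexing alters $\mu$ only on $\spinc(W_0,\s_0)$.

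\emph{Sign computation.} The next step is to evaluate $\epsilon_j$ explicitly. By Lemma \ref{conjugation action}, the map $\tau^{*}$ acts as $-1$ on $H^1$ and $H^2$ of $W_j$ (hence on $H^{+}_{2}(W_j;\R)$), and also on $H^1$ of each boundary component; by Lemma \ref{homology of cobordism}, $b_1(W_j)=0$. Tracing through the orientation conventions for the cobordism map, the action of $\tau^{*}_{W_j}$ on the relevant determinant line is multiplication by $(-1)^{b^{+}_{2}(W_j) + b_1(Y_{j+1})}$, so $\epsilon_j = (-1)^{b^{+}_{2}(W_j) + b_1(Y_{j+1})}$. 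The proof is completed by a case-by-case verification, using the Betti-number data of Lemma \ref{homology of cobordism}, that in both case (1) (all $L_j$ ramifiable, with exactly one $b^{+}_{2}(W_j)=1$) and case (2) (some $L_n$ not ramifiable, with exactly one $b_1(Y_n)=1$) one has
\[
c_j + c_{j+1} \;\equiv\; b^{+}_{2}(W_j) + b_1(Y_{j+1}) + [j=0] \pmod 2.
\]

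The main obstacle is the sign bookkeeping in the previous paragraph: naturality only determines $\epsilon_j$ up to a sign coming from the action on orientation data, and the $H^1(Y_{j+1})$ contribution — which is invisible when both boundaries are rational homology spheres but crucial in case (2), particularly when the non-ramifiable link is $L_1$ — must be identified from the specific orientation conventions for the monopole cobordism map used in \cite{Kronheimer-Mrowka}. Once this is done, the identity $c_j + c_{j+1} \equiv b^{+}_{2}(W_j) + b_1(Y_{j+1}) + [j=0] \pmod 2$ holds in every case by direct enumeration, proving \eqref{involution compatible with cobordism map}.
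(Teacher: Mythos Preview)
Your proposal is correct and follows essentially the same route as the paper's proof: naturality of the cobordism maps under $\tau_{W_j}$, identification $\tau^{*}_{W_j}\s=\bar\s$ via Lemma~\ref{conjugation action}, the homology-orientation sign $\epsilon_j=(-1)^{b^{+}_{2}(W_j)+b_1(Y_{j+1})}$ (the paper cites \cite[Definition 3.4.1]{Kronheimer-Mrowka} explicitly for the determinant line $\bigwedge^{\max}(H^1(W_j)\oplus I^+(W_j)\oplus H^1(Y_{j+1}))$), and the reindexing via Proposition~\ref{conjugation on mu}. The only cosmetic difference is that the paper packages your case-by-case verification of $c_j+c_{j+1}\equiv b^{+}_{2}(W_j)+b_1(Y_{j+1})+[j=0]$ into the single identity $b_1(Y_0)+b_1(Y_1)+b_1(Y_2)+b^{+}_2(W_0)+b^{+}_2(W_1)+b^{+}_2(W_2)=1$, which handles the $j=0$ case in one line.
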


\begin{proof}
Since the covering translation $\tau_{W_j}$ on $W_j$ extends the covering translations $\tau_{Y_j}$ and $\tau_{Y_{j+1}}$ on its boundary components, the functoriality of the monopole Floer homology and Lemma \ref{conjugation action} imply that
\begin{equation}\label{cobordism map for single spin-c}
\begin{split}
\check{\tau}^{j+1}_{*}\circ \widecheck{\HM}(W_j,\s) =(-1)^{b^{+}_2(W_j) +b_1(Y_{j+1})} \cdot \widecheck{\HM} & (W_j,\tau_{W_j}^{*}\s)  \circ \check{\tau}^j_{*} \\ &=(-1)^{b^{+}_2(W_j)+b_1(Y_{j+1})}\cdot \widecheck{\HM}(W_j,\bar{\s})\circ \check{\tau}^j_{*},
\end{split}
\end{equation}
for any $\s\in \spin^{c}(W_j,[\s_0])$. To explain the extra factor $(-1)^{b^{+}_2(W_j)+b_1(Y_{j+1})}$, we recall that the homology orientation, that is, an orientation of the vector space
\[
\bigwedge\nolimits^{\rm max}\left(H^1(W_j;\R)\oplus I^{+}(W_j;\R)\oplus H^1(Y_{j+1};\R)\right),
\]
is involved in the definition of the map $\widecheck{\HM}(W)$; see \cite[Definition 3.4.1]{Kronheimer-Mrowka}. Here, $I^{+}(W_j)$ stands for a maximum positive subspace for the intersection form on $\operatorname{im}(H^2(W_j,\partial W_j)\rightarrow H^2(W_j))$. By Lemma \ref{conjugation action}, the covering translation $\tau_{W_j}$ acts as the negative identity on the space 
\[
H^1(W_j;\R)\oplus I^{+}(W_j;\R)\oplus H^1(Y_{j+1};\R),
\]
thereby changing the homology orientation by the factor of 
\[
(-1)^{b^{+}_2(W_j)+b_1(W_j)+b_1(Y_{j+1})}
\]
Recall that the map $F_{W_j}$ was defined in Section \ref{S:triangle} by the formula
\[
F_{W_j}\; =\; \sum\limits_{\s\in \spinc(W_j,\s_0)}(-1)^{\mu(\s)}\cdot\widecheck{\HM}(W_j,\s),
\]
where $\mu(\s)$ is the $\F_2$-valued function defined in \eqref{defi:mu}. Therefore, in order to deduce \eqref{involution compatible with cobordism map} from \eqref{cobordism map for single spin-c}, we just need to check the relation
\[
b^{+}_2(W_j)+b_1(Y_{j+1}) +c_{j+1}+c_j\,=\, \mu(\s)-\mu(\bar{\s})\pmod 2
\]
for any $\s\in \spinc (W_j,\s_0)$. For $j=1$ and $j = 2$, this is immediate from Proposition \ref{conjugation on mu}. For $j=0$, this follows from Proposition \ref{conjugation on mu} and the identity
\begin{equation}\label{betti sum}
b_1(Y_0)+b_1(Y_1)+b_1(Y_2)+b^{+}_2(W_0)+b^{+}_2(W_1)+b^{+}_2(W_2)=1,
\end{equation}
which is a  consequence of Lemma \ref{homology of cobordism}. 
\end{proof}

Since $\widecheck{\HM}(Y_j,[\s_0])$ usually has infinite rank as a $\Z$--module, we will truncate it before discussing Lefschetz numbers.

\begin{defi}
For any rational number $q$ and $j \in \Z/3$, define the \emph{truncated monopole Floer homology} as
\[
\widecheck{\HM}_{\leq q}(Y_j,[\s_0]) = \Big(\mathop{\bigoplus}_{\substack{\s|_Y=\s_0\\
            c_1(\s)\;\text{torsion}}}\mathop{\bigoplus}_{a\leq q}\quad \widecheck{\HM}_{a}(Y_j,\s)\Big)
            \;\bigoplus\; \Big(\mathop{\bigoplus}_{\substack{\s|_{Y}=\s_0\\
            c_1(\s)\;\text{non-torsion}}}\widecheck{\HM}(Y_j,\s)\Big).
\]
Also define
\[
\widecheck{\HM}_{>q}(Y_j,[\s_0])\; =\; \widecheck{\HM}(Y_j,[\s_0])\Big/\,\widecheck{\HM}_{\leq q}(Y_j,[\s_0]).
\]
\end{defi}

We wish to find truncations of $\widecheck{\HM}(Y_j,[\s_0])$ for all $j \in \Z/3$ which are preserved by the maps $F_{W_j}$. To this end, recall the map
\[
\rho: \text{tor-spin}^{c}(Y_j)\rightarrow [0,2)
\]
defined by the formula
\[
\rho(\s) \equiv \frac 1 4\,(c_1(\hat\s)^2-\sgn(X)) \pmod 2
\]
for any choice of smooth compact $\spinc$ manifold $(X,\hat{\s})$ with the $\spinc$ boundary $(Y_j,\s)$ (see \cite{aps:II}).

\begin{defi}\label{truncated triangle}
Let $\tilde{\s} \in \text{sc-spin}^{c}(Y_2,\s_0)$ and choose an even integer $N > 0$ large enough so that, for each $j\in \Z/3$, the following two conditions are satisfied: 
\begin{enumerate}
\item\label{item truncation include reduced part} the natural map $\widecheck{\HM}_{\leq q}(Y_j,[\s_0])\rightarrow \HM^{\red}(Y_j,[\s_0])$ is surjective, and
\item\label{item truncation include image of reduced part} for any $\s\in \spin^{c}(Y_j,\s_0)$, there exists a finite set
\[
\{a_1,a_2,...,a_{n}\} \subset\widecheck{\HM}(Y_j,\s)
\]
representing a set of generators for $\HM_{\red}(Y_j,\s)$ as a quotient $\mathbb{Q}[U]$-module, such that 
\[
F_{W_j}(a_{i})\subset \widecheck{\HM}_{\leq N}(Y_{j+1},[\s_0]).
\]
(That this can be achieved follows from \cite[Lemma 25.3.1]{Kronheimer-Mrowka}).
\end{enumerate}
\emph{The truncated triangle with parameter} $(N,\tilde{\s})$ is then defined as the $3$-periodic chain complex
\begin{multline}\notag
\ldots\xrightarrow{F_{W_0}(N,\tilde{\s})} \widecheck{\HM}_{\leq N+\rho(\tilde{\s})+o(1)}(Y_1,[\s_0])\xrightarrow{F_{W_1}(N,\tilde{\s})}\widecheck{\HM}_{\leq N+\rho(\tilde{\s})+o(2)}(Y_2,[\s_0])\\ \xrightarrow{F_{W_2}(N,\tilde{\s})}\widecheck{\HM}_{\leq N+\rho(\tilde{\s})+o(0)}(Y_0,[\s_0])\xrightarrow{F_{W_0}(N,\tilde{\s})}\ldots
\end{multline}
where $F_{W_j}(N,\tilde{\s})$ is the restriction of $F_{W_j}$ and $o(j)$ is defined as follows:
\begin{enumerate}
\item if $b_1(Y_{n})=1$ for some $n\in \Z/3$ then $o(n)=o(n-1)=o(n+1)=0$;
\item if $b^{+}_2(W_{n})=1$ for some $n\in \Z/3$ then $o(n+1)=0$, $o(n)=1/2$, and $o(n-1) = 1/4$.
\end{enumerate}
(Note that by Lemma \ref{homology of cobordism}, exactly one of these two cases occurs). We denote this truncated triangle by $\mathfrak{C}^{\leq}(N,\tilde{\s})$. It is a subcomplex of the exact triangle (\ref{equation exact triangle}), and we denote by $\mathfrak{C}^{>}(N,\tilde{\s})$ the quotient complex.
\end{defi}

\begin{lem}
The image $F_{W_j}(\widecheck{\HM}_{\leq N+\rho(\tilde{\s})+o(j)}(Y_j,[\s_0]))$ is contained in 
\[
\widecheck{\HM}_{\leq N+\rho(\tilde{\s})+o(j+1)}(Y_{j+1},[\s_0]),
\]
and therefore the map $F_{W_j}(N,\tilde{\s})$ is well defined.
\end{lem}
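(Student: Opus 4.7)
The plan is to apply the absolute $\mathbb{Q}$-grading shift formula for cobordism maps in monopole Floer homology. Writing $F_{W_j}=\sum_{\s}(-1)^{\mu(\s)}\widecheck{\HM}(W_j,\s)$, each summand shifts the grading by
\[
d(W_j,\s) \;=\; \tfrac{1}{4}\bigl(c_1(\s)^2-2\chi(W_j)-3\sigma(W_j)\bigr).
\]
For our surgery cobordisms one has $\chi(W_j)=1$, and $\sigma(W_j)\in\{-1,0,1\}$ is pinned down by Lemma~\ref{homology of cobordism}. Novikov additivity of the signature combined with the gluing formula for Chern numbers yields the mod-$2$ congruence $d(W_j,\s)\equiv\rho(\s|_{Y_{j+1}})-\rho(\s|_{Y_j})\pmod 2$ whenever both boundary restrictions are torsion, so source elements are automatically sent into the correct grading coset of the target.

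I would first dispose of the non-torsion contributions: if $\s|_{Y_j}$ (or $\s|_{Y_{j+1}}$) is non-torsion, then $\widecheck{\HM}(Y_j,\s|_{Y_j})$ is finite-dimensional and sits wholly inside $\widecheck{\HM}_{\leq q}(Y_j,[\s_0])$ for every $q$ by definition, and condition~\eqref{item truncation include image of reduced part} of Definition~\ref{truncated triangle} lets us choose $N$ large enough to absorb the finite-rank image on the target side. For the remaining torsion-to-torsion case, I would split into the two alternatives of Lemma~\ref{homology of cobordism}: in Case~(2) all $o(j)=0$ and the mod-$2$ compatibility together with the signature constraints suffices, while in Case~(1) the fractional offsets $o(n)=1/4$, $o(n+1)=1/2$, $o(n+2)=0$ (where $W_{n+1}$ has $b^+_2=1$) are precisely calibrated so that the minimum-attaining shifts around the three-term cycle satisfy the desired inequality $d(W_j,\s)\le o(j+1)-o(j)$; this is a direct calculation combining the formula for $d$ with the explicit cup-product formula of Lemma~\ref{l: cup product on W}.

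The main obstacle will be controlling $\spinc$ structures with $c_1(\s)^2$ far from its minimum value, since for these the shift $d(W_j,\s)$ is large positive and would naively push elements out of the target truncation. The resolution is that the corresponding cobordism maps $\widecheck{\HM}(W_j,\s)$ vanish on the truncated subspace: the associated Seiberg--Witten moduli spaces have virtual dimension $d(W_j,\s)$, and for $d$ outside the range compatible with the structure of the tower $\widecheck{\HM}_{\leq N+\rho(\tilde\s)+o(j)}(Y_j,[\s_0])$ these contributions are forced to be zero. Using Lemma~\ref{l: cup product on W} to bound the admissible values of $c_1(\s)^2$ in terms of the resolution data then confines the non-vanishing summands of $F_{W_j}$ to those that respect the cutoff, yielding the claimed inclusion.
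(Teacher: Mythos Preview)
Your overall strategy---computing the degree shift $d(W_j,\s)=\tfrac14(c_1(\s)^2-2\chi(W_j)-3\sigma(W_j))$ for each summand and comparing with $o(j+1)-o(j)$---matches the paper's. The paper in fact only writes out Case~(2) of Lemma~\ref{homology of cobordism} (where $b_1(Y_n)=1$ and all $o(j)=0$), and there the argument is exactly the degree computation you describe: for the negative-definite cobordism $W_{n+1}$ one has $p_{n+1}p_{n-1}=-1$, so $c_1(\s)^2=-a^2$ with $a$ odd, giving $d\le 0$; for $W_{n-1}$ and $W_n$ one has $b^+_2=b^-_2=0$, hence $c_1^2=0$ and $d=-\tfrac12$; the non-torsion $\spinc$ structures on $Y_n$ are dispatched via condition~(2) of Definition~\ref{truncated triangle}, just as you say.

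The gap is in your third paragraph, concerning the positive-definite cobordism $W_{n+1}$ in Case~(1). You correctly identify that $c_1(\s)^2$ is unbounded above here, so $d(W_{n+1},\s)$ can be arbitrarily large. But your resolution---that these maps ``vanish on the truncated subspace'' because of the virtual dimension of the moduli space---is not right. There is no dimensional obstruction preventing $\widecheck{\HM}(W_{n+1},\s)$ from sending an element representing a class in $\HM^{\red}(Y_{n+1})$ arbitrarily high into the $U$-tower of $\widecheck{\HM}(Y_{n+2})$, nor does Lemma~\ref{l: cup product on W} bound $c_1(\s)^2$. The correct mechanism is that $b^+_2(W_{n+1})=1$ forces $\overline{\HM}(W_{n+1},\s)=0$ for every $\s$, so $F_{W_{n+1}}$ annihilates the image of $\overline{\HM}(Y_{n+1})\to\widecheck{\HM}(Y_{n+1})$, i.e.\ the entire tower. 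Thus $F_{W_{n+1}}$ on $\widecheck{\HM}_{\leq q}(Y_{n+1})$ factors through the finite-dimensional quotient $\HM^{\red}(Y_{n+1})$, and condition~(2) of Definition~\ref{truncated triangle} (together with $U$-equivariance) forces the image into $\widecheck{\HM}_{\leq N}(Y_{n+2})\subset\widecheck{\HM}_{\leq N+\rho(\tilde\s)}(Y_{n+2})$. This is exactly parallel to the paper's handling of the non-torsion case in Case~(2), where $\widecheck{\HM}(Y_n,\s|_{Y_n})=\HM^{\red}(Y_n,\s|_{Y_n})$ has no tower to begin with; here the tower is present but is killed by the cobordism map.
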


\begin{proof}
We will only give the proof in the case when $b_1(Y_{n})=1$ for some $n\in \Z/3$ since the other case is similar. 

We will start with the map $F_{W_{n+1}}$. Since $b_1(Y_{n})=1$, we have $\gamma_n = l$, which implies that $|p_{n\pm 1}|=1$ and $p_{n-1}+p_{n+1}=-p_{n}=0$. Using (\ref{cup product formula}) and Lemma \ref{homology of cobordism}, we obtain
 \[
 \frac 1 4\,(c_1^2(\s)-2\chi(W_{n+1})-3\sigma(W_{n+1}))\; \leq\; \frac 1 4\,(-1-2+3) = 0
 \]
for any $\s\in \spinc(W_{n+1},[\s_0])$. Therefore, $F_{W_{n+1}}$ decreases the absolute grading, and the statement follows from the fact that $o(n+1)=o(n-1)$.

Let us now consider the map $F_{W_{n-1}}$. For a given $\s\in\spin^{c}(W_{n-1},\s_0)$, there are two possibilities:
\begin{itemize}
\item $\s|_{Y_{n}}$ is non-torsion. There is nothing to prove in this case because no truncation is done on $\widecheck{\HM}(Y_{n},\s|_{Y_{n}})$.
\item $\s|_{Y_{n}}$ is torsion. Since $b_1(Y_{n-1})=0$, the restriction $\s|_{\partial W_{n}}$ is torsion and the map $\widecheck{\HM}(W_{n},\s)$ has $\mathbb{Q}$--degree
\[
\frac 1 4\, (c_1(\s)^2-2\chi(W_{n})-3\sigma(W_{n})) = \frac 1 4\,(0-2-0) = -\frac12.
\]
The statement follows from the fact that $o(n)>o(n-1)- 1/2$.
\end{itemize}

Finally, consider the map $F_{W_{n}}$. For a given $\s\in\spin^{c}(W_{n},\s_0)$, there are again two possibilities:
\begin{itemize}
\item $\s|_{Y_{n}}$ is non-torsion. Then $\widecheck{\HM}(Y_{n},\s|_{Y_{n}})=\HM^{\red}(Y_{n},\s|_{Y_{n}})$ and the statement follows from Part (\ref{item truncation include image of reduced part}) of Definition \ref{truncated triangle}.
\item $\s|_{Y_{n}}$ is torsion. As in the corresponding case for $F_{W_{n-1}}$, the map $\widecheck{\HM}(W_{n},\s)$ has $\mathbb{Q}$--degree $-1/2$, and the statement follows from the fact that $o(n+1)>o(n)-1/2$.
\end{itemize}
\end{proof}

\noindent
Note that, in general, neither $\mathfrak{C}^{\leq}(N,\tilde{\s})$ nor $\mathfrak{C}^{>}(N,\tilde{\s})$ is exact. We denote their homology groups by $\{H_j^{\leq}(N,\tilde{\s})\}$ and $\{H_j^{>}(N,\tilde{\s})\}$, respectively. The absolute $\Z/2$ grading on $\widecheck{\HM}(Y_j,[\s_0])$ induces an absolute $ \Z/2$ grading on these homology groups. The maps $f_j$ give rise to involutions on both $\mathfrak{C}^{\leq}(N,\tilde{\s})$ and $\mathfrak{C}^{>}(N,\tilde{\s})$. We denote the corresponding chain maps by $f^{\leq}_j(N,\tilde{\s})$ and  $f^{>}_j(N,\tilde{\s})$. We also denote the induced maps on $H_j^{\leq}(N,\tilde{\s})$ and $H_j^{>}(N,\tilde{\s})$ by, respectively, $f^{\leq}_j(N,\tilde{\s})_{*}$ and $f^{>}_j(N,\tilde{\s})_{*}$. With a slight abuse of language, we will call all of these involutions \emph{covering involutions}.

\begin{lem}\label{truncated homology isomorphic to quotient homology}
For any $j\in \Z/3$, there is an isomorphism 
\[
\xi_j:H_j^{\leq }(N,\tilde{\s})\longrightarrow H_{j+1}^{>}(N,\tilde{\s})
\]
compatible with the covering involution. The map $\xi_j$ shifts the absolute $\Z/2$ grading by the same amount as the map $F_{W_j}$.
\end{lem}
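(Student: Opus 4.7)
The key idea is to apply the snake lemma to the short exact sequence of $3$-periodic chain complexes
\[
0 \longrightarrow \mathfrak{C}^{\leq}(N,\tilde{\s}) \xrightarrow{\;i\;} \mathfrak{C} \xrightarrow{\;\pi\;} \mathfrak{C}^{>}(N,\tilde{\s}) \longrightarrow 0,
\]
where $\mathfrak{C}$ denotes the full exact triangle \eqref{equation exact triangle}. The lemma immediately preceding this one verifies that each $F_{W_j}$ preserves the filtration used to define $\mathfrak{C}^{\leq}$, so that $i$ is a genuine chain map of $3$-periodic complexes and $\pi$ descends to the quotient. The snake lemma then delivers a cyclic long exact sequence of the form
\[
\cdots \longrightarrow H_j^{\leq} \longrightarrow H_j(\mathfrak{C}) \longrightarrow H_j^{>} \xrightarrow{\delta_j} H_{j+1}^{\leq} \longrightarrow H_{j+1}(\mathfrak{C}) \longrightarrow \cdots,
\]
whose connecting homomorphisms $\delta_j$ are, up to the obvious cyclic reindexing on $\mathbb{Z}/3$, the maps $\xi_j$ of the statement.

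Theorem \ref{exact triangle} asserts precisely that $\mathfrak{C}$ is acyclic over $\mathbb{Q}$, so $H_j(\mathfrak{C}) = 0$ for every $j\in \mathbb{Z}/3$, and the long exact sequence degenerates into three isomorphisms between the subcomplex homology and the quotient homology. The absolute $\mathbb{Z}/2$-grading claim is automatic from the snake-lemma recipe for $\delta_j$: one lifts a cycle from the quotient to $\mathfrak{C}$, applies the differential $F_{W_j}$ exactly once, and then restricts to the subcomplex, so $\xi_j$ shifts the $\mathbb{Z}/2$-grading by the same amount as $F_{W_j}$.

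For the equivariance, Lemma \ref{exact triangle compatible with covering transformation} gives $f_{j+1}\circ F_{W_j} = F_{W_j}\circ f_j$, so the collection $\{f_j\}$ assembles into a chain self-map of $\mathfrak{C}$. By Lemma \ref{conjugation action}, the covering involutions preserve both the splitting by $\spinc$ structures restricting to $\s_0$ and the absolute $\mathbb{Q}$-grading, so this self-map restricts to $\mathfrak{C}^{\leq}(N,\tilde{\s})$ and descends to $\mathfrak{C}^{>}(N,\tilde{\s})$. Naturality of the connecting homomorphism with respect to self-maps of short exact sequences then forces $\xi_j$ to intertwine $f_j^{\leq}(N,\tilde{\s})_{*}$ with $f_{j+1}^{>}(N,\tilde{\s})_{*}$. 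The main bookkeeping obstacle I foresee is not the homological algebra, which is entirely formal, but rather pinning down the overall sign in $\xi_j$: the snake-lemma connecting map is canonical only up to a sign, and this sign must be chosen consistently with the twists $(-1)^{c_j}$ appearing in the definition of $f_j$ so that the intertwining relation holds on the nose rather than up to $\pm 1$.
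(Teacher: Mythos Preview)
Your approach matches the paper's: view the exact triangle as an acyclic $3$-periodic complex $\mathfrak{C}$ and extract the isomorphisms from the long exact sequence associated to $0 \to \mathfrak{C}^{\leq}(N,\tilde{\s}) \to \mathfrak{C} \to \mathfrak{C}^{>}(N,\tilde{\s}) \to 0$. The sign worry you flag at the end is unnecessary: once a standard snake-lemma convention is fixed, the connecting homomorphism is natural with respect to morphisms of short exact sequences, and since the $f_j$ already assemble into such a self-map of the short exact sequence (by Lemma~\ref{exact triangle compatible with covering transformation} and the grading-preservation you cite), the intertwining with the covering involutions holds on the nose without any further sign bookkeeping.
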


\begin{proof}
Treat the exact triangle (\ref{equation exact triangle}) as a chain complex with trivial homology. Call this chain complex $\mathfrak{C}$. Then we have a short exact sequence
\[
\begin{CD}
0 @>>> \mathfrak{C}_{*}^{\leq}(N,\tilde{\s}) @>>> \mathfrak{C} @>>> \mathfrak{C}_{*}^{>}(N,\tilde{\s}) @>>> 0,
\end{CD}
\]
and the statement follows from the long exact sequence it generates in homology.
\end{proof}

Next, we will show that the chain complex $\mathfrak{C}^{>}(N,\tilde{\s})$ only depends on the equivalence class of the $\spinc$ system $\SS((L_0,L_1,L_2),\s_0)$. To make this statement precise, consider another admissible skein triple $(L'_0,L'_1,L'_2)$, and let $\s'_0$ be a self-conjugate $\spinc$ structure on $Y' = \Sigma(S^{3}\setminus B')$, where $B'$ is a small ball containing the resolved crossing. We suppose that there exists an equivalence
\[
\SS((L_0,L_1,L_2),\s_0)\sim \SS((L'_0,L'_1,L'_2),\s'_0)
\]

\smallskip\noindent
provided by the maps $\{\theta_j\}$ and $\{\hat{\theta}_j\}$ as in Definition \ref{equivalent spin-c system}. We write $\tilde\s' = \theta_j(\tilde{\s})$ and choose $N'$ large enough as to satisfy the conditions of Definition \ref{truncated triangle}. All of the above constructions can be repeated with $(L'_0,L'_1,L'_2)$ in place of $(L_0,L_1,L_2)$ and $(N',\tilde\s')$ in place of $(N,\tilde\s)$.


\begin{lem}\label{quotient chain complex isomorphic}
There is an isomorphism between the chain complexes $\mathfrak{C}^{>}(N,\tilde{\s})$ and $\mathfrak{C}^{>}(N',\tilde{\s}')$. This isomorphism preserves the covering involution, absolute $\Z/2$ grading and the relative $\mathbb{Q}$-grading.
\end{lem}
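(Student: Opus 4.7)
The plan is to exploit the fact that, above the reducible threshold, the quotient complex $\mathfrak{C}^{>}(N,\tilde{\s})$ is entirely determined by topological data that is already encoded in the $\spinc$ system. Since the bijections $\theta_j$ and $\hat{\theta}_j$ preserve this data by Definition \ref{equivalent spin-c system}, they should lift to the desired chain isomorphism.

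First I would analyze the underlying vector space. By construction, $\widecheck{\HM}_{>q}(Y_j,[\s_0])$ only receives contributions from torsion $\spinc$ structures, and for each such $\s$ the group $\widecheck{\HM}(Y_j,\s)$ in sufficiently high absolute degree agrees with the infinite ``reducible tower'' isomorphic to $\mathbb{Q}[U^{-1}]$, whose absolute $\mathbb{Q}$--grading is shifted by $\rho(\s)$. Hence for truncation parameter large enough, there is a canonical identification of graded $\mathbb{Q}[U]$--modules on each $\spinc$ piece. The key point is that $\rho(\s)$ is determined by $c_1(\s)^2$ modulo signature data, so the two truncation levels $N+\rho(\tilde{\s})+o(j)$ and $N'+\rho(\tilde{\s}')+o(j)$ correspond under $\theta_j$, using that $\rho(\tilde{\s})=\rho(\theta_2(\tilde{\s}))$ (since $\theta_j$ preserves $c_1^2$ by Definition \ref{spin-c system}(2) and the signatures of the bounding manifolds used in the APS formula can be matched with the help of the cobordism $W_j$, whose signature is preserved by the equivalence).

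Next I would show that the differentials $F_{W_j}(N,\tilde{\s})$, descended to the quotient, factor through the reducible part of the monopole moduli spaces on $W_j$. For each $\s\in\spinc(W_j,\s_0)$, the contribution of $\widecheck{\HM}(W_j,\s)$ to the tower-to-tower piece is computed by standard reducible formulas (see \cite[Chapter 35]{Kronheimer-Mrowka}): up to the sign $(-1)^{\mu(\s)}$, it depends only on the Chern square $c_1(\s)^2$, on $\sigma(W_j)$, and on $\chi(W_j)$. By Definition \ref{equivalent spin-c system} and Proposition \ref{same resolution data gives same homology}(2)--(3), all of these quantities are matched by the bijections $\hat{\theta}_j$, so the per--$\spinc$ tower-to-tower maps coincide. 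Summing over $\spinc(W_j,\s_0)$ with the $\mu$--signs (which are intrinsic data of the $\spinc$ system) then matches $F_{W_j}(N,\tilde{\s})$ with $F_{W'_j}(N',\tilde{\s}')$ on the quotient.

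Finally I would verify the remaining structures. Compatibility with the covering involution follows because $\tau^{*}_{Y_j}$ acts on $\spinc(Y_j,\s_0)$ by conjugation (Lemma \ref{conjugation action}), because $\theta_j$ is compatible with conjugation by Definition \ref{equivalent spin-c system}(3), and because the twisting constants $c_j$ that distinguish $f_j$ from $\check{\tau}^j_{*}$ depend only on $b_1(Y_j)$ and $b_2^{+}(W_j)$, which agree for the two skein triangles by Lemma \ref{homology of cobordism}. The absolute $\mathbb{Z}/2$ grading and the relative $\mathbb{Q}$ grading are both determined by Chern squares, signatures, and Euler characteristics of the cobordisms, all of which are preserved by the equivalence. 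The main technical obstacle I expect is making the ``tower-to-tower'' statement precise: one needs to confirm that, on the quotient, the contribution of $\widecheck{\HM}(W_j,\s)$ is literally a topological expression with no residual irreducible input, which amounts to invoking the reducible structure of the Seiberg--Witten equations over $W_j$ in the range above the Fr{\o}yshov threshold. Everything else reduces to bookkeeping that the equivalence of $\spinc$ systems is set up to track.
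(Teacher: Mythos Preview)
Your approach is essentially the same as the paper's: the quotient complex is built entirely from the ``reducible towers'' and the tower-to-tower cobordism maps, all of which are determined by the data in the $\spinc$ system, so the equivalence of $\spinc$ systems yields the desired chain isomorphism.

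There is, however, a wrinkle in your treatment of the gradings. You assert that $\rho(\tilde{\s}) = \rho(\theta_2(\tilde{\s}))$, arguing that ``the signatures of the bounding manifolds used in the APS formula can be matched with the help of the cobordism $W_j$''. This does not work: the cobordism $W_j$ only lets you compute \emph{differences} $\rho(\s|_{Y_{j+1}}) - \rho(\s|_{Y_j}) \pmod 2$ via the expression $\tfrac14(c_1(\s)^2 - \sigma(W_j))$, not the absolute value of $\rho$ on either end. There is no reason for $\rho(\tilde{\s})$ and $\rho(\tilde{\s}')$ to agree in general, and in any case $N$ and $N'$ are independently chosen large even integers, so the truncation levels $N + \rho(\tilde{\s}) + o(j)$ and $N' + \rho(\tilde{\s}') + o(j)$ will typically sit at different absolute heights.

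This is harmless, because the lemma only asks that the \emph{relative} $\mathbb{Q}$-grading be preserved: the isomorphism shifts the absolute grading by the constant $N - N' + \rho(\tilde{\s}) - \rho(\tilde{\s}')$. What you actually need is that the residues $\rho(\s) - \rho(\tilde{\s}) \pmod 2$, for $\s$ ranging over $\torspinc(Y_j,\s_0)$, are matched by $\theta_j$; this \emph{does} follow from the equivalence of $\spinc$ systems (compute the differences through the cobordisms $W_j$ and use Definition~\ref{equivalent spin-c system}), and it is exactly what the paper uses to locate the bottom of each tower $\mathcal{T}(\s)$ in the quotient. Once you redirect your argument from absolute to relative $\mathbb{Q}$-gradings, the rest of your outline goes through.
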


\begin{proof}
The chain complex $\mathfrak{C}^{>}(N,\tilde{\s})$ can be explicitly described in terms of the $\spinc$ structures and their Chern classes. For example, when $b_1(Y_j)=0$, each $\s\in \torspinc(Y_j,\s_0)$ contributes a summand $\mathcal{T}(\s) = \Z[U,U^{-1}]/\Z[U]$ to $\widecheck{\HM}_j^{>}(N,\tilde{s})$, supported in even $\Z/2$ grading. The smallest $\Q$--degree in this summand is given by 
\[
\min\{a \mid a\in 2\Z+\rho(s),\ a>N+\rho(\tilde{s})+o(j)\}.
\]
The covering involution interchanges $\mathcal{T}(\s)$ and $\mathcal{T}(\bar{\s})$. We have a similar description for the chain maps in $\mathfrak{C}^{>}(N,\tilde\s)$. The lemma can now be checked using this description and the corresponding description for $\mathfrak{C}^{>}(N',\tilde\s')$.
\end{proof}

\begin{cor}\label{equivalent spin-c systems have same homology}
For each $j\in \Z/3$, there exists an isomorphism $H_j^{\leq}(N',\tilde{\s}')\to H_j^{\leq }(N,\tilde{\s})$ that is compatible with the covering involution and preserves the absolute $\Z/2$ grading.
\end{cor}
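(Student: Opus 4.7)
The plan is to obtain the desired isomorphism by splicing together the two lemmas immediately preceding the corollary. Concretely, for each $j \in \Z/3$, Lemma \ref{truncated homology isomorphic to quotient homology} supplies isomorphisms
\[
\xi_j : H_j^{\leq}(N,\tilde{\s}) \xrightarrow{\;\cong\;} H_{j+1}^{>}(N,\tilde{\s})
\quad\text{and}\quad
\xi'_j : H_j^{\leq}(N',\tilde{\s}') \xrightarrow{\;\cong\;} H_{j+1}^{>}(N',\tilde{\s}'),
\]
both intertwining the covering involutions and both shifting the absolute $\Z/2$ grading by the same amount as the cobordism map $F_{W_j}$. On the other hand, Lemma \ref{quotient chain complex isomorphic} provides a chain level isomorphism $\mathfrak{C}^{>}(N,\tilde{\s}) \cong \mathfrak{C}^{>}(N',\tilde{\s}')$ compatible with the covering involution and the absolute $\Z/2$ grading, which in particular induces an isomorphism $\Phi_{j+1}: H_{j+1}^{>}(N',\tilde{\s}') \to H_{j+1}^{>}(N,\tilde{\s})$ with the same compatibilities.

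The desired map is then the composition
\[
H_j^{\leq}(N',\tilde{\s}') \xrightarrow{\;\xi'_j\;} H_{j+1}^{>}(N',\tilde{\s}') \xrightarrow{\;\Phi_{j+1}\;} H_{j+1}^{>}(N,\tilde{\s}) \xrightarrow{\;\xi_j^{-1}\;} H_j^{\leq}(N,\tilde{\s}).
\]
Each factor is an isomorphism, so the composite is as well. Compatibility with the covering involution is immediate since all three factors are equivariant. For the $\Z/2$ grading, the shifts introduced by $\xi'_j$ and by $\xi_j^{-1}$ agree (both are governed by the parity of the degree of $F_{W_j}$, which is determined by the common values of $\sigma(W_j)$, $b_1$, and $b^+_2$ recorded in the resolution data), while $\Phi_{j+1}$ preserves the $\Z/2$ grading on the nose; hence the composition preserves the absolute $\Z/2$ grading, as required.

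The only real step requiring attention is the verification that $\xi'_j$ and $\xi_j^{-1}$ shift the $\Z/2$ grading by identical amounts. This is the main (though modest) obstacle: one must unwind the definition of $\xi_j$ as the connecting homomorphism of the short exact sequence of chain complexes in the proof of Lemma \ref{truncated homology isomorphic to quotient homology}, and observe that the induced grading shift is detected entirely by the parity of $\frac14(c_1(\s)^2 - 2\chi(W_j) - 3\sigma(W_j))$ for any torsion $\spinc$ structure on $W_j$ together with the Betti number data from Lemma \ref{homology of cobordism}. All of this data is preserved by the equivalence of $\spinc$ systems via Proposition \ref{same resolution data gives same homology}, parts (2) and (3), so the two shifts match and the composition lands in the correct $\Z/2$ grading.
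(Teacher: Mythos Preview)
Your proposal is correct and follows exactly the approach the paper intends: the paper's own proof is the single sentence ``This follows by combining Lemma \ref{quotient chain complex isomorphic} with Lemma \ref{truncated homology isomorphic to quotient homology},'' and you have simply spelled out that combination explicitly, including the verification that the $\Z/2$ grading shifts of $\xi_j$ and $\xi'_j$ agree (which is indeed guaranteed by the equivalence of $\spinc$ systems and the shared resolution data).
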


\begin{proof}
This follows by combining Lemma \ref{quotient chain complex isomorphic} with Lemma \ref{truncated homology isomorphic to quotient homology}.
\end{proof}

\begin{lem}\label{lefs on chain map equals lefs on homology}
The following identity holds for the Lefschetz numbers on the truncated monopole Floer homology:
\begin{align*}
\Lef&(\check{\tau}^{\leq}_{Y_0,[\s_0]}  (N,\tilde{\s})) +\Lef(\check{\tau}^{\leq}_{Y_1,[\s_0]}(N,\tilde{\s}))-\Lef(\check{\tau}^{\leq}_{Y_2,[\s_0]}(N,\tilde{\s}))\\
&=(-1)^{b^{+}_2(W_2)+b_1(Y_0)}\Lef(f^{\leq}_0(N,\tilde{\s}))+
(-1)^{b^{+}_2(W_1)+b_1(Y_2)}\Lef(f^{\leq}_1(N,\tilde{\s})) -\Lef(f^{\leq}_2(N,\tilde{\s}))\\
&=(-1)^{b^{+}_2(W_2)+b_1(Y_0)}\Lef(f^{\leq}_0(N,\tilde{\s})_{*})+
(-1)^{b^{+}_1(W_1)+b_1(Y_2)}\Lef(f^{\leq}_1(N,\tilde{\s})_{*}) -\Lef(f^{\leq}_2(N,\tilde{\s})_{*}).
\end{align*}
A similar equality holds for $(L'_0,L'_1,L'_2)$.
\end{lem}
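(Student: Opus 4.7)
The first equality is a direct consequence of the definition $f_j=(-1)^{c_j}\check{\tau}^j_*$ and the linearity of the super-trace. On each $\Z/2$-graded truncated group $\widecheck{\HM}_{\leq\,\cdot}(Y_j,[\s_0])$ one has $\Lef(\check{\tau}^{\leq}_{Y_j,[\s_0]}(N,\tilde{\s}))=(-1)^{c_j}\Lef(f^{\leq}_j(N,\tilde{\s}))$, and substituting $c_0=b^{+}_2(W_2)+b_1(Y_0)$, $c_1=b^{+}_2(W_1)+b_1(Y_2)$, and $c_2=0$ transforms the first line of the claim into the second line.

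For the second equality I would apply a Hopf trace formula to the 3-periodic truncated chain complex $\mathfrak{C}^{\leq}(N,\tilde{\s})$, on which the triple $(f^{\leq}_0,f^{\leq}_1,f^{\leq}_2)$ is a chain endomorphism by Lemma~\ref{exact triangle compatible with covering transformation}. Filter each chain group
\[
C_j:=\widecheck{\HM}_{\leq N+\rho(\tilde{\s})+o(j)}(Y_j,[\s_0])
\]
by $0\subset B_j:=\im F_{W_{j-1}}(N,\tilde{\s})\subset Z_j:=\ker F_{W_j}(N,\tilde{\s})\subset C_j$. Since $f^{\leq}_j$ preserves this filtration, the additivity of the super-trace on short exact sequences of $\Z/2$-graded spaces yields
\[
\Lef(f^{\leq}_j|C_j)=\Lef(f^{\leq}_j|B_j)+\Lef(f^{\leq}_j|H^{\leq}_j)+\Lef(f^{\leq}_j|C_j/Z_j),
\]
and the linear bijection $C_j/Z_j\xrightarrow{\sim}B_{j+1}$ induced by $F_{W_j}(N,\tilde{\s})$ intertwines $f^{\leq}_j$ with $f^{\leq}_{j+1}$ up to the $\Z/2$-grading shifts of the individual cobordism pieces $\widecheck{\HM}(W_j,\s)$. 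Combining these identities and forming the alternating sum over $j\in\Z/3$ with signs $\sigma_0=(-1)^{c_0}$, $\sigma_1=(-1)^{c_1}$, $\sigma_2=-1$ (which are precisely the coefficients appearing on the second line of the claim), the contributions $\Lef(f^{\leq}_j|B_j)$ coming from the filtration of $C_j$ telescopically cancel against the contributions $\Lef(f^{\leq}_{j-1}|C_{j-1}/Z_{j-1})$ coming from the bijection $C_{j-1}/Z_{j-1}\xrightarrow{\sim}B_j$, leaving exactly $\sum_j\sigma_j\Lef(f^{\leq}_j|H^{\leq}_j)$.

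The main obstacle is the sign bookkeeping, i.e.\ verifying that the cancellation signs produced by the bijections $C_j/Z_j\xrightarrow{\sim}B_{j+1}$ match the prescribed $\sigma_j$. This amounts to decomposing $F_{W_j}$ by spin$^c$-summand, tracking the $\Z/2$-degree shift of each $\widecheck{\HM}(W_j,\s)$, and combining the sign convention~\eqref{defi:mu} for $\mu$ with the parity identity~\eqref{betti sum}; it proceeds in the same spirit as the sign analysis of Proposition~\ref{conjugation on mu} and Lemma~\ref{exact triangle compatible with covering transformation}.
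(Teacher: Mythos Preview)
Your approach is correct and is essentially the same Hopf trace argument the paper uses, but the paper's presentation is considerably more streamlined and avoids one unnecessary complication in your plan.

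The paper's proof of the second equality consists of a single observation: by \cite[Proposition 2.5]{KMOS}, the map $F_{W_j}^{\leq}(N,\tilde{\s})$ has a well-defined $\Z/2$-degree, equal to $\tfrac12(\chi(W_j)+\sigma(W_j)-b_1(Y_j)+b_1(Y_{j+1}))$, which via Lemma~\ref{homology of cobordism} reduces to $b^{+}_2(W_j)+b_1(Y_{j+1})\pmod 2$. One then checks directly (using~\eqref{betti sum}) that this parity coincides with the parity of $\sigma_j\sigma_{j+1}$; equivalently, $F_{W_j}$ preserves the $\Z/2$-grading exactly when $\sigma_j$ and $\sigma_{j+1}$ differ. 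Once this is established, your filtration argument (or, equivalently, regrading each $C_j$ by $\epsilon_j$ with $(-1)^{\epsilon_j}=\pm\sigma_j$ so that the differential has uniformly odd degree and applying the ordinary Hopf trace formula to the resulting $\Z/2$-graded complex) goes through immediately.

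The place where your write-up overcomplicates matters is the final paragraph: you propose ``decomposing $F_{W_j}$ by spin$^c$-summand'' and invoking the sign function $\mu$ of~\eqref{defi:mu}. This is unnecessary. The $\Z/2$-degree shift of each $\widecheck{\HM}(W_j,\s)$ depends only on the cobordism $W_j$, not on the individual $\spinc$ structure $\s$, so all summands shift the grading by the same amount and the $\mu$-signs play no role in this step. Dropping that decomposition and citing the $\Z/2$-degree formula directly gives exactly the paper's short proof.
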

\begin{proof}
The first equality should be clear from the definition of $f_j$. The second equality is based on the following observation: by \cite[Proposition 2.5]{KMOS}, the map $F_{W_j}^{\leq}(N,\tilde{\s})$ preserves the absolute $\Z/2$ grading if and only if
\[
\frac12(\chi(W_j)+\sigma(W_j)-b_1(Y_j)+b_1(Y_{j+1})) = 0\pmod 2.
\]
Using Lemma \ref{homology of cobordism}, it is not difficult to check that this is equivalent to the condition
\[
b^{+}_2(W_j)+b_1(Y_{j+1}) = 0\pmod 2.
\]
This is exactly when the sign before $\Lef(f^{\leq}_j(N,\tilde{\s}))$ differs from the sign before $\Lef(f^{\leq}_{j+1}(N,\tilde{\s}))$ (see (\ref{betti sum}). As a result, this kind of alternating sum of the Lefschetz numbers for the chain map equals the corresponding sum for the induced map on homology.
\end{proof}

\begin{cor}\label{equivalent spin-c systems have same lefs number}
 We have the following equality of Lefschetz numbers
\begin{multline}\notag
\Lef(\check{\tau}^{\leq}_{Y_0,[\s_0]}(N,\tilde{\s}))+\Lef(\check{\tau}^{\leq}_{Y_1,[\s_0]}(N,\tilde{\s}))-\Lef(\check{\tau}^{\leq}_{Y_2,[\s_0]}(N,\tilde{\s}))\\
=\Lef(\check{\tau}^{\leq}_{Y'_0,[\s'_0]}(N',\tilde{\s}'))+\Lef(\check{\tau}^{\leq}_{Y'_1,[\s'_0]}(N',\tilde{\s}'))-\Lef(\check{\tau}^{\leq}_{Y'_2,[\s'_0]}(N',\tilde{\s}')).
\end{multline}
\end{cor}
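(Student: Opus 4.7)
The plan is to combine Lemma \ref{lefs on chain map equals lefs on homology} with Corollary \ref{equivalent spin-c systems have same homology}. First, I would apply Lemma \ref{lefs on chain map equals lefs on homology} to rewrite the left-hand side as
\[
(-1)^{b^{+}_2(W_2)+b_1(Y_0)}\Lef(f^{\leq}_0(N,\tilde{\s})_{*})+(-1)^{b^{+}_2(W_1)+b_1(Y_2)}\Lef(f^{\leq}_1(N,\tilde{\s})_{*})-\Lef(f^{\leq}_2(N,\tilde{\s})_{*}),
\]
and the right-hand side as the analogous signed sum involving the primed objects and the induced maps $f_j^{\leq}(N',\tilde{\s}')_{*}$ on $H_j^{\leq}(N',\tilde{\s}')$.

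Next, I would check that the signs $(-1)^{b^{+}_2(W_{j+1})+b_1(Y_{j})}$ appearing on the two sides coincide. This is where the hypothesis $\SS((L_0,L_1,L_2),\s_0)\sim \SS((L'_0,L'_1,L'_2),\s'_0)$ is used at the topological level: equivalence of $\spin^{c}$ systems requires $\sgn(W_j)=\sgn(W'_j)$ together with bijections of the torsion and self-conjugate $\spin^{c}$ sets compatible with restrictions, which (together with the case distinction in Lemma \ref{homology of cobordism}) forces $b_1(Y_j)=b_1(Y'_j)$ and $b^{+}_2(W_j)=b^{+}_2(W'_j)$ for all $j\in \Z/3$. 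Thus the sign prefactors match term by term.

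Finally, I would invoke Corollary \ref{equivalent spin-c systems have same homology}, which provides, for each $j\in \Z/3$, an isomorphism $H_j^{\leq}(N',\tilde{\s}')\to H_j^{\leq}(N,\tilde{\s})$ conjugating $f_j^{\leq}(N',\tilde{\s}')_{*}$ to $f_j^{\leq}(N,\tilde{\s})_{*}$ and preserving the absolute $\Z/2$-grading. Since Lefschetz numbers are invariant under graded conjugation,
\[
\Lef(f^{\leq}_j(N,\tilde{\s})_{*})=\Lef(f^{\leq}_j(N',\tilde{\s}')_{*})
\]
for each $j$. Combining this with the sign matching above establishes the claimed equality.

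The proof is essentially a direct assembly of the preceding three results, so there is no serious obstacle; the only point requiring a small amount of care is verifying that the sign coefficients extracted from Lemma \ref{lefs on chain map equals lefs on homology} are indeed preserved under equivalence of $\spin^{c}$ systems, and this reduces to observing that the Betti numbers relevant to those signs are rigidly determined by the resolution data via Lemma \ref{homology of cobordism}.
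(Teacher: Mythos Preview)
Your proposal is correct and follows exactly the paper's approach: the paper's proof simply cites Corollary \ref{equivalent spin-c systems have same homology} and Lemma \ref{lefs on chain map equals lefs on homology}, and you have spelled out the combination of these two ingredients in detail, including the verification that the sign prefactors agree.
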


\begin{proof}
This follows from Corollary \ref{equivalent spin-c systems have same homology} and Lemma \ref{lefs on chain map equals lefs on homology}.
\end{proof}

As our next step, we will study relations between the Lefschetz numbers $\Lef(\check{\tau}^{\leq}_{Y_j,[\s_0]}(N,\tilde{\s}))$ and the corresponding Lefschetz numbers on the reduced Floer homology.

\begin{defi}\label{normalized lefs}
For any $\s\in \scspinc(Y_j)$, define the \emph{normalized Lefschetz number} $\nLef (Y_j,\s)$ of the map
$$
\tau^{\red}_{Y_j,\s}:\HM^{\red}(Y_j,\s)\rightarrow \HM^{\red}(Y_j,\s)
$$
as follows:
\begin{itemize}
\item if $b_1(Y_j)=0$, we let 
\[
\nLef(Y_j,\s)=\Lef(\tau^{\red}_{Y_j,\s})+h(Y_j,\s).
\]
\item if $b_1(Y_j)=1$, recall that (as in Heegaard Floer theory~\cite[\S4.2]{oz:boundary}) there are two Fr\o yshov invariants $h_0(Y_j,\s)$ and $h_1(Y_j,\s)$ (see the proof of Lemma \ref{normalized lefs and truncated lefs} below). We let
\[
\nLef(Y_j,\s)=\Lef(\tau^{\red}_{Y_j,\s})+h_0(Y_j,\s)+h_1(Y_j,\s).
\]
\end{itemize}
\end{defi}

\begin{lem}\label{normalized lefs and truncated lefs}
For any $\s\in \scspinc(Y_j)$ and any rational number $q$, consider the map
\[
\check{\tau}^{\leq q}_{Y_j,\s}:\; \mathop{\bigoplus}_{a\leq q}\; \widecheck{\HM}_{a}(Y_j,\s)\longrightarrow \mathop{\bigoplus}_{a\leq q}\; \widecheck{\HM}_{a}(Y_j,\s).
\]
For all sufficiently large $q$, its Lefschetz number satisfies the equality 
\begin{equation}\label{equation normalized lefs and truncated lefs}
\Lef(\check{\tau}^{\leq q}_{Y_j,\s})-2^{b_1(Y_j)-1}q\;=\;\nLef(Y_j,\s)+C(b_1(Y_j),q-\rho(\s)),
\end{equation}
where $C(b_1(Y_j),q-\rho(\s))$ is constant depending only on $b_1(Y_j)$ and the mod 2 reduction of $q-\rho(\s)$ in $\mathbb{Q}/2\Z$.
\end{lem}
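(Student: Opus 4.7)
The plan is to decompose the truncated group into its reduced part and a ``reducible tower'' part, verify that the covering involution preserves both, and compute their Lefschetz contributions separately.

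For $q$ larger than the maximal grading appearing in $\HM^{\red}(Y_j,\s)$, one has a splitting
\begin{equation*}
\mathop{\bigoplus}\limits_{a\leq q}\widecheck{\HM}_a(Y_j,\s)\;\cong\;\HM^{\red}(Y_j,\s)\,\oplus\, T^{\leq q}(Y_j,\s),
\end{equation*}
where $T^{\leq q}(Y_j,\s)$ denotes the image in $\widecheck{\HM}(Y_j,\s)$ of the truncated bar homology $\overline{\HM}_{\leq q}(Y_j,\s)$. Since $\s$ is self-conjugate, $\check{\tau}^{\leq q}_{Y_j,\s}$ preserves the reduced summand; averaging over the involution (permissible in characteristic zero) then yields a $\tau$-equivariant splitting, so the Lefschetz number is additive:
\begin{equation*}
\Lef(\check{\tau}^{\leq q}_{Y_j,\s})\;=\;\Lef(\tau^{\red}_{Y_j,\s})\,+\,\Lef(\check{\tau}|_{T^{\leq q}(Y_j,\s)}).
\end{equation*}

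Next I would analyze the tower contribution via the identification of $\overline{\HM}(Y_j,\s)$ with $H^{*}((S^1)^{b_1(Y_j)})\otimes \Q[U,U^{-1}]$ on the Picard torus of flat reducible connections. When $b_1(Y_j)=0$, the tower is a single $\Q[U^{-1}]$-module in a fixed $\Z/2$-grading whose bottom generator has $\Q$-grading $-2h(Y_j,\s)$; because the unique reducible for a self-conjugate $\s$ is fixed by $\tau_{Y_j}$, $\tau_{*}$ acts as $+\id$ on this tower, and counting generators with grading $\leq q$ gives
\begin{equation*}
\Lef(\check{\tau}|_{T^{\leq q}(Y_j,\s)})\;=\;\tfrac{1}{2}\,q+h(Y_j,\s)+C_0,
\end{equation*}
with $C_0$ depending only on the parity of $q-\rho(\s)\bmod 2$. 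When $b_1(Y_j)=1$, the tower has two $\Q[U^{-1}]$-summands coming from $H^0$ and $H^1$ of the Picard circle, sitting in opposite $\Z/2$-gradings with bottom $\Q$-gradings determined by $h_0(Y_j,\s)$ and $h_1(Y_j,\s)$. By Lemma \ref{conjugation action}\,(4), $\tau$ acts on $H_1(Y_j;\Z)$ by $-1$, hence on the Picard circle by inversion, making $\tau_{*}$ act as $+\id$ on the $H^0$-tower and as $-\id$ on the $H^1$-tower; the two minus signs (from the $\Z/2$-grading shift and from the $H^1$-action) cancel in the Lefschetz sum, so each summand contributes its rank with a plus sign, giving a total of $q+h_0(Y_j,\s)+h_1(Y_j,\s)+C_0$. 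In either case the tower contribution equals $2^{b_1(Y_j)-1}q$ plus the Fr\o yshov term plus a constant depending only on $b_1(Y_j)$ and $q-\rho(\s)\bmod 2$.

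Combining with $\Lef(\tau^{\red}_{Y_j,\s})$ and invoking the definition of $\nLef(Y_j,\s)$ then yields (\ref{equation normalized lefs and truncated lefs}). The main obstacle is the sign bookkeeping in the $b_1(Y_j)=1$ case: one must carefully identify the $\Z/2$-grading of the two towers with the $H^{*}(S^1)$-grading, verify that $\tau_{*}$ acts on the Picard circle by inversion rather than by a translation (using self-conjugacy of $\s$ together with Lemma \ref{conjugation action}), and check that the resulting sign cancellations produce the uniform coefficient $2^{b_1(Y_j)-1}$ for $b_1(Y_j)\in\{0,1\}$. The fact that the residual constants depend only on $b_1(Y_j)$ and $q-\rho(\s)\bmod 2$ follows because the $U$-tower structure, once its bottom grading is fixed modulo $2$, is determined at high grading by these two data.
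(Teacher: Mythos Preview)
Your proposal is correct and follows essentially the same route as the paper: decompose $\widecheck{\HM}_{\leq q}(Y_j,\s)$ into the reduced piece and the $U$--tower(s), identify the action of $\tau_*$ on each tower (via the action on $H^*$ of the Picard torus, which the paper phrases as a citation of \cite[Theorem 35.1.1]{Kronheimer-Mrowka}), and count. The only cosmetic difference is that the paper does not bother to make the splitting $\tau$--equivariant by averaging; it simply writes $\check{\tau}_{Y_j,\s}$ in block upper--triangular form with diagonal blocks $1$, $-1$, $\tau^{\red}_{Y_j,\s}$ and reads off the trace, which of course gives the same Lefschetz number.
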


\begin{proof}
Let us assume that $b_1(Y_j)=1$; the case of $b_1(Y_j)=0$ is similar (and easier). We have the following (non-canonical) decomposition for  $\widecheck{\HM}(Y,\s)$:
\[
(\mathbb{Q}[U,U^{-1}]/\mathbb{Q}[U])_{-2h_0(Y,\s)}\,\oplus\, (\mathbb{Q}[U,U^{-1}]/\mathbb{Q}[U])_{-2h_1(Y,\s)}\,\oplus\, \HM^{\red}(Y_j,\s),
\]
with the lower indices indicating the absolute grading of the bottom of the $U$--tail. Regarding  the absolute $\Z/2$ grading, the first summand is supported in the even grading while the second summand is supported in the odd grading. With respect to this decomposition, the map $\check{\tau}_{Y_j,\s}$ is given by the matrix
\[
\begin{bmatrix}
   1 & \phantom{-}0  &  * \\
   0 & -1 &  * \\
   0 & \phantom{-}0  &  \tau^{\red}_{Y_j,\s}
\end{bmatrix}
\]

\smallskip\noindent
(the action on the second summand is $-1$ because $\tau^{*}_{Y_j}$ acts as negative identity
 on $H^1(Y_j;\mathbb{R})$; see \cite[Theorem 35.1.1]{Kronheimer-Mrowka}). 
 Therefore, if $q$ is large enough so that $\HM^{\red}(Y_j,\s)$ is supported in degree less than $q$, we obtain
\begin{multline}\notag
\Lef(\check{\tau}^{\leq q}_{Y_j,\s})-q-\nLef(Y_j,\s) = \left|\{k\in \Z^{\geq 0}\mid -2h_0(Y,\s)+2k\leq q\}\right| - q/2\; + \\ \left|\{k\in \Z^{\geq 0}\mid -2h_1(Y,\s)+2k\leq q\}\right|-q/2.
\end{multline}
Clearly, this number only depends on the mod 2 reduction of $q+2h_0(Y,\s)$ and $q+2h_1(Y,\s)$. To complete the proof, we observe that $\rho(\s)+2h_0(Y,\s)\in 2\Z$ and $\rho(\s)+2h_1(Y,\s)\in 2\Z+1$, which follows directly from the definition of the absolute grading in monopole Floer homology.
\end{proof}

By setting $q=N+o(j)+\rho(\tilde{\s})$ and taking the sum of the equalities (\ref{equation normalized lefs and truncated lefs}) over all  $\spinc$-structures $\s\in \text{sc-spin}^{c}(Y_j,\s_0)$, we obtain the equality
\medskip
\begin{equation}\label{truncated lefs and normalized lefs 2}
\begin{split}
\Lef(\check{\tau}^{\leq}_{Y_j,[\s_0]}(N,\tilde{\s}))- \sum\limits_{\s\in \text{sc-spin}^{c}(Y_j,\s_0)} &\nLef(Y_j,\s) \\
&= 2^{b_1(Y_j)-1}\cdot \left|\text{sc-spin}^{c}(Y_j,\s_0)\right|\cdot (N+\rho(\tilde{\s}))+C,
\end{split}
\end{equation}
where $C$ is a constant depending on $b_1(Y_j)$, $o(j)$, $\left| \scspinc (Y_j, \s_0)\right|$, and the mod 2 reduction of $\rho(\s)-\rho(\tilde{\s})$ for $\s\in \scspinc (Y_j,\s_0)$.

\begin{cor} If $\SS((L_0,L_1,L_2),\s_0)\sim \SS((L'_0,L'_1,L'_2),\s'_0)$ then
\medskip
\begin{equation*}
\begin{split}
&\sum\limits_{\s\in \scspinc(Y_0,\s_0)}\nLef(Y_0,\s)+\sum\limits_{\s\in \scspinc(Y_1,\s_0)}\nLef(Y_1,\s)-\sum\limits_{\s\in \scspinc(Y_2,\s_0)}\nLef(Y_2,\s) = \\
&\sum\limits_{\s\in \scspinc(Y'_0,\s'_0)}\nLef(Y'_0,\s)+\sum\limits_{\s\in \scspinc(Y'_1,\s'_0)}\nLef(Y'_1,\s)-\sum\limits_{\s\in \scspinc(Y'_2,\s'_0)}\nLef(Y'_2,\s)
\end{split}
\end{equation*}

\noindent In addition,
\begin{align}
\frac1{2^{|L_2|-2}}\,\Big(\sum\limits_{\s\in \scspinc(Y_0)}\nLef(Y_0,\s)+ \sum\limits_{\s\in \scspinc (Y_1)} & \nLef(Y_1,\s)-\sum\limits_{\s\in \scspinc(Y_2)}\nLef(Y_2,\s)\Big) \label{skein for normalized lefs} \\ 
&\qquad = C(t(Y),s(Y),(p_0,q_0),(p_1,q_1)).\notag
\end{align}
\end{cor}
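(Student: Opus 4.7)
My plan is to combine three results already established in the text: Corollary \ref{equivalent spin-c systems have same lefs number}, which equates alternating sums of truncated Lefschetz numbers for equivalent spin-c systems; equation \eqref{truncated lefs and normalized lefs 2}, which converts each truncated Lefschetz number into a normalized one plus an explicit $N$-linear piece plus a combinatorial constant; and Theorem \ref{same resolution data give equivalent spin-c system}, which provides a bijection $A_0 \cup A_1 \leftrightarrow A'_0 \cup A'_1$ between $\scspinc(Y)$ and $\scspinc(Y')$ under which the associated spin-c systems become pairwise equivalent whenever two admissible skein triples share the same resolution data.

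To prove the first equality, I would substitute \eqref{truncated lefs and normalized lefs 2} into each of the six truncated Lefschetz numbers appearing in Corollary \ref{equivalent spin-c systems have same lefs number}. With the sign pattern $(+,+,-)$ for $j=0,1,2$, the alternating sum of the coefficients of $N+\rho(\tilde{\s})$ is
\[
2^{b_1(Y_0)-1}|\scspinc(Y_0,\s_0)| + 2^{b_1(Y_1)-1}|\scspinc(Y_1,\s_0)| - 2^{b_1(Y_2)-1}|\scspinc(Y_2,\s_0)|,
\]
which vanishes by the count \eqref{spin structure count} of Lemma \ref{spin structure extension}. The residual constants depend only on $b_1(Y_j)$, $|\scspinc(Y_j,\s_0)|$, $o(j)$ and the mod 2 reductions of $\rho(\s)-\rho(\tilde{\s})$ for $\s \in \scspinc(Y_j,\s_0)$; each of these is preserved by a spin-c system equivalence, since the bijections $\theta_j, \hat{\theta}_j$ respect Chern classes and cobordism signatures (Definition \ref{equivalent spin-c system}, reinforced by Proposition \ref{same resolution data gives same homology}(2)). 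Hence the first equality holds.

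For the second equality, I would sum the first equality over all $\s_0 \in \scspinc(Y)$. Since $\scspinc(Y_j)$ is the disjoint union of the sets $\scspinc(Y_j,\s_0)$ as $\s_0$ ranges over $\scspinc(Y)$, the partial sums assemble into the full alternating sum over $\scspinc(Y_j)$. Applying Theorem \ref{same resolution data give equivalent spin-c system} to any other admissible skein triple $(L'_0,L'_1,L'_2)$ with the same resolution data, and summing termwise through the bijection $\scspinc(Y) \leftrightarrow \scspinc(Y')$, shows that the alternating sums for the two triples coincide. Dividing by the common factor $2^{|L_2|-2} = 2^{|L'_2|-2}$ then exhibits the quantity as a universal constant depending only on $(t(Y),s(Y),(p_0,q_0),(p_1,q_1))$. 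The main obstacle is the verification that the combinatorial constant $C(b_1(Y_j), q-\rho(\s))$ in \eqref{equation normalized lefs and truncated lefs} really is invariant under spin-c system equivalence; this reduces to checking that the equivalence preserves the mod 2 reduction of $\rho(\s)-\rho(\tilde{\s})$, which follows from the preservation of relative Chern class squares in Definition \ref{spin-c system}(2) together with the cobordism signature preservation built into Definition \ref{equivalent spin-c system}.
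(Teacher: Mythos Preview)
Your argument for the first equality is correct and matches the paper's approach exactly: substitute \eqref{truncated lefs and normalized lefs 2} into Corollary~\ref{equivalent spin-c systems have same lefs number}, cancel the $N+\rho(\tilde{\s})$ terms via \eqref{spin structure count}, and observe that the residual combinatorial constants are preserved under $\spinc$ system equivalence.

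Your argument for the second equality has a genuine gap. You claim a bijection $\scspinc(Y)\leftrightarrow\scspinc(Y')$ and that $2^{|L_2|-2}=2^{|L'_2|-2}$, but Theorem~\ref{same resolution data give equivalent spin-c system} provides neither: it only gives decompositions $A_0\cup A_1$ and $A'_0\cup A'_1$ with $|A_0|=|A_1|$ and $|A'_0|=|A'_1|$, without asserting $|A_i|=|A'_i|$. Indeed $|\scspinc(Y)|$ and $|L_2|$ are not determined by the resolution data, so two triples with the same resolution data may well have $|\scspinc(Y)|\neq|\scspinc(Y')|$.

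The fix is to argue intrinsically rather than by comparison. By the first equality, the alternating sum $\sum_j(\pm)\sum_{\s\in\scspinc(Y_j,\s_0)}\nLef(Y_j,\s)$ takes a value $v_i$ depending only on the index $i\in\{0,1\}$ of the part $A_i$ containing $\s_0$ and on the resolution data. Summing over all $\s_0\in\scspinc(Y)$ therefore yields $|A_0|v_0+|A_1|v_1=\tfrac{|\scspinc(Y)|}{2}(v_0+v_1)$. Now one checks that $|\scspinc(Y)|=2^{|L_2|-2}$: since $H^1(Y_2;\F_2)\to H^1(Y;\F_2)$ is an isomorphism (proof of Lemma~\ref{spin cobordism}), $|\spin(Y)|=|\spin(Y_2)|=2^{|L_2|-1}$, and since $b_1(Y)=1$, Remark~\ref{R:sc} gives $|\scspinc(Y)|=|\spin(Y)|/2=2^{|L_2|-2}$. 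Dividing by $2^{|L_2|-2}$ then produces $(v_0+v_1)/2$, which depends only on the resolution data. The paper's terse ``follows easily from (1) and Theorem~\ref{same resolution data give equivalent spin-c system}'' is implicitly invoking this computation.
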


\begin{proof}
Since $b_1(Y_j)=b_1(Y'_j)$, $o(j)=o'(j)$, $|\scspinc (Y_j,\s_0)|=|\scspinc (Y'_j,\s'_0)|$, and 
\[
\rho(\s)-\rho(\tilde{\s})\,=\, \rho(\theta(\s))-\rho(\tilde{\s}') \pmod 2,
\]
the constant $C$ in (\ref{truncated lefs and normalized lefs 2}) equals the corresponding constant for $Y'$. Now we add the equalities (\ref{truncated lefs and normalized lefs 2}) for $Y_0$, $Y_1$ and subtract the one for $Y_2$. By comparing the result with the corresponding result for $Y'_j$ and applying (\ref{spin structure count}) and Corollary \ref{equivalent spin-c systems have same lefs number}, we finish the proof of the first claim. The second claim follows easily from (1) and Theorem \ref{same resolution data give equivalent spin-c system}.
\end{proof}

We are now ready to prove the main theorem of this section.

\begin{proof}[Proof of Theorem \ref{skein with error term}] 
For any ramifiable link $L_j$, it follows from formula \eqref{Murasugi invariant} for the Murasugi signature that
\begin{equation}\label{chi decomposition}
\frac1{2^{|L_j|-1}}\sum\limits_{\s\in \scspinc(Y_j)}\nLef(Y_j,\s)\,-\,\frac 1 8\,\xi(L_j)\, =\, \chi(L_j).
\end{equation}
Therefore, \eqref{skein with error term 1} follows from \eqref{skein for normalized lefs} and Lemma \ref{skein for signature}. This proves statement (1) of Theorem \ref{skein with error term}.

The first assertion of statement (2) follows from Lemma \ref{at most one not ramifiable}. To prove the second assertion, suppose that $L_2$ is not ramifiable. Then $\gamma_2$ represents zero elements in both $H_1 (Y;\mathbb F_2)$ and  $H_1 (Y;\mathbb Z)$. Let $(t(Y),s(Y),(p_0,q_0),(p_1,q_1))$ be the resolution data for $(L_0,L_1,L_2)$. Then $s(Y)=0$ and $(p_2,q_2)=(0,1)$, which implies that $(p_1,q_1)=(-p_0,-q_0-1)$. It follows from Lemma \ref{at most one not ramifiable} that $(\bar{L}_1,L_0,L_2)$ forms an admissible skein triangle, and one can check that its resolution data is $(t(Y),s(Y),(p_0,q_0-1),(-p_0,-q_0))$. The equality (\ref{skein for normalized lefs}) now reads
\begin{align}
\frac1{2^{|L_2|-2}}\,\Big(\sum\limits_{\s\in \scspinc(Y_0)}\nLef(Y_0,\s) + \sum\limits_{\s\in \scspinc(\bar{Y}_1)} & \nLef(\overline{Y}_1,\s) - \sum\limits_{\s\in \scspinc(Y_2)}\nLef(Y_2,\s)\Big) \label{skein for normalized lefs 2} \\
& \quad = C(t(Y),s(Y),(p_0,q_0-1),(-p_0,-q_0)), \notag
\end{align}
where $\overline{Y}_1$ stands for the double branched cover of $S^{3}$ with branch set $\bar{L}_1$. Subtracting \eqref{skein for normalized lefs} from \eqref{skein for normalized lefs 2}, we obtain
\begin{align*}
 \frac1{2^{|L_1|-1}}\, \Big(\sum \limits_{\s\in \scspinc(\bar{Y}_1)} & \nLef(\bar{Y}_1,\s)-\sum\limits_{\s\in \scspinc(Y_1)}\nLef(Y_1,\s)\Big) \\
= C ( & t(Y), s(Y),(p_0,q_0-1),(-p_0,-q_0))-C(t(Y),s(Y),(p_0,q_0),(p_1,q_1)).
\end{align*}
Combining this with Lemma \ref{skein for signature} and \eqref{chi decomposition}, we finish the proof of \eqref{skein with error term 2} in the case of $n = 2$. The proofs of \eqref{skein with error term 3} for $n = 2$, and of both \eqref{skein with error term 2} and \eqref{skein with error term 3} for $n=0$ and $n = 1$ are similar.
\end{proof}


\section{Vanishing of the universal constants}\label{S:universal}
In this section, we will prove that the constants $C(t(Y),s(Y),(p_0,q_0),(p_1,q_1))$ and $C^{\pm}_j (t(Y),\allowbreak s(Y),(p_0,q_0),(p_1,q_1))$ in Theorem \ref{skein with error term} vanish for all resolution data. The cyclic symmetry of skein triangles will then imply Proposition \ref{skein relation for chi} and therefore finish the proof of Theorem \ref{main theorem}.

\begin{defi}\label{admissible tuple}
A six-tuple $(t,s,(p_0,q_0),(p_1,q_1))$, where $t$ is a positive integer, $s\in \Z/2$, and $p_j, q_j\in \Z$, $j = 0,1$, is called \emph{admissible} is the following four conditions are satisfied:
\begin{enumerate}
\item $p_0 q_1 - p_1 q_0 = 1$,
\item $s=0$ if $t$ is odd,
\item $p_2 = - p_0 - p_1$ is even when $s=0$, and $q_2 = - q_0 - q_1$ is even when $s=1$.
\end{enumerate}
\end{defi}

\begin{lem}
The resolution data $(t(Y),s(Y),(p_0,q_0),(p_1,q_1))$ associated with an admissible skein triangle $(L_0,L_1,L_2)$ as in Definition \ref{D:res} is an admissible six-tuple.
\end{lem}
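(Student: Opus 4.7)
The plan is to verify the three conditions defining an admissible six-tuple in turn, extracting each from the corresponding axiom in the setup preceding the lemma. The argument should be short and essentially bookkeeping; no deep input is required beyond the definitions of $(m,l)$, $\gamma_j$, $t(Y)$, and $s(Y)$.

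First I would handle condition (1). Writing $[\gamma_j] = p_j[m] + q_j[l]$ in $H_1(\partial Y;\Z)$ and using bilinearity of the intersection pairing together with $\#(m\cap l) = -1$ from boundary framing axiom (1), the identity $\#(\gamma_0\cap \gamma_1) = -1$ from Definition~\ref{D:res}(a) expands to
\[
-1 \;=\; (p_0 q_1 - p_1 q_0)\cdot \#(m\cap l)\; =\; -(p_0 q_1 - p_1 q_0),
\]
so $p_0 q_1 - p_1 q_0 = 1$.

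Next, condition (2) follows by reducing the defining relation $t(Y)\cdot [l] = 0 \in H_1(Y;\Z)$ modulo $2$: if $t(Y)$ is odd, then $[l] = 0 \in H_1(Y;\F_2)$, and by the definition of $s(Y)$ this forces $s(Y) = 0$.

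Finally, condition (3) uses the admissibility assumption. Condition (1) of Definition~\ref{D:skein} (the resolved crossing lies between two distinct components of $L_2$) forces $\gamma_2$ to be an $\F_2$-longitude, i.e., $[\gamma_2] = 0 \in H_1(Y;\F_2)$. Write $[\gamma_2] = p_2[m] + q_2[l]$ and reduce modulo $2$. If $s(Y) = 0$, then $[l] = 0$ in $H_1(Y;\F_2)$; boundary framing axiom (3) combined with the fact that $\ker(H_1(\partial Y;\F_2)\to H_1(Y;\F_2))$ is one-dimensional (by Poincar\'e--Lefschetz duality, since $Y$ has torus boundary) guarantees $[m]\neq 0$ in $H_1(Y;\F_2)$, so $p_2[m] = 0$ in $H_1(Y;\F_2)$ forces $p_2$ to be even. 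The case $s(Y) = 1$ is symmetric: there $[l]\neq 0$ and $[m] = 0$ in $H_1(Y;\F_2)$, so $q_2[l] = 0$ forces $q_2$ to be even.

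The only subtlety worth flagging is tracking which of $[m]$ and $[l]$ is $\F_2$-primitive in $H_1(Y;\F_2)$ in the two cases of $s(Y)$; this is pinned down by the one-dimensional kernel observation together with the definition of $s(Y)$. No other step should present any real difficulty.
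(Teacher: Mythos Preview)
Your proof is correct and follows exactly the same approach as the paper's own proof, which is a terse three-clause sketch attributing (1) to $\#(\gamma_0\cap\gamma_1)=-1$, (2) to $l$ being an $\F_2$-longitude when $t(Y)$ is odd, and (3) to $\gamma_2$ being an $\F_2$-longitude by Definition~\ref{D:skein}(1). You have simply filled in the details the paper leaves implicit.
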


\begin{proof} This can be easily verified: (1) corresponds to the requirement that $\#(\gamma_0\cap \gamma_1) = -1$, (2) follows from the fact that $l$ is an $\mathbb F_2$ longitude when $t(Y)$ is odd, and (3) is true because $\gamma_2$ is an $\F_2$ longitude by part (1) of Definition \ref{D:skein}.
\end{proof}

\begin{thm}\label{vanishing of constant}
Every admissible six-tuple $(t,s,(p_0,q_0),(p_1,q_1))$ is the resolution data of an admissible skein triangle. Furthermore,
\begin{enumerate}
\item if $(p_j,q_j)\neq (0,1)$ for all $j\in \Z/3$ then $C(t,s,(p_0,q_0),(p_1,q_1)) = 0$, and 
\item if $(p_j,q_j)=(0,1)$ for some $j\in \Z/3$\, then\, $C^{\pm}_j(t,s,(p_0,q_0),(p_1,q_1)) = 0$.
\end{enumerate}
\end{thm}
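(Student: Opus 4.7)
The plan is to realize every admissible six-tuple by an admissible skein triangle, and then to bootstrap from the family of \emph{two-bridge skein triangles}, for which $\chi(L_j)=0$ holds a priori: two-bridge links are quasi-alternating, hence Khovanov homology thin, so $\hmred(\Sigma(L))=0$ (making $\Lef(\tau_*)=0$), their double branched covers are lens spaces whose Fr\o yshov invariants are classically known, and the identity relating these invariants to the Murasugi signature is a direct calculation. The realization step itself is a tangle-construction exercise: given $(t,s,(p_0,q_0),(p_1,q_1))$, one builds a rational tangle in a ball whose double branched cover is a manifold $Y$ with the prescribed boundary framing $(m,l)$, with longitude divisibility $t$ and mod-$2$ type $s$, and then inserts it into a standard two-tangle in $S^3$. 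Conditions (1)--(3) of Definition \ref{admissible tuple} translate precisely into the combinatorial conditions needed for such a tangle to exist.

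Once realizability is established, I would enlarge the class of ramifiable links known to satisfy $\chi(L)=0$ in successive waves. The initial wave is the family of two-bridge links. In the next wave I fix an admissible skein triangle $(L_0,L_1,L_2)$ in which two of the three links, say $L_0$ and $L_1$, are two-bridge. Applying Theorem \ref{skein with error term}(1) to another admissible skein triangle with the \emph{same} resolution data, but in which $L_2$ is also two-bridge (so $\chi(L_0)=\chi(L_1)=\chi(L_2)=0$), forces $C(t,s,(p_0,q_0),(p_1,q_1))=0$ on that subset of tuples; feeding this back into Theorem \ref{skein with error term}(1) applied to the original triangle gives $\chi(L_2)=0$ for a strictly larger class of links. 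Iterating — at each round using the previously established constants to enlarge the class of $\chi=0$ links, then using this enlarged class to kill further constants — I exhaust all admissible tuples with $(p_j,q_j)\neq (0,1)$, proving (1). The cyclic symmetry of a skein triangle means that it suffices to prove vanishing for resolution data obtained by a single normalization, which cuts down the casework considerably.

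For (2), the triangle contains a non-ramifiable $L_j$, but the links $L_{j\pm 1}$ and $\bar L_{j\pm 1}$ are all ramifiable by Lemma \ref{at most one not ramifiable}, and the relevant identities \eqref{skein with error term 2}--\eqref{skein with error term 3} compare $\chi(L_{j\pm 1})$ with $\chi(\bar L_{j\pm 1})$. Once a sufficient stock of such pairs is produced from the bootstrap for (1), each constant $C^{\pm}_j$ is determined by evaluating the relation on a single pair of links in the stock, both having $\chi=0$, which pins down $C^{\pm}_j=0$. Realizing this pair explicitly is again done by rational-tangle constructions whose boundary framings match the prescribed $(t,s,(p_0,q_0),(p_1,q_1))$ with one $(p_j,q_j)=(0,1)$.

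The main obstacle is the bookkeeping of the iteration: one must verify at every stage that the already-known vanishing constants suffice to produce the next batch of $\chi=0$ links needed to cover the remaining admissible tuples, and that the two-bridge seed is rich enough to initiate the process in every resolution-data residue class modulo the existing vanishing. Concretely, this reduces to finitely many verifications on continued-fraction expansions of two-bridge links and the resulting $(p_j,q_j)$ patterns, but one must arrange the order of the bootstrap carefully so that no admissible tuple is left inaccessible. This is the technical core of the proof.
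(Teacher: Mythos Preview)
Your overall strategy---seed with two-bridge links (where $\chi=0$ is known), then bootstrap by alternately killing constants and enlarging the stock of $\chi=0$ links---is exactly the strategy the paper uses. But your proposal has a genuine gap at the realization step, and it is not just bookkeeping.

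You say that given $(t,s,(p_0,q_0),(p_1,q_1))$ one ``builds a rational tangle in a ball whose double branched cover is a manifold $Y$ with \ldots longitude divisibility $t$,'' and later that the verification ``reduces to finitely many verifications on continued-fraction expansions of two-bridge links.'' But the double branched cover of a rational tangle is always a solid torus, which forces $t(Y)=1$. Two-bridge skein triangles therefore only realize admissible tuples with $t=1$; they cannot touch any constant with $t>1$. Your bootstrap, as described, never leaves the $t=1$ world, and the iteration stalls after the very first wave.

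What the paper does to escape this is to pass to \emph{Montesinos} links $K(a_1/b_1,a_2/b_2,a_3/b_3)$, whose associated manifold $Y=Y(a_1/b_1,a_2/b_2,\bullet)$ is Seifert-fibered over a disk with two exceptional fibers and has longitude divisibility that can be any prescribed $t$ (Lemma \ref{divisibility computation}). The bootstrap is then organized as an induction on $t$: first odd $t$ via the families $K(2n+1,-(2n+1),\cdot)$ (Proposition \ref{the case t odd}), then even $t$ via $K(2n,-2n,\cdot)$ and $K(4n,4n/(2n-1),\cdot)$ (Proposition \ref{the case t even, all ramifiable}), with a framing-change lemma (Lemma \ref{framing changing}) and a chain-of-triangles argument in $\Q^+$ (Lemma \ref{triangle induction}) to move between resolution data at fixed $t$. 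The specific choice of these families is what makes the inductive step close: one must arrange that the two ``outer'' links in each triangle have $\chi=0$ by the inductive hypothesis (because the relevant $t(Y(\cdot,\cdot,\bullet))$ is strictly smaller or of the opposite parity), and this requires the arithmetic control of Lemma \ref{divisibility computation}. None of this structure is visible in your proposal, and without it the argument does not go through for $t>1$.

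Part (2) then follows quickly once part (1) is in hand for all $t$ (Proposition \ref{the case t even, not all ramifiable}), so the same gap propagates there.
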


Our proof of Theorem \ref{vanishing of constant} is inspired by the proof of \cite[Theorem 7.5]{theta divisor}. The idea is roughly as follows: starting with two-bridge links, which are alternating and hence have vanishing $\chi$, we will generate sufficiently many examples of Montesinos links with vanishing $\chi$ to cover all possible admissible six-tuples. We will then apply Theorem \ref{skein with error term} to conclude that the constants in question are all zero. 

\begin{pro}\label{t=1 case}
Theorem \ref{vanishing of constant} holds for all admissible six-tuples with $t=1$.
\end{pro}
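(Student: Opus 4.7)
The strategy is to reduce everything to the case of two-bridge links. The condition $t=1$ means the boundary longitude $l$ is already null-homologous over $\Z$ in $Y$, so we may arrange $Y$ to be a solid torus, realized as the double branched cover of $B^3$ with branch set a rational tangle. In this situation, each Dehn filling $Y_j = Y\cup_{\gamma_j}(S^1\times D^2)$ is a lens space (or $S^3$, or $S^1\times S^2$), and the associated link $L_j$ is accordingly either a two-bridge link, an unknot, or a two-component split unknot.

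The first step is to construct, for every admissible six-tuple $(1,s,(p_0,q_0),(p_1,q_1))$, a rational tangle $T\subset B^3$ realizing the given resolution data. This uses the Conway correspondence between rational tangles and slopes in $\Q\cup\{\infty\}$: one picks $T$ so that the double branched cover is a solid torus whose preferred meridian and longitude on $\partial Y$ match a chosen boundary framing $(m,l)$, and so that the three filling slopes $\gamma_j = p_j m + q_j l$ correspond to the three local pictures of Figure \ref{F:resolutions}. Admissibility condition (1), namely $p_0q_1-p_1q_0=1$, provides the required intersection numbers $\#(\gamma_j\cap \gamma_{j+1})=-1$, while conditions (2) and (3) guarantee that $s(Y)=s$ and that condition (1) of Definition \ref{D:skein} is met.

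The second step is the observation that $\chi(L)=0$ whenever $L$ is a two-bridge link. Indeed, $\Sigma(L)$ is a lens space, hence an $L$-space, so $\HM^{\red}(\Sigma(L))=0$ and $\Lef(\tau_{*})=0$ vanishes for trivial reasons. What remains is the identity $\xi(L)=8\sum_{\s}h(\Sigma(L),\s)$, which is a direct computation comparing the closed formula for the Murasugi signature of a two-bridge link $K(p,q)$ with the formula for the Heegaard Floer correction terms of the lens space $L(p,q)$ (equivalent to $-2h$ by Remark \ref{R:HM=HF}); this is the special case of the Manolescu--Owens conjecture \cite[Conjecture 1.4]{manolescu-owens} for two-bridge links, which follows from the explicit recursive formulas for the $d$-invariants of lens spaces.

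Given these two inputs, the conclusion is immediate from Theorem \ref{skein with error term}. In case (1) where $(p_j,q_j)\neq(0,1)$ for all $j$, the three links $L_0,L_1,L_2$ produced by the construction are all ramifiable two-bridge links, so $\chi(L_j)=0$ for all $j$ and equation \eqref{skein with error term 1} forces $C(1,s,(p_0,q_0),(p_1,q_1))=0$. In case (2) where $(p_j,q_j)=(0,1)$ for some $j$, the link $L_j$ is a split two-component unknot (non-ramifiable), while $L_{j\pm 1}$ and $\bar L_{j\pm 1}$ are all two-bridge links with $\chi=0$; equations \eqref{skein with error term 2} and \eqref{skein with error term 3} then yield $C^{\pm}_j(1,s,(p_0,q_0),(p_1,q_1))=0$. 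The main technical hurdle is the realization step: producing, in a systematic way, a rational tangle inducing the prescribed boundary framing and filling slopes for every admissible tuple. This is a matter of rational-tangle arithmetic, and in practice amounts to choosing a tangle whose slope is $0$ and then applying the unique element of $SL(2,\Z)$ sending the standard basis of $H_1(\partial Y)$ to $(m,l)$.
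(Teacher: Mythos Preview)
Your argument is correct and follows essentially the same route as the paper: realize the given resolution data by a solid torus (the double branched cover of a rational tangle), so that all the links $L_j$ and $\bar L_j$ are two-bridge, then invoke the known identity $\xi(L)=8\sum_{\s}h(\Sigma(L),\s)$ for alternating links (the paper cites \cite{manolescu-owens} and \cite{Donald-Owens} rather than redoing the lens-space $d$-invariant computation), and finally read off $C=C^{\pm}_j=0$ from Theorem~\ref{skein with error term}. One small point: since $t=1$ is odd, condition (2) of Definition~\ref{admissible tuple} forces $s=0$, so there is no need to treat a general $s$; the paper makes this explicit, and it slightly simplifies the realization step you sketch at the end.
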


\begin{proof}
Observe that since $t = 1$ is odd, we automatically have $s = 0$ by Definition \ref{admissible tuple}, so the admissible six-tuples at hand are of the form $(1,0,(p_0,q_0),(p_1,q_1))$ with $p_0 q_1 - p_1 q_0 = 1$ and even $p_2 = - p_0 - p_1$. Let us consider an unknot in $S^3$ with $Y = S^1 \times D^2$ and the standard boundary framing $(m,l)$ on $\partial Y$, which has $t = 1$ and $s = 0$. For every $j \in \Z/3$, the manifold $Y_j$ obtained by the $p_j/q_j$ surgery on the unknot is a lens space of the form $Y_j = \Sigma(L_j)$, where $L_j$ is a two-bridge link. The links $\bar L_j$ are also two-bridge, and hence alternating. It then follows from \cite[Theorem 1.2]{manolescu-owens} and \cite[Lemma 3.4]{Donald-Owens}, combined with the relation cited in Remark \ref{R:HM=HF} between the Heegaard Floer and monopole correction terms
that $\chi(L_j) = \chi (\bar L_j) = 0$. 
The result will now follow from Theorem \ref{skein with error term} as soon as we show that $(L_0,L_1,L_2)$ is an admissible skein triangle. But the latter is a special case of the more general result proved below in Lemma \ref{L:res}.
\end{proof}

To continue, we will introduce some notation. Choose three distinct circle fibers in $S^2 \times S^1$ and remove their disjoint open tubular neighborhoods. The resulting manifold will be called $N$. The tori $T_j$, $j = 1, 2, 3$, on the boundary of $N$ have natural framings $(x_j,h)$, where $h$ is the circle fiber and the curves $x_1$, $x_2$, and $x_3$ co-bound a section of the product circle bundle.

Denote by $\Q^+ = \Q\, \cup\, \{\infty\}$ the extended set of rational numbers, with the convention that $\infty = 1/0$. Given three numbers $a_j/b_j \in \Q^+$ with co-prime $(a_j,b_j)$, $j = 1, 2, 3$, denote by $Y(a_1/b_1,a_2/b_2, a_3/b_3)$ the closed manifold obtained by attaching to $N$ three solid tori along their boundaries so that their meridians match the curves $a_j x_j + b_j h$. A direct calculation shows that the first homology group of $Y(a_1/b_1,a_2/b_2, a_3/b_3)$ is finite if and only if $b_1 a_2 a_3 + a_1 b_2 a_3 + a_1 a_2 b_3\; \neq\; 0$, in which case
\begin{equation}\label{E:order}
|H_1(Y(a_1/b_1,a_2/b_2, a_3/b_3);\Z)|\; = \;|b_1 a_2 a_3 + a_1 b_2 a_3 + a_1 a_2 b_3|.
\end{equation}
A surgery description of  $Y(a_1/b_1,a_2/b_2, a_3/b_3)$ is shown in Figure \ref{F:fig15}. Denote by $Y(a_1/b_1, a_2/b_2,\bullet)$ the manifold with a single boundary component obtained by attaching to $N$ just the first two solid tori.

\begin{figure}[!ht]
\centering
\psfrag1{$a_1/b_1$}
\psfrag2{$a_2/b_2$}
\psfrag{n}{$a_3/b_3$}
\psfrag0{$0$}
\includegraphics{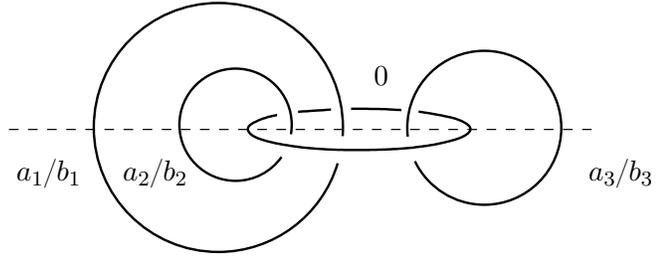}
\caption{The manifold $Y(a_1/b_1,a_2/b_2,a_3/b_3)$}
\label{F:fig15}
\end{figure}

The $180^{\circ}$ rotation with respect to the dotted line in Figure \ref{F:fig15} makes $Y(a_1/b_1,a_2/b_2,a_3/b_3)$ into a double branched cover over $S^3$ with branch set the Montesinos link $K(a_1/b_1, a_2/b_2, a_3/b_3)$ pictured in Figure \ref{F:fig11}.

\begin{figure}[!ht]
\centering
\psfrag1{$a_1/b_1$}
\psfrag2{$a_2/b_2$}
\psfrag{3}{$a_3/b_3$}
\includegraphics{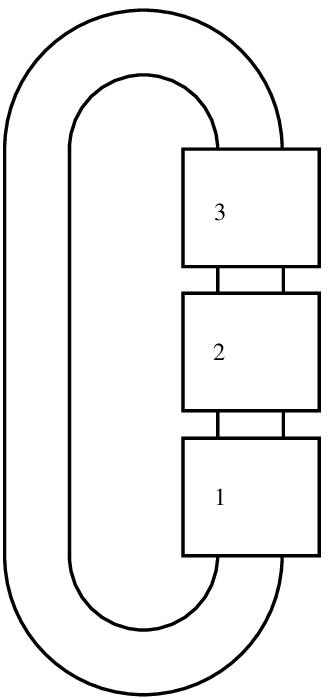}
\caption{The Montesinos link $K(a_1/b_1,a_2/b_2,a_3/b_3)$}
\label{F:fig11}
\end{figure}

\noindent 
Each of the boxes marked $a_j/b_j$ in the figure stands for the rational tangle $T(a_j/b_j)$ obtained from a continued fraction decomposition 
\begin{equation}\label{E:c-frac}
a_j/b_j = [t_1,\ldots,t_{k_j}] =   t_1 - \cfrac[l]1
                     { t_2 - \cfrac[l]1 
                     { \cdots - \cfrac[l]1 
                     { t_{k_j}}}}
\end{equation}
by applying consecutive twists to neighboring endpoints starting from two unknotted and unlinked arcs. Our conventions for rational tangles should be clear from the examples in Figure \ref{F:fig14}.

\medskip

\begin{figure}[!ht]
\centering
\psfrag1{$3 = [3]$}
\psfrag2{$\hspace{-0.2in} -7/3 = [-2,3]$}
\psfrag{3}{$31/7 = [4,-2,3]$}
\psfrag0{$0$}
\includegraphics{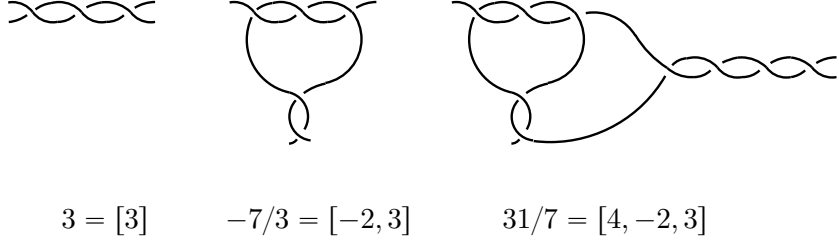}
\caption{Examples of rational tangles}
\label{F:fig14}
\end{figure}

We will study skein triangles formed by these Montesinos links. Given $p_0/q_0$ and $p_1/q_1 \in \mathbb{Q}^{+}$ with co-prime $(p_0,q_0)$ and $(p_1,q_1)$, define the \emph{distance} between them by the formula
\begin{equation}\label{E:distance}
\Delta ({p_0}/{q_0},{p_1}/{q_1}) = |p_0q_1-p_1q_0|.
\end{equation}
We will say that three points in $\mathbb{Q}^{+}$ form a \emph{triangle} if the distance between any two of them is equal to $1$. Two triangles $T_1$ and $T_2$ are called \emph{adjacent} if the intersection $T_1\cap T_2$ consists of exactly two points.

\begin{lem}\label{triangle induction}
For any $r,s,t\in \mathbb{Q}^{+}$ with $\Delta(r,s)=1$, there exists a chain of triangles $S_0$, $S_1,\ldots, S_n$ such that $r,s \in S_0$, $t\in S_n$, and $S_i$ is adjacent to $S_{i+1}$ for all $i = 0,\ldots,n$. 
\end{lem}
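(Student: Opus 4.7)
The distance $\Delta$ coincides with the classical Farey distance on $\mathbb{Q}^{+}$, and the ``triangles'' of the lemma are precisely the vertex sets of the ideal triangles of the Farey tessellation of the hyperbolic plane, with our adjacency meaning that the two triangles share an edge. The lemma therefore amounts to the classical fact that the dual graph of the Farey tessellation is connected: it is enough to show that every Farey triangle is joined by a chain of adjacent triangles to the fixed triangle $T_{*}=\{0/1,\,1/0,\,1/1\}$, after which any triangle containing $\{r,s\}$ and any triangle containing $t$ can both be connected to $T_{*}$ and hence to each other.

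The plan is to induct on the ``height'' $h(T)=q_1+q_2+q_3$ of a Farey triangle with vertices $p_i/q_i$ in lowest terms, $q_i\ge 0$, using the convention $1/0=\infty$. The key structural fact, immediate from $|p_iq_j-p_jq_i|=1$ (which makes $((p_1,q_1),(p_2,q_2))$ a $\mathbb{Z}$-basis of $\mathbb{Z}^2$), is that after a suitable relabelling one has $(p_3,q_3)=(p_1+p_2,\,q_1+q_2)$; so one vertex of any Farey triangle is the ``mediant'' of the other two. The two triangles sharing the edge $\{p_1/q_1,\,p_2/q_2\}$ opposite to the mediant have third vertices the mediant $(p_1+p_2)/(q_1+q_2)$ and the ``anti-mediant'' coming from $\pm(p_1-p_2,\,q_1-q_2)$; this identifies the adjacency moves combinatorially.

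For the inductive step, suppose first that $1/0\notin T$, so $q_1,q_2,q_3\ge 1$. Moving across the edge opposite to the mediant vertex produces an adjacent triangle of height $q_1+q_2+|q_1-q_2|\le 2\max(q_1,q_2)$, which is strictly smaller than $h(T)=2(q_1+q_2)$; induction then joins this adjacent triangle, and hence $T$, to $T_{*}$. The remaining case is when $1/0$ is a vertex of $T$; a short calculation with $\Delta(n/1,\,1/0)=1$ forces $T$ to be one of the ``fan'' triangles $F_n=\{n/1,\,(n+1)/1,\,1/0\}$ for some $n\in\mathbb{Z}$, and the observation that $F_n$ and $F_{n+1}$ share the edge $\{(n+1)/1,\,1/0\}$ gives an explicit chain $F_0=T_{*}\to F_{\pm 1}\to\cdots\to F_n$.

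The main technical obstacle is the sign bookkeeping in the mediant/anti-mediant identity, together with verifying that a triangle is always produced when one moves across an edge (i.e.\ that the three pairwise $\Delta$-distances in the new triple really are all equal to $1$); both checks are routine once the framework is in place, and the argument reduces to a standard Stern--Brocot/continued-fraction induction.
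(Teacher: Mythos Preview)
Your argument is correct, and it proves the same classical connectedness fact about the Farey tessellation that the paper needs. The route, however, differs from the paper's in a way worth noting.

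The paper normalizes first: it uses the $PSL(2,\Z)$ action on $\Q^+$ to send $r\mapsto 0$ and $s\mapsto\infty$, so that $S_0$ is one of the two triangles $\{0,\infty,\pm 1\}$. It then works on the $t$ side rather than the triangle side: applying explicit matrices such as $\left(\begin{smallmatrix}1&-1\\1&0\end{smallmatrix}\right)$ or $\left(\begin{smallmatrix}1&0\\-1&1\end{smallmatrix}\right)$, it replaces $t=p/q$ by one of $(p\pm q)/p$ or $p/(q\pm p)$ and argues that such replacements eventually reduce $t$ to $1$. This is a Euclidean-algorithm descent on the target vertex $t$.

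You instead fix a basepoint triangle $T_*$ and perform an induction on the height $q_1+q_2+q_3$ of an arbitrary Farey triangle, using the mediant structure to cross the edge opposite the largest-denominator vertex and strictly drop the height; the residual ``fan'' case at $\infty$ is handled directly. This is a descent on triangles rather than on $t$.

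Both are continued-fraction arguments at heart. Your formulation has the advantage of making the underlying object (the dual tree of the Farey tessellation) explicit and avoids tracking four separate matrix moves; the paper's formulation is more hands-on and keeps the chain of triangles anchored at $\{r,s\}$ throughout, which matches slightly more directly the way the lemma is stated. Either is adequate here.
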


\begin{proof}
The modular group $PSL(2,\Z)$ acts on the set $\mathbb{Q}^{+}$ by linear fractional transformations 
\[
\begin{pmatrix}a&b\\c&d\end{pmatrix}\cdot\left(\frac{p}{q}\right)\; =\; \frac{ap+bq}{cp+dq}.
\]
This action preserves the distance \eqref{E:distance} and hence sends triangles to triangles. It follows from the general properties of the modular group (and can also be checked directly) that, for any pair $r, s \in \Q^+$, there exists $A \in PSL(2,\Z)$ such that $A\cdot r= 0$ and $A\cdot s = \infty$. Therefore, we may assume without loss of generality that  $r = 0$ and $s = \infty$.

Observe that there are exactly two choices for the triangle $S_0$ with vertices $0$ and $\infty$: in one of these triangles, the third vertex is $1$, and in the other $-1$. Therefore, we will find a chain of triangles $S_0$, $S_1, \ldots, S_n$ connecting $0$ and $\infty$ to $t = p/q$ as soon as we find a chain of triangles $S_1, \ldots, S_n$ connecting $0$ and $\pm 1$ to $t = p/q$. First suppose that the triangle $S_0$ has vertices $0$, $1$, and $\infty$. The matrix 
\[
\begin{pmatrix*}[r]
1 & -1\\ 1 & 0
\end{pmatrix*}
\]
sends $0$, $1$, and $p/q$ into $\infty$, $0$, and $(p-q)/q$, respectively, and turns the problem at hand into the problem of finding a chain of triangles $S_1$,\ldots, $S_n$ connecting $0$ and $\infty$ to $(p-q)/p$. This is, of course, the original problem with $t = p/q$ replaced by $t = (p-q)/p$. Similarly, the matrix
\[
\begin{pmatrix*}[r]
1 & 0 \\ -1&1
\end{pmatrix*}
\]
sends $0$, $1$, and $p/q$ into $0$, $\infty$, and $p/(q-p)$, respectively, thereby replacing $t = p/q$ by $t = p/(q-p)$. If the triangle $S_0$ has vertices $0$, $-1$ , and $\infty$, the matrices 
\[
\begin{pmatrix*}[r]
1 & 1\\ 1 & 0
\end{pmatrix*}
\quad \text{and} \quad
\begin{pmatrix*}[r]
1 & 0 \\ 1 & 1
\end{pmatrix*}
\]
can be used to replace $t = p/q$ with $t = (p+q)/p$ and $t = p/(p+q)$, respectively. In summary, $t = p/q$ can be replaced with any one of the four fractions $(p \pm q)/q$ and $p/(q\pm p)$. One can find a sequence of such replacements making any $t = p/q$ into $t = 1$, for which there is an obvious solution. 
\end{proof}

\begin{lem}\label{L:res}
For any $p$, $q\in \mathbb{Q}^{+}$ and any adjacent triangles $\{r,s,t\}$ and $\{r,s,t'\}$, one can find a planar projection of the link $K(p,q,t)$ and a crossing $c$ such that
\begin{itemize}
\item the two resolutions of $K(p,q,t)$ at the crossing $c$ are $K(p,q,r)$ and $K(p,q,s)$, and
\item the link $K(p,q,t)$ with the crossing $c$ changed is $K(p,q,t')$.
\end{itemize}
In particular, each of the sets 
\[
\{K(p,q,r),K(p,q,s),K(p,q,t)\}\quad\text{and}\quad \{K(p,q,r),K(p,q,s),K(p,q,t')\}
\]
forms an admissible skein triangle, possibly after a permutation. For both skein triangles, the manifold $Y$ with torus boundary is just $Y(p,q,\bullet)$. 
\end{lem}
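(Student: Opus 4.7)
The plan is to rely on the classical tangle calculus, which identifies rational tangles up to isotopy with the Farey tessellation of $\mathbb{Q}^{+}$. The modular group $PSL(2,\Z)$ acts on $\mathbb{Q}^{+}$ by linear fractional transformations, and this action lifts to planar moves on rational tangle diagrams: the generator $T\colon x\mapsto x+1$ corresponds to inserting a positive half-twist at one end of the tangle box, and $S\colon x\mapsto -1/x$ corresponds to rotating the box by $90^{\circ}$. Applying the sequence of moves realizing $M\in PSL(2,\Z)$ to a diagram of $T(x)$ produces a diagram of $T(Mx)$ with a controlled change in the crossing set.

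Given the triangle $\{r,s,t\}$, I would choose $M\in PSL(2,\Z)$ sending it to the standard triangle $\{0,\infty,1\}$; then $M\cdot t'=-1$ automatically, since $\{r,s,t'\}$ is adjacent to $\{r,s,t\}$ along $\{r,s\}$ and hence corresponds to $\{0,\infty,-1\}$. In the standard case, $T(1)$ has a single crossing $c_0$ whose $0$-resolution is $T(0)$, whose $1$-resolution is $T(\infty)$, and whose crossing change yields $T(-1)$. Pulling $c_0$ back through the sequence of moves realizing $M^{-1}$ identifies a specific crossing $c$ in the diagram of $T(t)$ whose two resolutions are $T(r)$ and $T(s)$ and whose crossing change produces $T(t')$. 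Substituting into the Montesinos template $K(p,q,\cdot)$ proves the first two bullets of the lemma. Since the ball around $c$ lies inside the third tangle box, the double branched cover of the complement $S^{3}\setminus B$ branched over $(S^{3}\setminus B)\cap K(p,q,t)$ is precisely $Y(p,q,\bullet)$, which gives the last assertion of the lemma.

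It remains to verify admissibility (Definition \ref{D:skein}). Condition (1) is automatic once we allow a permutation that designates the link with the largest component count as $L_2$: in any skein triangle, the three component counts form a multiset of the form $\{n,n,n+1\}$, and the cyclic symmetry of resolutions (Figure \ref{F:tetrahedra}) guarantees that the $(n+1)$-component link can always be presented as the base of a resolution at a crossing between two distinct components. Condition (2) holds whenever not all three determinants vanish; for the Montesinos triples $K(p,q,\cdot)$ this follows from \eqref{E:order} combined with the signed skein identity $\pm\det(L_0)\pm\det(L_1)\pm\det(L_2)=0$, which together exclude simultaneous vanishing of all three determinants outside degenerate configurations (such as $p=q=0$) that fall outside the application in Theorem~\ref{vanishing of constant}.

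The main technical obstacle I anticipate lies in the bookkeeping of the $PSL(2,\Z)$ correspondence: one must verify that the action on rational tangle fractions coincides, under the double branched cover, with the action on boundary slopes $a x_{3}+b h$ on $\partial Y(p,q,\bullet)$ in the natural framing introduced above. Keeping all signs and orientations consistent through the chain of twists and rotations, so that the resolutions of $c$ lift correctly to the Dehn fillings of $Y(p,q,\bullet)$ with the prescribed slopes $r$ and $s$, is classical but demands careful matching of conventions.
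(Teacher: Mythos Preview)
Your proposal is correct and takes essentially the same approach as the paper: both normalize the Farey triangle $\{r,s,t\}$ to $\{0,\infty,\pm 1\}$ via the $PSL(2,\Z)$ action on rational tangles, where the skein relation among $T(0)$, $T(\infty)$, $T(1)$, $T(-1)$ is manifest. The paper realizes this action by coning the $PSL(2,\Z)$ action on $\partial B^3 \cong (\R^2/\Z^2)/\pm 1$ to a homeomorphism of $S^3$, while you track the generators as planar twist/rotation moves on tangle diagrams; the two viewpoints are equivalent, and the paper's proof likewise does not spell out the admissibility verification beyond what is immediate from the normalized picture.
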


\begin{proof}
Let $B^3$ be a 3-ball in $S^{3}$ which contains the third rational tangle in all of the Montesinos links at hand. Identify its boundary $\partial B^3$ with the quotient $(\R^2/\Z^2)/\pm 1$ of the torus $\R^2/\Z^2$ by the hyperelliptic involution. The standard action of $SL(2,\Z)$ on the plane $\R^2$ induces an action of $PSL(2,\Z)$ on $\partial B^3$ which permutes the points $(0,0)$, $(0,1/2)$, $(1/2,0)$, and $(1/2,1/2)$. Every $A \in PSL(2,\Z)$ gives a homeomorphism $A: \partial B^3 \to \partial B^3$ which extends to a homeomorphism $f_A: B^3 \to B^3$ by the coning construction, $f_A\, (t\cdot x) = t\cdot A(x)$. The homeomorphism $f_A$ sends a rational tangle $T(\ell)$ to a rational tangle $T(A\cdot \ell\,)$, which can be seen by factoring $A$ into a product of Dehn twists 
\medskip
\[
A\; =\;\begin{pmatrix*}[r] q & s \\ p & r \end{pmatrix*}\;=\;
\begin{pmatrix*}[l] 1 & 0 \\ t_1 & 1 \end{pmatrix*}
\begin{pmatrix*}[r] 0 & 1 \\   -1 & 0 \end{pmatrix*}
\begin{pmatrix*}[l] 1 & 0 \\ t_2 & 1 \end{pmatrix*}
\begin{pmatrix*}[r] 0 & 1 \\   -1 & 0 \end{pmatrix*} \cdots
\begin{pmatrix*}[r] 0 & 1 \\   -1 & 0 \end{pmatrix*}
\begin{pmatrix*}[l] 1 & 0 \\ t_k & 0 \end{pmatrix*}
\]

\medskip\noindent
using a continued fraction $p/q = [t_1,\ldots,t_k]$ as in \eqref{E:c-frac}. Now, given $r,s\in \Q^{+}$, there exists $A\in PSL(2,\Z)$ such that $A\cdot r = 0$, $A\cdot s = \infty$, and then necessarily $\{A\cdot t,A\cdot t'\} = \{\pm 1\}$. By the coning construction, $f_A$ extends to the exterior of $B^3$, resulting in a homeomorphism of $S^3$. This homeomorphism turns the original tangle decompositions into tangle decompositions of the form  
\[
K(p,q,r) = T'\cup\,T(0),\quad K(p,q,s) = T'\cup\,T(\infty),\quad\text{and}
\]
\[
\{K(p,q,t),\,K(p,q,t')\} = \{\,T'\cup\,T(1),\,T'\cup\,T(-1)\},
\]
where $T'$ is a certain tangle in the exterior of $B^3$. The conclusion of the lemma is now clear.
\end{proof}

\begin{pro}\label{one coefficient is integer}
Suppose  the link $K(r_1,r_2,r_{3})$ is ramifiable and $1/r_j$ is an integer or infinity for some $j$. Then $\chi(K(r_1,r_2,r_{3}))=0.$
\end{pro}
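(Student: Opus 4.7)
The plan is to reduce Proposition \ref{one coefficient is integer} to the two-bridge case already handled in Proposition \ref{t=1 case} by showing that, under the stated hypothesis, the Montesinos link $K(r_1,r_2,r_3)$ is isotopic to a two-bridge link.

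First, I would verify that when $1/r_j$ is an integer or $\infty$ --- equivalently, when $r_j = 1/n$ for some integer $n$, or $r_j = 0$ --- the corresponding rational tangle $T(r_j)$ in Figure \ref{F:fig11} is ``trivial'' in the sense that it can be absorbed into one of the adjacent tangles by an ambient isotopy of $S^3$, producing a Montesinos link with two rational tangles, which is a two-bridge link. This is a classical consequence of the Conway/Montesinos tangle calculus. One can also see this from the Seifert fibered description in Figure \ref{F:fig15}: the condition $a_j \in \{0,1\}$ in the pair $(a_j,b_j)$ means the $j$-th fiber has multiplicity at most one, so it is not genuinely exceptional, and $Y(r_1,r_2,r_3)$ is a Seifert fibered space over $S^2$ with at most two exceptional fibers, namely a lens space. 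The ramifiability hypothesis together with \eqref{E:order} ensures this lens space is a rational homology sphere.

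Once the reduction to the two-bridge case is in place, the conclusion $\chi(K(r_1,r_2,r_3))=0$ follows exactly as in Proposition \ref{t=1 case}: two-bridge links are alternating, hence Khovanov homology thin \cite{manolescu-ozsvath:quasi}, so by Bloom's spectral sequence \cite{bloom: spectral sequence} their double branched covers have vanishing reduced monopole Floer homology, forcing $\Lef(\tau_*)=0$; and the identity $\tfrac{1}{8}\xi(L)=\sum_{\s} h(\Sigma(L),\s)$ for two-bridge links follows from \cite[Theorem 1.2]{manolescu-owens} and \cite[Lemma 3.4]{Donald-Owens}, combined with the identification $d(Y,\s)=-2h(Y,\s)$ recorded in Remark \ref{R:HM=HF}.

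The main content of the argument is thus the identification of $K(r_1,r_2,r_3)$ as a two-bridge link under the hypothesis on $r_j$; this identification is classical and I do not anticipate any genuine technical obstacle, since the remainder of the proof is essentially a verbatim repetition of the reasoning already used to dispose of the case $t=1$ of Theorem \ref{vanishing of constant}.
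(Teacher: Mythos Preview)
Your proposal is correct and follows essentially the same approach as the paper: both arguments observe that under the hypothesis the Montesinos link $K(r_1,r_2,r_3)$ collapses to a two-bridge (hence alternating) link, and then invoke \cite{manolescu-owens} and \cite{Donald-Owens} together with Remark~\ref{R:HM=HF} exactly as in the proof of Proposition~\ref{t=1 case}. Your detour through Bloom's spectral sequence to establish $\Lef(\tau_*)=0$ is valid but unnecessary, since lens spaces are $L$-spaces by an elementary computation; the paper simply cites the same references without this extra step.
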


\begin{proof}
In this case, $K(r_1,r_2,r_3)$ is a two-bridge link and, in particular, it is alternating. The result now follows from \cite{manolescu-owens} and \cite{Donald-Owens} as in the proof of Proposition \ref{t=1 case}.
\end{proof}

\begin{pro}\label{switching singular fiber}
For any $p,q,r\in \mathbb{Q}^{+}$, suppose that $K(p,q,r)$ is ramifiable and Theorem \ref{vanishing of constant} holds for all admissible six-tuples with $t = t(Y(p,q,\bullet))$. Then $\chi(K(p,q,r))=0.$
\end{pro}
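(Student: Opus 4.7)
The plan is to propagate the vanishing of $\chi$ through the one-parameter family $\{K(p,q,\cdot)\}$ of Montesinos links by repeatedly applying Theorem \ref{skein with error term}, whose universal constants all vanish by hypothesis for admissible six-tuples with first entry $t := t(Y(p,q,\bullet))$. The base case will come from Proposition \ref{one coefficient is integer}: for every $v \in \{0, \infty\} \cup \{1/n : n \in \Z \setminus \{0\}\}$ such that $K(p,q,v)$ is ramifiable, one has $\chi(K(p,q,v)) = 0$.

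To reach the target $r$, I would apply Lemma \ref{triangle induction} with starting pair $(0, \infty)$ (which satisfies $\Delta(0,\infty) = 1$) and target $r$, producing a chain of triangles $S_0, \ldots, S_n$ in $\Q^+$ with $\{0, \infty\} \subset S_0$, $r \in S_n$, and $S_{i-1}$ adjacent to $S_i$ for each $i$. The third vertex of $S_0$ is forced to be $\pm 1$, so every vertex of $S_0$ satisfies the base-case hypothesis.

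The core of the argument is an induction on $i$ with the claim that $\chi(K(p,q,v)) = 0$ for every $v \in S_0 \cup \cdots \cup S_i$ with $K(p,q,v)$ ramifiable. For the inductive step, let $c$ be the unique new vertex of $S_i$, let $\{a,b\} = S_{i-1} \cap S_i$, and let $d$ be the third vertex of $S_{i-1}$. By Lemma \ref{L:res}, both $\{K(p,q,a), K(p,q,b), K(p,q,c)\}$ and $\{K(p,q,a), K(p,q,b), K(p,q,d)\}$ are admissible skein triangles with underlying manifold $Y(p,q,\bullet)$, so their resolution data have first entry $t$ and, by hypothesis, all relevant universal constants in Theorem \ref{skein with error term} vanish. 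Assuming $K(p,q,c)$ is ramifiable (otherwise nothing needs proving at this step), I would split into cases using Lemma \ref{at most one not ramifiable}. If all four of $K(p,q,a), K(p,q,b), K(p,q,c), K(p,q,d)$ are ramifiable, Theorem \ref{skein with error term} (1) applied to $\{K(p,q,a), K(p,q,b), K(p,q,c)\}$ gives a relation of the form $2\chi(L_2) = \chi(L_0) + \chi(L_1)$, which together with $\chi(K(p,q,a)) = \chi(K(p,q,b)) = 0$ from the inductive hypothesis forces $\chi(K(p,q,c)) = 0$. If instead one of $K(p,q,a), K(p,q,b)$ is non-ramifiable, then Lemma \ref{at most one not ramifiable} guarantees $K(p,q,d)$ is ramifiable; applying Theorem \ref{skein with error term} (2) to $\{K(p,q,a), K(p,q,b), K(p,q,d)\}$, viewed as encoding the crossing change $K(p,q,d) \leftrightarrow K(p,q,c)$, then yields $\chi(K(p,q,c)) = \chi(K(p,q,d))$, which equals $0$ by the inductive hypothesis.

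The main obstacle will be the case analysis accommodating non-ramifiable intermediate links along the chain, together with checking that Lemma \ref{L:res} really supplies the skein triangles with the asserted underlying manifold $Y(p,q,\bullet)$ so that the hypothesis on $t$ applies uniformly at every step. Once this bookkeeping is organized the induction closes, and applied at step $n$ it gives $\chi(K(p,q,r)) = 0$ since $r \in S_n$ and $K(p,q,r)$ is ramifiable by assumption.
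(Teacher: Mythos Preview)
Your proposal is correct and follows essentially the same inductive scheme as the paper: build a chain of Farey triangles via Lemma~\ref{triangle induction}, anchor the base case with Proposition~\ref{one coefficient is integer}, and propagate $\chi=0$ along the chain using Theorem~\ref{skein with error term} (with constants vanishing by hypothesis) together with Lemma~\ref{L:res}.

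Two cosmetic differences are worth noting. First, the paper starts the chain at $(1/k,\,1/(k{+}1))$, choosing $k$ via~\eqref{E:order} so that both base links are ramifiable; your choice $(0,\infty)$ works just as well and makes the base case slightly cleaner, since every vertex of $S_0\in\{\{0,\infty,1\},\{0,\infty,-1\}\}$ already satisfies the hypothesis of Proposition~\ref{one coefficient is integer}, and at most one slope on $Y(p,q,\bullet)$ can give a non-ramifiable link. Second, your dichotomy ``all four of $a,b,c,d$ ramifiable'' versus ``one of $a,b$ non-ramifiable'' formally omits the case where only $K(p,q,d)$ is non-ramifiable; but in that case $a,b$ are still ramifiable and your Case~1 argument (Theorem~\ref{skein with error term}\,(1) applied to $\{a,b,c\}$) goes through unchanged, since $d$ plays no role there. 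Tightening Case~1 to ``both $a,b$ ramifiable'' closes this harmless gap.
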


\begin{proof}
Use formula \eqref{E:order} to find a positive integer $k$ such that both $K(p,q,1/k)$ and $K(p,q,\allowbreak 1/(k+1))$ are ramifiable. It follows from Proposition \ref{one coefficient is integer} that
\[
\chi(K(p,q,1/k) = \chi(K(p,q,1/(k+1)) = 0.
\]
Since $\Delta(1/k,1/(k+1)) = 1$, Lemma \ref{triangle induction} supplies us with a chain of triangles $S_0$, $S_1,\ldots, S_n$ such that $k, k+1\in S_0$, $r \in S_n$, and $S_i$ is adjacent to $S_{i+1}$ for all $i= 0,\ldots,n$. We claim that for any $m = 0,\ldots,n$ and $s \in S_{m}$ such that $K(p,q,s)$ is ramifiable, 
\[
\chi(K(p,q,s))=0.
\]
We will proceed by induction on $m$. First, suppose that $m = 0$. If $s = k$ or $k+1$, the claim follows from Proposition \ref{one coefficient is integer}; otherwise, it follows from Theorem \ref{skein with error term}\,(1). Next, suppose that the claim holds for $m$ and prove it for $m+1$. Write $S_{m+1} = \{s,u,v\}$ and suppose that $K(p,q,s)$ is ramifiable. If $s \in S_m$ then the claim follows from the induction hypothesis. Otherwise, write $S_m = \{u,v,w\}$ and consider two possibilities. One possibility is that both $K(p,q,u)$ and $K(p,q,v)$ are ramifiable. Then $\chi(K(p,q,u)) = \chi(K(p,q,v)) = 0$ by the induction hypothesis and the vanishing of $\chi(K(p,q,s))$ follows from Theorem \ref{skein with error term}\,(1). The other possibility is that one of $K(p,q,u)$ and $K(p,q,v)$ is not ramifiable. Then by Theorem \ref{skein with error term}\,(2), the link $K(p,q,w)$ is ramifiable and $\chi(K(p,q,s)) = \chi(K(p,q,w)) = 0$ by the induction hypothesis.
\end{proof}

The following lemma will be helpful in computing $t(Y(n,a/b,\bullet))$. It uses the notation $
\div_2(m) = \max\,\{c\in\mathbb{N}\mid 2^{c}\;\text{divides}\; m\}$.

\begin{lem}\label{divisibility computation}
For any integer $n$ and co-prime integers $a$ and $b$ with $b_1(Y(n,a/b,\bullet)) = 1$, the integer $t(Y(n,a/b,\bullet))$ is a divisor of $n$. In particular, $t(Y(n,a/b,\bullet))\leq |n|$, with $t(Y(n,a/b,\bullet)) = |n|$ if and only there exists an integer $k$ such that $a=kn$ and $n$ divides $k+b$. Furthermore, if $\div_2(n)\neq \div_2(a)$ then the integer $t(Y(n,a/b,\bullet))$ is odd.
\end{lem}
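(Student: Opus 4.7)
The plan is a direct homological computation of $t(Y(n,a/b,\bullet))$ from the surgery presentation: present $H_1(Y;\Z)$, identify the longitude $l$ on the remaining torus boundary, compute $[l]$, and read off the order. Starting from $H_1(N)=\langle x_1,x_2,x_3,h\mid x_1+x_2+x_3\rangle$ and adjoining the Dehn-filling relations $nx_1+h=0$ and $ax_2+bh=0$, elimination of $x_3$ and $h$ produces
\[
H_1(Y;\Z)\;=\;\langle x_1,x_2\rangle\big/\langle ax_2-bnx_1\rangle.
\]
Set $g=\gcd(a,n)$. Since $\gcd(a,b)=1$ gives $\gcd(a,bn)=g$, I write $a=ga'$ and $n=gn'$ with $\gcd(a',n')=1$, so the relation factors as $g\cdot(a'x_2-bn'x_1)$ with primitive $(a',-bn')$. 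Hence $H_1(Y;\Z)\cong \Z\oplus \Z/g$, with torsion generator $a'x_2-bn'x_1$.

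Next I identify the longitude. The boundary $T_3$ carries the natural framing $(x_3,h)$, and the projection $H_1(Y;\Z)\to\Z$ onto the free part is the primitive map sending $x_1\mapsto a'$ and $x_2\mapsto bn'$ (it kills $ax_2-bnx_1$, and is primitive since $\gcd(a',bn')=1$). Under this projection, $x_3\mapsto -(a'+bn')$ and $h\mapsto -ga'n'$, and solving for the primitive kernel vector on $T_3$ yields
\[
l\;=\;\tfrac{an}{d}\,x_3\;-\;\tfrac{a+bn}{d}\,h,\qquad d:=\gcd(an,a+bn).
\]
The Euclidean algorithm applied to the pair $(g^2a'n',\,g(a'+bn'))$, combined with the coprimality $\gcd(a'n',a'+bn')=1$ (which follows from $\gcd(a',n')=1$ and $\gcd(a',b)=1$), yields the identity $d=g\cdot e$ with $e:=\gcd(g,a'+bn')$. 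Substituting $x_3=-x_1-x_2$ and $h=-nx_1$ then gives $[l]=-(gn'/e)(a'x_2-bn'x_1)$, which is $gn'/e$ copies of the torsion generator in $\Z/g$. Its order is $g/\gcd(g,gn'/e)=e/\gcd(e,n')$, whence
\[
t(Y(n,a/b,\bullet))\;=\;\frac{e}{\gcd(e,n')}.
\]

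The three claims of the lemma now follow quickly. Divisibility is immediate: $t\mid e\mid g\mid n$. The equality $t=|n|$ forces $e=g=|n|$ and $\gcd(e,n')=1$; the first makes $n'=1$, so $|n|$ divides $a$ and we may write $a=kn$, and then $e=g$ rewrites as $n\mid a'+b=k+b$ (the converse being a direct check). For the oddness statement, when $v_2(a)\neq v_2(n)$ a brief case split on whether $v_2(a)<v_2(n)$ or $v_2(a)>v_2(n)$---invoking $\gcd(a,b)=1$ in the latter case to force $b$ odd---shows that in each case exactly one of $v_2(a')$ and $v_2(bn')$ vanishes while the other is positive, so $v_2(a'+bn')=0$; hence $v_2(e)=0$, and $t\mid e$ is odd. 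The main obstacle throughout is purely bookkeeping, especially keeping the Euclidean identity $d=g\cdot\gcd(g,a'+bn')$ straight and organizing the 2-adic split cleanly; no deeper input is required.
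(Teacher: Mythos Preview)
Your proof is correct and follows essentially the same route as the paper: both compute $H_1(Y;\Z)$ from the surgery presentation, identify the longitude on $T_3$, and read off its order, arriving at $t(Y)=\gcd(g,a'+bn')$ (your formula $e/\gcd(e,n')$ simplifies to $e$ since any prime dividing both $e\mid g$ and $n'$ would also divide $a'$, contradicting $\gcd(a',n')=1$). The paper organizes the computation slightly more directly by first writing down a generator of $\ker(H_1(\partial Y)\to H_1(Y))$ and taking the gcd of its coefficients, whereas you first pin down the primitive longitude and then compute its image, but the arithmetic content and the derivations of the three claims are the same.
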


\begin{proof}
We will use the notation $Y = Y(n,a/b,\bullet)$. The homology group $H_1 (Y;\Z)$ is generated by the homology classes $[x_1]$, $[x_2]$, $[x_{3}]$, and $[h]$ subject to the relations
\[
n\cdot [x_1] + [h] = 0,\quad a\cdot [x_2] + b\cdot [h] = 0,\quad [x_1] + [x_2] + [x_3] = 0.
\]
One can easily see that the kernel of the map $H_1(\partial Y; \Z) \to H_1 (Y; \Z)$ is generated by the homology class 
\[
\frac{a+bn}{\gcd(n,a)} \cdot [h] - \frac{na}{\gcd(n,a)} \cdot [x_3],
\]

\medskip\noindent
and therefore
\[
t(Y) = \gcd\left(\frac{a+bn}{\gcd(n,a)}\, ,\;\frac{na}{\gcd(n,a)}\right).
\]

\medskip\noindent
To prove the first statement of the lemma, write $n = \gcd(n,a)\cdot n'$ and $a = \gcd(n,a)\cdot a'$ with the relatively prime $n'$ and $a'$. Then 
\[
t(Y) = \gcd\, (a'+bn' ,\; \gcd(n,a)\cdot n' a').
\]
Note that any prime $p$ that divides the product $n' a'$ must divide either $n'$ or $a'$ but not both. In either case, $p$ cannot divide $a' + bn'$ because $a'$ and $b$ are relatively prime. Therefore, all the common divisors of $a'+bn'$ and $\gcd(n,a)\cdot n' a'$ must also be divisors of $\gcd(n,a)$, which implies that $t(Y)$ is a divisor of $\gcd(n,a)$ and hence of $n$.

If $t(Y) = |n|$, the integer $n$ must divide $\gcd(n,a)$ implying that $n = \gcd(n,a)$ and $a = kn$ for some integer $k$. Since $n' = 1$ and $a' = k$, the fact that $n$ divides $a' + bn'$ is equivalent to saying that $n$ divides $k + b$.

Finally, suppose $\div_2(n)\neq \div_2(a)$, then $n'a'$ must be even. If $n'$ is even then $a'$ is odd hence $t(Y) = a' + bn'$ must be odd. On the other hand, if $a'$ is even then both $n'$ and $b$ are odd hence $t(Y) = a' + bn'$ must be odd. 
\end{proof}

\begin{pro}\label{the case t odd}
Theorem \ref{vanishing of constant} holds for all admissible six-tuples with $t$ odd.
\end{pro}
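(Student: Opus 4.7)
The plan is to realize every admissible six-tuple $(t,0,(p_0,q_0),(p_1,q_1))$ with $t$ odd as the resolution data of an admissible Montesinos skein triangle $(K(p,q,r_0),K(p,q,r_1),K(p,q,r_2))$ for a suitable choice of $(p,q)\in\Q^+\times\Q^+$ and boundary framing on $\partial Y(p,q,\bullet)$, and then to deduce the vanishing of the universal constant from the skein relation of Theorem~\ref{skein with error term}(1). Proposition~\ref{t=1 case} handles the base case $t=1$ directly. For odd $t>1$ a convenient choice is $(p,q)=(t,t/(t-1))$, for which a computation along the lines of Lemma~\ref{divisibility computation} yields $t(Y(p,q,\bullet))=t$. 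Different admissible six-tuples with the same $t$ are realized by varying the triangle $\{r_0,r_1,r_2\}\subset\Q^+$ and the boundary framing $(m,l)$ on $\partial Y(p,q,\bullet)$ within its $SL(2,\Z)$ orbit; I would verify this realization step by a direct calculation using the natural framing $(x_3,h)$ on $\partial Y(p,q,\bullet)$ and the freedom in the choice of $(m,l)$.

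The central task is then to prove $\chi(K(p,q,r_j))=0$ for each of the three links in the skein triangle; once this is in place, Theorem~\ref{skein with error term}(1) immediately yields $C=0$. Applied naively, Proposition~\ref{switching singular fiber} would deliver this vanishing provided that Theorem~\ref{vanishing of constant} were already known for all six-tuples with $t=t(Y(p,q,\bullet))$, which is exactly what we are trying to prove. I propose to break this circular dependency via a simultaneous induction along chains of adjacent Farey triangles in $\Q^+$ provided by Lemma~\ref{triangle induction}, starting from the two-bridge base edge $\{1/k,1/(k+1)\}$ whose associated Montesinos links have $\chi=0$ by Proposition~\ref{one coefficient is integer}. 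At each step of the induction, the skein relation \eqref{skein with error term 1} for a new triangle produces a linear equation involving one new $\chi$ value and one new universal constant $C$; pairing this with the corresponding equation for an adjacent triangle sharing two vertices, and exploiting the cyclic symmetry of skein triangles (Lemma~\ref{at most one not ramifiable}) to obtain a third relation from the triple $(\bar L_1,L_0,L_2)$ or $(L_1,\bar L_0,L_2)$, yields enough independent equations to force both the new $\chi$ value and the new constant $C$ to vanish.

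Part~(2) of Theorem~\ref{vanishing of constant}, where $(p_j,q_j)=(0,1)$ for some $j$ so that one link in the triangle is non-ramifiable, follows by the analogous bootstrap using the skein relations \eqref{skein with error term 2} and \eqref{skein with error term 3} in place of \eqref{skein with error term 1}, together with the cyclic symmetry that reduces the three cases $j\in\Z/3$ to a single one.

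The main obstacle will be untangling precisely the circular dependency between Proposition~\ref{switching singular fiber} and Theorem~\ref{vanishing of constant} described above. The bootstrapping induction must be organized carefully enough that, at each stage, every skein relation being invoked has at least two of its three $\chi$ values already certified to vanish in an earlier stage, and every newly encountered universal constant is pinned down by a second, independent relation obtained from a neighboring triangle sharing an edge with the current one; getting the combinatorics of the Farey propagation consistent with this bookkeeping is where the delicacy of the argument lies.
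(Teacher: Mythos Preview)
Your bootstrap does not close. The universal constant $C$ in \eqref{skein with error term 1} depends on the \emph{full} resolution data $(t,s,(p_0,q_0),(p_1,q_1))$, not just on $t$. So when you pass to an adjacent Farey triangle (or to one of the barred triples), the associated skein relation involves a \emph{different} constant. Each new relation you write down therefore introduces at least one new unknown constant along with the new $\chi$-value, and the three relations you propose to combine involve three distinct constants; the system remains underdetermined no matter how far you propagate along the Farey graph. There is no way to pin down both $\chi$ and $C$ at level $t$ using only skein relations at level $t$.

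The paper breaks the circularity by a different mechanism: the permutation symmetry of Montesinos links. It chooses $(p,q)=(2n+1,-(2n+1))$ rather than your $(t,t/(t-1))$, and then observes that for any third parameter $a/b$, the link $K(2n+1,-(2n+1),a/b)$ can equally well be viewed as a filling of $Y(2n+1,a/b,\bullet)$ or of $Y(-(2n+1),a/b,\bullet)$. By Lemma~\ref{divisibility computation}, each of these alternate knot complements has $t$-value a divisor of $2n+1$, and a short arithmetic argument shows they cannot both equal $2n+1$. Hence at least one is a proper divisor, so strictly smaller and odd, and the induction hypothesis combined with Proposition~\ref{switching singular fiber} gives $\chi(K(2n+1,-(2n+1),a/b))=0$ directly---\emph{without} needing to know any constant at level $2n+1$. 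Once all these $\chi$-values are known to vanish, Theorem~\ref{skein with error term} then forces $C=0$ and $C^{\pm}_j=0$ at level $t=2n+1$. The point is that $\chi$ is established first via a reduction to smaller $t$, and only then is $C$ deduced; your proposal tries to do both at once and cannot.
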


\begin{proof}
Observe that since $t $ is odd, $s$ must be zero by Definition \ref{admissible tuple}. We will  proceed by induction on $t$. The case $t=1$ was proved in Proposition \ref{t=1 case}. Suppose the statement holds for all odd $t = 1,\ldots, 2n-1$ and consider links of the form $K(2n+1,-(2n+1),a/b)$. An easy calculation with formula \eqref{E:order} shows that such a link is ramifiable if and only if $a/b \neq \infty$.

It follows from Lemma \ref{divisibility computation} that $t(Y(2n+1,\bullet,a/b))$ and $t(Y(\bullet,-(2n+1),a/b))$ are divisors of $2n+1$. We claim that they can not be both $2n+1$: otherwise, by Lemma \ref{divisibility computation} again, there would exist an integer $k$ such that $a = (2n+1)k$ and $2n+1$ divides both $b + k$ and $b - k$, which would contradict the assumption that $a$ and $b$ are co-prime. Together with Proposition \ref{switching singular fiber} and the induction hypothesis, this claim implies that 
\[
\chi(K(2n+1,-(2n+1),a/b)) = 0
\]
for any $a/b \neq \infty$. Since $t(Y(2n+1,-(2n+1),\bullet)) = 2n+1$ by Lemma \ref{divisibility computation}, all the constants $C(2n+1,0,(p_0,q_0),(p_1,q_1))$ and $C^{\pm}_j (2n+1,0,(p_0,q_0),(p_1,q_1))$ must vanish by Theorem \ref{skein with error term}. This completes the inductive step and hence the proof of the proposition.
\end{proof}

\begin{pro}\label{1/n singular fiber}
Suppose that $K(n,a_2/b_2,a_3/b_3)$ is ramifiable and $\div_2 (n)\neq \div_2(a_2)$. Then $\chi(K(n,a_2/b_2,a_3/b_3))=0$.
\end{pro}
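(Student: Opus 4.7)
The plan is to combine the three previous results in this section in a direct way. First I would identify the manifold with torus boundary associated to this skein problem: it is $Y = Y(n, a_2/b_2, \bullet)$, obtained by removing a neighborhood of the third singular fiber from the Seifert fibered space. The divisibility parameter we care about is $t(Y) = t(Y(n, a_2/b_2, \bullet))$.

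Next, I would apply the last statement of Lemma \ref{divisibility computation}: the hypothesis $\div_2(n)\neq \div_2(a_2)$ ensures that $t(Y(n, a_2/b_2, \bullet))$ is odd. With odd $t(Y)$ in hand, Proposition \ref{the case t odd} tells us that Theorem \ref{vanishing of constant} holds for every admissible six-tuple of the form $(t(Y), s, (p_0,q_0),(p_1,q_1))$; in particular, every universal constant $C(t(Y), s, \cdots)$ and $C^{\pm}_j(t(Y), s, \cdots)$ that could arise from admissible skein triangles whose link exterior $Y$ has divisibility $t(Y)$ vanishes.

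Finally, I would feed this into Proposition \ref{switching singular fiber} with $p = n$, $q = a_2/b_2$, and $r = a_3/b_3$: the two hypotheses of that proposition, namely that $K(p, q, r) = K(n, a_2/b_2, a_3/b_3)$ is ramifiable and that Theorem \ref{vanishing of constant} holds for all admissible six-tuples with $t = t(Y(p, q, \bullet))$, are both satisfied. The conclusion $\chi(K(n, a_2/b_2, a_3/b_3)) = 0$ follows immediately.

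There is essentially no obstacle here; the proposition is a quick corollary of the three previously established facts. The only place one needs to be careful is in verifying the hypothesis of Lemma \ref{divisibility computation} applies verbatim to the first two rational coefficients in our specific order (this is purely symbolic since the role of the boundary component in $Y(\cdot,\cdot,\bullet)$ is symmetric among the singular fibers up to relabeling).
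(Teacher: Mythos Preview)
Your proposal is correct and follows exactly the same route as the paper: apply Lemma \ref{divisibility computation} to deduce that $t(Y(n,a_2/b_2,\bullet))$ is odd, invoke Proposition \ref{the case t odd} to get the vanishing of the universal constants for odd $t$, and then conclude via Proposition \ref{switching singular fiber}. The paper's proof is essentially a one-line version of what you wrote.
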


\begin{proof}
When $\div_2(n)\neq \div_2(b_2)$, the integer $t(K(n,a_2/b_2,\bullet))$ must be odd by Lemma \ref{divisibility computation}. The result now follows  from Proposition \ref{the case t odd} and Proposition \ref{switching singular fiber}.
\end{proof}

The following simple lemma will be instrumental in completing the proof of Theorem \ref{vanishing of constant}.

\begin{lem}\label{framing changing}
For any admissible six-tuple $(t,s,(p_0,q_0),(p_1,q_1))$, 
\begin{gather*}
C(t,s,(p_0,q_0),(p_1,q_1))=C(t,s,(p_0,q_0+kp_0),(p_1,q_1+kp_1))\;\text{and} \\
C^{\pm}_j(t,s,(p_0,q_0),(p_1,q_1))\,=\,C^{\pm}_j(t,s,(p_0,q_0+kp_0),(p_1,q_1+kp_1)),
\end{gather*}
assuming the constants are defined. Here, $k$ can be any integer when $s=0$, and $k$ can be any even integer when $s=1$.
\end{lem}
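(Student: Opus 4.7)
My plan is to show that the transformation $(p_0,q_0,p_1,q_1)\mapsto (p_0,q_0+kp_0,p_1,q_1+kp_1)$ on the resolution data corresponds precisely to changing the boundary framing from $(m,l)$ to $(m-kl,l)$ on the same underlying admissible skein triangle. Once this is established, both framings feed into Theorem \ref{skein with error term} for the \emph{same} skein triangle, and since the left-hand sides $2\chi(L_2)-\chi(L_0)-\chi(L_1)$ (or the appropriate analogue for $C^{\pm}_j$) do not depend on the framing, subtracting the two instances of the formula forces the two universal constants to agree.

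First I would select an admissible skein triangle $(L_0,L_1,L_2)$ with boundary framing $(m,l)$ realizing the given six-tuple (this is implicit in the assumption that the constant is defined). I would then verify that $(m',l'):=(m-kl,l)$ is again a valid boundary framing: the change-of-basis matrix $\bigl(\begin{smallmatrix}1 & 0\\-k & 1\end{smallmatrix}\bigr)$ is unimodular, so $\#(m'\cap l')=\#(m\cap l)=-1$, and $[l']=[l]=0\in H_1(Y;\mathbb{Q})$ is automatic. The $\mathbb{F}_2$ condition splits into the two cases in the statement: when $s=0$ the class $[l]$ is already $\mathbb{F}_2$-trivial, so $l'=l$ works for any $k\in\mathbb{Z}$; when $s=1$ we have $[l]\neq 0$ and $[m]=0$ in $H_1(Y;\mathbb{F}_2)$, so $[m']=[m]-k[l]$ is $\mathbb{F}_2$-trivial iff $k$ is even, matching the restriction in the lemma.

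Next I would check that the pair $(t(Y),s(Y))$ is unchanged (both invariants depend only on $l$, which is unaltered) and compute the new coefficients of $\gamma_j$ by substituting $[m]=[m']+k[l']$:
\[
[\gamma_j]\;=\;p_j[m]+q_j[l]\;=\;p_j[m']+(q_j+kp_j)[l'],
\]
so the resolution data in the new framing is $(t,s,(p_0,q_0+kp_0),(p_1,q_1+kp_1))$, exactly as required. Applying Theorem \ref{skein with error term}(1) to the same skein triangle with both framings and subtracting yields $C(t,s,(p_0,q_0),(p_1,q_1))=C(t,s,(p_0,q_0+kp_0),(p_1,q_1+kp_1))$; the identical argument using (\ref{skein with error term 2}) and (\ref{skein with error term 3}) handles the $C^{\pm}_j$ case.

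The one point needing care is the orientation convention (b) in the definition of the $\gamma_j$: the sign of $\#(\gamma_2\cap m')$ may differ from the sign of $\#(\gamma_2\cap m)$, in which case honoring (b) in the new framing would force us to reverse all three $\gamma_j$ simultaneously and replace the new tuple by its overall negation $(-p_j,-q_j-kp_j)$. I would deal with this by noting that the simultaneous reversal of the $\gamma_j$ does not alter the underlying surgery cobordisms $W_j$ or the manifolds $Y_j$, nor any of the data ($b_1(Y_j)$, $o(j)$, $|\scspinc(Y_j,\mathfrak{s}_0)|$, or $\rho(\mathfrak{s})-\rho(\tilde{\mathfrak{s}})\!\pmod 2$) that enters the construction of $C$ through (\ref{truncated lefs and normalized lefs 2}) and Theorem \ref{same resolution data give equivalent spin-c system}; thus the universal constants are invariant under $(p_j,q_j)\mapsto(-p_j,-q_j)$, and the lemma's identity holds regardless of whether the orientation convention is preserved.
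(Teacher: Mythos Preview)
Your proof is correct and follows the same approach as the paper: change the boundary framing from $(m,l)$ to $(m-kl,l)$ on a fixed admissible skein triangle, observe that the $\chi$-side of Theorem~\ref{skein with error term} is framing-independent, and conclude that the universal constants at the two resolution tuples agree. The paper's argument is the terse three-line version of exactly this; your write-up is more careful, in particular about verifying that $(m-kl,l)$ remains a boundary framing and about the possible flip in orientation convention~(b), which the paper does not mention. Your handling of that last point---that reversing all $\gamma_j$ simultaneously leaves the manifolds $Y_j$, $W_j$, and hence the $\spinc$ systems and all inputs to the constant unchanged---is correct, and could be stated even more directly: the value $2\chi(L_2)-\chi(L_0)-\chi(L_1)$ (and similarly for $C^\pm_j$) depends only on the skein triangle, so any two resolution tuples arising from the same triangle under different framing/orientation choices are forced to carry the same constant.
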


\begin{proof}
The right hand sides of the equalities in Theorem \ref{skein with error term} do not depend on the choice of framing. Therefore, we can replace the framing $(m,l)$ by $(m-kl,l)$ without changing the corresponding constants (for this to be true, $k$ needs to be even when $s = 1$ so that $m-kl$ is still an $\F_2$ longitude). This proves the statement of the lemma. 
\end{proof}

\begin{pro}\label{the case t even, all ramifiable}
Theorem \ref{vanishing of constant}\,(1) holds for any $t$.
\end{pro}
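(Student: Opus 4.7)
The plan is to induct on $t$, with the odd case already handled by Proposition \ref{the case t odd}. For even $t = n$, we follow the strategy of Proposition \ref{the case t odd} almost verbatim, using the Montesinos link family $K(n, -n, a/b)$. By Lemma \ref{divisibility computation} applied with $a = -n$, $b = 1$, we have $t(Y(n, -n, \bullet)) = n$, and by formula (\ref{E:order}) a direct computation gives $|H_1(Y(n,-n,a/b);\Z)| = n^2|b|$, so the link is ramifiable for every $a/b \neq \infty$.

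The bulk of the argument is to establish $\chi(K(n, -n, a/b)) = 0$ for all ramifiable $a/b$. For a typical $a/b$, at least one of $t(Y(n, a/b, \bullet))$ and $t(Y(-n, a/b, \bullet))$ is a proper divisor of $n$ by Lemma \ref{divisibility computation}: indeed, if both were equal to $n$ then $a$ would be a common multiple $kn$ and $-kn$ of $\pm n$, and $n$ would divide $k+b$ and $-k+b$ simultaneously, hence divide $2b$ and $2k$. For such generic $a/b$, Proposition \ref{switching singular fiber} combined with the induction hypothesis (which gives the full Theorem \ref{vanishing of constant} for the smaller odd proper divisors via Proposition \ref{the case t odd}, and Part (1) for the smaller even proper divisors via the inductive step) yields $\chi(K(n, -n, a/b)) = 0$.

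For the remaining ``exceptional'' fractions, where both of these $t$-values still equal $n$, we invoke Proposition \ref{1/n singular fiber} via the cyclic symmetry $K(n, -n, a/b) = K(-n, a/b, n)$, which gives $\chi(K(n, -n, a/b)) = 0$ whenever $\div_2(a) \neq \div_2(n)$. Any residual cases not covered by either method are handled by a Farey walking argument, analogous to that of Proposition \ref{switching singular fiber}: we connect the residual slope to a triangle of known-vanishing slopes by a chain of adjacent Farey triangles, and propagate $\chi=0$ step by step using Theorem \ref{skein with error term}\,(1), having first established $C = 0$ for the relevant six-tuples by exhibiting auxiliary Farey triangles all three of whose vertices are non-exceptional.

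Finally, with $\chi(K(n, -n, a/b)) = 0$ for all ramifiable $a/b$, every admissible six-tuple $(n, s, (p_0, q_0), (p_1, q_1))$ with $(p_j, q_j) \neq (0, 1)$ for all $j$ is realized, up to the framing equivalence of Lemma \ref{framing changing}, by some Farey-triangle skein triangle in this family, forcing $C(n, s, (p_0, q_0), (p_1, q_1)) = 0$ via Theorem \ref{skein with error term}\,(1). The main obstacle is verifying the combinatorial realization step — namely that, after accounting for the framing freedom, every admissible six-tuple with $t=n$ and $(p_j, q_j) \neq (0, 1)$ really does arise from a Farey triangle in the family $K(n,-n,a/b)$. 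This is most delicate for small $n$ (especially $n=2$), where exceptional fractions are abundant and one must carefully track both parity and framing to ensure that the vanishing is propagated to every equivalence class.
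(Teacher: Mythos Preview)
Your proposal has two genuine gaps.

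First, the family $K(n,-n,\,\cdot\,)$ only realizes $s(Y)=1$, never $s(Y)=0$. For $Y=Y(n,-n,\bullet)$ with $n$ even, the rational longitude is $x_3$ (as you note, with divisibility $n$), but in $H_1(Y;\F_2)$ the class $[x_3]$ is nonzero while $[h]=0$; hence the $\F_2$--longitude is $h$, not $l=x_3$, and $s(Y)=1$ by definition. So your claim that ``every admissible six-tuple $(n,s,(p_0,q_0),(p_1,q_1))$ \ldots is realized \ldots by some Farey-triangle skein triangle in this family'' is false for $s=0$. This is precisely why the paper splits into two cases and uses the different manifold $Y(4n,4n/(2n-1),\bullet)$---whose rational longitude $-2x_3+h$ \emph{is} the $\F_2$--longitude---to handle $s=0$.

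Second, your induction is circular. Proposition~\ref{switching singular fiber} requires the \emph{full} Theorem~\ref{vanishing of constant} (both parts) for the given $t$, because its Farey-walk proof must sometimes pass through a non-ramifiable link and invoke Theorem~\ref{skein with error term}\,(2), which needs $C^\pm_j=0$. For an even proper divisor of $n$, your induction hypothesis supplies only Part~(1), so you cannot legitimately invoke Proposition~\ref{switching singular fiber} there. The paper avoids this trap by not inducting on $t$ at all in the even case: it instead appeals to Proposition~\ref{1/n singular fiber}, whose hypothesis $\div_2(n)\neq\div_2(a_2)$ forces $t(Y(n,a_2/b_2,\bullet))$ to be \emph{odd} (Lemma~\ref{divisibility computation}), reducing everything to Proposition~\ref{the case t odd}. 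The paper then carefully chooses the framing shift $k$ so that all three links in the skein triangle satisfy this parity condition, which is the real content of the argument and is absent from your sketch.
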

\begin{proof}
The case of odd $t$ was dealt with in Proposition \ref{the case t odd} hence we will focus on the case of $t = 2n$ with positive $n$. We will consider two separate cases, those of $s = 0$ and $s=1$.

Let us first suppose that $s=1$. It follows from the homological calculation in the proof of Lemma \ref{divisibility computation} that the $\Z$--longitude $l$ of the manifold $Y(2n,-2n,\bullet)$ is $x_3$ with divisibility $2n$, while its $\F_2$ longitude $m$ can be chosen to be $h$. We wish to show that $C(2n,1,(p_0,q_0),(p_1,q_1)) = 0$ for any admissible six-tuple $(2n,1,(p_0,q_0),(p_1,q_1))$ with non-zero $p_0$, $p_1$, and $p_2$ (recall that $q_2 = -q_0 - q_1$ and $p_2 = -p_0 - p_1$). By Lemma \ref{framing changing}, it suffices to show that $C(2n,1,(p_0,q_0+kp_0),(p_1,q_1+kp_1))=0$ for some even integer $k$.

Since $m = h$ and $l = x_3$, the constant $C(2n,1,(p_0,q_0+kp_0),(p_1,q_1+kq_1))$ arises in the admissible skein triangle comprising the links
\[
L_j = K(2n,-2n,(q_j+kp_j)/p_j),\quad j = 0, 1, 2.
\]
Since $q_2$ is even by Definition \ref{admissible tuple}\,(3), the integers $q_0$, $q_1$, and $p_2$ must be odd. Therefore,
\[
\div_2(q_j+kp_j) = 0\,\neq\, \div_2(2n)
\]
for $j = 0, 1$ and any even $k$. Using Proposition \ref{1/n singular fiber}, we conclude that $\chi(L_0) = \chi(L_1) = 0$. To show that $\chi(L_2) = 0$, we just need to find an even integer $k$ such that
\[
\div_2(q_2+kp_2)\,\neq\, \div_2(2n).
\]
This can be done as follows: since $p_2$ and $2^{\div_2(2n)+1}$ are co-prime, there exists an (obviously even) $k$ such that $2^{\div_2(2n)+1}$ divides $q_2 + kp_2$, which implies that $\div_2(q_2+kp_2) > \div_2(2n)$. Now that we know that $\chi(L_j) = 0$ for $j = 0,1,2$, we use Theorem \ref{skein with error term} and Lemma \ref{framing changing} to conclude that $C(2n,1,(p_0,q_0),(p_1,q_1)) = 0$.

Let us now suppose that $s = 0$. Our argument will be similar to that in the $s = 1$ case but with the manifold $Y(4n,4n/(2n-1),\bullet)$. The $\Z$--longitude $l$ of this manifold is $- 2 x_3 + h$ with divisibility $2n$, and it also happens to be its $\F_2$ longitude. We set $m = x_3$. As before, for any admissible six-tuple $(2n,0,(p_0,q_0),(p_1,q_1))$ with non-zero$p_0$, $p_1$, and $p_2$, we want to show that 
\[
C(2n,0,(p_0,q_0+kp_0),(p_1,q_1+kp_1))=0
\]
for some integer $k$. This constant arises in the admissible skein exact triangle with the links
\[
L_j = K(4n,4n/(2n-1),(p_j - 2q_j - 2kp_j)(q_j + kp_j)),\quad j = 0, 1, 2.
\]
Since $(2n,0,(p_0,q_0),(p_1,q_1))$ is an admissible six-tuple, $p_2$ is even and $q_2$, $p_0$, and $p_1$ are odd. Therefore, 
\[
\div_2(p_j-2q_j-2kp_j)=0\neq \div_2(4n)
\]
for $j = 0, 1$ and any $k$. Using Proposition \ref{1/n singular fiber}, we conclude that $\chi(L_0)=\chi(L_1) = 0$. Now, we wish to find an integer $k$ such that
\[
\div_2(p_2-2q_2-2kp_2)\,\neq\, \div_2(4n).
\]
If $p_2 = 0\pmod 4$ this is true for any $k$ because $\div_2(p_2 - 2q_2 - 2kp_2) = 1$. Let us now assume that $p_2 = 4\ell + 2$. Then $p_2 - 2q_2 - 2kp_2 = 4(\ell + (1-q_2)/2 - k(2\ell +1))$. Since $2\ell+1$ is odd, we can choose $k$ so that $2^{\div_2(4n)}$ divides $(\ell + (1-q_2)/2 -k(2\ell + 1))$ and therefore
\[
\div_2(p_2-2q_2-2kp_2) \, >\, \div_2(4n).
\]
In either case, Proposition \ref{1/n singular fiber} implies that $\chi(L_2) = 0$ for a properly chosen $k$. Theorem \ref{skein with error term} now completes the proof.
\end{proof}

\begin{lem}\label{1/n singular fiber general}
Suppose that $n$ is even and the link $K(n,a_2/b_2,a_3/b_3)$ is ramifiable. Then 
\[
\chi(K(n,a_2/b_2,a_3/b_3)) = 0.
\]
\end{lem}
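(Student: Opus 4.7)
The plan is to reduce the general case to Proposition \ref{1/n singular fiber} by combining the permutation symmetry of Montesinos links with a single skein relation, using the fact that Proposition \ref{the case t even, all ramifiable} already guarantees vanishing of the universal constant in Theorem \ref{skein with error term}(1) for \emph{every} admissible six-tuple.

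First, I would invoke Proposition \ref{1/n singular fiber} together with the symmetry $K(r_1,r_2,r_3) \cong K(r_1,r_3,r_2)$ of the Montesinos link to reduce to the case $\div_2(a_2) = \div_2(a_3) = \div_2(n) =: k$ with $k \geq 1$. Writing $a_3/b_3 = p/q$ in lowest terms, this forces $p$ even and $q$ odd.

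Next, I would find a Farey triangle $\{r_0,r_1,p/q\}$ in $\Q^{+}$ (i.e.\ $\Delta(r_i,r_j)=1$ for all $i\neq j$) with the following two properties: (a) each link $K(n,a_2/b_2,r_i)$, $i=0,1$, is ramifiable; (b) the numerators of $r_0,r_1$ in lowest terms are odd. For (a), observe that $Y(n,a_2/b_2,r)$ fails to be a rational homology sphere at precisely one exceptional rational $r^{*} = -na_2/(a_2+nb_2) \in \Q^{+}$, and $r^{*}$ is a vertex of at most two of the infinitely many Farey triangles containing $p/q$ (namely those sharing the edge $\{p/q,r^{*}\}$, if any). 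For (b), every Farey triangle has one vertex that is the mediant of the other two, so in $\{r_0,r_1,p/q\}$ either $p_0+p_1=p$ or $|p_0-p_1|=p$; combined with $p$ even this forces $p_0,p_1$ to have the same parity, and the Farey condition $|p_0q_1-p_1q_0|=1$ rules out both being even.

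With such a triangle chosen, Lemma \ref{L:res} produces an admissible skein triangle $(K(n,a_2/b_2,r_0), K(n,a_2/b_2,r_1), K(n,a_2/b_2,p/q))$ with associated manifold-with-torus-boundary $Y(n,a_2/b_2,\bullet)$, in which all three links are ramifiable. Permuting arguments to place $r_i$ in the second slot, Proposition \ref{1/n singular fiber} applies (the numerator of $r_i$ is odd, so its $\div_2$ equals $0 \neq k$) and gives $\chi(K(n,a_2/b_2,r_i))=0$ for $i=0,1$. Since Proposition \ref{the case t even, all ramifiable} establishes Theorem \ref{vanishing of constant}(1) for all $t$, the constant $C$ in Theorem \ref{skein with error term}(1) vanishes, and the skein relation yields $2\chi(K(n,a_2/b_2,p/q))=0$.

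The main obstacle is property (a): a naive choice of the Farey parents of $p/q$ may place the exceptional value $r^{*}$ at one vertex, blocking access to Proposition \ref{the case t even, all ramifiable}(1) via Theorem \ref{skein with error term}(1); note that Theorem \ref{vanishing of constant}(2) has not yet been established for even $t$, so the fallback of Theorem \ref{skein with error term}(2) is unavailable. The remedy exploits the abundance of Farey triangles through $p/q$ together with the fact that $r^{*}$ lies on at most a single Farey edge through $p/q$.
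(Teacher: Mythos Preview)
Your argument is correct and follows essentially the same route as the paper's own proof: reduce via Proposition~\ref{1/n singular fiber} to the case where the third numerator is even, find two Farey neighbors with odd numerators such that the corresponding links are ramifiable, apply Proposition~\ref{1/n singular fiber} (after permuting) to those two, and then invoke the skein relation with the constant $C$ already known to vanish by Proposition~\ref{the case t even, all ramifiable}. The only difference is packaging: the paper writes the Farey neighbors explicitly as $c_3/d_3$ and $(a_3+c_3)/(b_3+d_3)$ with $a_3 d_3 - c_3 b_3 = 1$ and shifts $(c_3,d_3) \mapsto (c_3+ka_3,\,d_3+kb_3)$ to dodge the non-ramifiable value, whereas you argue the existence of a good Farey triangle by counting; your parity observation (b) is exactly the fact that $c_3$ and $a_3+c_3$ are automatically odd.
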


\begin{proof}
If either $a_2$ or $a_3$ is odd, this follows from Proposition \ref{1/n singular fiber}, hence we will focus on the case of even $a_2$ and $a_3$. Since $a_3$ and $b_3$ are co-prime, there exist integers $c_3$ and $d_3$ such that $a_3 d_3 - c_3 b_3 = 1$. By replacing $(c_3, d_3)$ by $(c_3 + ka_3, d_3 + kb_3)$ if necessary and using \eqref{E:order}, we may assume that the links
\[
L = K(n,a_2/b_2,c_3/d_3)\quad\text{and}\quad L' = K(n,a_2/b_2,(a_3+c_3)/(b_3+d_3))
\]
are both ramifiable. Since $c_3$ is odd and $a_3$ is even, we have $\chi(L) = \chi(L') = 0$ by Proposition \ref{1/n singular fiber}.  After a cyclic permutation if necessary, the triple  $(K(n,a_2/b_2,b_3/a_3),L,L')$ forms an admissible skein triangle consisting of three ramifiable links. By Proposition \ref{the case t even, all ramifiable}, 
\[\chi(K(n,a_2/b_2,a_3/b_3)) = 0.\qedhere
\]
\end{proof}

\begin{pro}\label{the case t even, not all ramifiable}
Theorem \ref{vanishing of constant}\,(2) holds for any $t$.
\end{pro}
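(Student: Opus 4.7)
The plan is parallel to that of Proposition \ref{the case t even, all ramifiable}: realize each admissible six-tuple with $(p_j,q_j)=(0,1)$ as the resolution data of an explicit Montesinos skein triangle based at one of $Y(2n,-2n,\bullet)$ or $Y(4n,4n/(2n-1),\bullet)$, then invoke Lemma \ref{1/n singular fiber general} to force $\chi=0$ on enough of the relevant links to conclude $C^{\pm}_j=0$ via Theorem \ref{skein with error term}(2).

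By Proposition \ref{the case t odd} we may assume $t=2n$ is even. Using the cyclic symmetry of Lemma \ref{at most one not ramifiable}, I would treat the three possibilities $j=0,1,2$ separately. For $j=2$, the admissibility conditions of Definition \ref{admissible tuple} (together with $p_0q_1-p_1q_0=1$) force $s=0$, $p_0=-1$, and $p_1=1$, and Lemma \ref{framing changing} normalizes $q_0=0$. With the framing $(m,l)=(x_3,\,-2x_3+h)$ on $Y(4n,4n/(2n-1),\bullet)$, the filling slopes $\gamma_j=p_j m+q_j l$ translate into the rational-tangle parameters $a_0/b_0=\infty$, $a_1/b_1=-3$, and $a_2/b_2=-2$, so the skein triangle is realized by
\[
L_0=K(4n,\tfrac{4n}{2n-1},\infty),\quad L_1=K(4n,\tfrac{4n}{2n-1},-3),\quad L_2=K(4n,\tfrac{4n}{2n-1},-2).
\]
A direct calculation with \eqref{E:order} gives $\det L_2=0$, consistent with $L_2$ being non-ramifiable, while both $L_0$ and $L_1$ have non-zero determinant.

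By Lemma \ref{L:res}, the crossing-change links $\bar L_0$ and $\bar L_1$ are again of the same Montesinos form $K(4n,\tfrac{4n}{2n-1},r)$, where $r$ is the vertex of the Farey triangle adjacent to $\{-3,-2\}$ (respectively $\{\infty,-2\}$) opposite to the original vertex. This gives $\bar L_0=K(4n,\tfrac{4n}{2n-1},-5/2)$ and $\bar L_1=K(4n,\tfrac{4n}{2n-1},-1)$, both ramifiable for $n\ge 1$ by a further determinant computation. Since the first Montesinos parameter $4n$ is even, Lemma \ref{1/n singular fiber general} yields $\chi(L_0)=\chi(L_1)=\chi(\bar L_0)=\chi(\bar L_1)=0$. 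Substituting into the skein relations $\chi(\bar L_1)=\chi(L_1)+C^-_2$ and $\chi(\bar L_0)=\chi(L_0)+C^+_2$ from Theorem \ref{skein with error term}(2) forces $C^{\pm}_2=0$, and Lemma \ref{framing changing} propagates this to every admissible six-tuple with $(p_2,q_2)=(0,1)$. The remaining cases $j=0$ and $j=1$ (each forcing $s=1$) are treated in the same fashion using $Y(2n,-2n,\bullet)$ with framing $m=h$, $l=x_3$, producing Montesinos realizations $K(2n,-2n,\cdot)$ whose first parameter is again even, so Lemma \ref{1/n singular fiber general} again applies.

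The main obstacle is the parity and ramifiability bookkeeping at each stage of the case analysis; however, the Farey tessellation structure underlying Lemma \ref{L:res}, combined with the uniform applicability of Lemma \ref{1/n singular fiber general} whenever the first Montesinos parameter is even, reduces this to routine determinant calculations on a handful of explicit Montesinos links.
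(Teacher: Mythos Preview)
Your proposal is correct and follows essentially the same route as the paper: realize every admissible six-tuple with $(p_j,q_j)=(0,1)$ as the resolution data of a skein triangle in one of the Montesinos families $K(2n,-2n,\cdot)$ or $K(4n,4n/(2n-1),\cdot)$, then use Lemma~\ref{1/n singular fiber general} (applicable since the first parameter is even) to force $\chi=0$ on all ramifiable links and their crossing-changes, and finally read off $C^{\pm}_j=0$ from Theorem~\ref{skein with error term}(2). The paper's proof is simply a terser version of yours: rather than normalizing via Lemma~\ref{framing changing} and checking explicit Farey-adjacent parameters like $-5/2$, $-1$, $-3$, it just observes once and for all that every ramifiable $K(2m,-2m,p/q)$ and $K(4m,4m/(2m-1),p/q)$ has $\chi=0$ and that these families exhaust all admissible skein triangles with $t=2m$, so the constants vanish wherever defined.
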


\begin{proof}
The case $t$ is odd has been dealt with in Proposition \ref{the case t odd} so we will assume that $t = 2m$. By Lemma \ref{1/n singular fiber general}, all ramifiable links of the form $K(2m,-2m,p/q)$ and $K(4m,4m/(2m-1),p/q)$ have vanishing $\chi$. These links cover all possible admissible skein triangles with $t = 2m$. Therefore, we can use Theorem \ref{skein with error term} to conclude that $C^{\pm}_{n}(2m,s,(p_0,q_0),(p_1,q_1))$ equals zero, as long as it is defined.
\end{proof}

Proposition \ref{the case t even, all ramifiable} together with Proposition \ref{the case t even, not all ramifiable} finishes the proof of Theorem \ref{vanishing of constant}.


\section{The Seiberg--Witten and Furuta--Ohta invariants of mapping tori}\label{lfo} 
Let $Y$ be the double branched cover of a knot $K$ in an integral homology sphere $Y'$. The manifold $Y$ is a rational homology sphere, which comes equipped with the covering translation $\tau: Y \to Y$. The mapping torus of $\tau$ is the smooth 4-manifold $X = ([0,1]\times Y)\,/\,(0,x)\sim (1,\tau(x))$ with the product orientation. We will show in Section \ref{S:prelim} that $X$ has the integral homology of $S^1 \times S^3$ and that it has a well defined invariant $\lfo (X)$ of the type introduced by Furuta--Ohta \cite{FO}. The following theorem is the main result of this section.

\begin{thm}\label{T:Lfo for mapping torus}
Let $\lambda (Y')$ be the Casson invariant of $Y'$, and $\sign\,(K)$ the signature of the knot $K$. Then
\[
\lfo (X)\; =\; 2\cdot\lambda(Y')\; + \; \frac 1 8\, \sign\,(K).
\]
\end{thm}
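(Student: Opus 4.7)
The plan is to compute $\lfo(X)$ by reducing it to an equivariant count of flat $SU(2)$ connections on $Y$ and then invoking the calculation of the corresponding Lefschetz number in instanton Floer homology from \cite{RS2}. After the extension of the definition of $\lfo$ given earlier in this section, the Furuta--Ohta invariant of the mapping torus $X$ should be identifiable, via an equivariant Morse / gluing argument along the $S^1$-direction of $X$, with the Lefschetz number $\Lef(\tau_\ast^{I})$ of the map induced by $\tau$ on the instanton Floer homology of $Y$, plus a well-understood correction accounting for the trivial reducible. This identification is the instanton analogue of the splitting formula \cite[Theorem A]{LRS} used for $\lsw$, and should be proven by the same strategy: decompose the ASD moduli space on $X$ according to the $S^1$-action and identify the fixed stratum with the space of $\tau$-equivariant flat connections on $Y$, i.e.\ conjugacy classes of pairs $(\rho, T)$ with $T\rho(\gamma)T^{-1} = \rho(\tau_\ast\gamma)$.

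Given this splitting, the main computational input is \cite[Theorem 1.1]{RS2}, which evaluates $\Lef(\tau_\ast^I)$ for mapping tori of finite-order diffeomorphisms of rational homology spheres in terms of the Casson invariant of the quotient and the Tristram--Levine signatures of the branch set. Specializing that formula to $n=2$ and using the elementary identities $\sgn^{0}(K) = 0$ and $\sgn^{1/2}(K) = \sgn(K)$, one obtains
\[
\Lef(\tau_\ast^I)\; =\; 2\,\lambda(Y')\; +\; \tfrac{1}{8}\,\sgn(K).
\]
Combined with the splitting above, this yields the identity claimed by the theorem, modulo verifying that the reducible correction vanishes.

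The main obstacle I anticipate is precisely this last point: showing that the contribution of the trivial (reducible) flat connection to $\lfo(X)$ is zero. The trivial connection on $Y$ is $\tau$-invariant but has nontrivial isotropy, and could a priori contribute through a boundary term in the Uhlenbeck compactification of the ASD moduli space on $X$. To handle it, one would choose a small $\tau$-equivariant holonomy perturbation so that only irreducible $\tau$-equivariant flat connections survive, and then show that the index-theoretic term already present in the definition of $\lfo$ absorbs the boundary contribution. The homology hypothesis $H_\ast(\tilde X; \Q) = H_\ast(S^3; \Q)$ (weaker than the integral version of \cite{FO}) is used here exactly to guarantee that the trivial connection is the only reducible on the relevant $SU(2)$ bundle over $X$, so that the equivariant transversality scheme of \cite{RS2} can be applied without modification to yield the counts on the two sides of the splitting formula.
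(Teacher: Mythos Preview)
Your proposal has a genuine gap: it treats the rational homology sphere case as if it were already covered by \cite{RS2}, but that is precisely the new content of the theorem. The results of \cite{CS} and \cite{RS2} establish the formula only when $Y$ is an \emph{integral} homology sphere; here $Y=\Sigma(K)$ has $|H_1(Y;\Z)|=|\Delta_K(-1)|$, which is typically larger than one. In that setting there is no standard instanton Floer homology $I_*(Y)$ on which to take a Lefschetz number, so your identification ``$\lfo(X)=\Lef(\tau_*^I)+\text{correction}$'' followed by an appeal to \cite[Theorem 1.1]{RS2} is not available.

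The related technical point you miss is that $Y$ now carries nontrivial \emph{abelian} flat $SU(2)$ connections, not just the trivial one. These give rise to \emph{irreducible} binary dihedral representations of $\pi_1(X)$ (since $\tau_*$ acts as $-1$ on $H_1(Y)$), so they genuinely contribute to $\M^*(X)$ and cannot be disposed of by a ``reducible correction'' argument. The paper's proof works directly with the moduli space: it identifies $\M^*(X)$ with the $\tau$-fixed character variety $\RR^\tau(Y)=\RR_{\ab}(Y)\sqcup\RR^\tau_{\irr}(Y)$ via the fiber inclusion (one-to-one over the abelian part, two-to-one over the irreducible part), then matches this with the orbifold character variety $\RR^V(Y',K)$ by the same multiplicities, and finally counts the latter using Herald's pillowcase formula. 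The new work beyond \cite{RS2} is exactly the treatment of abelian connections---constant lifts, equivariant spectral flow, and perturbations in the abelian sector---which your outline does not address.
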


\begin{proof}[Proof of Theorem \ref{T:lfo-lsw}] 
Applying Theorem \ref{T:Lfo for mapping torus} to the homology sphere $Y' = S^3$, we obtain 
\[
\lfo(X) = \frac1{8}\,\sgn\,(K).
\]
On the other hand, using the splitting theorem \cite[Theorem A]{LRS} together with Theorem \ref{T:main} of this paper, we have 
\[
\lsw(X) = -\Lef(\tau_{*})-h(Y,\s)=-\frac 1 8\, \sgn\,(K)
\]
for the unique spin structure on $Y$. This completes the proof.
\end{proof}

Theorem \ref{T:Lfo for mapping torus} was proved in \cite{CS} and \cite{RS2} under the assumption that $Y$ is an integral homology sphere. Our proof here will rely on the extension of those techniques to the general case at hand.


\subsection{Preliminaries}\label{S:prelim}
We begin in this section with some topological preliminaries, including an extension of the Furuta--Ohta invariant $\lfo (X)$ to a wider class of manifolds than that in the original paper \cite{FO}.

The Furuta--Ohta invariant was originally defined in \cite{FO} for smooth 4-manifolds $X$ satisfying two conditions, $H_* (X; \Z) = H_* (S^1 \times S^3; \Z)$ and $H_*(\tilde X; \Z) = H_*(S^3; \Z)$, where $\tilde X$ is the universal abelian cover of $X$.  To fix the signs, one needs to fix an orientation on $X$ as well as a homology orientation, i.e.~a choice of generator of $H^1(X;\Z)$.  The mapping tori we consider in this section provide examples of manifolds $X$ which satisfy the first condition but not the second (which can only be guaranteed if we use rational coefficients). Therefore, we need an extension of the Furuta--Ohta work to define $\lfo (X)$ in this case.

Let $X$ be an arbitrary smooth oriented 4-manifold such that $H_* (X; \Z) = H_* (S^1 \times S^3; \Z)$ and $H_*(\tilde X; \Q) = H_* (S^3; \Q)$. Let $\M^*(X)$ be the moduli space of irreducible ASD connections in a trivial $SU(2)$--bundle $E \to X$. All such connections are necessarily flat hence we can identify $\M^*(X)$ with the irreducible part of the $SU(2)$--character variety of $\pi_1 (X)$.

\begin{lem}
The moduli space $\M^*(X)$ is compact.
\end{lem}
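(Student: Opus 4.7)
Since $c_2(E)=0$, the Chern--Weil identity forces every ASD connection on $E$ to be flat, so $\M^*(X)$ is identified with the irreducible stratum of the $SU(2)$-character variety $R(X) = \Hom(\pi_1(X), SU(2))/SU(2)$. The full character variety is compact as a quotient of a compact space by a compact group, so it suffices to show that no sequence of irreducible classes $[\rho_n]$ can converge to a reducible class $[\rho_\infty]$.

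A reducible $SU(2)$-representation is conjugate to $\rho_\chi := \chi \oplus \chi^{-1}$ for some character $\chi\colon \pi_1(X) \to U(1)$, and any such $\chi$ factors through $H_1(X;\Z) = \Z$. The key cohomological input is the vanishing $H^k(X; L_\eta) = 0$ for every $k$ and every nontrivial character $\eta$. I plan to deduce this by viewing $C_*(\tilde X;\Q)$ as a chain complex of $\Q[t,t^{-1}]$-modules and invoking the universal coefficient theorem over this PID: under the hypothesis $H_*(\tilde X;\Q) = H_*(S^3;\Q)$, the only possibly surviving contributions come from $H_0(\tilde X;\Q) = \Q$ and $H_3(\tilde X;\Q) = \Q$ (both with trivial $t$-action, since the covering translation is orientation-preserving), and the relevant $\Tor_1$ term $\ker((t-\eta(t))\colon\mathbb{C}\to\mathbb{C})$ vanishes whenever $\eta(t) \neq 1$.

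Next I would apply Goldman--Kuranishi deformation theory at $\rho_\chi$. In the non-central case $\chi^2 \neq 1$, the adjoint bundle splits as $\mathrm{ad}\,\rho_\chi = \underline{\mathbb{R}} \oplus L_{\chi^2}$, so the vanishing above gives $H^1(X;\mathrm{ad}\,\rho_\chi) = \mathbb{R}$ and $H^2(X;\mathrm{ad}\,\rho_\chi) = 0$. The representation variety $\Hom(\pi_1(X), SU(2))$ is therefore smooth at $\rho_\chi$ of dimension $\dim Z^1 = 1 + (\dim \su(2) - \dim H^0) = 1+3-1 = 3$. A direct count shows the reducible locus $\Hom(\Z, U(1))\cdot SU(2)$ already has dimension $1 + (\dim SU(2) - \dim U(1)) = 3$ at $\rho_\chi$, so the two submanifolds coincide in a neighborhood and every representation near $\rho_\chi$ is reducible, contradicting the irreducibility of the $\rho_n$.

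The main obstacle will be the central case $\chi^2 = 1$, where $\rho_\chi = \pm I$ and the stabilizer jumps to all of $SU(2)$. Here $\mathrm{ad}\,\rho_\chi$ is trivial, giving $H^1 = \su(2)\otimes H^1(X;\mathbb{R}) = \mathbb{R}^3$ and $H^2 = \su(2)\otimes H^2(X;\mathbb{R}) = 0$ (the latter using $H^2(X;\mathbb{R})=0$ in the homology type of $S^1\times S^3$). Kuranishi again yields smoothness of $\Hom(\pi_1(X), SU(2))$ of dimension $3$ near $\pm I$, while pullback along the abelianization embeds $\Hom(\Z, SU(2)) \cong SU(2)$ as a closed smooth $3$-dimensional subvariety consisting of reducibles and passing through $\pm I$; by dimension equality this inclusion is a local diffeomorphism and every nearby representation is reducible. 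The subcase $\chi = -1$ reduces to $\chi = 1$ after twisting $\rho_n$ by the sign character, which preserves irreducibility. This yields the desired contradiction and proves that $\M^*(X)$ is closed in $R(X)$, hence compact.
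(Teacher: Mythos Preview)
Your argument is correct and follows essentially the same strategy as the paper's proof: show that the reducible locus is an isolated component of the character variety by computing Zariski tangent spaces at reducibles, using the vanishing of twisted cohomology $H^*(X;L_\eta)$ for nontrivial $\eta$ that follows from $H_*(\tilde X;\Q)=H_*(S^3;\Q)$. The paper's proof is a one-line citation to the spectral sequence argument of Furuta--Ohta \cite[Section~4.1]{FO}, whereas you have unpacked this in detail via the universal coefficient theorem over $\Q[t,t^{-1}]$ and Goldman--Kuranishi deformation theory at the level of $\Hom(\pi_1(X),SU(2))$; the substance is the same.
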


\begin{proof}
The spectral sequence argument of Furuta--Ohta \cite[Section 4.1]{FO} works in our situation with little change to show that the $SU(2)$ character variety of $\pi_1 X$ has the right Zariski dimension at the reducible representations, and hence the set of reducibles is a single isolated component of the character variety, which is obviously compact.
\end{proof}

Given the compactness of $\M^* (X)$, the definition of the Furuta--Ohta invariant proceeds exactly as in \cite{FO} and \cite{RS2} giving a well-defined invariant
\begin{equation}\label{E:lfo}
\lfo (X)\; = \; \frac 1 4\, \#\M^* (X)\, \in\, \Q,
\end{equation}
where $\#\M^* (X)$ stands for the count of points in the (possibly perturbed) moduli space $\M^* (X)$ with the signs determined by a choice of orientation and homology orientation on $X$.
\begin{rmk}\label{R:rational}
The original definition of $\lfo$ in \cite{FO} had a denominator of $1/2$, which was replaced by the $1/4$ in equation~\eqref{E:lfo} in \cite{RS2} to match the conjectured mod $2$ equality with the Rohlin invariant \cite[Conjecture 4.5]{FO}. It is not obvious from the definition that the original $\lfo$ should even be an integer, although this turns out to be true \cite[Section 5]{ruberman-saveliev:survey}. On the other hand, Theorem~\ref{T:Lfo for mapping torus} makes it clear that the generalized $\lfo$ invariant defined herein is not an integer, since the signature of a knot can be an arbitrary even integer. We conjecture that with the normalization used in this paper, $\lfo(X)$ reduces mod $2$ to the Rohlin invariant of $X$, defined as an element of $\Q/2\Z$. This conjecture was confirmed in \cite{RS2} for the mapping tori of finite order diffeomorphisms of integral homology spheres, and now the formula of Theorem \ref{T:Lfo for mapping torus} reduced mod 2 implies that the conjecture is also true for all of the mapping tori $X$ in Theorem \ref{T:Lfo for mapping torus}.
\end{rmk}

\begin{rmk}\label{R:inoue}
A closer examination of the argument in \cite[Section 4.1]{FO} shows that the following hypotheses would allow for a well-defined $\lfo$ invariant: $X$ has the integral homology of $S^1 \times S^3$ and, for every non-trivial $U(1)$ representation $\alpha$, the cohomology $H^1 (X; \mathbb C_\alpha)$ vanishes. Examples of such manifolds $X$ may be obtained by surgery on a knot in $S^4$ whose Alexander polynomial has no roots on the unit circle. For instance, the spin of the figure-eight knot in the $3$-sphere has this property, as do the Cappell--Shaneson knots~\cite{cappell-shaneson:knots}. The latter knots are fibered with fiber $T^3$, and hence it is not difficult to count the irreducible $SU(2)$ representations of $\pi_1 (X)$. For example, one of the Cappell--Shaneson knots gives rise to a 3--torus fibration $X$ over the circle with the monodromy 
\[
\begin{pmatrix}
  0 & 1 & 0 \\
  0 & 1 & 1 \\
  1 & 0 & 0
\end{pmatrix}
\]
and hence has fundamental group with presentation
\[
\pi_1 (X)\; =\; \langle t, x, y, z\;|\; [x,y] = 1, [y,z] = 1, [x,z] = 1, txt^{-1} = y, tyt^{-1} = yz, tzt^{-1} = x
\rangle.
\]
A direct calculation shows that, up to conjugation,  $\pi_1 (X)$ admits a unique irreducible $SU(2)$ representation given by
\[
t = j,\quad x = e^{2\pi i/3},\quad y = z = e^{-2\pi i/3},
\]
and that this representation gives a non-degenerate point in the instanton moduli space on $X$. Therefore, the generalized $\lfo$ invariant of $X$ equals $\pm\, 1/4$. On the other hand, the spin structure on the torus fiber induced from its embedding in $X$ is the group-invariant one~\cite{cappell-shaneson:rp4}. Since the Rohlin invariant of this spin structure equals $1$, the generalized $\lfo(X)$ does not reduce mod $2$ to the Rohlin invariant of $X$.
\end{rmk}

For the rest of Section \ref{lfo}, we will assume that $X$ is the mapping torus of $\tau: Y \to Y$, an involution which exhibits $Y$ as the double branched cover of an integral homology sphere $Y'$ with branch set a knot $K$.

\begin{lem}\label{L:zhss}
The manifold $X$ has the integral homology of $S^1 \times S^3$.
\end{lem}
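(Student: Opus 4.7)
\textbf{Proof plan for Lemma \ref{L:zhss}.}
The plan is to compute $H_*(X;\Z)$ directly from the Wang exact sequence associated to the fiber bundle $Y \to X \to S^1$,
\[
\cdots \longrightarrow H_n(Y;\Z) \xrightarrow{\;\tau_*-\id\;} H_n(Y;\Z) \longrightarrow H_n(X;\Z) \longrightarrow H_{n-1}(Y;\Z) \xrightarrow{\;\tau_*-\id\;} H_{n-1}(Y;\Z) \longrightarrow \cdots,
\]
by first determining $\tau_*$ in each degree. Because $Y$ is a rational homology sphere covering $S^3$ branched over a knot, $H_0(Y;\Z) = H_3(Y;\Z) = \Z$, the group $H_1(Y;\Z)$ is finite, and $H_2(Y;\Z)=0$. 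The involution $\tau$ is orientation-preserving (it is a rotation about the preimage of a codimension-two branch set), so $\tau_*$ acts as the identity on $H_0$ and $H_3$.

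The key input is the computation of $\tau_*$ on $H_1(Y;\Z)$. This is precisely part (4) of Lemma \ref{conjugation action}: for the double branched cover of $S^3$ with branch set a knot, the covering translation acts as $-\id$ on $H_1$. Thus $\tau_*-\id$ is multiplication by $-2$ on $H_1(Y;\Z)$. The second crucial fact is that the order of $H_1(Y;\Z)$ equals $|\det K|$, which is odd because $K$ is a knot (equivalently, $Y$ is a $\Z/2$-homology sphere). Therefore multiplication by $-2$ is an \emph{automorphism} of $H_1(Y;\Z)$.

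With these inputs the Wang sequence collapses in each degree. In degree $3$ one reads off
\[
\Z \xrightarrow{0} \Z \longrightarrow H_3(X;\Z) \longrightarrow 0,
\]
giving $H_3(X;\Z)=\Z$. In degree $2$, the sequence reads $0 \to H_2(X;\Z) \to H_1(Y;\Z) \xrightarrow{-2} H_1(Y;\Z)$, and since $-2$ is injective on $H_1(Y;\Z)$, we get $H_2(X;\Z)=0$. In degree $1$, surjectivity of $-2$ on $H_1(Y;\Z)$ makes the cokernel vanish, leaving $0 \to H_1(X;\Z) \to H_0(Y;\Z) \xrightarrow{0} H_0(Y;\Z)$, so $H_1(X;\Z)=\Z$. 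Finally $H_0(X;\Z)=\Z$ since $X$ is connected. This matches $H_*(S^1\times S^3;\Z)$.

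The only point that is not completely automatic is the verification that $\tau_*=-\id$ on $H_1(Y;\Z)$, but this is already supplied by Lemma \ref{conjugation action}; everything else is a routine unwinding of the Wang sequence, with the odd-order property of $H_1(\Sigma(K);\Z)$ doing the essential work. Thus there is no serious obstacle and the proof is short.
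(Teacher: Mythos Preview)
Your argument is correct and essentially the same as the paper's: the paper uses the Leray--Serre spectral sequence of the fibration $Y\to X\to S^1$, which for a circle base collapses to exactly the Wang long exact sequence you wrote down, and both proofs hinge on the same two facts (that $\tau_*=-\id$ on $H_1(Y;\Z)$ and that $|H_1(Y;\Z)|$ is odd). One small slip: in the context of this lemma $Y$ is the double branched cover of a knot in an \emph{integral homology sphere} $Y'$, not necessarily $S^3$; the needed facts (Lemma~\ref{conjugation action}(4) and the oddness of $|\Delta_K(-1)|$) still hold in that generality, so your argument goes through unchanged once you replace ``$S^3$'' by ``$Y'$''.
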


\begin{proof}
Let $\Delta_K (t)$ be the Alexander polynomial of the knot $K$ normalized so that $\Delta_K (1) = 1$ and $\Delta_K (t^{-1}) = \Delta_K (t)$. Then $H_1 (Y)$ is a finite group of order $|\Delta_K (-1)|$ on which $\tau_*$ acts as minus identity, see Lemma~\ref{conjugation action} or \cite[Theorem 5.5.1]{K}. Since $|\Delta_K (-1)|$ is odd, the fixed point set of $\tau_*: H_1 (Y) \to H_1 (Y)$ must be zero. Now, the natural projection $X \to S^1$ gives rise to a locally trivial bundle with fiber $Y$. The $E^2$ page of its Leray--Serre spectral sequence is
$$
E^2_{pq} = H_p (S^1,\H_q(Y)),
$$
where $\H_q(Y)$ is the local coefficient system associated with the fiber bundle. The groups $E^2_{pq}$ vanish for all $p\ge 2$ hence the spectral sequence collapses at its $E_2$ page. This implies that
$$
H_1(X) = H_1(S^1,\H_0(Y))\,\oplus\,H_0(S^1,\H_1(Y)) = \mathbb Z\,\oplus\,H_0(S^1,\H_1(Y)).
$$
The generator of $\pi_1(S^1)$ acts on $H_1(Y)$ as $\tau_*: H_1 (Y) \to H_1 (Y)$, therefore, $H_0 (S^1, \H_1 (Y)) = \Fix (\tau_*) = 0$ and hence $H_1 (X) = \mathbb Z$. Similarly,
$$
H_2 (X) = H_1(S^1,\H_1(Y))\,\oplus\,H_0(S^1,\H_2(Y)) = 0
$$
because $\Fix (\tau_*) = 0$ and $H_2 (Y) = H^1 (Y) = 0$. This completes the proof.
\end{proof}

Since the $\tilde X = \R \times Y$, where $Y$ is a rational homology sphere, both conditions $H_* (X; \Z) = H_* (S^1 \times S^3; \Z)$ and $H_*(\tilde X; \Z) = H_*(S^3; \Z)$ are satisfied, and the invariant $\lfo (X)$ is well defined by the formula \eqref{E:lfo}. To prove that $\lfo(X)$ is  given by the formula of Theorem \ref{T:Lfo for mapping torus}, we need to analyze the moduli spaces $\M^*(X)$ that go into its definition.


\subsection{Equivariant theory}\label{S:eq}
We will first describe $\M^*(X)$ in terms of $\RR(Y)$, the $SU(2)$ character variety of $\pi_1(Y)$. To this end, consider the splitting
$$
\RR(Y)\; =\; \{\theta\}\, \sqcup\, \RR_{\ab} (Y)\, \sqcup\, \RR_{\irr}(Y),
$$
whose three components consist of the trivial representation and the conjugacy classes of abelian (that is, non-trivial reducible) and irreducible representations, respectively. Note that $\theta$ is the only central representation $\pi_1 (Y) \to SU(2)$ because $Y$ is a $\Z/2$ homology sphere ; see the proof of Lemma \ref{L:zhss}. This decomposition is preserved by the map $\tau^*: \RR(Y) \to \RR (Y)$.

\begin{lem}\label{L:ab}
The involution $\tau^*$ acts as the identity on $\RR_{\ab} (Y)$.
\end{lem}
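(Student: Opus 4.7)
The plan is to use the standard parametrization of $\RR_{\ab}(Y)$ by characters of $H_1(Y;\Z)$ together with the computation of $\tau_\ast$ on first homology that has already been carried out earlier in the paper.

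First I would reduce to characters. A non-trivial reducible $SU(2)$ representation $\rho:\pi_1(Y)\to SU(2)$ factors, after conjugation, through the maximal torus $U(1)\subset SU(2)$, so it is classified by a non-trivial homomorphism $\chi:H_1(Y;\Z)\to U(1)$. Two such homomorphisms $\chi$ and $\chi'$ give $SU(2)$-conjugate representations if and only if $\chi' = \chi^{\pm 1}$, because the normalizer of $U(1)$ in $SU(2)$ is generated by $U(1)$ and the element $j$, which acts on $U(1)$ by $u\mapsto u^{-1}$. This identifies $\RR_{\ab}(Y)$ with the set of non-trivial characters of $H_1(Y;\Z)$ modulo inversion.

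Next I would transport the action of $\tau^\ast$ through this identification. By functoriality, $\tau^\ast[\rho]$ is represented by $\rho\circ\tau_\ast$, so the associated character is $\chi\circ\tau_\ast:H_1(Y;\Z)\to U(1)$. Here is the key input: in Lemma~\ref{conjugation action}\,(4) it was shown that the covering involution acts as $-1$ on $H_1(Y;\Z)$. Therefore $\chi\circ\tau_\ast = \chi^{-1}$. Since $\chi$ and $\chi^{-1}$ represent the same class in $\RR_{\ab}(Y)$, we conclude $\tau^\ast[\rho] = [\rho]$, i.e.\ $\tau^\ast$ is the identity on $\RR_{\ab}(Y)$.

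There is essentially no obstacle here: once one knows that $\tau_\ast=-\mathrm{id}$ on $H_1(Y;\Z)$ and that the Weyl group of $SU(2)$ acts on characters by inversion, the result is immediate. The only thing to be a bit careful about is the slight distinction between the quotient $Y/\tau$ being $S^3$ (the setting of the theorem actually used to invoke Lemma~\ref{conjugation action}) versus an integral homology sphere $Y'$ (the setting of Section~\ref{lfo}); but the same argument that $\tau_\ast = -\mathrm{id}$ on $H_1(Y;\Z)$ applies verbatim when $Y'$ is any $\Z$-homology sphere, using that lifts of null-homologous loops downstairs are interchanged by $\tau$.
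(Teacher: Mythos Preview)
Your proof is correct and follows essentially the same approach as the paper: factor abelian representations through $U(1)$ characters of $H_1(Y;\Z)$, use that $\tau_*$ acts as $-1$ on $H_1(Y;\Z)$, and conclude that $\tau^*\alpha=\alpha^{-1}$ is conjugate to $\alpha$ via an element of $j\cdot U(1)$. The only minor difference is that the paper's proof records the extra observation that any conjugating element must lie in $j\cdot U(1)$ (since $\alpha$ is non-central), a fact used in the subsequent Proposition~\ref{P:one} and Lemma~\ref{L:lift}; your final paragraph correctly anticipates the one subtlety about invoking Lemma~\ref{conjugation action} when the quotient is a homology sphere rather than $S^3$.
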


\begin{proof}
Up to conjugation, any abelian representation $\pi_1 (Y) \to SU(2)$ can be factored through a representation $\alpha: H_1 (Y) \to U(1)$, where $U(1)$ stands for the group of unit complex numbers in $SU(2)$. Since the involution $\tau_*$ acts as minus identity on $H_1 (Y)$, we have $\tau^*\alpha = \alpha^{-1}$, which is obviously a conjugate of $\alpha$. Moreover, any unit quaternion $u$ which conjugates $\alpha^{-1}$ to $\alpha$ must belong to $j\cdot U(1)$ because $\alpha$ is not a central representation.
\end{proof}

Let $\RR^{\tau} (Y)$ be the fixed point set of the involution $\tau^*$ acting on $\RR(Y)\setminus \{\theta\} = \RR_{\ab}(Y)\, \sqcup\, \RR_{\irr} (Y)$. It follows from the above lemma that
\[
\RR^{\tau} (Y)\; =\; \RR_{\ab} (Y)\, \sqcup\, \RR_{\irr}^{\tau} (Y).
\]



\begin{pro}\label{P:one}
Let $i: Y \to X$ be the inclusion map given by the formula $i(x) = [0,x]$. Then the induced map
\begin{equation}\label{E:i*}
i^*: \M^*(X) \to \RR^{\tau}(Y)
\end{equation}
is well defined, and is a one-to-one correspondence over $\RR_{\ab} (Y)$ and a two-to-one correspondence over $\RR^{\tau}_{\irr} (Y)$.
\end{pro}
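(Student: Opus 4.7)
The plan is to use the HNN-extension description of $\pi_1(X)$. Since $X$ is the mapping torus of $\tau$, choosing a lift $t \in \pi_1(X)$ of a generator of $\pi_1(S^1)$ gives a short exact sequence
\[
1 \longrightarrow \pi_1(Y) \longrightarrow \pi_1(X) \longrightarrow \Z \longrightarrow 1
\]
in which $t g t^{-1} = \tau_*(g)$ for $g \in \pi_1(Y)$. An $SU(2)$-representation $\rho$ of $\pi_1(X)$ is therefore determined by a pair $(\alpha,u) = (\rho|_{\pi_1(Y)}, \rho(t))$ subject to the twisted intertwining relation $u\,\alpha(g)\,u^{-1} = \alpha(\tau_*(g))$ for all $g \in \pi_1(Y)$. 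This immediately makes $[\alpha]$ a $\tau^*$-fixed point of $\RR(Y)$; moreover if $\rho$ is irreducible then $\alpha$ cannot equal $\theta$, since otherwise $\rho$ would factor through the abelian quotient $\Z$. Hence $i^*$ is a well-defined map $\M^*(X) \to \RR^{\tau}(Y)$.

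To analyze the fibers, I would fix a representative $\alpha$ of a class $[\alpha] \in \RR^{\tau}(Y)$ and let $Z(\alpha) \subset SU(2)$ denote its centralizer. The set $U_\alpha$ of elements $u$ satisfying the intertwining relation is a coset of $Z(\alpha)$, and two extensions $(\alpha, u_1)$, $(\alpha, u_2)$ yield $SU(2)$-conjugate representations of $\pi_1(X)$ precisely when some $v \in Z(\alpha)$ satisfies $v u_1 v^{-1} = u_2$. Thus the fiber of $i^*$ over $[\alpha]$ is the quotient $U_\alpha / Z(\alpha)$ under the conjugation action. If $[\alpha] \in \RR^{\tau}_{\irr}(Y)$, then $Z(\alpha) = \{\pm 1\}$, so $U_\alpha = \{\pm u_0\}$ and the conjugation action is trivial, giving a fiber of size two; both extensions are irreducible as representations of $\pi_1(X)$ because $\alpha$ already is.

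The abelian case is the main computation. For $[\alpha] \in \RR_{\ab}(Y)$, the representation $\alpha$ factors through a homomorphism $H_1(Y) \to U(1) \subset SU(2)$; because $|H_1(Y)| = |\Delta_K(-1)|$ is odd (as $K$ is a knot), the image of $\alpha$ has odd order, so $Z(\alpha)$ is exactly this $U(1)$. Lemma~\ref{L:ab} identifies $U_\alpha$ with the coset $j \cdot U(1)$, and the quaternionic identity $e^{i\psi} j = j e^{-i\psi}$ yields
\[
e^{i\psi}\,(j e^{i\phi})\,e^{-i\psi} \;=\; j\, e^{i(\phi - 2\psi)},
\]
so $U(1)$ acts transitively on $U_\alpha$ by conjugation. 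The fiber therefore consists of a single point, corresponding to an irreducible extension since its image contains both an odd-order element of $U(1)$ and an element of $j \cdot U(1)$, which together generate a non-abelian binary dihedral subgroup. I expect no serious obstacle beyond this bookkeeping; the delicate point is that the oddness of $|H_1(Y)|$ is what simultaneously rules out the degenerate scalar case (in which no irreducible extension could exist) and forces $Z(\alpha)$ to be exactly $U(1)$ rather than a larger subgroup.
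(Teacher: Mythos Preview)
Your proof is correct and follows essentially the same approach as the paper's: both use the semidirect product structure $\pi_1(X) = \pi_1(Y) \rtimes \Z$ to identify representations with pairs $(\alpha,u)$ satisfying the intertwining relation, then analyze the abelian and irreducible cases separately via the centralizer of $\alpha$. Your centralizer/coset formulation is slightly more systematic than the paper's direct argument, and your explicit verification of irreducibility in the abelian case (via the non-abelian binary dihedral image) spells out what the paper leaves implicit; otherwise the two arguments coincide.
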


\begin{proof}
The natural projection $X\to S^1$ is a locally trivial bundle whose
homotopy exact sequence
\[
\begin{CD}
0 @>>> \pi_1(Y) @>>> \pi_1(X) @>>> \mathbb Z @>>> 0
\end{CD}
\]
splits, making $\pi_1(X)$ into a semi-direct product of $\pi_1(Y)$ and $\mathbb Z$. Let $t$ be a generator of $\mathbb Z$ then every representation $A: \pi_1(X)\to SU(2)$ determines and is uniquely determined by the pair $(\alpha,u)$ where $u = A(t)$ and $\alpha= i^* A: \pi_1(Y)
\to SU(2)$ is a representation such that $\tau^*\alpha = u\alpha u^{-1}$. In particular, the conjugacy class of $\alpha$ is fixed by $\tau^*$.

If $\alpha = \theta$ then $A$ must be reducible, hence $\alpha$ is not in the image of $i^*$. If $\alpha$ is non-trivial abelian, we can conjugate it to a representation whose image is in the group of unit complex numbers in $SU(2)$. Then $\alpha$ is of the form $\alpha = i^*A$ with $u = A(t)$ in the circle $j\cdot U(1)$, as in the proof of Lemma \ref{L:ab}. In particular, $A$ is irreducible and $u^2 = -1$. Since any two quaternions in $j\cdot U(1)$ are conjugate to each other by a unit complex number, the map $i^*$ is a one-to-one correspondence over $\RR^a (Y)$. Finally, let $\alpha$ be an irreducible representation with the character in $\RR^{\tau} (Y)$. Then there is a unit quaternion $u$ such that $\tau^*\alpha = u\alpha u^{-1}$, and therefore $\alpha$ is in the image of $i^*$. Moreover, there are exactly two different choices of $u$ such that $\tau^*\alpha= u\alpha u^{-1}$ because if $u_1\alpha u_1^{-1} = u_2\alpha u_2^{-1}$ then $u_1 = \pm\, u_2$ since $\alpha$ is irreducible. The irreducibility of $\alpha$ also implies that $u^2 = \pm 1$. In this case, the map $i^*$ is a two-to-one correspondence.
\end{proof}

\begin{rmk}
It follows from the above proof that the characters in $\M^*(X)$ that are mapped by $i^*$ to $\RR_{\ab} (Y)$ are binary dihedral, while those mapped to $\RR^{\tau}_{\irr}(Y)$ are not.
\end{rmk}

The Zariski tangent space to $\RR^{\tau}(Y)$ at a point $[\alpha] \in \RR^{\tau}(Y)$ is the fixed point set of the map $\tau^*: T_{[\alpha]} \RR (Y) \to T_{[\alpha]} \RR (Y)$. Using an identification $T_{[\alpha]} \RR (Y) = H^1 (Y, \ad\alpha)$ and the fact that $\tau^*\alpha = u\alpha u^{-1}$, this set can be described in cohomological terms as the fixed point set of the map
\[
\Ad u\circ \tau^*: H^1 (Y, \ad\alpha) \to H^1 (Y, \ad\alpha).
\]
We will call $\RR^{\tau}(Y)$ {\it non-degenerate} if the equivariant cohomology groups 
\[
H^1_{\tau}(Y, \ad\alpha) = \Fix\,(\Ad u\circ \tau^*: H^1 (Y, \ad\alpha) \to H^1 (Y, \ad\alpha))
\]
vanish for all $[\alpha]\in\RR^{\tau}(Y)$. The moduli space $\M^*(X)$ is called {\it non-degenerate} if $\coker(d_A^*\,\oplus\, d_A^+) = 0$ for all $[A]\in \M^*(X)$. Since $\ind(d^*\,\oplus\, d_A^+) = \dim\M^*(X) = 0$, this is equivalent to $\ker(d_A^*\,\oplus\, d_A^+) = 0$ and, since $A$ is flat and irreducible, to simply $H^1(X;\ad A) = 0$.

\begin{pro}\label{P2}
The moduli space $\M^*(X)$ is non-degenerate if and only if $\RR^{\tau}(Y)$ is non-degenerate.
\end{pro}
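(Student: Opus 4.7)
The strategy is to identify both non-degeneracy conditions with the vanishing of certain cohomology groups and to compare these via a Wang exact sequence.

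First I would note that for an irreducible flat $SU(2)$ connection $A$ on $X$, standard deformation theory (using $b_2^+(X)=0$, which holds since $H_*(X;\Z)=H_*(S^1\times S^3;\Z)$) identifies the Zariski tangent space and obstruction space at $[A]\in\M^*(X)$ so that non-degeneracy amounts to $H^1(X;\ad A)=0$. By Proposition \ref{P:one}, $A$ corresponds to a pair $(\alpha,u)$ with $\tau^*\alpha=u\alpha u^{-1}$, and non-degeneracy of $\RR^\tau(Y)$ at $[\alpha]$ is the vanishing of $H^1_\tau(Y,\ad\alpha)=\ker(T-1)$, where $T=\Ad u\circ\tau^*$ acts on $H^1(Y;\ad\alpha)$.

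Next, I would observe that $X$ is the mapping torus of $\tau$ and that the flat bundle $\ad A$ restricts to $\ad\alpha$ on the fiber $Y$, with monodromy around $S^1$ acting on $H^*(Y;\ad\alpha)$ as $T=\Ad u\circ\tau^*$. The Wang exact sequence with these coefficients reads
\[
\cdots\to H^{k-1}(Y;\ad\alpha)\xrightarrow{T-1}H^{k-1}(Y;\ad\alpha)\to H^k(X;\ad A)\to H^k(Y;\ad\alpha)\xrightarrow{T-1}H^k(Y;\ad\alpha)\to\cdots,
\]
and in particular yields a short exact sequence
\[
0\to\coker\bigl(T-1\colon H^0(Y;\ad\alpha)\to H^0(Y;\ad\alpha)\bigr)\to H^1(X;\ad A)\to\ker\bigl(T-1\colon H^1(Y;\ad\alpha)\to H^1(Y;\ad\alpha)\bigr)\to 0.
\]

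The final step is to show that $T-1$ is always an isomorphism on $H^0(Y;\ad\alpha)$, so that the right-hand term alone computes $H^1(X;\ad A)$ and the two non-degeneracy conditions coincide. If $\alpha$ is irreducible then $H^0(Y;\ad\alpha)=0$ and there is nothing to check. If $\alpha$ is abelian (non-trivial reducible), its stabilizer is a circle and $H^0(Y;\ad\alpha)\cong\mathfrak{u}(1)\cong\R$; by the proof of Proposition \ref{P:one} the element $u$ lies in $j\cdot U(1)$, and conjugation by such a quaternion acts as $-1$ on $\mathfrak{u}(1)$, while $\tau^*$ acts trivially on parallel sections. Hence $T=-1$ and $T-1=-2$, which is an isomorphism.

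The main subtlety is the bookkeeping that sets up the local coefficient system over $S^1$ from the pair $(\alpha,u)$ and verifies that its monodromy is exactly $T$; once this is in place, the argument is a quick application of the Wang sequence together with the one-line calculation on $H^0$.
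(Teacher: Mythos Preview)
Your proof is correct and follows essentially the same route as the paper: the paper uses the Leray--Serre spectral sequence of the fibration $X\to S^1$, which for a circle base collapses at $E_2$ and yields exactly your Wang short exact sequence, and then disposes of the $H^0$ contribution by the same case analysis (irreducible $\alpha$ gives $H^0=0$; abelian $\alpha$ gives $T=-\id$ on $H^0\cong i\cdot\R$). The only cosmetic difference is that the spectral sequence presentation gives a direct sum rather than an extension, but since the left-hand term vanishes this is immaterial.
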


\begin{proof}
The group $H^1 (X, \ad A)$ can be computed with the help of the Leray--Serre spectral sequence of the fibration $X\to S^1$ with fiber $Y$. The $E_2$--page of this spectral sequence is
\[
E_2^{pq} = H^p(S^1, \H^q(Y, \ad\alpha)),
\]
where $\alpha = i^* A$ and $\H^q(Y,\ad\alpha)$ is the local coefficient system associated with the fibration. The groups $E_2^{pq}$ vanish for all $p\ge 2$, so the spectral collapses at the $E_2$--page, and
\begin{equation}\label{E:E2}
H^1(X,\ad A) = H^1(S^1,\H^0(Y,\ad\alpha))\,\oplus\,H^0(S^1,\H^1(Y,
\ad\alpha)).
\end{equation}
The generator of $\pi_1(S^1)$ acts on the cohomology groups $H^* (Y,\ad\alpha)$ as
\[
\Ad u\circ \tau^*: H^* (Y,\ad\alpha) \to H^* (Y,\ad\alpha),
\]
where $u$ is such that $\tau^*\alpha = u \alpha u^{-1}$. If $\alpha$ is irreducible, $H^0 (Y,\ad\alpha) = 0$ and the first summand in \eqref{E:E2} vanishes. If $\alpha$ is non-trivial abelian, we may assume without loss of generality that it takes values in the group $U(1)$ of unit complex numbers. Then $\tau^*\alpha = u\alpha u^{-1}$ for some $u \in j\cdot U(1)$ and $H^0 (Y,\ad\alpha) = i\cdot \mathbb R$ as a subspace of $\su(2)$, with $\tau^* = \id$. One can easily check that $\Ad u$ acts as minus identity on $i\cdot\mathbb R$ hence the first summand in \eqref{E:E2} again vanishes. The second summand in \eqref{E:E2} is the fixed point set of $\tau^*$ acting on $H^1(Y,\ad\alpha)$, which is the equivariant cohomology $H^1_{\tau}(Y,\ad\alpha)$. Thus we conclude that $H^1(X,\ad A) = H^1_{\tau}(Y,\ad\alpha)$, which completes the proof.
\end{proof}

Let us assume that $\RR^{\tau}(Y)$ is non-degenerate. For any $[\alpha] \in \RR^{\tau}(Y)$, its orientation will be given by
\[
(-1)^{\sf^{\tau}(\theta,\alpha)}
\]
where $\sf^{\tau} (\theta,\alpha)$ is the mod 2 equivariant spectral flow defined in \cite[Section 3.4]{RS2} for irreducible $\alpha$. That definition extends word for word to abelian $\alpha$ after one resolves the technical issue of the existence of a constant lift, which we will do next.

Let $P$ be an $SU(2)$ bundle over $Y$ with a fixed trivialization and $\alpha$ an abelian flat connection in $P$; we are abusing notations by using the same symbol for the connection and its holonomy.  It follows from Lemma \ref{L:ab} that $\tau$ admits a lift $\tilde \tau: P \to P$ such that $\tilde\tau^*\alpha = \alpha$. Since $\alpha$ is abelian, this lift is defined uniquely up to the stabilizer of $\alpha$, which is a copy of $U(1)$ in $SU(2)$. The lift $\tilde\tau$ can be written in the base-fiber coordinates as $\tilde \tau(x,y) = (\tau(x), \rho(x)\cdot y)$ for some function $\rho: Y \to SU(2)$. We call it \emph{constant} if there exists $u\in SU(2)$ such that $\rho(x) = u$ for all $x\in SU(2)$.

\begin{lem}\label{L:lift}
By changing $\alpha$ within its gauge equivalence class, one may assume that $\tilde\tau$ is a constant lift with $u^2 = -1$.
\end{lem}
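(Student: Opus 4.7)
The plan is to normalize $\alpha$ within its gauge equivalence class in two stages, and then exhibit the constant lift explicitly. First, by a constant gauge transformation I would conjugate the holonomy of $\alpha$ into the standard maximal torus of $SU(2)$, so that $\alpha$ becomes an $i\R$-valued $1$-form on $Y$; this is possible because a non-central abelian representation of $\pi_1(Y)$ into $SU(2)$ is conjugate into a maximal torus. Second, I would apply a $U(1)$-valued gauge transformation to arrange that $\tau^{*}\alpha=-\alpha$. Once this is achieved, the constant lift $\tilde\tau(x,y)=(\tau(x),jy)$ with $u=j$ will satisfy $\tilde\tau^{*}\alpha=\alpha$ automatically, and $u^{2}=j^{2}=-1$.

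The analytic heart of the argument is the second normalization. Since $\alpha$ is closed and $\tau_{*}$ acts as $-\id$ on $H_{1}(Y;\Z)$ by Lemma \ref{conjugation action}, the cohomology class of $\tau^{*}\alpha+\alpha$ in $H^{1}(Y;i\R)$ vanishes, so $\tau^{*}\alpha+\alpha=i\,df$ for some real function $f\colon Y\to\R$. Applying $\tau^{*}$ to this equation and comparing with the original shows that $\tau^{*}f-f$ is a constant that equals its own negative, and hence vanishes, so $f$ may be chosen $\tau$-invariant. The gauge transformation $\psi=\exp(-if/2)\colon Y\to U(1)$ then sends $\alpha$ to $\alpha'=\alpha-(i/2)\,df$, and the $\tau$-invariance of $f$ yields $\tau^{*}\alpha'+\alpha'=i\,df-i\,df=0$, as desired.

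With $\alpha'$ in hand, the constant lift is verified directly: using the standard pullback formula for a bundle automorphism of the form $(x,y)\mapsto(\tau(x),h(x)y)$, together with $h=j$ constant and the quaternion identity $j^{-1}(ir)j=-ir$ for real $r$, one computes $\tilde\tau^{*}\alpha'=j^{-1}(\tau^{*}\alpha')j=-j^{-1}\alpha'j=\alpha'$. The main subtlety I anticipate is the bookkeeping around the primitive $f$ (whether it can be chosen $\tau$-invariant, handled by the constant-is-its-own-negative argument) and the verification that the resulting $\tilde\tau$ sits in the correct $U(1)$-torsor of lifts preserving $\alpha'$, which is transparent from the explicit construction since any modification by an element of the $U(1)$-stabilizer replaces $j$ by some other element of $j\cdot U(1)$, still squaring to $-1$.
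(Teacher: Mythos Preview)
Your argument has a genuine gap at the very first step. You claim that after a constant gauge transformation conjugating the holonomy into the standard maximal torus, the connection $\alpha$ ``becomes an $i\R$-valued $1$-form on $Y$.'' This conflates two different things: conjugating the \emph{holonomy representation} into $U(1)$, and arranging that the \emph{connection $1$-form} (in the fixed trivialization of $P$) takes values in $i\R\subset\su(2)$. The first is always possible by a constant conjugation; the second is impossible here. Indeed, if $A\in\Omega^1(Y;\su(2))$ is flat and $i\R$-valued, then $A=i\omega$ for a closed real $1$-form $\omega$; since $Y$ is a rational homology sphere, $H^1(Y;\R)=0$, so $\omega$ is exact and the holonomy $\exp\!\big(-i\int_\gamma\omega\big)$ is trivial for every loop $\gamma$. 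Thus a flat connection with nontrivial abelian holonomy can \emph{never} be gauge-equivalent to a globally $i\R$-valued $1$-form on $Y$. Equivalently, the reduction $P\cong L\oplus L^{-1}$ determined by the $U(1)$-holonomy has $L$ a nontrivial line bundle (the Bockstein $H^1(Y;U(1))\to H^2(Y;\Z)$ is an isomorphism when $b_1(Y)=0$), so the splitting is not constant in any trivialization of $P$.

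Because your subsequent steps---writing $\tau^*\alpha+\alpha=i\,df$, choosing $f$ $\tau$-invariant, and applying the $U(1)$-gauge transformation $e^{-if/2}$---all rely on having a global $i\R$-valued primitive, the gap propagates through the whole argument. The paper avoids this obstruction entirely: rather than trying to normalize the $1$-form, it keeps an arbitrary lift $\tilde\tau$ preserving $\alpha$, analyzes $\rho(x)$ on the fixed set $\Fix(\tau)$ to see that $\rho(x)^2=-1$ there, and then compares the $SO(3)$ orbifold bundles $P/\tilde\tau$ and $P/u$ over $Y'$. Since such bundles are classified by their holonomy around the branch locus, which is $\ad(u)$ in both cases, they are isomorphic, and the isomorphism pulls back to a gauge transformation on $P$ that converts $\tilde\tau$ into the constant lift $u$. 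This orbifold-bundle argument is what replaces the (unavailable) global abelian trivialization you were hoping to use.
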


\begin{proof}
The equation $\tilde\tau^*\alpha = \alpha$ implies that $(\tilde\tau^2)^*\alpha = \alpha$ hence the gauge transformation $\tilde\tau^2$ belongs to the stabilizer of the connection $\alpha$. If $x \in \Fix (\tau)$ then $\tilde\tau^2 (x,y) = (x,\rho(x)^2\cdot y)$ hence $\rho(x)^2$ is a unit complex number independent of $x$. This implies that $\rho(x)$ itself is a unit complex number unless $\rho(x)^2 = -1$. It is this last case that must be realized because, at the level of holonomy representations, $\tau^*\alpha = \alpha^{-1}$ is conjugate to $\alpha$ by an element $u \in SU(2)$ with $u^2 = -1$; see the proof of Lemma \ref{L:ab}. Since $\rho(x)^2 = -1$ describes a single conjugacy class $\tr \rho(x) = 0$ in $SU(2)$, we may assume that $\rho(x) = u$ for all $x \in \Fix(\tau)$.

To finish the proof, we will follow the argument of \cite[Section 2.2]{RS2}. Let $u: P \to P$ be the constant lift $u(x,y) = (\tau(x),u\cdot y)$ and consider the $SO(3)$ orbifold bundles $P/\tilde\tau$ and $P/u$ over the integral homology sphere $Y'$. All such bundles are classified by the holonomy around the singular set in $Y'$. Since this holonomy equals $\ad(u)$ in both cases, the bundles $P/\tilde\tau$ and $P/u$ must be isomorphic, with any isomorphism pulling back to a gauge transformation $g: P \to P$ that relates the lifts $\tilde\tau$ and $u$.
\end{proof}

\begin{pro}
Assuming that the moduli space $\RR^{\tau}(Y)$ is non-degenerate, the map \eqref{E:i*} is orientation preserving.
\end{pro}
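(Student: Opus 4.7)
The plan is to show that, at each $[A]\in \M^*(X)$ with $[\alpha] = i^*[A]\in \RR^\tau(Y)$, the sign of the determinant line of the ASD operator $d_A^* \oplus d_A^+$ on $X$ agrees with $(-1)^{\sf^\tau(\theta,\alpha)}$. Since this identifies the standard orientation of $\M^*(X)$ with the orientation defined on $\RR^\tau(Y)$ via equivariant spectral flow, the map $i^*$ is orientation preserving. The argument extends the corresponding calculation in \cite{RS2} (where $Y$ was assumed to be an integral homology sphere and $\alpha$ was automatically irreducible) to our setting, in which $\alpha$ may also be abelian.

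I would begin by placing $A$ in temporal gauge on the mapping torus, so that an $SU(2)$--connection on $X$ is encoded by a path $A_t$, $t\in[0,1]$, of connections on $Y$ together with a gluing condition $A_1 = \tilde\tau^* A_0$ for some lift $\tilde\tau$ of $\tau$ to the trivial bundle over $Y$. When $\alpha$ is irreducible, this lift is unique up to $\pm 1$, giving the two points of $\M^*(X)$ above $[\alpha]$; when $\alpha$ is abelian, Lemma \ref{L:lift} lets us assume $\tilde\tau$ is the constant lift by $u\in j\cdot U(1)$ with $u^2 = -1$, so that the relevant representation of $\pi_1(X)$ is binary dihedral. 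In either case, the path $A_t$ may be chosen to connect the trivial connection to $\alpha$ inside the space of $\tilde\tau$--equivariant connections, because the components of $\RR^\tau(Y)$ containing $\theta$ and $[\alpha]$ are path connected within the equivariant configuration space by standard flat-connection arguments.

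Next I would invoke the cylinder-to-mapping-torus index identity: pulling back to the infinite cyclic cover $Y\times \R$ (on which the $\Z$--action is generated by $\tilde\tau$), the index of $d_A^* \oplus d_A^+$ twisted by $\ad A$ is computed by the $\tilde\tau$--equivariant spectral flow of the one-parameter family of self-adjoint operators $*d_{A_t}$ governing the gauge-fixed ASD equations on $Y$. By Proposition \ref{P2}, non-degeneracy of $\RR^\tau(Y)$ is equivalent to non-degeneracy of $\M^*(X)$, so the determinant line is canonically trivialized at the endpoints and the sign of $\det(d_A^*\oplus d_A^+)$ is therefore $(-1)^{\sf^\tau(\theta,\alpha)}$ up to a universal sign depending only on the base point $\theta$. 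A small additional check in the abelian case shows that the two choices $\pm u$ giving the two lifts of $\tau$ produce the same sign for the equivariant spectral flow, because $\Ad(-u) = \Ad(u)$ on $\su(2)$, consistent with the fact that the abelian locus $\RR_{\ab}(Y)$ is only in one-to-one correspondence with its preimage in $\M^*(X)$.

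The main obstacle is the calibration of orientations at the trivial connection: one must verify that the universal sign above equals $+1$, i.e.~that the homology orientation on $X$ (a choice of generator of $H^1(X;\Z) = \Z$) induces precisely the trivialization of $\det(d_\theta^*\oplus d_\theta^+)$ that the equivariant spectral flow convention assigns to $\theta$. This reduces to an explicit computation of the $\tilde\tau$--action on $H^0(Y;\R)\oplus H^1(Y;\R)\oplus H^2(Y;\R)$ at the basepoint, and on the stabilizing factors $H^1(X;\R)\oplus I^+(X;\R)$; using the action of $\tau^*$ determined in Lemma \ref{conjugation action}, both trivializations are readily identified. With this calibration in place, the chain of equalities $\epsilon(A) = (-1)^{\sf^\tau(\theta,\alpha)} = \epsilon(\alpha)$ holds at every point, completing the proof.
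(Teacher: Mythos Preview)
Your proposal is correct and follows essentially the same approach as the paper, which simply states that the proof from \cite[Section 3]{RS2} extends to the current situation with no change. You have spelled out a reasonable sketch of that argument, including the additional observation (via Lemma \ref{L:lift} and the fact that $\Ad(\pm u)$ agree) needed to handle the abelian case; this is precisely the extension the paper is asserting exists.
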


\begin{proof}
The proof from \cite[Section 3]{RS2} extends to the current situation with no change.
\end{proof}

\subsection{Orbifold theory}\label{S:orb}
Under the continued non-degeneracy assumption, we will now describe $\RR^{\tau}(Y)$ in terms of orbifold representations. Let us consider the orbifold fundamental group $\pi_1^V (Y',K) = \pi_1 (Y' - N(K))/ \langle \mu^2 \rangle$, where $\mu$ is a meridian of $K$. This group can be included into the split orbifold exact sequence
\[
\begin{tikzpicture}
\draw (1,1) node (a) {$1$};
\draw (3,1) node (b) {$\pi_1 Y$};
\draw (6,1) node (c) {$\pi_1^V(Y', K)$};
\draw (9,1) node (d) {$\Z/2$};
\draw (11,1) node (e) {$1.$};
\draw[->](a)--(b);
\draw[->](b)--(c) node [midway,above](TextNode){$\pi_{*}$};
\draw[->](c)--(d) node [midway,above](TextNode){$j$};
\draw[->](d)--(e);
\end{tikzpicture}
\]
Denote by $\RR^V (Y',K; SO(3))$ the character variety of irreducible $SO(3)$ representations of the group $\pi_1^V (Y', K)$, and also introduce the character variety $\RR^{\tau} (Y; SO(3))$ of irreducible representations $\pi_1 Y \to SO(3)$.

\begin{pro}\label{P:eq-knots}
The pull back of representations via the map $\pi_*$ in the orbifold exact sequence gives rise to a one-to-one correspondence
\[
\pi^*: \RR^V (Y',K; SO(3)) \longrightarrow \RR^{\tau} (Y;SO(3)).
\]
\end{pro}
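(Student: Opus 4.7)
The plan is to verify that $\pi^{*}$ is (a) well-defined, (b) injective, and (c) surjective, exploiting throughout that the centralizer in $SO(3)$ of an irreducible representation is trivial. Fix a splitting $s:\Z/2\to\pi_{1}^{V}(Y',K)$ of the orbifold exact sequence, so that $\pi_{1}^{V}(Y',K)\cong \pi_{1}Y\rtimes\Z/2$, where the non-trivial element of $\Z/2$ acts on $\pi_{1}Y$ by the geometric automorphism $\tau_{*}$. Given any $\rho:\pi_{1}^{V}(Y',K)\to SO(3)$, the relation $\rho(s)\rho(g)\rho(s)^{-1}=\rho(\tau_{*}g)$ for $g\in\pi_{1}Y$ shows that the character of $\pi^{*}\rho=\rho|_{\pi_{1}Y}$ lies in the fixed set of $\tau^{*}$, so $\pi^{*}\rho\in\RR^{\tau}(Y;SO(3))$ once one checks that $\pi^{*}\rho$ is itself irreducible. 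For this last point I would argue that if $\pi^{*}\rho$ fixed a line $L\subset\R^{3}$, then $L$ would be the unique such line (otherwise $\rho|_{\pi_{1}Y}$ would be trivial and $\rho$ would factor through $\Z/2$, contradicting irreducibility), and normality of $\pi_{1}Y$ in $\pi_{1}^{V}(Y',K)$ would force $\rho(s)L=L$, contradicting the irreducibility of $\rho$.

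For injectivity, suppose $\pi^{*}\rho_{1}$ and $\pi^{*}\rho_{2}$ have the same character. After conjugation we may assume $\rho_{1}|_{\pi_{1}Y}=\rho_{2}|_{\pi_{1}Y}=\alpha$. Then $h:=\rho_{1}(s)\rho_{2}(s)^{-1}$ centralizes $\alpha$, and since $\alpha$ is irreducible the centralizer of $\alpha$ in $SO(3)$ is trivial; hence $\rho_{1}(s)=\rho_{2}(s)$ and $\rho_{1}=\rho_{2}$.

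For surjectivity, start with an irreducible $\alpha:\pi_{1}Y\to SO(3)$ whose character is $\tau^{*}$-invariant. Then there exists $g\in SO(3)$ with $g\alpha(x)g^{-1}=\alpha(\tau_{*}x)$ for all $x\in\pi_{1}Y$; irreducibility of $\alpha$ makes such $g$ unique. Applying the same relation twice gives $g^{2}\alpha(x)g^{-2}=\alpha(x)$, so $g^{2}$ is central in $SO(3)$ and therefore $g^{2}=1$. Consequently the assignment $\rho|_{\pi_{1}Y}=\alpha$ and $\rho(s)=g$ respects the relations of $\pi_{1}Y\rtimes\Z/2$ and defines an irreducible representation $\rho:\pi_{1}^{V}(Y',K)\to SO(3)$ with $\pi^{*}\rho=[\alpha]$. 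This is the place where working over $SO(3)$ rather than $SU(2)$ simplifies the argument substantially: in $SU(2)$ the analogous element satisfies only $u^{2}=\pm 1$, which is precisely the complication that required Lemma \ref{L:lift} and the constant lift discussion; in $SO(3)$ the triviality of the center kills this ambiguity altogether.

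The step most likely to hide a subtlety is the well-definedness statement, specifically the claim that the restriction of an irreducible $\pi_{1}^{V}(Y',K)$-representation remains irreducible. Once that is in hand, injectivity and surjectivity are formal consequences of the fact that $SO(3)$ has trivial center and that centralizers of irreducible representations are trivial.
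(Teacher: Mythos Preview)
Your argument is clean for the genuinely irreducible case, but it misses the abelian/dihedral stratum, which the paper does treat and which is needed downstream. Despite the paper's phrasing of the definition, $\RR^{\tau}(Y;SO(3))$ is meant to contain the non-trivial abelian characters as well: the very next proposition uses that the map $\RR^{\tau}(Y)\to\RR^{\tau}(Y;SO(3))$ induced by $\Ad$ is a bijection, and $\RR^{\tau}(Y)=\RR_{\ab}(Y)\sqcup\RR^{\tau}_{\irr}(Y)$. Correspondingly, $\RR^{V}(Y',K;SO(3))$ must contain the dihedral representations (those $\alpha'$ with image in $O(2)\subset SO(3)$), which \emph{are} reducible in the standard sense---they fix the axis as a line. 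Your well-definedness step actually proves that a standard-sense irreducible $\rho$ restricts to an irreducible $\alpha$; this is correct, but it tells you the dihedral $\rho$ never enter your domain, whereas the paper explicitly keeps them.

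Once abelian $\alpha$ are allowed in the target, your injectivity and surjectivity arguments break: the conjugating element $v$ with $\tau^{*}\alpha=v\alpha v^{-1}$ is no longer unique (the centraliser of $\alpha$ is $SO(2)$), so you cannot simply declare $\rho(s)=v$ and be done. The paper handles this by observing that $v=\Ad u$ for some $u\in j\cdot U(1)$, and that any two such $u$ are conjugate by a unit complex number, so the resulting $\alpha'$ is well defined up to conjugacy. You would need an analogous argument. A smaller point: your claim that a non-trivial $SO(3)$ representation fixing two lines must be trivial is false in general (think of a single $\pi$-rotation, or the Klein four-group); what saves you here is that $Y$ is a $\Z/2$ homology sphere, which forces any line-fixing $\pi^{*}\rho$ to have image in $SO(2)$ of odd order---you should make this explicit.
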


\begin{proof}
One can easily see that a representation $\alpha': \pi_1^V(Y', K) \to SO(3)$ pulls back to a trivial representation $\theta: \pi_1 Y \to SO(3)$ if and only if $\alpha'$ is reducible. The same argument as in \cite[Proposition 3.3]{CS} shows that all pull-back representations belong to $\RR^{\tau} (Y,SO(3))$. The inverse map for $\pi^*$ is constructed as follows: given $[\alpha] \in \RR^{\tau} (Y,SO(3))$ choose $v \in SO(3)$ such that $\tau^*\alpha = v \alpha v^{-1}$, and define a representation $\alpha'$ of $\pi_1^V(Y',K) = \pi_1 Y \rtimes \Z/2$ by the formula
\begin{equation}\label{E:inverse}
\alpha'(g \cdot \mu^k)\, =\, \alpha(g)\cdot v^k.
\end{equation}
If $\alpha$ is irreducible, the element $v$ is unique hence formula \eqref{E:inverse} gives an inverse map. If $\alpha$ is non-trivial abelian, lift it to a $U(1)$ representation using the fact that $Y$ is a $\Z/2$ homology sphere. The proof of Lemma \ref{L:ab} then tells us that $v = \Ad u$ for some $u \in j\cdot U(1)$. Since any two elements of $j\cdot U(1)$ are conjugate to each other by a unit complex number, formula \eqref{E:inverse} again gives an inverse map.
\end{proof}

The representations $\pi_1^V (Y',K) \to SO(3)$ need not lift to $SU(2)$ representations. However, they lift to projective representations $\pi_1^V (Y',K) \to SU(2)$ sending $\mu^2$ to $\pm 1$. The character variety of such projective representations will be denoted by $\RR^V (Y',K)$, and it will be oriented using the orbifold spectral flow.

\begin{pro}
The correspondence of Proposition \ref{P:eq-knots} gives rise to an orientation preserving correspondence $\RR^V (Y',K) \to \RR^{\tau} (Y)$ which is one-to-one over $\RR_{\ab}(Y)$ and two-to-one over $\RR^{\tau}_{\irr} (Y)$.
\end{pro}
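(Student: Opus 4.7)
The plan is to combine Proposition~\ref{P:eq-knots} with a direct count of lifts from $SO(3)$ to projective $SU(2)$, and then to identify the two orientations by matching orbifold and equivariant spectral flow.

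First I would fix a section $t$ of the projection $j$, so that $\pi_1^V(Y',K) = \pi_1 Y \rtimes \langle t \rangle$ with $t^2 = 1$ in the orbifold group. A projective $SU(2)$ representation $\tilde\alpha'$ of $\pi_1^V(Y',K)$ sending $\mu^2$ to $\pm 1$ is then equivalent to a pair $(\alpha, u)$ consisting of a representation $\alpha = \tilde\alpha'|_{\pi_1 Y}$ and an element $u = \tilde\alpha'(t) \in SU(2)$ satisfying $\tau^*\alpha = u\alpha u^{-1}$ and $u^2 = \pm 1$. This is formally identical to the analysis of $\pi_1 X$ carried out in Proposition~\ref{P:one}, and the same argument shows that $\pi^*$ is one-to-one over $\RR_{\ab}(Y)$, where the admissible $u$ fill the circle $j\cdot U(1)$ and form a single orbit under conjugation by the stabilizer $U(1)$ of $\alpha$, and two-to-one over $\RR_{\irr}^\tau(Y)$, where Schur's lemma forces $u$ to be unique up to sign and the lifts $\pm u$ are not conjugate as projective representations.

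Next I would verify that $\pi^*$ is orientation preserving. Both sides are oriented by $(-1)^{\sf}$, where $\sf$ is a mod~$2$ spectral flow from the trivial connection: the orbifold (projective) spectral flow on the $(Y',K)$ side, and the equivariant spectral flow $\sf^\tau(\theta,\alpha)$ of \cite[Section 3.4]{RS2} on the $Y$ side. Choosing a $\tau$-invariant metric on $Y$ pulled back from an orbifold metric on $(Y',K)$, together with a path of $SU(2)$ connections on $Y$ obtained by pulling back an orbifold path, the Hessian of the orbifold Chern--Simons functional pulls back to the $\tau$-equivariant Hessian on $Y$. The orbifold spectrum is then exactly the $(\Ad u \circ \tau^*)$-invariant part of the $Y$-spectrum, so the orbifold spectral flow equals $\sf^\tau(\theta,\alpha)$ modulo~$2$ along the chosen equivariant path. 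This is the same identification used in \cite{CS} and \cite[Section 3]{RS2} for integral homology sphere branched covers, and it carries over without change to the rational homology sphere setting.

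The main obstacle will be the bookkeeping at abelian characters, where the constant lift $\tilde\tau$ of $\tau$ to the $SU(2)$ bundle is not unique. Here I would invoke Lemma~\ref{L:lift} to normalize $\tilde\tau$ as a constant lift with $u^2 = -1$, so that the orbifold/equivariant spectral flow identification can be set up uniformly at abelian and irreducible characters. Once this normalization is in place, the abelian and irreducible cases are treated on the same footing, and everything else is a routine adaptation of the arguments already used on integral homology sphere branched covers.
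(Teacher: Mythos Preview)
Your argument is correct, but it takes a somewhat different route from the paper. The paper does not repeat the direct pair $(\alpha,u)$ count from Proposition~\ref{P:one}; instead it descends via the adjoint map. It observes that $\RR^{\tau}(Y) \to \RR^{\tau}(Y;SO(3))$ is a bijection (since $Y$ is a $\Z/2$ homology sphere), while $\RR^V(Y',K) \to \RR^V(Y',K;SO(3))$ is the quotient by the $\Z/2$ action $\mu \mapsto -\mu$, whose fixed points are exactly the binary dihedral projective representations. Composing with the $SO(3)$ bijection of Proposition~\ref{P:eq-knots}, the map is $2$-to-$1$ off the binary dihedral locus and $1$-to-$1$ there, and the paper then checks that $\alpha'$ is binary dihedral if and only if $\pi^*\alpha'$ is abelian (using that $\pi_1 Y$ is the commutator subgroup of $\pi_1^V(Y',K)$). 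Your direct approach bypasses the $SO(3)$ intermediary and the binary dihedral characterization entirely; the paper's approach has the virtue of explaining the Remark following Proposition~\ref{P:one} (that the characters mapping to $\RR_{\ab}(Y)$ are precisely the binary dihedral ones). For the orientation statement, both you and the paper simply assert that the orbifold spectral flow coincides with the equivariant spectral flow, with the paper deferring to \cite{CS} and \cite{RS2}; your added detail about pulling back metrics and operators is correct and is implicit in those references.
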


\begin{proof}
Let us consider the adjoint representation $\Ad: SU(2) \to SO(3)$ and the induced maps
\[
 \RR^{\tau}(Y) \to \RR^{\tau}(Y; SO(3))\quad\text{and}\quad\RR^V (Y',K) \to \RR^V (Y',K; SO(3)).
\]
The first map is a one-to-one correspondence because $Y$ is a $\Z/2$ homology sphere. The second map is the quotient map by the action of $\Z/2$ sending the image of the meridian $\mu$ to its negative. The fixed points of this action are precisely the binary dihedral projective representations $\alpha': \pi_1^V (Y',K) \to SU(2)$. Now, the proof will be finished as soon as we show that an irreducible projective representation $\alpha': \pi_1^V (Y',K) \to SU(2)$ is binary dihedral if and only if its pull back representation $\pi^*\alpha': \pi_1 (Y) \to SU(2)$ is abelian.

If $\pi^*\alpha'$ is abelian, its image belongs to $U(1) \subset SU(2)$ and the image of $\alpha'$ to its $\Z/2$ extension. This extension is the binary dihedral group $U(1)\,\cup\,j\cdot U(1)$. Conversely, it follows from the orbifold exact sequence that $\pi_1 Y$ is the commutator subgroup of $\pi_1^V (Y',K)$ therefore, if $\alpha'$ is binary dihedral, the image of $\pi^*\alpha'$ must belong to the commutator subgroup of $U(1)\,\cup\,j\cdot U(1)$, which is of course the group $U(1)$.

Since the orbifold spectral flow matches the equivariant spectral flow used to orient $\RR^{\tau}(Y)$, the above correspondence is orientation preserving.
\end{proof}


\subsection{Perturbations}\label{S:pert}
In this section, we will remove the assumption that $\RR^{\tau}(Y)$ is non-degene\-rate which we used until now. To accomplish that, we will switch from the language of representations to the language of connections. Let $P$ a trivialized $SU(2)$ bundle over $Y$. Any endomorphism $\tilde\tau: P \to P$ which lifts the involution $\tau$ induces an action on the space of connections $\A(Y)$ by pull back. Since any two such lifts are related by a gauge transformation, this action defines a well defined action on the configuration space $\B(Y) = \A(Y)/\G(Y)$. The fixed point set of this action will be denoted by $\B^{\tau}(Y)$.

The irreducible part of $\B^{\tau}(Y)$ was studied in \cite{RS2} hence we will only deal with reducible connections. In fact, we will further restrict ourselves to constant lifts $u$ with $u^2 =-1$ because any flat abelian connection $\alpha$ admits such a lift; see Lemma \ref{L:lift}.

Let $\A^u (Y) \subset \A(Y)$ consist of all non-trivial connections $A$ such that $u^*A = A$, and $\G^u(Y) \subset \G(Y)$ of all gauge transformations $g$ such that $gu = ug$. The quotient space $\A^u(Y)/\G^u(Y)$ will be denoted by $\B^u(Y)$. The following lemma is a key to making the arguments of \cite{RS2} work in the case of abelian connections.

\begin{lem}
The group $\G^u(Y)$ acts on $\A^u(Y)$ with the stabilizer $\{\pm 1\}$. Moreover, the natural map $\B^u(Y) \to \B^{\tau}(Y)$ is a two-to-one correspondence to its image on the irreducible part of $\B^u(Y)$, and a one-to-one correspondence on the reducible part.
\end{lem}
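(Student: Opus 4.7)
The plan is to treat the action of $u$ as an involution $\sigma:\G(Y)\to \G(Y)$ by conjugation, so $\G^u=\mathrm{Fix}(\sigma)$, with $\sigma^2=\mathrm{id}$ because $u^2=-1$ is central in $SU(2)$. With the appropriate convention, for any $g\in\G(Y)$ and $A\in\A(Y)$ one has $u^*(g\cdot A)=\sigma(g)\cdot(u^*A)$; consequently, if $A\in\A^u$, then $gA\in\A^u$ if and only if the ratio $\phi:=g^{-1}\sigma(g)$ lies in $\mathrm{Stab}_{\G}(A)$. This identity drives both assertions.

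For the stabilizer claim, first suppose $A\in\A^u$ is irreducible. Its full $\G$-stabilizer is the center $\{\pm 1\}\subset SU(2)$, which manifestly commutes with $u$, so the $\G^u$-stabilizer is $\{\pm 1\}$. If instead $A$ is non-trivially reducible, after a gauge change it takes values in $U(1)\subset SU(2)$ and $\mathrm{Stab}_{\G}(A)$ is this constant $U(1)$. By Lemma \ref{L:lift} we may take $u$ to be the constant lift with value in $j\cdot U(1)$, and a short quaternionic calculation shows that conjugation by $u$ acts on this $U(1)$ by $\phi\mapsto \phi^{-1}$. Its fixed points are exactly $\{\pm 1\}$.

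For the fiber count, take $A,A'\in\A^u$ with $A'=gA$ and examine $\phi=g^{-1}\sigma(g)\in\mathrm{Stab}_{\G}(A)$. In the irreducible case $\phi\in\{\pm 1\}$. When $\phi=+1$, $g\in\G^u$ and $[A']_u=[A]_u$. When $\phi=-1$, $g\notin\G^u$, but any two such gauge transformations $g_1,g_2$ satisfy $\sigma(g_1g_2^{-1})=(-g_1)(-g_2^{-1})=g_1g_2^{-1}\in\G^u$, so the $\phi=-1$ case yields a single additional $\G^u$-orbit. To verify that this second orbit actually occurs over every point of the image (so the map is genuinely $2$-to-$1$ onto its image), take $g_0$ to be the constant gauge transformation equal to $i\in SU(2)$; then $\sigma(g_0)=jij^{-1}=-i=-g_0$, and $g_0\cdot A\in\A^u$ because $-1$ acts trivially on connections, so $[g_0 A]_u\ne[A]_u$.

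In the abelian case $\phi\in U(1)_A$, and the goal is to replace $g$ by $g\epsilon$ in $\G^u$ for a suitable $\epsilon\in U(1)_A$. Since $\sigma(\epsilon)=\epsilon^{-1}$ for constant $\epsilon\in U(1)_A$ (by the same quaternionic identity as above), we compute $\sigma(g\epsilon)=\sigma(g)\sigma(\epsilon)=g\phi\epsilon^{-1}$, so $g\epsilon\in\G^u$ is equivalent to $\epsilon^2=\phi$; divisibility of $U(1)$ always provides such an $\epsilon$. Then $g\epsilon\cdot A=g\cdot A=A'$, so $[A']_u=[A]_u$ and the map is injective on the reducible part. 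The main subtle point is fixing conventions so that the identity $u^*(gA)=\sigma(g)(u^*A)$ holds on the nose, and using Lemma \ref{L:lift} to reduce the whole analysis to the elementary quaternion identity $u\phi u^{-1}=\phi^{-1}$ for $u\in j\cdot U(1)$ and $\phi\in U(1)$.
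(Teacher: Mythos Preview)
Your proof is correct and follows essentially the same approach as the paper's. Both arguments reduce the stabilizer and fiber questions to the relation between $ug$ and $gu$ (equivalently, your cocycle $\phi=g^{-1}\sigma(g)$) and the elementary quaternion identity that conjugation by $u\in j\cdot U(1)$ inverts $U(1)$; your presentation is slightly more systematic in packaging this via the involution $\sigma$ on $\G(Y)$, and you make the existence of the second irreducible orbit explicit via $g_0=i$, whereas the paper argues by index.
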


\begin{proof}
For the sake of simplicity, we will assume that reducible connections have their holonomy in the subgroup $U(1)$ of unit complex numbers in $SU(2)$, and that $u \in j\cdot U(1)$. Let us suppose that $g^*A = A$ for a connection $A \in \A^u (Y)$ and a gauge transformation $g \in \G^u (Y)$. If $A$ is irreducible, we automatically have $g = \pm 1$. If $A$ is non-trivial abelian, then $g$ is a complex number, and the condition $ug = gu$ implies that $g = \pm 1$.

To prove the second statement, consider a connection $A$ such that $u^*A = A$ and consider its gauge equivalence class in $\B^{\tau}(Y)$. It consists of all connections $g^*A$ such that $u^* g^* A = g^* A$. Since $A = u^* A$, we immediately conclude that $u^* g^* A = g^* u^* A$ so that $ug$ and $gu$ differ by an element in the stabilizer of $A$. If $A$ is irreducible, its stabilizer consists of $\pm 1$ hence $ug = \pm gu$. The group of gauge transformations satisfying this condition contains $\G^u(Y)$ as a subgroup of index two, which leads to the desired two-to-one correspondence. If $A$ is non-trivial abelian, its stabilizer consists of unit complex numbers. Therefore, we can write $ug = c^2 gu$ with $c \in U(1)$ or, equivalently, $ucg = cgu$. This provides us with a gauge transformation $cg \in \G^u (Y)$ such that $(cg)^* A = A$, yielding the one-to-one correspondence on the reducible part.
\end{proof}

With this lemma in place, the proof of Proposition \ref{P:one} can be re-stated in gauge-theoretic terms as in \cite[Proposition 3.1]{RS2}. The treatment of perturbations in our case is then essentially identical to that in \cite{CS} and \cite{RS2}, one important observation being that the orbifold representations $\alpha'$ that pull back to abelian representations of $\pi_1 (Y)$ are in fact irreducible. This fact is used in the proof of \cite[Lemma 3.8]{CS}, which supplies us with sufficiently many admissible perturbations.


\subsection{Proof of Theorem \ref{T:Lfo for mapping torus}}
The outcome of Section \ref{S:eq} and Section \ref{S:orb} is that, perhaps after perturbing as in Section \ref{S:pert}, we have two orientation preserving correspondences,
\[
\M^*(X)\; \longrightarrow\; \RR^{\tau}(Y)\; \longleftarrow\; \RR^V (Y',K),
\]
both of which are one-to-one over $\RR_{\ab} (Y)$ and two-to-one over $\RR^{\tau}_{\irr} (Y)$ (we omit perturbations in our notations). These correspondences give rise to an orientation preserving one-to-one correspondence between $\M^*(X)$ and $\RR^V (Y',K)$. The proof of Theorem \ref{T:main} will be complete after we express the signed count of points in $\RR^V (Y',K)$ in terms of the Casson invariant of $Y'$ and the knot signature of $K$.

The character variety $\RR^V (Y',K)$ of projective representations $\alpha'$ splits into two components corresponding to whether the square of $\alpha'(\mu)$ equals $+1$ or $-1$. Let $E$ be the exterior of the knot $K$ then this splitting corresponds to the splitting
\[
\RR^V (Y',K)\; = \; \SS_0 (E,SU(2))\,\cup\,\SS_{1/2} (E,SU(2))\,\cup\,\SS_1 (E,SU(2))
\]
 of \cite[Proposition 3.4]{CS}, where $\SS_a (E,SU(2))$ comprises the conjugacy classes of representations $\gamma: \pi_1 X \to SU(2)$ such that $\tr \gamma (\mu) = 2\cos (2\pi a)$. According to Herald \cite{H}, the signed count of points in $\SS_0 (E,SU(2))\,\cup\,\SS_1 (E,SU(2))$ equals $4 \cdot \lambda(Y')$, while the signed count of points in $\SS_{1/2} (E,SU(2))$ equals $4 \cdot \lambda (Y') + 1/2\,\sgn\,(K)$. Adding up the two counts and dividing by four we obtain the desired formula
\[
\lfo(X)\; =\; 2\cdot\lambda (Y')\; +\; \frac 1 8\, \sgn\,(K).
\]


\section{Strongly non-extendable involutions and Akbulut corks}\label{S:cork}
A cork is a pair $(W,\tau)$ which consists of a smooth compact contractible $4$--manifold $W$ and an involution $\tau$ on its boundary that  does not extend to a self-diffeomorphism of $W$. Sometimes the definition of a cork includes the hypothesis that $W$ have a Stein structure (see for instance~\cite[Definition 10.3]{akbulut:book}) but we do not require this. 


\subsection{Strongly non-extendable involutions}\label{S:inv}
Figure~\ref{F:cork-symm}\,(a) shows the cork constructed by Akbulut \cite{Akbulut:cork}, and Figure \ref{F:cork-symm}\,(b) shows the involution $\tau$ on its boundary. This cork will be called $W_1$, and its boundary $Y_1$. 

\begin{figure}[!ht]
    \labellist
    \normalsize\hair 0mm
    \pinlabel {(a)} at  58 -13
    \pinlabel {(b)} at  275 -13
    \pinlabel {$0$} at 12 125
    \pinlabel {$0$} at 230 140
    \pinlabel {$0$} at 315 140
    \pinlabel {$\tau$} at 285 155
    \pinlabel {{\LARGE $\cong_{\footnotesize\,\partial}$}} at 175 72
    \endlabellist
\includegraphics[scale=1.1]{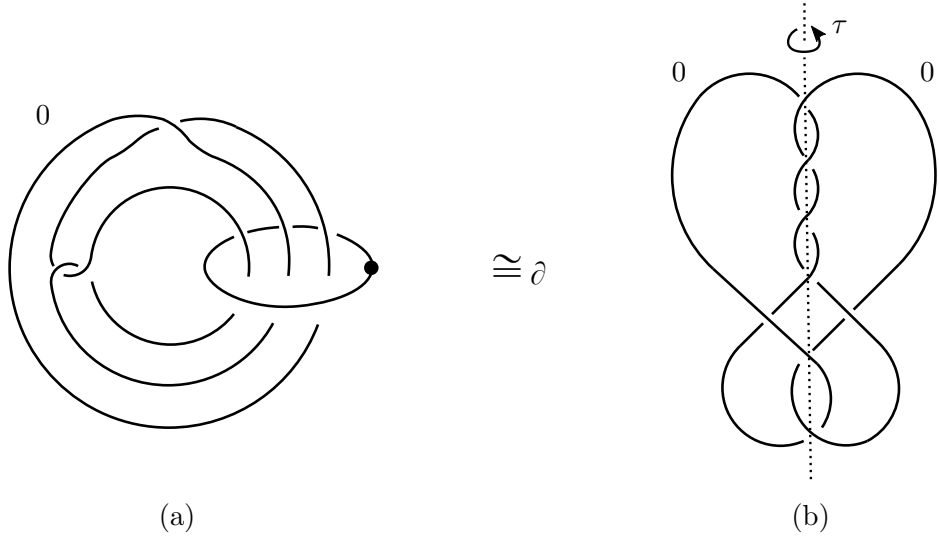}
\bigskip
\caption{Akbulut cork $W_1$ and the involution on  $Y_1 = \partial W_1$}
\label{F:cork-symm}
\end{figure}
\begin{thm}\label{T:strong}  
The involution $\tau: Y_1 \to Y_1$ does not extend to a diffeomorphism of any smooth $\mathbb{Z}/2$ homology 4-ball bounded by $Y_1$.
\end{thm}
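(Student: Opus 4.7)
The plan is to extract from Theorem~\ref{T:main} a topological obstruction that depends only on $(Y_1,\tau)$, not on any particular filling. The key observation is that, for any smooth $\mathbb{Z}/2$-homology $4$-ball $W$ bounded by $Y_1$, an extension of $\tau$ over $W$ forces the relative Seiberg--Witten invariant of $W$ to be fixed by the induced action $\tau_{*}$ on $\widehat{HM}(Y_1;\mathbb{Z})$, and this relative invariant is essentially canonical by Fr\o yshov's theorem.

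First I would combine the Akbulut--Durusoy description of $(Y_1,\tau)$ as a double branched cover $\Sigma(K)\to S^3$ over a specific knot $K$ with the isomorphism between monopole and Heegaard Floer homology (Remark~\ref{R:HM=HF}) to read off $\sgn(K)$, $h(Y_1)$, and the full $\mathbb{Q}[U]$-module structure and absolute $\mathbb{Q}$-grading of $\widehat{HM}(Y_1;\mathbb{Z})$. Plugging into formula~\eqref{E:main-knot} gives $\Lef(\tau_{*})=\tfrac{1}{8}\sgn(K)-h(Y_1)$. Since $\HM^{\red}(Y_1)$ is known explicitly, this Lefschetz number together with $U$-equivariance and the exact sequence relating $\widehat{HM}$, $\overline{HM}$, and $\HM^{\red}$ should pin down a definite sign $\varepsilon\in\{\pm1\}$ for the action of $\tau_{*}$ on the distinguished reducible generator at the bottom of the infinite $U$-tower in $\widehat{HM}(Y_1;\mathbb{Z})$.

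Suppose now that $\tau$ extends to a self-diffeomorphism $\tilde\tau$ of some smooth $\mathbb{Z}/2$-homology $4$-ball $W$ with $\partial W=Y_1$. As a $\mathbb{Z}/2$-homology ball, $W$ is automatically orientable and spin with a unique spin structure $\mathfrak{s}$, and $\tilde\tau^{*}\mathfrak{s}=\mathfrak{s}$; so the relative monopole invariant produces a class $\psi_W\in\widehat{HM}(Y_1;\mathbb{Z})$ that is $\tilde\tau$-equivariant. The sharpness of the Fr\o yshov bound identifies $\psi_W$, up to sign, with the canonical bottom generator of the $U$-tower, so $\tau_{*}\psi_W=\varepsilon\,\psi_W$. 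On the other hand, because $\tilde\tau$ preserves both the orientation and the spin structure, the homology-oriented definition of the relative invariant gives $\tau_{*}\psi_W=+\psi_W$. Showing that $\varepsilon=-1$ therefore completes the argument and yields the theorem.

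The main obstacle will be the sign bookkeeping needed to verify $\varepsilon=-1$: one must reconcile the absolute $\mathbb{Z}/2$-grading of $\tau_{*}$ (which controls the sign in the Lefschetz count produced by Theorem~\ref{T:main}) with the homology orientation built into the definition of the relative Seiberg--Witten invariant. I expect the cleanest route to be first to verify the sign at the level of $\HM^{\red}(Y_1)$ using the explicit calculation of Akbulut and Durusoy~\cite{akbulut-durusoy:involution}, and then to propagate it to the canonical tower generator via $U$-equivariance of $\tau_{*}$; once this is done, the argument obstructs extension over \emph{any} $\mathbb{Z}/2$-homology ball, not just the original cork $W_1$.
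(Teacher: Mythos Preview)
Your outline has a genuine gap. The heart of your argument is the claim that ``the sharpness of the Fr\o yshov bound identifies $\psi_W$, up to sign, with the canonical bottom generator of the $U$-tower''. But there is no such canonical generator here: $\HM^{\red}(Y_1)\cong\Z^2$ sits in the \emph{same} grading as the bottom of the tower, so $\widehat{\HM}_{-1}(Y_1)\cong\Z^3$ and the short exact sequence
\[
0\longrightarrow \HM^{\red}(Y_1)\longrightarrow \widehat{\HM}_{-1}(Y_1)\xrightarrow{\ f\ } \overline{\HM}_{-2}(Y_1)\cong\Z\longrightarrow 0
\]
does not split canonically. The relative invariant $e_{W}$ of a $\Z/2$-homology ball only satisfies $f(e_W)=1$; it is determined only modulo $\HM^{\red}(Y_1)$, not up to sign. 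Moreover, $\tau_*$ acts as $+1$ on the quotient $\overline{\HM}_{-2}(Y_1)$, not $-1$: the $-I$ you extract from $\Lef(\tau_*)=2$ via Theorem~\ref{T:main} lives on $\HM^{\red}$, and the full action on $\widehat{\HM}_{-1}(Y_1)$ has the form
\[
\begin{pmatrix}-1&0&p\\0&-1&q\\0&0&1\end{pmatrix}
\]
for some unknown $p,q\in\Z$. Whether a $\tau_*$-fixed vector with $f=1$ exists depends on the parity of $p$ and $q$: if both were even there would be such a fixed vector, and your argument would collapse.

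The paper supplies exactly the missing ingredient you omit: it uses the \emph{effective embedding} of $W_1$ into $K3\,\#\,\cptwobar$ and the gluing formula for Seiberg--Witten invariants to show that the cork twist changes $SW$, which forces $ap+bq+1=0$ for certain integers $a,b$ and hence $\gcd(p,q)=1$. Only then does one know that every $\tau_*$-fixed vector has even $f$-value, contradicting $f(e_{W'})=1$ for any hypothetical extension. So the obstruction is not purely intrinsic to $(Y_1,\tau)$ and Theorem~\ref{T:main}; it also requires the $4$-dimensional input that $(W_1,\tau)$ is already known to be a cork via an effective embedding.
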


The proof of Theorem~\ref{T:strong} makes use of a gluing theorem for Seiberg--Witten invariants, which we briefly summarize. Let $(X,\s)$ be a smooth closed oriented 4-manifold with a $\spinc$ structure $\s$ and $b^+_2 (X) > 1$. Suppose that $X$ is decomposed as $X = X_1\, \cup\, X_2$ with $b^+_2 (X_2) > 1$. Let $Y$ be the oriented boundary of $X_1$ and consider two cobordisms, $M_1$ from $S^3$ to $Y$ and $M_2$ from $Y$ to $S^3$, obtained by removing open 4-balls from $X_1$ and $X_2$, respectively. Let $\s_i$ be the induced $\spinc$ structures on $M_i$, $i = 1, 2$, and $\s_0$ the induced $\spinc$ structure on $Y$. Then we have two maps in monopole homology,
\begin{align*}
& \widehat{HM}_{*}(M_1,\s_1): \widehat{HM}_{*}(S^{3})\rightarrow \widehat{HM}_{*}(Y,\s_0)
\quad\text{and} \\
& \overrightarrow{HM}^{*}(M_2,\s_2): \widecheck{HM}^{*}(S^{3})\rightarrow \widehat{HM}^{*}(Y,\s_0).
\end{align*}
Denote by $\check1$ and $\hat1$ the canonical generators of $\widecheck{HM}^{*}(S^{3})$ and  $\widehat{HM}_{*}(S^{3})$. The gluing theorem expresses the Seiberg--Witten invariant of $(X,\s)$ as follows.

\begin{pro}\label{gluing theorem}
Suppose that $Y$ is a rational homology sphere. Then
\begin{equation}\label{E:swgluing}
SW(X,\s) = \langle\, \widehat{HM}_{*}(M_1,\s_1)(\hat1), \overrightarrow{HM}^{*}(M_2,\s_2)(\check1)\,\rangle.
\end{equation}
\end{pro}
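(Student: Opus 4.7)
The plan is to derive the identity from the composition law for cobordism maps in monopole Floer homology, together with the Kronheimer--Mrowka characterization of the Seiberg--Witten invariant of a closed $4$--manifold in terms of its decomposition into two cobordisms from/to $S^3$. So this is not really a new theorem; rather, it is an unwinding of the definitions, and the goal of the proof is to set up the pairing on the right-hand side of \eqref{E:swgluing} so that it matches the composition law exactly.

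The first step is to interpret the right-hand side. By the adjointness of $\widehat{HM}_{*}$ and $\widehat{HM}^{*}$, the pairing
\[
\langle\, \widehat{HM}_{*}(M_1,\s_1)(\hat 1),\; \overrightarrow{HM}^{*}(M_2,\s_2)(\check 1)\,\rangle
\]
is equal to the composition $\overrightarrow{HM}^{*}(M_2,\s_2)(\check 1)\circ \widehat{HM}_{*}(M_1,\s_1)(\hat 1)$, viewed as a rational number. Using the composition law for cobordism induced maps, valid in our setting because the intermediate 3-manifold $Y$ is a rational homology sphere (so restrictions of $\spinc$ structures from $M_1 \cup_Y M_2$ to $Y$ have finite fibers and no reducible wall crossing complications arise), this composition equals the sum over $\spinc$ structures $\s'$ on the composite cobordism $M_2\circ M_1$ restricting to $\s_1,\s_2,\s_0$ of the corresponding relative cobordism invariant evaluated on $\hat 1 \otimes \check 1$.

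The second step is to identify this composite with the closed-manifold SW invariant. The cobordism $M_2\circ M_1$ is precisely $X$ with two disjoint open $4$--balls removed, so it is naturally a cobordism from $S^3$ to $S^3$; capping off the two boundary components with $4$--balls returns the closed manifold $X$, and the canonical generators $\hat 1 \in \widehat{HM}_{*}(S^3)$ and $\check 1\in \widecheck{HM}^{*}(S^3)$ correspond, under the Kronheimer--Mrowka computation $\widehat{HM}_{*}(S^3)=\F[U^{-1}]$ and $\widecheck{HM}^{*}(S^3)=\F[\![U]\!]$, to the inclusion/capping data for these balls. By the definition of the Seiberg--Witten invariant in the monopole framework (cf.\ \cite[Proposition 3.6.1 and Corollary 39.1.3]{Kronheimer-Mrowka}), the resulting number is exactly $SW(X,\s)$ provided that only the $\spinc$ structure $\s$ contributes; this last point follows from the fact that $\s_1,\s_2$ determine $\s'$ uniquely among $\spinc$ structures on $M_2\circ M_1$ restricting to $\s_0$ on $Y$, again using that $Y$ is a rational homology sphere.

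The main obstacle in making this rigorous is bookkeeping: one has to verify compatibility of sign conventions (homology orientations of $M_1$ and $M_2$ versus that of $X$), check that perturbations used in defining the two relative invariants can be glued to a perturbation realizing $SW(X,\s)$, and ensure that the hypotheses $b^+_2(X) > 1$ and $b^+_2(X_2) > 1$ are used correctly so that both $\widehat{HM}_{*}(M_1,\s_1)(\hat 1)$ and $\overrightarrow{HM}^{*}(M_2,\s_2)(\check 1)$ are genuinely defined (the latter requiring $b^+_2(M_2)>0$ so that there is no wall-crossing ambiguity at the $S^{3}$ end of $M_2$, and the former implicitly using $b^+_2(X) > 1$ so that the SW invariant itself is a well-defined integer). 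Once these points are in place, the identity \eqref{E:swgluing} is an immediate consequence of the composition law and the definition of $SW(X,\s)$ in terms of the Floer-theoretic cap product.
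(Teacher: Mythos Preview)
Your proposal is correct and takes essentially the same approach as the paper: both treat the formula as a direct citation of the Kronheimer--Mrowka composition law/pairing formula from \cite[Section 3.6]{Kronheimer-Mrowka}, with the one additional observation that the rational homology sphere hypothesis (so $H^1(Y;\Z)=0$) lets the formula be stated for a single $\spinc$ structure rather than a sum. The paper is terser, simply pointing to the remark after \cite[Proposition 27.4.1]{Kronheimer-Mrowka} for the separation of $\spinc$ structures, while you spell out the Mayer--Vietoris reason; but the content is the same.
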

Formula \eqref{E:swgluing} is a slight strengthening of the formula that appears just before \cite[Definition 3.6.3]{Kronheimer-Mrowka}, in that \eqref{E:swgluing} holds for each $\spinc$ structure separately, rather than for the sum over the $\spinc$ structures on $X$, as would be the case for $b_1(Y) > 0$. Our strengthened formula follows from the remark on~\cite[page 569]{Kronheimer-Mrowka} following the proof of Proposition 27.4.1. (Separating the $\spinc$ structures can also be achieved using local coefficients as in~\cite[Section 3.7--3.8]{Kronheimer-Mrowka}, but we do not need this in our situation.)  

The following simple algebraic lemma is presumably well-known.

\begin{lem}\label{L:trace}
Let $A$ be a $2 \times 2$ matrix with $A^2 = I$ and $\tr(A) = -2$. Then $A = -I$, where $I$ stands for the identity matrix.

\end{lem}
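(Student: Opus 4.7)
The plan is to combine the Cayley--Hamilton theorem with the given data to force both eigenvalues of $A$ to equal $-1$. The Cayley--Hamilton theorem for a $2\times 2$ matrix gives
\[
A^{2}-\tr(A)\cdot A+\det(A)\cdot I\;=\;0.
\]
Substituting $A^2=I$ and $\tr(A)=-2$ yields $I+2A+\det(A)\cdot I=0$, i.e.\ $2A=-(1+\det(A))\cdot I$. Taking the trace of this identity gives $-4=-2(1+\det(A))$, so $\det(A)=1$. The displayed relation then reduces to $2A=-2I$, hence $A=-I$.

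An alternative (essentially equivalent) route is to observe that the minimal polynomial of $A$ divides $x^{2}-1=(x-1)(x+1)$, so $A$ is diagonalizable with eigenvalues in $\{\pm 1\}$. The condition $\tr(A)=-2$ forces both eigenvalues to be $-1$, and a diagonalizable matrix whose only eigenvalue is $-1$ must equal $-I$.

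There is no real obstacle here: the argument is a two-line calculation with Cayley--Hamilton, and the statement is included only to streamline a subsequent application (presumably to rule out a certain induced action being the identity by identifying it with $-I$).
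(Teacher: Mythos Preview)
Your proof is correct and follows essentially the same route as the paper: both apply Cayley--Hamilton with the hypotheses $A^2=I$ and $\tr(A)=-2$, determine that $\det(A)=1$, and conclude $A=-I$. The only cosmetic difference is that the paper rules out $\det(A)=-1$ by noting it would force $\tr(A)\cdot A=0$ (contradicting invertibility), whereas you take the trace of the resulting identity to compute $\det(A)$ directly.
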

\begin{proof}
By the Cayley--Hamilton theorem, we have that $A^2 - \tr(A)\cdot A + \det(A)\cdot I = 0$, where $\det (A) = \pm 1$. If $\det(A) = -1$, we obtain $\tr(A)\cdot A = 0$, which contradicts the invertibility of $A$. Hence $\det(A)  = 1$, which implies that $A = -I$.  
\end{proof}

\begin{proof}[Proof of Theorem~\ref{T:strong}]
We will omit the $\spinc$ structure $\s_0$ from our notations. We claim first that the action of $\tau_{*}$ on $\HM^{\red}(Y_1)$ is minus the identity. To prove this, we will combine our Theorem~\ref{T:main} with a Heegaard Floer homology calculation by Akbulut and Durusoy~\cite{akbulut-durusoy:involution}. They work with a picture that is the mirror image of Figure \ref{F:cork-symm}\,(a) and show that $\HF^+(-Y_1) \cong \T_{(0)}\, \oplus\, \Z_{(0)}\, \oplus\, \Z_{(0)}$, where the first summand is a tower $\Z[U,U^{-1}]/U \cdot \Z[U]$ with the lowest degree in grading $0$. It follows that $\HF^{\red}(-Y_1) \cong \Z_{(0)}\,\oplus\,\Z_{(0)}$ and $\HF^{\red}(Y_1) \cong  \Z\,\oplus\,\Z$, with both summands of odd grading (with respect to the absolute $\Z/2$ grading). The parity can be checked using the formula
\[
\lambda(Y) = \chi(\HF^{\red}(Y)) - 1/2\cdot d(Y)
\]

\smallskip\noindent
of \cite[Theorem 1.3]{oz:boundary}, where $\lambda(Y)$ is the Casson invariant of $Y$. Since $\lambda(Y_1) = -2$, see for instance \cite{saveliev:cork}, and $d(Y_1) = 0$, both summands in  $\HF^{\red}(Y_1)$ must have odd grading.  We translate this computation  into the monopole homology, keeping in mind the isomorphisms
\begin{equation}\label{E:dual}
\widehat{\HM}_{a}(Y)\; \cong\; (\widecheck{\HM}_{-1-a}(-Y))^{*}\; \cong\; (\HF^+_{-1-a}(-Y))^{*}.
\end{equation}
The grading shift  for the first `duality' isomorphism is \cite[Proposition 28.3.4]{Kronheimer-Mrowka}, while the second equality of the absolute $\Q$-gradings is deduced from \cite{Gripp,huang-ramos:grading,Gardiner}.

Now, the involution $\tau$ makes $Y_1$ into a double branched cover of the 3-sphere with branch set a knot $K_1\subset S^{3}$. As described in~\cite{ruberman-saveliev:survey} and drawn in Figure \ref{F:twist}, the knot $K_1$ is obtained from the left-handed $(5,6)$-torus knot on six strings by adding one full left-handed twist on two adjacent strings. In particular, the signature of $K_1$ is $16$.  Using Theorem~\ref{T:main}, we compute
\[
\tr(\tau_*) = - \Lef(\tau_*) = -\frac18\sign(K_1) = -2
\]
and, using Lemma~\ref{L:trace}, conclude that 
\begin{equation}\label{E:tau}
\tau_{*} = -I: \HF_{\red}(Y_1) \to \HF_{\red}(Y_1).
\end{equation}

\begin{figure}[!ht]
\centering
\includegraphics[scale=.8]{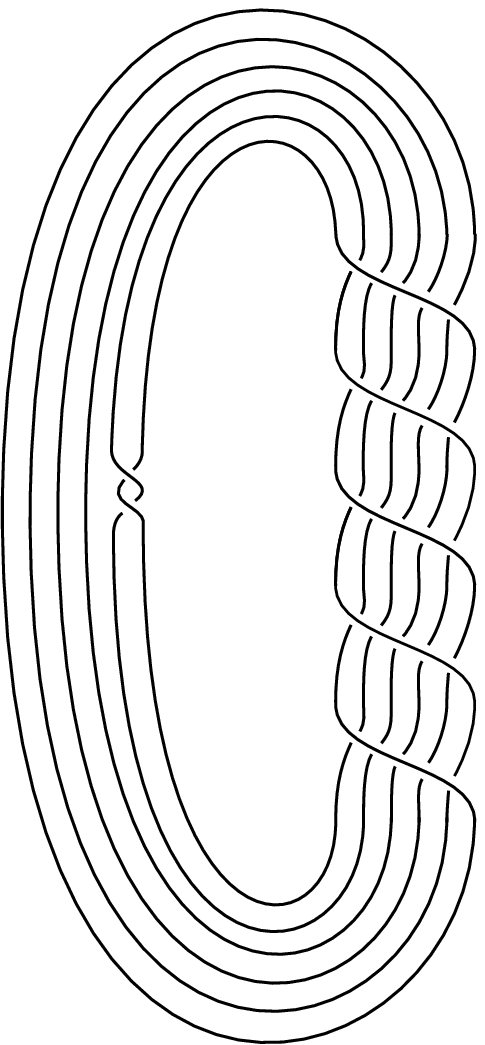}
\caption{}\label{F:twist}
\end{figure}

In order to compute the action of $\tau_{*}$ on $\widehat{\HM}(Y_1)$, consider the short exact sequence in monopole homology
\[
\begin{CD}
 0 @>>> \HM^{\red}(Y_1) @>>> \widehat{\HM}_{-1}(Y_1) @> f >> \overline{\HM}_{-2}(Y_1) @>>> 0.
\end{CD}
\]

\medskip\noindent
Since $\HM^{\red}(Y_1)\cong \Z\,\oplus\,\Z$ and $\overline{\HM}_{-2}(Y_1)\cong \Z$, the group $\widehat{\HM}_{-1}(Y_1)$ must be free of rank 3. We define a splitting $\overline{\HM}_{-2}(Y_1) \rightarrow \widehat{\HM}_{-1}(Y_1)$ of this short exact sequence by sending the canonical generator $1\in \overline{\HM}_{-2}(Y_1)$ to the element $e_W = \widehat{HM}_{*}(M_1,\s_1)(\hat1)\in \widehat{\HM}_{-1}(Y_1)$ as above, where $M_1$ is obtained from $W_1$ by removing an open 4-ball. Using the fact that $\overline{HM}_{*}(M_1,\s_1)$ maps the canonical generator of $\overline{\HM}_{-2}(S^{3})$ to that of $\overline{\HM}_{-2}(Y_1)$, we see that $f(e_{W})=1$.  

For any choice of free generators $\{e_0,e_1\}$ of $\HM^{\red}(Y_1)$ we have a set of free generators $\{e_0,e_1,e_{W}\}$ of $\widehat{\HM}_{-1}(Y_1)$. The action of $\tau_{*}$ on $\widehat{\HM}_{-1}(Y_1)$ is then given by a matrix of the form
\medskip
\begin{equation}\label{E:matrix}
\begin{pmatrix*}[r] -1 & 0 & p \\ 0 & -1 & q \\ 0 & 0 & 1 \end{pmatrix*}
\end{equation}

\medskip\noindent
with some unknown integers $p$ and $q$. In what follows, we will extract some information about $p$ and $q $ from the fact that a cork twist on $W_1$ changes the Seiberg--Witten invariant of a certain closed 4-manifold.

There is an embedding~\cite[Figure 9.5]{gompf-stipsicz:book} (see also~\cite{Akbulut:cork}) of $W_1$ into the blown up $K3$--surface, $X = K3\, \#\, \cptwobar$, such that the the cork twist results in the manifold
\[
X^\tau = W_1\,\cup_{\tau}\,(X - \inte(W_1))
\]
with the trivial Seiberg--Witten invariant. On the other hand, the blowup formula for Seiberg--Witten invariants~\cite{fs:sw-blowup} implies that the Seiberg--Witten invariant of $X$ equals 1 for the $\spinc$ structure $\s$ whose first Chern class is the generator of $H^2 (\cptwobar)$. Since $Y_1$ is an integral homology sphere, there is an obvious correspondence, $\s \leftrightarrow \s^\tau$, between $\spinc$ structures on $X$ and $X^\tau$. Using the gluing formula \eqref{E:swgluing} with $X_1 = W_1$ and $X_2 = X - \inte(W_1)$, we obtain
\begin{align*}
& \operatorname{SW}(X,\s) = \langle \widehat{HM}_{*}(M_1,\s_1)(\hat1), \overrightarrow{HM}^{*}(M_2,\s_2)(\check1)\rangle = 1\quad\;\text{and} \\
& \operatorname{SW}(X^{\tau},\s^{\tau}) = \langle \tau_*(\widehat{HM}_{*}(M_1,\s_1)(\hat1)), \overrightarrow{HM}^{*}(M_2,\s_2)(\check1)\rangle = 0.
\end{align*}
If we write $\overrightarrow{HM}^{*}(M_2,\s_2)(\check1) = ae_0^* + b e_1^* + c e_W^*$ with respect to the dual basis of $\widehat{HM}^{-1}(Y_1)$, the above formulas reduce to
\[
\operatorname{SW}(X,\s) = c = 1\quad\text{and}\quad \operatorname{SW}(X^\tau,\s^\tau) = ap + bq + c = 0,
\]
implying that $ap + bq + 1 = 0$ and, in particular, that the integers $p$ and $q$ are co-prime. Therefore, by a change of basis $\{e_0,e_1\}$, we can turn the matrix \eqref{E:matrix} of the involution $\tau_*$ into
\medskip
\[
A=\begin{pmatrix*}[r] -1 & 0 & 0 \\ 0 & -1 & 1 \\ 0 & 0 & 1 \end{pmatrix*}.
\]

\medskip
Now, suppose that $Y_1$ bounds another smooth $\Z/2$-homology 4-ball $W'$. $W'$ has a unique spin structure $\s_{W'}$, which must be preserved by any diffeomorphism. By studying the the spin manifold $(W',\mathfrak{s}_{W'})$, one  defines the element $e_{W'} \in \widehat{\HM}_{-1}(Y_1)$ by the same procedure as $e_{W}$. As before, $f(e_{W'})=1$. Suppose that $\tau$ extends to a diffeomorphism on $W'$. Then, by naturality of monopole Floer homology, one must have
\[
\tau^{*}(e_{W'})=e_{W'}.
\]
But since the kernel of $A-I$ is generated by the vector $(0,1,2)$, we have $e_{W'} = (0,c,2c)$ for some integer $c$. In particular, $f(e_{W'}) = 2c$ is an even integer, which contradicts $f(e_{W'}) = 1$.
\end{proof}

\begin{rmk}\label{R:237}
We can prove the same non-extension result for other involutions on homology spheres, even those that are not the boundaries of contractible manifolds. For example, an extension~\cite{ohta:boundary} of Taubes' result \cite{taubes:periodic} (plus the fact~\cite{fs:237} that $\Sigma(2,3,7)$ bounds a spin manifold with intersection form $E_8 \oplus H$) implies that the homology sphere $\Sigma(2,3,7)\,\# -\Sigma(2,3,7)$ does not bound a smooth contractible manifold.  On the other hand, we can construct an involution on this manifold as follows. 
View $\Sigma(2,3,7)$ as the link of a singularity,
\[
\Sigma (2,3,7)\,=\,\{(x,y,z)\in \mathbb{C}^{3}\mid x^{2}+y^{3}+z^{7}=0,\ |x|^{2}+|y|^{2}+|z|^{2}=1\},
\]
and consider the involutions $\tau_0$ and $\tau_1$ acting on $\Sigma(2,3,7)$ by the formula 
\begin{equation}\label{two involutions}
\tau_{0}(x,y,z)=(-x,y,z)\quad\text{and}\quad\tau_{1}(x,y,z)=(\bar{x},\bar{y},\bar{z}).
\end{equation}
Let $\tau^{*}_{i}$ denote the map on  $HM^{\red}(\Sigma(2,3,7);\mathbb{Q}) = \mathbb{Q}$ induced by $\tau_{i}$, $i = 0, 1$. The involution $\tau_0$ is isotopic to the identity hence $\tau^*_0$ is the identity; the action of $\tau^*_1$ is computed in Section \ref{S:contact} below as negative one. Suppose $\tau = \tau_0 \,\# \tau_1$ extends as a diffeomorphism on some $\Z/2$ homology ball with boundary $\Sigma(2,3,7)\,\# -\Sigma(2,3,7)$. Adding a $3$-handle results in a $\Z/2$ homology cobordism $W$ from $\Sigma(2,3,7)$ to itself that admits a self-diffeomorphism restricting to $\tau_0$ and $\tau_1$ on its two boundary components. By functoriality of monopole Floer homology, $W$ induces trivial map on $\hmred(\Sigma(2,3,7))$. This contradicts the splitting formula for $\lambda_{SW}$ \cite[Theorem A]{LRS} and the fact that it reduces mod 2 to the Rohlin invariant \cite[Theorem A]{MRS1}.
\end{rmk}


\subsection{Constructing corks}
Starting with the cork $W_1$, one can construct a number of other corks by the method we describe in this subsection. Recall that the involution $\tau: \partial W_1 \to \partial W_1$ makes $\partial W_1$ into a double branched cover $\Sigma(K_1)$ of the 3--sphere with branch set the knot $K_1 \subset S^3$ shown in Figure \ref{F:twist}. Let $K$ be an arbitrary knot in $S^3$ smoothly concordant to $K_1$. The double branched cover of $I\times S^3$ with branch set the concordance is a $\Z/2$ homology cobordism $U_K$ from $\partial W_1 = \Sigma(K_1)$ to $\Sigma(K)$. The manifold 
\[
W_{K} =W_1\cup_{\partial W_1} U_{K}
\]
is a smooth $\Z/2$ homology 4-ball with the natural involution $\tau_K: \partial W_K \rightarrow \partial W_{K}$ on its boundary given by the covering translation. 

\begin{cor}\label{C: new cork} 
The involution $\tau_{K}$ can be extended to $W_{K}$ as a homeomorphism but not as a diffeomorphism. Moreover, if $\pi_1(U_{K})$ is normally generated by the image of $\pi_1(\partial W_1)$ then the manifold $W_K$ is contractible and therefore $(W_K,\tau_K)$ is a cork.
\end{cor}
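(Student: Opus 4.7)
\emph{Topological extension.} The involution $\tau$ extends to a homeomorphism $\tilde\tau_1\colon W_1 \to W_1$ by Akbulut's original construction \cite{Akbulut:cork}. On the other side, the deck transformation of the double branched cover $U_K \to I\times S^3$ is a smooth involution $\tilde\tau_{U_K}\colon U_K \to U_K$ that restricts to $\tau$ on $\partial W_1 = \Sigma(K_1)$ and to $\tau_K$ on $\Sigma(K) = \partial W_K$. These two extensions agree on the shared boundary $\partial W_1$, so they glue to a self-homeomorphism of $W_K = W_1 \cup_{\partial W_1} U_K$ extending $\tau_K$.

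\emph{Obstruction to a smooth extension.} Let $-U_K$ denote $U_K$ with orientation reversed, regarded as a $\Z/2$ homology cobordism from $\Sigma(K)$ back to $\Sigma(K_1)=\partial W_1$, and set
\[
W_K'\; =\; W_K\, \cup_{\Sigma(K)}\, (-U_K).
\]
Since $W_1$ is contractible and both $U_K$ and $-U_K$ are $\Z/2$ homology cobordisms, $W_K'$ is a smooth $\Z/2$ homology $4$-ball bounded by $\partial W_1$. If $\tau_K$ admitted a smooth extension $\tilde\tau_K\colon W_K \to W_K$, then gluing $\tilde\tau_K$ with the deck transformation of $-U_K$ along $\Sigma(K)$ (where they both restrict to $\tau_K$) would produce a self-diffeomorphism of $W_K'$ whose restriction to $\partial W_K' = \partial W_1$ is $\tau$, contradicting Theorem~\ref{T: absolutely nonextendable involution}.

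\emph{Contractibility under the $\pi_1$ hypothesis.} Van Kampen applied to $W_K = W_1 \cup_{\partial W_1} U_K$, combined with $\pi_1(W_1) = 1$, yields
\[
\pi_1(W_K)\; =\; \pi_1(U_K)\,/\,\langle\!\langle\, \im(\pi_1(\partial W_1) \to \pi_1(U_K))\,\rangle\!\rangle,
\]
which is trivial by hypothesis. Hurewicz gives $H_1(W_K;\Z) = 0$, and the long exact sequence of $(W_K,\partial W_K)$ then forces $H_1(W_K,\partial W_K;\Z) = 0$. By the universal coefficient theorem, Lefschetz duality gives $H_2(W_K;\Z) \cong H^2(W_K,\partial W_K;\Z) \cong \Hom(H_2(W_K,\partial W_K;\Z),\Z)$, which is torsion-free. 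On the other hand, $H_2(W_K;\F_2) = 0$ together with $H_1(W_K;\Z) = 0$ forces $H_2(W_K;\Z)\otimes\F_2 = 0$, and a torsion-free abelian group with vanishing mod-$2$ reduction is zero. Lefschetz duality also yields $H_3(W_K;\Z) = H_4(W_K;\Z) = 0$, so Whitehead's theorem implies that $W_K$ is contractible and $(W_K,\tau_K)$ is a cork.

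\emph{Main obstacle.} The only step requiring genuine care is the smooth-obstruction argument: one must check that a hypothetical smooth extension of $\tau_K$ on $W_K$ matches the deck transformation of $-U_K$ along $\Sigma(K)$ in such a way as to extend $\tau$ itself smoothly to the $\Z/2$ homology $4$-ball $W_K'$, so that Theorem~\ref{T: absolutely nonextendable involution} applies. The contractibility computation is a routine combination of van Kampen, Lefschetz duality, and the universal coefficient theorem.
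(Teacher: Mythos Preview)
Your proposal is correct and follows essentially the same route as the paper's proof: the paper also glues a hypothetical smooth extension of $\tau_K$ on $W_K$ to the covering translation on $-U_K$ to obtain a diffeomorphism of the $\Z/2$ homology ball $W_K \cup_{\Sigma(K)}(-U_K)$ extending $\tau$, and then invokes the strong non-extendability result (stated there as Theorem~\ref{T:strong}, equivalent to Theorem~\ref{T: absolutely nonextendable involution}). Your treatment of the homeomorphism extension and the contractibility claim is more explicit than the paper's, which leaves those points to the reader, but the argument is the same.
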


\begin{proof} 
The involution $\tau_{K}$ extends as a homeomorphism because $\tau$ does. To prove that $\tau_{K}$ does not extend as a diffeomorphism, consider the $\Z/2$ homology ball 
\[
W\, =\, W_{K}\cup_{\Sigma(K)}\,(- U_K)
\]
with boundary $Y_1$, where $- U_K$ denotes $U_K$ with reversed orientation. Suppose $\tau_{K}$ extends as a diffeomorphism on $W_{K}$. By gluing this diffeomorphism with the covering translation on $- U_K$, we obtain a diffeomorphism on $W$ that extends the involution $\tau$ on its boundary. This contradicts Theorem \ref{T:strong}.  \end{proof}

Examples of knots $K$ which are concordant to $K_1$ and, at the same time, satisfy the condition of Corollary \ref{C: new cork} can be constructed using the technique of infection~\cite{cochran-orr-teichner:l2}. Choose a knot $\eta$ in the complement of $K_1$ that is unknotted in $S^3$ and has even linking number with $K_1$. Let $J$ be any slice knot in $S^3$. Denote by $\nu(\eta)$ and $\nu(J)$ open tubular neighborhoods of the two knots. Then 
\[
(S^3 - \nu(\eta)) \cup (S^3 - \nu(J))
\]
is diffeomorphic to $S^3$, provided we glue the meridian of $J$ to the longitude of $\eta$, and vice versa. Under this diffeomorphism, the knot $K_1$ becomes a new knot, $K(J,\eta)$.  

One can similarly `infect' the product concordance from $K_1$ to itself by removing $I \times \nu(\eta)$ from $I \times S^3$ and gluing in the exterior of a concordance $C \subset I \times S^3$ from the unknot to $J$; see Gordon~\cite{gordon:contractible}. This gives a concordance $C(J,\eta)$ from $K_1$ to $K(J,\eta)$. Writing $U_{K(J,\eta)}$ for the double branched cover of $I\times S^3$ with branch set $C(J,\eta)$, we claim that $U_{K(J,\eta)}$ is a  $\Z/2$ homology cobordism from $\partial W_1 = \Sigma(K_1)$ to $\Sigma(K(J,\eta))$ whose fundamental group is normally generated by $\pi_1(\Sigma(K_1))$.

To see this, note that by the assumption on the linking number, the preimage of the cylinder $I \times \eta$ in $I \times \Sigma(K_1)$ consists of two cylinders, $I \times \eta_1$ and $I \times \eta_2$. Therefore,
\[
U_{K(J,\eta)} = ((I \times \Sigma(K_1)) - (I \times \nu(\eta_1)) - (I \times \nu(\eta_2))) \cup \left((I \times S^3) - \nu(C)\right) \cup \left((I \times S^3) - \nu(C)\right).
\]
In this identification, the longitude for each copy of $C$ is glued to the corresponding meridian of $\eta_1$ or $\eta_2$. Since the bottom of $C$ is an unknot, this means that the group $\pi_1 (U_{K(J,\eta)})$, computed via van Kampen's theorem, is normally generated by $\pi_1 (\Sigma(K_1))$ and two copies of $\pi_1((I \times S^3) - \nu(C))$.  But the meridians of the two copies of $C$, which normally generate $\pi_1((I \times S^3) - \nu(C))$, are the longitudes of $I \times \eta_1$ and $I \times \eta_2$. Since these are in $\pi_1(\Sigma(K_1))$, it follows that $\pi_1(\Sigma(K_1))$ normally generates $\pi_1(U_{K(J,\eta)})$.

One can also construct concordances to which Corollary \ref{C: new cork} would apply by replacing a tangle in $K_1$ with one that is concordant to it;  see Kirby--Lickorish \cite{kirby-lickorish:prime} and Bleiler \cite{bleiler:prime}. As we mentioned in the introduction, the corks are usually detected with the help of an effective embedding. A good example illustrating this point would be the corks constructed in~\cite{akbulut-ruberman:absolute} using a similar trick with invertible homology cobordisms. However, this is not how the corks in Corollary \ref{C: new cork} are detected: there does not seem to exist an obvious effective embedding that would help detect them.


\subsection{A re-gluing formula}
The above calculation of the induced action of $\tau$ on monopole Floer homology allows us to determine the effect of cutting and gluing along the homology sphere $Y_1$ via $\tau$ in a more general situation. 

\begin{thm}\label{invariance}
Let $Y_1$ be the manifold with involution $\tau$ shown in Figure \ref{F:cork-symm}\,(b), and $X$ a smooth closed oriented 4-manifold with $b^{+}_2(X) > 1$ decomposed as $X = X_1\cup X_2$ with $b^+_2 (X_2) > 1$ and $\partial X_2 =Y_1$. Let $X^{\tau}$ be the manifold obtained by cutting $X$ open along $Y_1$ and regluing using $\tau$. Then
$$
SW(X,s)=(-1)^{b_1(X_1)+b^{+}_2(X_1)}SW(X^{\tau},s^{\tau}).
$$
\end{thm}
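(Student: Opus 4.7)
The plan is to deduce Theorem~\ref{invariance} from the gluing formula for Seiberg--Witten invariants (Proposition~\ref{gluing theorem}) combined with the explicit description of the $\tau$-action on $\widehat{HM}(Y_1)$ obtained during the proof of Theorem~\ref{T:strong}. After removing a small open $4$-ball from each of $X_1$ and $X_2$, one obtains cobordisms $M_1$ and $M_2$ meeting along $Y_1$, with induced $\spinc$ structures $\mathfrak{s}_1,\mathfrak{s}_2$. Setting
\[
u\; =\; \widehat{HM}_{*}(M_2,\mathfrak{s}_2)(\hat{1})\; \in\; \widehat{HM}(Y_1),
\quad
v\; =\; \overrightarrow{HM}^{*}(M_1,\mathfrak{s}_1)(\check{1})\; \in\; \overrightarrow{HM}^{*}(Y_1),
\]
Proposition~\ref{gluing theorem} gives $SW(X,\mathfrak{s})=\langle u,v\rangle$, while the same formula applied to $X^{\tau}$ (the same decomposition with the boundary identification twisted by $\tau$) delivers
\[
SW(X^{\tau},\mathfrak{s}^{\tau})\; =\; \epsilon \cdot \langle\tau_{*}u,v\rangle,
\]
where $\epsilon=\pm1$ records how $\tau$ propagates through the homology orientations of the two pieces.

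The first key reduction is that the hypothesis $b^{+}_{2}(X_2)>1$ forces $u$ to lie in $\HM^{\red}(Y_1)$. Indeed, the reducible-only cobordism map $\overline{HM}(M_2,\mathfrak{s}_2)\colon \overline{HM}(S^{3})\to \overline{HM}(Y_1)$ vanishes whenever $b^{+}_{2}>0$ (the reducibles on $M_2$ are cut out by enough positivity equations to be generically empty), so $u$ maps to $0$ under $\widehat{HM}(Y_1)\twoheadrightarrow \overline{HM}(Y_1)$. The second input is the calculation from the proof of Theorem~\ref{T:strong} that $\tau_{*}=-\Id$ on $\HM^{\red}(Y_1)$. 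Consequently $\tau_{*}u=-u$, and
\[
SW(X^{\tau},\mathfrak{s}^{\tau})\; =\; -\epsilon\cdot SW(X,\mathfrak{s}).
\]
Note that the off-diagonal entry of the matrix $A$ from the proof of Theorem~\ref{T:strong} (which sends $e_W \mapsto e_W + e_1$) does not disturb this computation precisely because $u$ lies in the invariant subspace $\HM^{\red}(Y_1)$.

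The principal technical obstacle is pinning down the sign $\epsilon$. It comes from the naturality relation for cobordism-induced Floer maps, exactly of the shape \eqref{cobordism map for single spin-c}: transporting $\tau$ through the $M_1$-side of the pairing introduces the sign by which $\tau^{*}$ acts on the homology-orientation data $H^{1}(M_1;\R)\oplus I^{+}(M_1;\R)\oplus H^{1}(Y_1;\R)$ of $M_1$. Since $Y_1$ is an integer homology sphere, the $H^{1}(Y_1;\R)$ factor is absent and only $b_{1}(X_1)$ and $b^{+}_{2}(X_1)$ enter, forcing $\epsilon=-(-1)^{b_{1}(X_1)+b^{+}_{2}(X_1)}$. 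Substituting this into the previous equation gives
\[
SW(X^{\tau},\mathfrak{s}^{\tau})\; =\; (-1)^{b_{1}(X_1)+b^{+}_{2}(X_1)}\, SW(X,\mathfrak{s}),
\]
which is equivalent to the claimed formula. Carrying out the sign bookkeeping carefully -- tracking homology orientations through the cut-and-reglue operation while ensuring that the mixing between the reduced and the tower parts of $\widehat{HM}(Y_1)$ never sees $u$ -- is the main piece of work in the proof.
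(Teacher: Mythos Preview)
Your argument has two genuine gaps.

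\textbf{The gluing formula is applied with the pieces swapped.} Proposition~\ref{gluing theorem} produces the element $\widehat{HM}_*(M_1,\s_1)(\hat 1)$ from the piece whose complement has $b^+_2>1$. In the theorem, that piece is $X_1$, so the Floer-homology element lives in $\widehat{HM}_*(-Y_1)$ and comes from $M_1$, not $M_2$. Your version with $u=\widehat{HM}_*(M_2,\s_2)(\hat 1)\in\widehat{HM}_*(Y_1)$ would require $b^+_2(X_1)>1$, which is not assumed. When the formula is applied correctly, the relevant element $a=\widehat{HM}_*(M_1,\s_1)(\hat 1)$ need not lie in $\HM^{\red}$ at all (for $X_1=W_1$ it is the class $e_W$, which is \emph{not} in the reduced part), so the reduction ``$u\in\HM^{\red}$, hence $\tau_*u=-u$'' collapses.

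\textbf{The sign $\epsilon$ does not arise as you describe.} Formula~\eqref{cobordism map for single spin-c} compares $\tau_*$ on the two ends of a cobordism over which $\tau$ \emph{extends as a diffeomorphism}; here $\tau$ does not extend over $M_1$ (that is precisely the cork phenomenon), so that formula is inapplicable. Cutting and re-gluing by $\tau$ simply replaces $a$ by $\tau_*a$ in the pairing, with no extra homology-orientation factor. The sign in the theorem therefore has to be produced entirely by the action of $\tau_*$ on $\widehat{HM}_*(-Y_1)$. The paper's mechanism is different from yours: on $-Y_1$ (unlike on $Y_1$) the tower and the reduced part sit in \emph{distinct} $\Z/2$ gradings, so $\tau^*$ is $+\Id$ on $\widehat{HM}_{\mathrm{odd}}(-Y_1)$ and $-\Id$ on $\widehat{HM}_{\mathrm{even}}(-Y_1)=\HM_{\red}(-Y_1)$. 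One then computes that $a$ has $\Z/2$ grading $b_1(X_1)+b^+_2(X_1)+1\pmod 2$, and the theorem follows. The dependence on $b_1(X_1)+b^+_2(X_1)$ is thus a grading statement about $a$, not a homology-orientation statement about $\tau$.
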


\begin{proof}[Proof of Theorem \ref{invariance}] 
We wish to apply the gluing formula of Section \ref{S:inv} to $Y = -Y_1$ (note that the orientation convention for $Y_1$ in the above theorem is opposite of that in Section \ref{S:inv}). The key to doing that are the following two observations:
\begin{itemize}
\item[(1)] Write $M_{i}=X_{i}- \inte(B^{4})$ then the absolute $\Z/2$ grading of $\widehat{HM}^{*}(M_1,s_1)(\hat1)$ is equal to $b_1(X_1)+b^{+}_2(X_1)+1\pmod 2$. 
\item[(2)] The isomorphisms \eqref{E:dual} and the formula \eqref{E:tau} imply that $\tau^{*}$ acts as identity on $\widehat{\HM}_{\text{odd}} \allowbreak (-Y_1)$ and minus identity on $\widehat{\HM}_{\text{even}}(-Y_1)=\HM_{\red}(-Y_1)$.
\end{itemize}
Now, if $b^{+}_2(X_2) > 1$, the result follows from Proposition \ref{gluing theorem}. If $b^+_2 (X_2) = 1$ then both manifolds $X_1$ and $X_2$ in the splitting $X = X_1 \cup X_2$ have positive $b^+_2$ and the result follows from the pairing formula \cite[Equation 3.22]{Kronheimer-Mrowka}.  
\end{proof}

\begin{cor}
In the situation of Theorem \ref{invariance}, twisting the manifold $X$ along $Y_1$ via the involution $\tau$ can only kill the Seiberg--Witten invariant of $X$ when the piece bounded by $Y_1$ is negative definite.
\end{cor}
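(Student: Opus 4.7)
The plan is to argue by contraposition: assuming that the piece $X_2$ bounded by $Y_1$ has $b^+_2(X_2)\geq 1$ (so it is not negative definite), I will show that the identity $SW(X,\s) = \pm\, SW(X^\tau, \s^\tau)$ of Theorem~\ref{invariance} still holds for every $\spinc$ structure $\s$, whence the twist cannot send a non-zero Seiberg--Witten invariant to zero.

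Although Theorem~\ref{invariance} is formally stated under the hypothesis $b^+_2(X_2) > 1$, its proof already covers the borderline case $b^+_2(X_2) = 1$ as well. Indeed, the last two sentences of that proof bifurcate on this inequality: when $b^+_2(X_2) > 1$, the identity follows from the gluing formula of Proposition~\ref{gluing theorem}; when $b^+_2(X_2) = 1$, the global hypothesis $b^+_2(X) > 1$ forces $b^+_2(X_1)\geq 1$, and the Kronheimer--Mrowka pairing formula \cite[Equation 3.22]{Kronheimer-Mrowka} supplies the same conclusion. Both arguments feed on the same input --- the computation of $\tau^*$ on $\widehat{HM}_*(-Y_1)$ derived from \eqref{E:tau}, namely trivial on the odd-graded summand and negative identity on the reduced even summand $\HM^{\red}(-Y_1)$ --- so the extension to $b^+_2(X_2)\geq 1$ requires no additional ingredients.

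Putting the two cases together, whenever $b^+_2(X_2)\geq 1$ the Seiberg--Witten invariants of $(X,\s)$ and $(X^\tau,\s^\tau)$ agree up to the overall sign $(-1)^{b_1(X_1)+b^+_2(X_1)}$, and in particular they vanish simultaneously. The contrapositive is precisely the corollary: if the $\tau$-twist along $Y_1$ kills $SW$, then $b^+_2(X_2) = 0$, i.e., the piece $X_2$ bounded by $Y_1$ is negative definite (note that $Y_1$ being a homology sphere makes the intersection form on $X_2$ automatically non-degenerate, so semi-definiteness collapses to definiteness). The only item meriting verification beyond a citation of Theorem~\ref{invariance} is that its proof genuinely reaches the edge case $b^+_2(X_2)=1$; this is a one-line reading of the final two sentences of that proof, so no serious obstacle arises.
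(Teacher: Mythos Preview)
Your proof is correct and matches the paper's (implicit) argument. The paper states the corollary without proof, treating it as an immediate consequence of Theorem~\ref{invariance}; your contrapositive reading together with the observation that the proof of Theorem~\ref{invariance} already handles the borderline case $b^+_2(X_2)=1$ via \cite[Equation 3.22]{Kronheimer-Mrowka} is exactly what is intended.
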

In particular, if $X_1 = -W_1$, the cork twist cannot change the Seiberg--Witten invariant of $X$. This is perhaps more readily seen via the blow-up formula for the Seiberg--Witten invariants, using the fact (which is implicit in~\cite{akbulut-kirby:mazur}) that the cork twist extends over $W_1\,\#\,\cptwo$.



\section{Knot concordance and Khovanov homology thin knots}\label{S:thin} In this section, we prove the results in Section \ref{S:concordance} from the introduction. We start with the following lemma, which is presumably well known.

\begin{lem}\label{L: double branched cover of thin knot}
Let $L$ be a ramifiable link in the 3-sphere that is Khovanov homology thin over $\F_2$. Then $\Sigma(L)$ is a monopole $L$-space over the rationals, that is, 
\[
\hmred(\Sigma(L);\mathbb{Q})\,=\,0.
\]
\end{lem}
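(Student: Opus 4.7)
The plan is to combine Bloom's spectral sequence with the standard dimension bound for monopole Floer homology of a rational homology sphere, playing the two inequalities against each other to force the vanishing. First I would invoke Bloom's spectral sequence \cite{bloom: spectral sequence}, whose $E_2$-page is isomorphic to the reduced Khovanov homology $\widetilde{\KH}(L;\F_2)$ and which converges to $\widehat{\HM}(-\Sigma(L);\F_2)$. Passing to $E_\infty$ and using that subquotients only decrease the total $\F_2$-dimension yields the rank inequality
\[
\dim_{\F_2}\widehat{\HM}(-\Sigma(L);\F_2)\;\leq\;\dim_{\F_2}\widetilde{\KH}(L;\F_2).
\]

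Next, since $L$ is Khovanov homology thin over $\F_2$ with $\det(L)\neq 0$, the reduced Khovanov homology is concentrated in a single $\delta$-grading, and therefore its total $\F_2$-dimension is computed by the graded Euler characteristic, which is well-known to equal $|\det(L)|$ (this is the same dimension count used, for example, in Manolescu--Ozsv\'ath's treatment of quasi-alternating links). Combined with the previous inequality this gives
\[
\dim_{\F_2}\widehat{\HM}(-\Sigma(L);\F_2)\;\leq\;|\det(L)|.
\]
Since $\Sigma(L)$ is a rational homology sphere with $|H_1(\Sigma(L);\Z)|=|\det(L)|$, the standard structure theorem from \cite{Kronheimer-Mrowka} produces the matching lower bound $\dim_{\F_2}\widehat{\HM}(-\Sigma(L);\F_2)\geq|\det(L)|$, with equality realized precisely when $\hmred(\Sigma(L);\F_2)=0$. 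Combining both directions forces this vanishing.

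Finally, I would promote the $\F_2$ vanishing to $\mathbb{Q}$: the reduced monopole Floer homology of a rational homology sphere is a finitely generated abelian group, and the universal coefficient theorem applied to $\hmred(\Sigma(L);\F_2)=0$ gives both $\hmred(\Sigma(L);\Z)\otimes\F_2=0$ and the vanishing of its $2$-torsion, so the group itself must be zero and hence $\hmred(\Sigma(L);\mathbb{Q})=0$. The main point requiring care is matching conventions: one must make sure that the flavor of monopole Floer homology receiving Bloom's spectral sequence is precisely the one whose $\F_2$-dimension detects $\hmred$ via the structure theorem for rational homology spheres. Everything else is a clean dimension-count sandwich.
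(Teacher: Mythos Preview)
Your approach is essentially the paper's: Bloom's spectral sequence gives an upper bound on the dimension of the finite-flavor monopole Floer group, the Khovanov-thin hypothesis computes the $E_2$-page dimension as $|\det(L)|$, and the structural lower bound then forces the $L$-space condition. Two points need correcting.

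First, the flavor: in the Kronheimer--Mrowka notation used in this paper, $\widehat{\HM}$ is the ``from'' version and is infinite-dimensional for a rational homology sphere, so your inequality $\dim_{\F_2}\widehat{\HM}(-\Sigma(L);\F_2)\le |\det(L)|$ is false as written. Bloom's spectral sequence abuts to the tilde-version $\widetilde{\HM}(-\Sigma(L);\F_2)$ (the mapping-cone of $U$, analogous to $\widehat{HF}$), and it is $\widetilde{\HM}$ whose $\F_2$-dimension is bounded below by $|H_1|$ with equality exactly when $\hmred=0$. You flagged this as the ``main point requiring care''; here it must actually be resolved, not just noted.

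Second, your final promotion step contains a false intermediate claim. From $\hmred(\Sigma(L);\F_2)=0$ you cannot conclude that $\hmred(\Sigma(L);\Z)=0$: a finitely generated abelian group $M$ with $M\otimes\F_2=0$ and $\operatorname{Tor}(M,\F_2)=0$ could still be, say, $\Z/3$. What does follow is that $M$ has no free summand, hence $M\otimes\Q=0$, which is all you need---but the argument as written overclaims. The paper sidesteps this entirely by applying the universal coefficient theorem to $\widetilde{\HM}$ (which is genuinely the homology of a chain complex) to get $\dim_{\Q}\widetilde{\HM}(-\Sigma(L);\Q)\le\dim_{\F_2}\widetilde{\HM}(-\Sigma(L);\F_2)$, and then running the sandwich directly over $\Q$. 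This is cleaner because it avoids invoking the universal coefficient theorem for $\hmred$, which is defined as an image (or cokernel) rather than directly as the homology of a complex.
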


\begin{proof}
Let us fix an orientation on the link $L$. According to Bloom \cite{bloom: spectral sequence}, there is a spectral sequence whose $E_2$ page is $\widetilde{\KH}(L;\F_2)$ and which converges to $\widetilde{\HM}(-\Sigma(L);\F_2)$ (we refer to \cite[Section 8]{bloom: spectral sequence} for the definition of this  tilde-version of monopole Floer homology). In particular, this implies that
\begin{equation}\label{rank inequality}
\dim_{\F_2}(\widetilde{\KH}(L;\F_2))\;\geq\; \dim_{\F_2}(\widetilde{\HM}(-\Sigma(L);\F_2)).
\end{equation}
Recall from \cite{khovanov I} that the reduced Khovanov cohomology categorifies the Jones polynomial $J_{L}$. Together with the Khovanov homology thin condition, this implies that
\begin{equation}\label{rank equality}
\dim_{\F_2}(\widetilde{\KH}(L;\F_2)) = |J_L (-1)| = |H_1(\Sigma(L);\Z)|.
\end{equation}
Combining \eqref{rank inequality} and \eqref{rank equality} with the universal coefficient theorem, we obtain 
\begin{equation}\label{bounds on HM-tilde}
|H_1(\Sigma(L);\Z)|\; \geq\; \dim_{\F_2}(\widetilde{\HM}(-\Sigma(L);\F_2))\; \geq\; \dim_{\mathbb{Q}}(\widetilde{\HM}(-\Sigma(L);\mathbb{Q})).
\end{equation} 
By the definition of $\widetilde{\HM}$, one has $$\dim_{\mathbb{Q}}(\widetilde{\HM}(-\Sigma(L),\mathfrak{s};\mathbb{Q}))\geq 1$$
for any spin$^c$ structure $\mathfrak{s}$, with equality holding if and only if $\hmred(-\Sigma(L),\mathfrak{s};\mathbb{Q})=0.$
Therefore, \eqref{bounds on HM-tilde} implies that $-\Sigma(L)$ is a monopole $L$--space over the rationals. By the duality in the reduced monopole homology, $\Sigma(L)$ is a monopole $L$--space over the rationals as well.
\end{proof}

\begin{proof}[Proof of Corollary \ref{C: froyshov for thin knots}]This is a direct consequence of Lemma \ref{L: double branched cover of thin knot} and Theorem \ref{T:main}.
\end{proof}

\begin{proof}[Proof of Corollary \ref{C: Lefschetz concordance invariant}]
In the case of a knot $K$, the Murasugi signature of $K$ equals its usual signature, and the double branch cover $\Sigma(K)$ has a unique spin structure. Therefore, Theorem \ref{T:main} reduces to the statement that
\[
L(K)\; =\; \frac 1 8\, \sigma(K)\, -\, h(\Sigma(K)),
\]
where $\sigma(K)$ and $h(\Sigma(K))$ are additive concordance invariants. This makes $L(K)$ into an additive concordance invariant. This invariant is non-trivial: for example, if  $K$ is the right handed $(3,7)$-torus knot, $\sigma(K) = -8$ and $h(\Sigma(K)) = h(\Sigma (2,3,7)) = 0$, hence $L(K) = -1$.
\end{proof}

\begin{proof}[Proof of Corollary \ref{C: direct summand}]
This is immediate from Corollary \ref{C: Lefschetz concordance invariant}.
\end{proof}


\section{Monopole Contact invariant} \label{S:contact}
In this section we will prove Theorem \ref{T: contact sign}. Consider the Brieskorn homology sphere $Y = \Sigma(2,3,7)$, along with the involution $\tau_1(x,y,z)=(\bar{x},\bar{y},\bar{z})$ described in Remark \ref{R:237}.  We give $Y$ the canonical orientation as a link of singularity.
By combining the calculation of Heegaard Floer homology in \cite{oz: plumbed} with the identification between Heegaard Floer and monopole Floer homology, we obtain
\[
\widecheck{\HM}(Y;\Z)=\HF^{+}(Y;\Z)=\mathcal{T}_{(0)}\oplus \Z_{(-1)} 
\quad \text{and} \quad
\hmred(Y)=\widecheck{\HM}_{(-1)}(Y;\Z)=\Z_{(-1)}.
\]
To determine the induced action of $\tau$ on $\hmred(Y)$ we will use the fact that $\tau$ makes $Y$ into a double branched cover of the 3-sphere with branch set the $(2,-3,-7)$ pretzel knot $K$, pictured as the Montesinos knot $K(2,-3,-7)$ in Figure \ref{F:fig11}. Either by a direct calculation, or by using the formula of \cite[Section 7]{Saveliev} for the knot signature in terms of the $\bar\mu$--invariant, we obtain 
\[
\sigma(K) = 8\,\bar{\mu}(\Sigma(2,3,7)) = 8.
\]
Theorem \ref{T:main} then tells us that
\[
\Lef(\tau_{*})\, =\, \frac 1 8\,\sigma(K) - h(Y)\, =\, 1
\]

\medskip\noindent
and therefore $\tau_*: \hmred(Y)\to \hmred(Y)$ is negative identity. 

According to \cite{Tosun} (see also \cite[Theorem 1.6]{Mark-Tosun}), the manifold $-Y$ admits a unique (up to isotopy) tight contact structure $\xi$, which is Stein fillable and has Gompf invariant $\theta=2$. By applying the involution $\tau$, we obtain another contact structure $\tau^{*}(\xi)$. Since $\tau^{*}(\xi)$ is also tight, it must be isotopic to $\xi$. 

Now, suppose there is a canonical choice of the contact element $\tilde{\psi}(-Y,\xi) \in \widecheck{\HM} (Y;\Z)$. This element is non-zero and it is supported in degree $-(\theta+2)/4 = -1$. Since $\tau_*$ acts as negative identity on $\hmred(Y)=\widecheck{\HM}_{(-1)}(Y;\Z)$, we have 
\[
\tilde{\psi}(-Y,\xi)\neq -\tilde{\psi}(-Y,\xi)=\tau_{*}(\tilde{\psi}(-Y,\xi))=\tilde{\psi}(-Y,\tau_{*}(\xi)).
 \]
However, this contradicts the naturality of the contact invariant because $\tau_*(\xi)$ and $\xi$  are isotopic.  


\end{document}